\numberwithin{equation}{section}
\def\R{\mathbb R}
\def\C{\mathbb C}
\def\N{\mathbb N}
\def\supp{\mathrm{supp}\,}
\def\div{\mathrm{div}\,}
\def\divx{\mathrm{div}_x}
\def\divv{\mathrm{div}_v}
\def\A{{\mathcal A}}
\def\D{{\mathcal D}}
\def\B{{\mathcal B}}
\def\J{{\mathcal J}}
\def\Ri{{\mathcal R}}
\def\sign{{\mathrm{sign}}}
\newcommand{\id}{\mathds{1}}
\newcommand{\weakto}{\rightharpoonup}
\def\pa{\partial}
\def\l{\lambda}
\newcommand{\inter}{{\mathrm{int}}}
\def\pOmega{\bar\Omega}
\def\pE{E} 
\def\pvarphi{\varphi}
\def\pPhi{\Phi}
\def\pU{U}
\def\pUmin{\pU_0(0)}
\def\prho{\rho}
\def\pM{M}
\def\pR{R}
\def\pT{T}
\def\pG{G}
\def\pH{\bar H}
\def\pK{\bar K}
\def\pN{\bar N}
\def\pI{\bar I}
\def\pD{\bar{\mathcal D}}
\def\pB{\bar{\mathcal B}}
\def\pA{\bar{\mathcal A}}
\def\pJ{\bar{\mathcal J}}
\def\pv{\bar{v}}
\def\palpha{\alpha}
\def\pbeta{\beta}
\def\pf{f}
\newcommand{\p}[1]{\bar{#1}}
\def\F{\mathcal F}
\def\M{\mathcal M}
\def\pQ{\bar{Q}}
\def\pS{\bar{S}}
\def\tx{\tilde x}
\def\H{\mathcal H}
\def\Ltwo{H} 
\let\originalleft\left
\let\originalright\right
\renewcommand{\left}{\mathopen{}\mathclose\bgroup\originalleft}
\renewcommand{\right}{\aftergroup\egroup\originalright}
\newcommand*\diff{\mathop{}\!\mathrm{d}} 
\newtheorem{theorem}{Theorem}[section]
\newtheorem{defn}[theorem]{Definition}
\newtheorem{prop}[theorem]{Proposition}
\newtheorem{cor}[theorem]{Corollary}
\newtheorem{lemma}[theorem]{Lemma}
\newtheorem{remark}[theorem]{Remark}
\newtheorem{defnrem}[theorem]{Definition \& Remark}
\def\bcr{\begin{color}{red}}
\def\bcb{\begin{color}{blue}}
\def\ec{\end{color}}
\def\be{\begin{equation}}
\def\ee{\end{equation}}
\begin{document}

\title{On the existence of linearly oscillating galaxies}
\author{Mahir~Had\v{z}i\'{c}
	\thanks{University College London, UK. Email: m.hadzic@ucl.ac.uk}~, Gerhard~Rein
	\thanks{University of Bayreuth, Germany. Email: gerhard.rein@uni-bayreuth.de}~, and Christopher~Straub
	\thanks{University of Bayreuth, Germany. Email: christopher.straub@uni-bayreuth.de}}

\maketitle

\begin{abstract}
	We consider two classes of steady states of the three-dimensional,
	gravitational Vlasov-Poisson system: the spherically symmetric Antonov-stable
	steady states (including the polytropes and the King model) and their
	plane symmetric analogues. We completely describe the essential spectrum of
	the self-adjoint operator governing the linearized dynamics in the neighborhood of these
	steady states. We also show that for the steady states under consideration,
	there exists a gap in the spectrum.
	We then use a version of the Birman-Schwinger
	principle first used by Mathur to derive a general criterion for the existence of
	an eigenvalue inside the first gap of the essential spectrum, which corresponds to
	linear oscillations about the steady state. It follows in particular that no linear
	 Landau damping can occur in the neighborhood of steady states satisfying our criterion. 
	 Verification of this criterion requires
	a good understanding of the so-called period function associated with each
	steady state. In the plane symmetric case we verify the criterion rigorously,
	while in the spherically symmetric case we do so under a natural monotonicity assumption
	for the associated period function. Our results
	explain the pulsating behavior triggered by perturbing
	such steady states, which has been observed numerically.
\end{abstract}

\tableofcontents

\section{Introduction}

\subsection{The basic set-up and main objective}
The three-dimensional gravitational Vlasov-Poisson system is the fundamental system
of equations used in astrophysics to describe galaxies~\cite{BiTr}. The unknowns are
the phase-space density function $f\colon\mathbb R\times\mathbb R^3\times\R^3\to[0,\infty[$ and the
gravitational potential $U\colon\mathbb R\times\mathbb R^3\to\mathbb R$, and they satisfy
\begin{equation}
  \partial_tf + v\cdot\nabla_xf-\nabla_xU\cdot\nabla_vf = 0, \label{E:FEQN}
\end{equation}
\begin{equation}
  \Delta U = 4\pi\rho,\label{E:POISSONEQN}
\end{equation}
\begin{equation}  
  \rho(t,x) =\int_{\R^3}f(t,x,v) \diff v, \label{E:RHOEQN}
\end{equation}
where $\rho(t,\cdot)$ is the macroscopic density of the stars at time $t\in\R$
and $\cdot$ denotes the Euclidean scalar product.
Equation~\eqref{E:FEQN} is the Vlasov equation and~\eqref{E:POISSONEQN}
is the Poisson equation. Isolated systems are characterized by the boundary condition
\begin{equation}\label{E:AF}
  \lim_{\vert x\vert\to\infty}U(t,x)=0,
\end{equation}
which uniquely specifies the solution of the Poisson equation~\eqref{E:POISSONEQN}
for a given right-hand side. We refer to the system~\eqref{E:FEQN}--\eqref{E:RHOEQN}
as the {\em Vlasov-Poisson system}.

The Vlasov-Poisson system possesses a plethora of spatially localized steady
states which serve as models of stationary galaxies~\cite{BiTr,FrPo1984}.
The dynamic stability of such equilibria 
has attracted a lot of interest in both the physics and mathematics
communities; we will review some of the corresponding literature below.
However, even when a steady state has been shown to be stable the actual dynamical
behavior triggered by a small perturbation of it is not determined.
In a numerical investigation of the Einstein-Vlasov system, which is
the relativistic version of the Vlasov-Poisson system, it was observed that
such perturbations lead to solutions which oscillate about the steady state
\cite{AR}. In these oscillations the spatial support of the solutions expands
and contracts in a time-periodic way, i.e., after perturbation the state starts
to pulse. The same behavior was observed numerically for the Vlasov-Poisson
system in~\cite{RaRe2018}, and again
for the Einstein-Vlasov system in~\cite{Gue_e_a}.

Such pulsating solutions are classical for the Euler-Poisson system
and have been used to explain the Cepheid variable stars \cite{edd,ross}.
While a general oscillatory behavior in the context of the Vlasov matter model
has long been discussed \cite{BiTr,LoGe1988},
the above numerical investigations are to our knowledge the first instance that
such pulsating behavior has been observed in the mathematics or astrophysics
literature, with one notable exception.
In \cite{Ku78}, R.~Kurth constructs
a semi-explicit, one-parameter family of solutions to the Vlasov-Poisson system
which show exactly such pulsating behavior and which are small
perturbations of a particular
steady state when the parameter is small.

For the Euler-Poisson system a mathematically rigorous analysis of pulsating
solutions is provided in \cite{jang,mak}, and the main objective of the present paper
is to understand the corresponding issue for the Vlasov-Poisson system.
A natural first step in this undertaking is to linearize the system about some
steady state $f_0$, i.e., to substitute
\[
f = f_0 + \delta f
\]
into the system and drop all terms which are of higher order than linear in
the perturbation $\delta f$. For this to be justified $\delta f$ must be small
compared to $f_0$, in particular $\delta f$ must vanish outside the support
of $f_0$. So even if one finds an eigenvalue $i \omega$ (with $\omega >0$)
of the linear equation
which governs the dynamics of $\delta f$ and a corresponding eigenfunction
$f^\ast$, this does not explain the observed pulsations, since
$f_0 + e^{i\omega t} f^\ast$ is supported in the support of $f_0$.
However, if we linearize the Vlasov-Poisson system not starting from the Eulerian
picture as above, but in the Lagrangian picture or alternatively in suitable
mass-Lagrange coordinates, then the corresponding linearized analysis does
indeed capture the pulsating behavior which was observed numerically,
even though the linear operator which governs these alternative linearizations
is (essentially) the same as before. One should at this point note that the
numerical observations mentioned above refer to the non-linear systems as they
stand, and it is a-priori not clear that the observed pulsations can be
captured in a linear analysis and are not a purely non-linear effect.

The main objective of the present paper is to understand the pulsating perturbations
of certain steady states of the Vlasov-Poisson system on the linearized level.
In the next sections of this introduction we outline our paper in
more detail and put it into perspective.

\subsection{Symmetry classes}\label{ssc:symmetries}

The steady states which are studied in this paper and their perturbations will be
either spherically symmetric or plane symmetric. We need
to make precise what we mean by these symmetries, and we begin with spherical symmetry.

A phase-space density function $f$ is {\em spherically symmetric} if
\begin{equation} \label{ss_symm_def}
f(t,Ax,Av)=f(t,x,v)\  \mbox{for all}\ x,v \in\R^3 \ \mbox{and all rotations}\
A\in \text{SO}(3).
\end{equation}
In this case, it is sometimes convenient to use coordinates adapted to spherical symmetry,
\begin{equation}\label{eq:rwL}
  r = |x|, \ \ w = \frac{x\cdot v}{r}, \ \ L = |x\times v|^2,
\end{equation}
where $r$ is the spatial radius, $w$ is the radial velocity, and $L$ is the modulus of the angular
momentum squared. In these variables the Vlasov-Poisson system takes the form
\begin{equation}\label{ss_vlasov}
  \pa_t f + w\, \pa_r f + \left(\frac{L}{r^3} - \frac{m(t,r)}{r^2}\right)\,\pa_w f =0,
\end{equation}
\begin{equation} \label{ss_m}
  m(t,r) = 4\pi \int_0^r s^2\rho(t,s)\diff s,
\end{equation}
\begin{equation} \label{ss_rho}
  \rho(t,r) = \frac{\pi}{r^2} \int_{-\infty}^\infty \int_0^\infty f(t,r,w,L)\diff L\diff w;
\end{equation}
for the corresponding solution $U$ of the Poisson equation, $\pa_r U = m/r^2$.

A phase-space density function $f$ is {\em plane symmetric} if
\begin{equation} \label{pl_symm_def}
  f(t,x,v)=f(t,x_1,v)=f(t,-x_1,-v_1,\pv)\  \mbox{for all}\ x,v \in\R^3,
\end{equation}
which implies that $\rho(t,x)=\rho(t,x_1)=\rho(t,-x_1)$
and $U$ can be chosen such that $U(t,x)=U(t,x_1)=U(t,-x_1)$.
Since in the case of planar symmetry the effective spatial variable is only $x_1$,
we drop the subscript and view $x\in \R$ in the planar case, while
$v=(v_1,\bar v) \in \R^3$.
With this symmetry assumption and notational convention
the Vlasov-Poisson system can be written as
\begin{equation} 
  \partial_t f + v_1\,\partial_{x}f-\partial_{x} U\, \partial_{v_1} f = 0, \label{pl_vlasov}
\end{equation}
\begin{equation} \label{pl_poisson}
  U(t,x) = 2\pi \int_\R |x-y|\, \rho(t,y) \diff y,
\end{equation}
\begin{equation}  
  \rho(t,x) =\int_{\R^3}f(t,x,v) \diff v \label{pl_rho}.
\end{equation}
In the context of our investigation the system
\eqref{pl_vlasov}--\eqref{pl_rho} is equivalent to the one-dimensional Vlasov-Poisson system,
since one can simply integrate out the $\bar v$-dependence, but we prefer to keep to the
above form which appears in the astrophysics literature~\cite{An1971,KuMa1970}, and the stability analysis of
which differs from the purely one-dimensional one. 
Clearly, within this symmetry class we cannot require the boundary condition \eqref{E:AF},
but \eqref{pl_poisson} implies that
\begin{equation} \label{pl_dxU}
  \partial_x U(t,x) = 2\pi \int_\R \sign(x-y)\, \rho(t,y) \diff y,
\end{equation}
and hence
$\lim_{x\to-\infty}\partial_x U(t,x) = - \lim_{x\to\infty}\partial_x U(t,x)$,
which is a natural substitute
for \eqref{E:AF}. The additional reflection symmetry which we included in \eqref{pl_symm_def}
implies that $\partial_x U(t,0)=0$ and
\begin{equation} \label{pl_dxU_sym}
  \partial_x U(t,x) = 4\pi \int_0^x\rho(t,y) \diff y;
\end{equation}
this identity will be important in what follows.

A short comment on our (ab)use of notation seems in order. We will throughout the paper
identify $f(t,x,v)=f(t,r,w,L)$ and analogously for other functions or variables.
However, we will distinguish different representations of the same
set---for example the support of some steady state---in different coordinates.
Moreover, while we do not notationally distinguish between a general, or spherically symmetric,
or plane symmetric phase space density $f$, we will notationally distinguish between
function spaces and operators consisting of or acting on spherically symmetric,
or plane symmetric functions respectively; the latter will be distinguished by a bar,
so $\A$ will act on spherically symmetric functions, while $\bar \A$ will act on plane symmetric 
ones.

\subsection{Steady states, linearization, and stability}

The most important class of steady states to the Vlasov-Poisson system are
{\em isolated, spherically symmetric} steady states,
which are time-independent solutions of \eqref{ss_vlasov}--\eqref{ss_rho} with the boundary condition~\eqref{E:AF}.
Physically 
less important but mathematically easier to analyze are {\em plane symmetric} steady
states solving the system \eqref{pl_vlasov}--\eqref{pl_rho}. 
The latter have nevertheless been widely used 
in past investigations~\cite{KuMa1970,Louis1992,Mark1971,Ma,Weinberg1991}. 

To find a spherically symmetric steady state $f_0$ of the Vlasov-Poisson system
one prescribes a {\em microscopic} equation of state $\varphi$ and
seeks a solution of the form
\begin{equation}\label{E:MICRO}
	f_0(r,w,L) = \varphi(E,L),\quad E(r,w,L) = \frac12w^2 + \Psi_L(r),
\end{equation}
where
\begin{equation}\label{psildef}
	\Psi_L(r) = \frac{L}{2r^2} + U_0(r)
\end{equation}
is the effective potential and $U_0$ is the gravitational potential
induced by $f_0$ via the Poisson equation.
Both $E$ and $L$ are preserved by the characteristic flow of the
spherically symmetric Vlasov equation~\eqref{ss_vlasov},
so any sufficiently regular $f_0$ of the form \eqref{E:MICRO}
is a solution to the Vlasov equation. For a wide range of $\varphi$ one
finds time-independent solutions of the form~\eqref{E:MICRO} with finite
mass and compact support in the phase space \cite{RaRe13}. The most representative are the polytropes
\begin{equation}\label{E:POLYTROPESINTRO}
	\varphi(E,L) = (E_0-E)^k_+(L-L_0)_+^l,
\end{equation}
and the King model
\begin{equation}\label{E:KINGINTRO}
	\varphi(E) = (e^{E_0-E}-1)_+,
\end{equation}
which are used extensively in the astrophysics literature~\cite{BiTr}.
Here, $E_0<0$, $L_0\geq0$, and $k,l\in\R$ are suitably chosen parameters,
see the discussion in the paragraph after~\eqref{E:KING} in Section~\ref{ssc:ststradial}.
If not stated explicitly otherwise 
we employ the following notational convention throughout the paper. For $t,k\in \R$,
\begin{align} \label{E:+conv}
	t_+^k \coloneqq \left\{ \begin{array}{cc}t^k,&t>0,\\ 0,&t\leq 0.\end{array}\right.
\end{align}
In the planar case the ansatz
\begin{align}
	\pf_0(x,v)=\pvarphi(\pE,\pv), \qquad \pE(x,v)=\frac12v_1^2+\pU_0(x) ,\qquad (x,v)\in\R\times\R^3,
\end{align}
leads to a solution of the Vlasov equation~\eqref{pl_vlasov},
since $\pE$ and $\pv=(v_2,v_3)$ are conserved quantities of the characteristic flow.
Analogously to the radial situation, 
there exist plane symmetric steady states of polytropic form
\begin{align}\label{E:POLYTROPESPLANARINTRO}
\pvarphi(\pE,\pv) = (\pE_0-\pE)^k_+\,\pbeta(\pv), 
\end{align}
and of King type
\begin{align}\label{E:KINGPLANARINTRO}
\pvarphi(\pE,\pv) = (e^{\pE_0-\pE}-1)_+\,\pbeta(\pv).
\end{align}
For the discussion of the range of exponents $k$ in~\eqref{E:POLYTROPESPLANARINTRO}, the choice of $\pE_0$ as well as the $\pv$-dependency $\pbeta$, see Section \ref{ssc:ststp}.
We refer to the above steady states as the {\em plane symmetric polytropes} and the {\em plane symmetric King} solution respectively.
All of the following can be done for a much larger class of steady states, but for the sake of clarity we limit the discussion to these classical models.

A formal linearization of the Vlasov-Poisson system about some radial steady state
$f_0$ with potential $U_0$ yields a linearized equation of the form
\begin{align}\label{E:LINDYNINTRO}
	\pa_t^2 h + \A h =0,
\end{align}
where $h=h(x,v)$ is odd in $v$ and the {\em Antonov operator} or {\em linearized operator} $\A$ is given as
\begin{align}\label{E:ABDEF}
	\A\coloneqq - \D^2 - \B,
\end{align} 
where $\B$ is a bounded, symmetric operator, and
\begin{align}\label{E:Ddef}
	\D: = v\cdot\nabla_x - \nabla_x U_0 \cdot\nabla_v
\end{align}
is the transport operator associated with the characteristic flow of the steady state,
whose functional analytical properties have been investigated in \cite{GuLi08,LeMeRa11}
and recently in~\cite{ReSt20}. 
In Section~\ref{S:LINEARISATION} we carry out this linearization and the derivation of
the operator $\A$, starting from the Eulerian and the Lagrangian formulations
of the system, and also in the so-called mass-Lagrange coordinates.

A similar analysis around the plane symmetric steady states
\eqref{E:POLYTROPESPLANARINTRO}--\eqref{E:KINGPLANARINTRO} leads to
the linearization analogous to~\eqref{E:LINDYNINTRO} with the Antonov operator $\A$
replaced by a related operator $\pA$, see Section~\ref{S:LINEARISATION}.

The basic criterion for linear stability is the absence of growing modes.
Starting in the 1960's, a simple criterion for linear stability was proposed and
formally derived in the astrophysics literature~\cite{An1961,DoFeBa,KS}:
if 
\begin{align}\label{E:ANTONOVSCRITERION}
	\varphi'(E,L)\coloneqq\pa_E\varphi(E,L) <0
\end{align} 
then the corresponding steady state $f_0$ is linearly stable.
This is equivalent to the non-negativity of the quadratic form 
\be\label{E:QUADRATICFORM}
\langle\A g, g\rangle_H,
\ee
where $H$ is a suitable Hilbert space consisting of spherically symmetric functions
on the support of the steady state which are square-integrable with respect to a certain weight;
the latter is chosen such that $\A$ is symmetric on $\Ltwo$, see Section~\ref{sc:operator}.
We refer to the non-negativity of~\eqref{E:QUADRATICFORM} as {\em Antonov coercivity}
and for the reader's convenience this bound is stated in Proposition~\ref{antonovcoerc}. 
The equilibria satisfying~\eqref{E:ANTONOVSCRITERION} are therefore called {\em Antonov-stable} or {\em linearly stable}. The condition~\eqref{E:ANTONOVSCRITERION} has a simple interpretation---a steady galaxy of the form~\eqref{E:MICRO} is stable if the concentration of ever more energetic stars is decreasing on the phase-space support of the galaxy. If~\eqref{E:ANTONOVSCRITERION} is not satisfied the steady state may in fact be linearly unstable, i.e., there exist growing modes, cf.~\cite{GuLi08,GuoLinetal2013}.

A planar analogue of Antonov's coercivity bound has been shown in~\cite{KuMa1970} and is proven in Proposition~\ref{antonovcoerc1d} using the techniques from \cite{GuRe2007,LeMeRa11}. The stability of planar steady states is e.g.~discussed in \cite{Mark1971}.

In the mathematics literature a lot of effort went into the rigorous non-linear stability analysis, see~\cite{GuLi08,GuRe2001,GuRe2007,LeMeRa11,LeMeRa12,Re07} and the references therein. In particular, Lemou, M\'ehats, and Rapha\"el~\cite{LeMeRa12} showed non-linear orbital stability against general perturbations for a wide range of spherically symmetric, isotropic steady states. 

By contrast, very little is rigorously known about the linear or non-linear
{\em asymptotic stability} properties of the Antonov-stable steady states of the
form~\eqref{E:MICRO}.
The main motivating question for us is whether one can find linear oscillatory solutions
(normal modes) of the linearized dynamics around the above steady states. 
This asks for a
refined understanding of the linearized operator and we shall address it combining ideas
from spectral theory and Hamiltonian mechanics.

\subsection{Main results}
In this section we focus the discussion on the harder, spherically symmetric situation,
but emphasize that similar results are obtained in the planar setting as well.

\subsubsection*{The connection between eigenvalues and pulsating modes}
Linearizing the Vlasov-Poisson system about a fixed radial steady state translates
the existence of linearly oscillating modes
into the existence of positive eigenvalues of $\A$ with eigenfunction odd in $v$,
and different ways of linearizing yield different interpretations of oscillating modes.
The classical Eulerian way (Section~\ref{ssc:euler}) explains the oscillations
of the kinetic and potential energy respectively,
while the use of Mass-Lagrange and Lagrangian coordinates (Sections~\ref{ssc:masslagrange}
and~\ref{ssc:lagrange})
yields the oscillation of the support of the solution,
i.e., the pulsating behavior discussed above.
In all cases, an eigenvalue $\lambda>0$ of $\A$ and the period $P$
of the corresponding oscillation/pulsation are related via
\begin{align}\label{eq:periodeigenvalue}
	P=\frac{2\pi}{\sqrt\lambda}.
\end{align}
In the context of pulsating modes for the Euler-Poisson system and the Cepheid variable
stars the period of the
pulsation is related to the central density of the corresponding steady state
by the so-called Eddington-Ritter relation $P \rho(0)^{1/2} = const$, cf.\ \cite{edd,mak,ross}, and in
Section~\ref{ssc:eddritter} we establish an analogous result for the Vlasov-Poisson system
and thus for the pulsations of galaxies, which was observed numerically in \cite{RaRe2018} at the non-linear level.

\subsubsection*{The essential spectrum}
Next we provide a sharp description of the essential spectrum of $\A$. It is shown in Theorem~\ref{essspecA} that $\B$ is relatively compact to $\D^2$ and therefore the essential spectrum of $\A=-(\D^2+\B)$ coincides with the essential spectrum of $-\D^2$. A fundamental ingredient in our analysis are the {\em action-angle variables}. For any choice of the energy level $E$ and the angular momentum $L$, the corresponding particle motion in the gravitational well defined by the effective potential $\Psi_L$, cf.~\eqref{psildef}, is periodic, and the action variable $\theta\in\mathbb S^1$ parametrizes the phase of the oscillation along the particle orbit.
The radial particle periods are given by the {\em period function}
\be
T(E,L)\coloneqq\sqrt2 \int_{r_-(E,L)}^{r_+(E,L)} \frac1{\sqrt{E-\Psi_L(r)}} \diff r,
\ee
where $r_-(E,L)<r_+(E,L)$ solve $\Psi_L(r_\pm(E,L))=E$, see Lemma~\ref{effpot}.
The corresponding phase space change of variables 
\[
(r,w,L) \mapsto (\theta, E, L)
\]
is not quite volume preserving---as it would be in the case of actual action-angle variables \cite{Arnold,LaLi,LB1994}, but we use this terminology anyway---but instead satisfies $\diff r\diff w\diff L = T(E,L)\,\diff \theta\diff E\diff L$. In action-angle variables the transport operator $\D$ transforms into the particularly simple form
\begin{align}\label{E:TRANSPORTANGLE}
\D = \frac1{T(E,L)} \pa_\theta,
\end{align}
see Lemmas~\ref{trans1dsmooth} and~\ref{trans1d} for more precise statements, and therefore the Antonov operator~\eqref{E:ABDEF} can be rewritten as
\begin{align}\label{E:ANTONOVAA}
\A = - \frac1{T(E,L)^2}\pa_{\theta}^2 -  \B.
\end{align}
It is tempting to express the gravitational \enquote{response} operator $\B$
in the new variables as well, but the resulting expression does not lead to any clear insights.
This antagonism between transport
and gravity is well explained in Lynden-Bell's notes~\cite{LB1994} and also mentioned in \cite{BiTr}.
Using~\eqref{E:TRANSPORTANGLE}, we prove in Theorems~\ref{transsquarespectrum} and~\ref{essspecA} that
\begin{align}\label{E:ESSENTIALSPECTRUM}
\sigma_{ess}(\A)=\sigma_{ess}(-\D^2) = \overline{ \left(\frac{2\pi\N_0}{T(\mathring\Omega_0^{EL})}\right)^2 }= \overline{ \left\{  \frac{4\pi^2k^2}{T^2(E,L)}  ~\Big|~ k\in\N_0,~(E,L)\in\mathring\Omega_0^{EL} \right\} },
\end{align}
where $\mathring\Omega_0^{EL}$ is the interior of the $(E,L)$-support of the steady state. 
Related results regarding the essential spectrum of $\D$ instead of $-\D^2$ were stated previously in the physics literature, see e.g.~\cite{Ma}.
By Proposition~\ref{Tbounded}, the period function $(E,L)\mapsto T(E,L)$ is bounded from above and bounded away from zero on the support of the steady state, and thus \eqref{E:ESSENTIALSPECTRUM} in particular shows that the essential spectrum contains a gap between the $0$-eigenvalue and the value $\frac{4\pi^2}{\sup^2(T)}$. Following Mathur~\cite{Ma}, we refer to this gap as the {\em principal gap} and denote it by
\begin{align}\label{E:PRINCIPALGAP}
\mathcal G \coloneqq \left]0, \frac{4\pi^2}{\sup_{\mathring\Omega_0^{EL} }^2(T)}\right[.
\end{align}
%
%
Remarkably, the structure of the essential spectrum is entirely encoded in the properties of the period function $T(E,L)$. Another simple consequence of~\eqref{E:ESSENTIALSPECTRUM} is that further gaps in the spectrum are possible depending on the relative size of the maximum and the minimum of the period function on the steady state support, see Remark~\ref{essspecform}.
The gap structure is also mentioned in the physics literature, see~\cite{BiTr} and references therein. 

In the plasma-physics context, action-angle variables have been recently used in the important work of Guo and Lin~\cite{GuLi2017}, where the spectrum of the linearized operator around a certain class of (space-inhomogeneous) BKG steady states was completely described. We also mention a recent work of Pausader and Widmayer~\cite{PaWi2020} on small data global-in-time asymptotics for the Vlasov-Poisson system with a point charge, wherein action-angle variables are also used.

\subsubsection*{The gap in the spectrum}

It is natural to ask if there can exist eigenvalues of $\A$ in the principal gap $\mathcal G$.
In particular, \eqref{eq:periodeigenvalue} shows that an eigenvalue in $\mathcal G$ corresponds to an oscillating mode with period larger than $\sup(T)$, i.e., larger than the supremum of the radial particle periods in the steady state configuration.

One obvious attempt is to obtain such an eigenvalue via a variational principle, more precisely,
to minimize the quadratic form~\eqref{E:QUADRATICFORM} over the set of $g$ orthogonal to
$\ker(\A)$ satisfying $\|g\|_H=1$.
Fortunately, Antonov's coercivity bound---as stated in Proposition~\ref{antonovcoerc}---yields a complete characterization of the null-space of $\A$; it coincides with the null-space of $\D$, see Corollary~\ref{antonovev0}. Due to Jeans' theorem, $\ker(\D)\subset\Ltwo$ consists of all functions of $E$ and $L$ that belong to $H$ \cite{BaFaHo86,GuRe2007,ReSt20,St19}, a statement which also follows from the formula~\eqref{E:TRANSPORTANGLE}. Furthermore, Lemma~\ref{transinversetauel} implies that the range of $\D$ is closed, i.e., the orthogonal complement of the kernel of $\D$ equals $\mathrm{im}(\D)$. 

Nonetheless, this minimization problem is in general hard. In Theorem~\ref{antonovgap} we can show that 
\begin{align}\label{eq:variationalintro}
	\inf_{g\in\ker(\A)^{\perp}\setminus\{0\}} \frac{\langle\A g,g\rangle_{\Ltwo}}{\|g\|_{\Ltwo}^2}
\end{align}
is positive by considering an intermediate variational problem, cf.~Proposition~\ref{helpervariation}. The positivity of \eqref{eq:variationalintro} follows immediately from Antonov's coercivity bound~\eqref{eq:antonovcoerc} in the case of an isotropic model, but is harder to obtain for polytropic steady states with an inner vacuum region. This proves the existence of a gap at the origin in the spectrum of $\A$ and particularly shows that no point spectrum can accumulate at $0\in\sigma_{ess}(\A)$.

Unfortunately, we have neither quantitative control on the size of the infimum
\eqref{eq:variationalintro} nor do we obtain a minimizer. We therefore do not pursue
this argument further to decide
whether there exists an eigenvalue of $\A$ inside the principal gap $\mathcal G$.

\subsubsection*{The Birman-Schwinger principle and the existence of oscillating modes}

To address the problem of the existence of eigenvalues we resort to a version of the  Birman-Schwinger principle. The latter has been used classically to prove the existence of bound states below the essential spectrum of Schr\"odinger operators \cite{LiLo01} and one is tempted to apply these methods to~\eqref{E:ANTONOVAA}. We adopt the approach developed by Mathur~\cite{Ma}, originally used to find normal modes for the linearized Vlasov-Poisson system in the presence of an external potential. 

We restrict the operators to $\Ltwo^{odd}$---the subspace of $\Ltwo$ consisting of functions odd in $v$/$w$---since only spherically symmetric, odd-in-$v$ eigenfunctions yield the existence of an oscillating mode. Now it is easily checked that $\lambda\in\mathcal G$ is an eigenvalue of $\A$ if and only if $1$ is an eigenvalue of the operator 
\[
Q_\l = \B\, \left(-\D^2 - \lambda \right)^{-1} \colon \Ltwo^{odd} \to \Ltwo^{odd}.
\]
Lemmas~\ref{L:BS} and~\ref{L:BSONE} further show that the existence of some $\lambda\in\mathcal G$ such that $Q_\lambda$ possesses an eigenvalue in $[1,\infty[$ implies that $\A$ has an eigenvalue in the principal gap $\mathcal G$. 
The operator $Q_\l$ is however not easy to analyze, but the crucial simplification comes from the fact that the image of $\B$
is of the form $\vert\varphi'(E,L)\vert\,w\,F(r)$ for some function $F$. The latter \enquote{separation-of-variables} property allows us to restrict the operator $Q_\l$ to the subspace of functions of the form $|\varphi'(E,L)|\,w\,F(r)$ with $F$ in a suitable Hilbert space $\mathcal F$. We refer to the resulting operator 
\be
\M_\l:\F\to\F
\ee 
as the {\em Mathur operator} and refer to Definition~\ref{defmathur} for a rigorous definition.
By Lemma~\ref{spectraequivalent}, any eigenvalue of $\M_\l$ gives an eigenvalue of $Q_\l$. 
Furthermore, Proposition~\ref{Klambdadef} yields an integral kernel representation of the Mathur operator first proposed in~\cite{Ma}, and we show that $\M_\lambda$ is indeed a symmetric Hilbert-Schmidt operator---when considered on the \enquote{right} function space---and that its spectrum is non-negative. Thus, the largest element in the spectrum of the Mathur operator is given by
\begin{align}
	M_\lambda= \|\M_\lambda\|_{\F_1\to\F_1} = 
	\sup\left\{ \langle G,\M_\lambda G\rangle_{\mathcal F_1} \mid G\in\mathcal F_1,\,\|G\|_{\mathcal F_1}=1\right\}
\end{align}
for a suitably chosen function space $\mathcal F_1$, and $M_\lambda$ is an eigenvalue of $\M_\lambda$.
We therefore arrive at the next key result of this work---Theorem~\ref{T:CRITERION}. It states that the existence of a $\l\in\mathcal G$ such that 
\begin{align*}
	M_\lambda>1
\end{align*}
is equivalent to the linearized operator $\A$ having an eigenvalue within the principal gap $\mathcal G$. 

\subsubsection*{Examples of steady states with linearly oscillating solutions}

Finally, we prove in Theorems~\ref{T:EXAMPLEPLANAR} and~\ref{T:EXAMPLERADIAL} that
\begin{itemize}
\item  there exist classes of planar steady states such that the associated linearized
  operator has an eigenvalue in the principal gap;
\item assuming that the maximal value of the period function on the steady state support
  is attained at the maximal energy and minimal angular momentum, i.e.,
  \begin{align}\label{E:PERIODMONINTRO}
    \sup_{\mathring\Omega_0^{EL}}(T) = T(E_0,L_0),
  \end{align}
  and that $T$ is sufficiently regular~\eqref{E:Tdiffbar}, there are classes of
  spherically symmetric steady states such that there exists an eigenvalue in the
  principal gap for the associated linearized operator.
\end{itemize}
In fact, the assumption~\eqref{E:PERIODMONINTRO} is implied by the stronger {\em 
monotonicity} assumptions 
\be\label{E:MONASS2}
\pa_E T\geq0 \ \text{ and }\ \pa_L T\leq0 \  \text{ on } \mathring\Omega_0^{EL},
\ee
which are expected to hold for a wide class of steady states studied in this paper. 
The assumption~\eqref{E:MONASS2} and therefore~\eqref{E:PERIODMONINTRO} has been verified numerically, and it will be rigorously established in future work. 
In general, monotonicity properties of the period function are 
an important topic in dynamical systems, especially in connection to bifurcation theory for
Hamiltonian dynamical systems~\cite{Ch85,ChWa86,HaKo1991}. Monotonicity of the period function
plays an important role for phase mixing (see e.g.~\cite{RiSa2020}). However, in the context of
verifying the criterion of Theorem~\ref{T:CRITERION}, it is used in an entirely different
way---and for the opposite purpose.
Section~\ref{sc:T} in the appendix is devoted to the properties of the period function in
the radial setting.

\subsection{Background and motivation}
It is an important question in astrophysics to understand the oscillatory behavior of
galaxies~\cite{BiTr,LoGe1988}. We cite here directly
from~\cite[Section~5.5.3]{BiTr}: \enquote{\em Stability theory addresses only one aspect of
  how a stellar system responds to external forces. A natural next question is whether a stable
  stellar system can sustain undamped oscillations, that is, can it ring like a bell?}

Of course, the question arises what the exact nature of these oscillations is
(besides being undamped), and we recall the numerical investigations \cite{AR,Gue_e_a,RaRe2018}
which show that these oscillations are periodically repeated expansions and contractions
of the configuration in (phase) space.
We are aware of only one example of such time-periodic solutions to the Vlasov-Poisson
system \cite{Ku78}. In Section~\ref{sc:kurth} we remind the reader of the spherically symmetric
Kurth solutions and introduce a new class of plane symmetric Kurth-type solutions.
The Kurth flows are essentially a kinetic generalization of periodically oscillating
self-gravitating incompressible gas balls. 
The phase-space density of the Kurth solution blows up at the boundary of its support,
which raises doubts as to the physical relevance of such solutions. 
In view of the spectral analysis from Section~\ref{sc:ess}, the Kurth steady state is distinguished by the fact that the particle period is constant on the whole steady state support.
Comparing the oscillation period of time-periodic Kurth solutions close to the Kurth steady state with the essential spectrum, we see that these solutions indeed ``correspond" to an eigenvalue within the principal gap.
Observe however that the time-periodic Kurth solutions are solutions to the non-linear
Vlasov-Poisson system, but we neglect the transition from non-linear to linear regime at 
this point as well as the fact that the Kurth steady states do not satisfy the assumptions
required for the results in Section~\ref{sc:ess} in order to gain an understanding of the possible structure of the spectrum of $\A$.

Linearizing the flow around a given steady state, it is natural to look for linearly oscillating modes.
In Section~\ref{S:LINEARISATION} we present a detailed derivation of the Antonov operator from different
view-points, which will in particular allow us to interpret the normal modes as the oscillations of
the support of the galaxy.
The goal is to compute the \enquote{response} \cite[Chapter~5]{BiTr} of the spherical or
the plane symmetric steady state. Using action-angle variables, one can formally derive a dispersion relation, yet it is  rather complicated. Important contributions in the physics literature
in this direction were among others made by Kalnajs~\cite{Kalnajs1971,Kalnajs1977},
Fridman and Polyachenko~\cite{FrPo1984}, Louis~\cite{Louis1992}, Mathur~\cite{Ma},
Vandervoort~\cite{Vandervoort2003}, Weinberg~\cite{Weinberg1991,Weinberg1994}.
We refer the reader to~\cite{BiTr} for more background on the problem.

Another impetus for the study of the existence of normal modes is
to understand the (in)validity of the so-called linear Landau damping.
We observe that the steady states~\eqref{E:MICRO} are not spatially homogeneous,
which significantly complicates the linearized dynamics with respect to perturbations of spatially
homogeneous steady states. In fact, spatial homogeneity is not a natural assumption for isolated self-gravitating systems, but it is by contrast often studied in the plasma physics context. It was observed by Landau~\cite{Landau1946} that, despite the absence of dissipation, small perturbations of space-homogeneous steady plasmas can lead to asymptotic-in-time equilibriation of macroscopic observables---e.g.\ the macroscopic density---at the linearized level. This process is related to phase mixing and it is referred to as Landau damping. It was shown to hold nonlinearly by Mouhot and Villani~\cite{MoVi2011}, see also the work of Bedrossian, Masmoudi, and Mouhot~\cite{BeMaMo2016} and a recent proof by Grenier, Nguyen, and Rodnianski~\cite{GrNgRo2020}.  

By contrast, realistic galaxies are spatially inhomogeneous and the linearized dynamics combines
the phase-mixing effects with the gravitational response of the background. This can be viewed as the interaction between the operators $\D^2$ and $\B$ in the definition~\eqref{E:ABDEF} of $\A$.
In order to make sense of linear Landau damping around steady galaxies of the form~\eqref{E:MICRO}, we must first mod out the perturbations that belong to the null-space of the linearized operator if any decay is to happen. In the pioneering contribution of Lynden-Bell~\cite{LB1962} the author considered this question, however neglecting the crucial gravitational self-interaction term $\B$ in the linearized operator. He argued informally that there exists a mechanism for the damping of macroscopic, i.e., velocity-averaged, quantities, such as the spatial density. This is to our knowledge the first instance where action-angle variables were used to gain intuition about the phase-mixing process. We also mention the important early work of Antonov~\cite{An1961} and subsequent theory of violent relaxation by Lynden-Bell~\cite{LB1967}, where the mixing processes plays a crucial role. Since then, a lot of work has been devoted to a better understanding of phase-mixing/gravitational Landau damping in the physics literature, see the textbook by Binney and Tremaine~\cite{BiTr} for an overview. Recently, Rioseco and Sarbach~\cite{RiSa2020} studied phase-mixing for Vlasov equations in a {\em given} external potential using action-angle variables.
In light of this discussion, our results in Section~\ref{sc:mathur} imply that
{\em no} linear Landau damping occurs around a large class of plane symmetric steady
states and the same conclusion holds in the spherically symmetric case under the additional assumption~\eqref{E:PERIODMONINTRO} on the period function. This does not mean that such modes are not damped at the non-linear level, but such non-linear damping will work on time scales
which are much longer than the oscillation periods.
It must involve new mechanisms---the original Landau damping {\em is} already present at the linear
level---and it should be a challenging topic for future work.

Furthermore, as stated above, numerical simulations \cite{AR,Gue_e_a} indicate that the pulsating behavior is also present in the context of the asymptotically flat Einstein-Vlasov system and many of our methods extend to this case~\cite{GRS}.


\subsection{Outline of the paper} 
In Section \ref{sc:stst} we state the steady state class, consisting of spherically symmetric and plane symmetric ones, and also introduce some key quantities as the period function. In the following we always treat the radial and planar case separately. In Section \ref{S:LINEARISATION} we linearize the Vlasov-Poisson system around such steady states, which translates the existence of oscillating modes into the existence of eigenvalues of a certain operator, and derive an Eddington-Ritter type scaling law for the eigenvalues in Subsection~\ref{ssc:eddritter}.
This linearized operator is then carefully defined in Section \ref{sc:operator},
its essential spectrum is explicitly determined in Section \ref{sc:ess}. To gain an intuition of where the eigenvalues of the linearized operator can be located relative to the essential spectrum, we consider semi-explicit Kurth-type solutions of the Vlasov-Poisson system in Section \ref{sc:kurth}. In Section \ref{sc:gap} we use Antonov's coercivity bound to characterize the nullspace of the operator and show that the spectrum possesses a gap at the origin. This is needed for the Birman-Schwinger principle we apply in Section \ref{sc:mathur} to derive criteria for the existence of oscillating modes. Finally, we apply the aforementioned criteria in Section~\ref{ssc:examples} to obtain examples of linearly oscillating modes in the radial and planar setting---in the former setting
under the assumptions~\eqref{E:PERIODMONINTRO} and~\eqref{E:Tdiffbar} on $T$.

Appendix \ref{sc:potential} is devoted to the properties of gravitational potentials induced by dynamically accessible perturbations, which are used throughout the whole paper. In the second part of the appendix, Appendix \ref{sc:T}, we summarize the properties of the period function $T$ in the radial setting.

\vspace*{.5cm}\noindent
During the writing of this manuscript we have been informed that
related results were obtained independently by M.~Kunze \cite{MK}.

\vspace*{.5cm}
\noindent
{\bf Acknowledgments.}
M.~Had\v zi\'c's research is supported by the EPSRC Early Career Fellowship EP/S02218X/1.
The authors thank Thomas~Kriecherbauer, Alexander~Pushnitski, and Manuel~Schaller for helpful discussions.

\section{Steady states}\label{sc:stst}

\subsection{Spherically symmetric steady states}\label{ssc:ststradial}

We consider steady states of the three-dimensional
Vlasov-Poisson system~\eqref{E:FEQN}--\eqref{E:RHOEQN}
with boundary condition~\eqref{E:AF}
of the form
\begin{align*}
f_0 = \varphi(E,L),
\end{align*}
where $\varphi\colon\R\times\R\to[0,\infty[$ is a suitable ansatz function,
$E$ is the particle energy induced by the stationary potential $U_0=U_0(x)$ of the
steady state, i.e., 
\begin{align*}
E=E(x,v) = \frac12\vert v\vert^2 + U_0(x)
\end{align*}
as above, and $L$ is the modulus of the angular momentum squared defined in \eqref{eq:rwL}.
The particle energy $E$ is conserved along characteristics of the Vlasov equation,
provided $U_0$ is time-independent, while $L$ is conserved, provided $U_0$ is spherically
symmetric.
The stationary Vlasov-Poisson system is then reduced to the following equation
for the potential:
\begin{align}\label{eq:statVP}
\Delta U_0(x)=4\pi\int_{\R^3}
\varphi\left(\frac12\vert v\vert^2+U_0(x),\vert x\times v\vert^2\right)
\diff v\text{ for }x\in\R^3, \quad \lim_{\vert x\vert\to\infty}U_0(x)=0.
\end{align}
In the isotropic case where by definition $\varphi$ depends only on the particle energy
$E$, every solution $U_0\in C^2(\R^3)$ of this equation is spherically symmetric,
cf.~\cite{GiNiNi79}, while this symmetry must be assumed a-priori when $\varphi$
depends also on $L$.

As for the ansatz function, we focus on the polytropes
\begin{align}\label{E:POLYTROPE}
\varphi(E,L) = (E_0-E)^k_+ (L-L_0)_+^l,
\end{align}
and the King model 
\begin{align}\label{E:KING}
\varphi(E,L) = \varphi(E) = (e^{E_0-E}-1)_+,
\end{align}
both of which play a prominent role in the astrophysics literature, cf.~\cite{BiTr}; we recall~\eqref{E:+conv}.
In both cases, $E_0<0$ is a cut-off energy, which is necessary in order that
the resulting steady state has compact support and finite mass.
In the polytropic case, $L_0\geq0$ gives a lower bound for the angular momentum.
In particular, $L_0>0$ leads to a steady state with a vacuum region at the center.
In this case the parameters $k>0$ and $l>-1$ have to be chosen such that $k<l+\frac72$
in order for a steady state to exist and have finite mass and compact support, and we also require $k+l+\frac12\geq0$.
In the case of no vacuum region, i.e., $L_0=0$, we restrict ourselves to $l=0$, i.e., $L$-independent isotropic models; we use the convention $0^0=1$ 
in \eqref{E:POLYTROPE} in this case. Again, $0<k<\frac72$.
For the existence of steady states under the above (and more general) assumptions
we refer to \cite{RaRe13} and the references there.

Now let 
\begin{align*}
{\Omega_0}\coloneqq\{(x,v)\in\R^3\times\R^3\mid f_0(x,v) > 0\}
\end{align*}
be the (interior of the) support of the steady state in $(x,v)$-coordinates. 
For the steady states mentioned above $\Omega_0$ is open and bounded;
by $R_0\coloneqq\sup\{\vert x\vert\mid(x,v)\in\Omega_0\}\in]0,\infty[$ we denote the maximal occurring radius in the steady state support.
We add an upper index when expressing this set in different coordinates:
\begin{align*}
{\Omega_0^r} &\coloneqq \{ (r,w,L)\in\R^3\mid \exists(x,v)\in\Omega_0
\colon r=\vert x\vert,~ w=\frac{x\cdot v}r, L=L(x,v) \},\\
{\Omega_0^{EL}} &\coloneqq \{ (E,L)\in\R^2 \mid \exists(x,v)\in\Omega_0
\colon E=E(x,v),~L=L(x,v) \}.
\end{align*}
The derivative ${\varphi'}\coloneqq\partial_E\varphi$ exists on $\mathring\Omega_0^{EL}$ with
\begin{align}\label{eq:linstab}
\varphi'<0 \text{ on } \mathring\Omega_0^{EL},
\end{align}
which is the usual condition for linear or non-linear stability of the steady state,
encountered both in the astrophysics and in the mathematics literature,
cf.~\cite{BiTr,Re07} and the references there.
Here, $\mathring\Omega_0^{EL}$ denotes the interior of the set $\Omega_0^{EL}$.

All of the following can be done for a much larger class of steady states.
In fact, it is only essential that the ansatz function satisfies the conditions of the
existence theory \cite{RaRe13} and that the steady state satisfies the stability
condition \eqref{eq:linstab}. Furthermore, for the spectral analysis we require that
\begin{align}\label{eq:A7prime}
\int_{\R^3} \vert\varphi'(E(x,v),L(x,v))\vert \diff v \leq C
\end{align}
for some $C>0$ independent of $x$, where we extend $\varphi'=\partial_E\varphi$ by $0$
to the whole space. 
The assumption~\eqref{eq:A7prime} however is of mere technical nature and it is expected that it can be relaxed.
In the case of an isotropic steady state, i.e.,
$\varphi(E,L)=\varphi(E)$, \eqref{eq:A7prime} follows if  
\begin{align}\label{eq:critiso}
\int_{U_0(0)}^{E_0}\vert\varphi'(E)\vert\diff E < \infty,
\end{align}
cf.~\cite{BaMoRe95}. For polytropic ansatz functions, our choice of parameters also yields
\eqref{eq:A7prime}, since
\begin{align}
\int_{\R^3} \vert\varphi'(E(x,v),L(x,v))\vert \diff v
= c_{k,l}\,r^{2l}\left(E_0-U_0(r)-\frac{L_0}{2r^2}\right)^{k+l+\frac12}_+
\end{align}
for $x\in\R^3\setminus\{0\}$ and $r=\vert x\vert$, where $c_{k,l}>0$ is some constant
depending on the indices $k$ and $l$. We refer to \cite{RaRe13,RaRe2018,Re99} where similar
calculations have been performed in order to represent $\rho_0$ in terms of $E_0-U_0$;
note that only the exponents differ by $1$ in the formulas for $\rho_0$ and
$\int \vert\varphi'(E,L)\vert \diff v$.

An important quantity for the analysis of spherically symmetric
steady states of the Vlasov-Poisson system is the effective potential
\begin{align}\label{eq:defeffpot}
{\Psi_L} \colon \left] 0, \infty \right[ \to \R ,~ \Psi_L(r) \coloneqq U_0(r) + \frac L{2 r^2},
\end{align}
where $L>0$ and we identified $U_0(x)=U_0(\vert x\vert)$. We claim the following properties:

\begin{lemma} \label{effpot}
  \begin{enumerate}[label=(\alph*)]
  \item For any $L>0$ there exists a unique ${r_L} > 0$ such that 
    \begin{align*}
      \min_{\left]0, \infty \right[}( \Psi_L ) = \Psi_L(r_L) < 0.
    \end{align*}
    Moreover, the mapping $\left]0, \infty \right[ \ni L \mapsto r_L$ is continuously differentiable.
  \item For any $L>0$ and $E \in \left] \Psi_L (r_L) , 0 \right[$
    there exist two unique radii ${r_\pm (E,L)}$ satisfying
    \begin{align*}
      0 < r_-(E,L) < r_L < r_+(E,L) < \infty
    \end{align*}
    and such that $\Psi_L (r_\pm(E,L)) = E $. In addition, the functions
    \begin{align*}
      \{ (E,L) \in \left]- \infty, 0 \right[ \times \left]0, \infty \right[ \mid
    \Psi_L(r_L) < E \} \ni (E,L) \mapsto r_\pm(E,L)
    \end{align*}
    are continuously differentiable.
  \item For any $L>0$ and $E \in \left] \Psi_L (r_L) , 0 \right[$,
    \begin{align}
      r_+(E,L) < - \frac{M_0}{E},
    \end{align}
    where $M_0:=\|f_0\|_1\in \,]0,\infty[$ denotes the total mass of the steady state.
  \item For any $L>0$,
    $E \in \left] \Psi_L (r_L) , 0 \right[$ and
    $r \in \left[ r_-(E,L) , r_+(E,L) \right]$
    the following estimate holds:
    \begin{align}
      E - \Psi_L(r) \geq L \, \frac{(r_+(E,L) - r) \, (r - r_-(E,L))}{2 r^2 r_-(E,L)  r_+(E,L)}.
    \end{align}
  \end{enumerate}
\end{lemma}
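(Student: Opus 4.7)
For part (a), the plan is to analyze the first two derivatives of $\Psi_L$. Writing $\Psi_L'(r) = m_0(r)/r^2 - L/r^3$, where $m_0(r)=4\pi\int_0^r s^2\rho_0(s)\diff s$, one sees that $\Psi_L'(r)<0$ for small $r$ (the $L/r^3$ term dominates, since $m_0(r)=O(r^3)$), while for $r>R_0$ one has $\Psi_L'(r) = (M_0 r - L)/r^3>0$ whenever $r > L/M_0$. By continuity a critical point $r_L$ exists, and it satisfies $L = r_L m_0(r_L)$. Substituting this into $\Psi_L''(r) = 4\pi\rho_0(r) - 2m_0(r)/r^3 + 3L/r^4$ yields $\Psi_L''(r_L) = 4\pi\rho_0(r_L)+m_0(r_L)/r_L^3 > 0$, the strict positivity coming from $m_0(r_L)=L/r_L>0$. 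Hence every critical point is a strict local minimum, so $r_L$ is unique. To check $\Psi_L(r_L)<0$, pick any $r_*>\max(R_0,L/(2M_0))$; then $\Psi_L(r_*) = -M_0/r_*+L/(2r_*^2)<0$. The $C^1$-dependence of $r_L$ on $L$ follows from the implicit function theorem applied to $F(r,L)\coloneqq rm_0(r)-L=0$, using $\partial_r F(r_L,L)=m_0(r_L)+4\pi r_L^3\rho_0(r_L)>0$.

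For part (b), uniqueness of the critical point together with $\Psi_L''(r_L)>0$ forces $\Psi_L$ to be strictly decreasing on $(0,r_L]$ and strictly increasing on $[r_L,\infty)$. Combined with $\Psi_L(r)\to+\infty$ as $r\to 0^+$ and $\Psi_L(r)\to 0$ as $r\to\infty$, this gives existence and uniqueness of $r_\pm(E,L)$ for every $E\in(\Psi_L(r_L),0)$. The implicit function theorem applied to $\Psi_L(r)-E=0$ yields $C^1$-regularity, since $\Psi_L'(r_\pm)\neq 0$ (because $r_\pm\neq r_L$). Part (c) is an immediate consequence of the pointwise bound $U_0(r)\geq -M_0/r$, obtained by writing $U_0(r) = -\int_r^\infty m_0(s)/s^2\,\diff s$ and using $m_0\leq M_0$: then $E = U_0(r_+)+L/(2r_+^2)>-M_0/r_+$ (strict because $L>0$), which rearranges to $r_+<-M_0/E$.

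The substantive point is (d). The idea is to pass to the new variable $u=1/r$ and exploit convexity. Setting $\tilde U_0(u)\coloneqq U_0(1/u)$, a direct computation gives $\tilde U_0'(u)=-m_0(1/u)$ and hence $\tilde U_0''(u)=4\pi r^4\rho_0(r)\geq 0$, so $\tilde U_0$ is convex in $u$. With $u_\pm\coloneqq 1/r_\pm$, so $u_+<u_-$ and $\tilde U_0(u_\pm)=E-Lu_\pm^2/2$, the chord inequality for convex functions yields
\begin{align*}
\tilde U_0(u)\leq \frac{u_--u}{u_--u_+}\tilde U_0(u_+)+\frac{u-u_+}{u_--u_+}\tilde U_0(u_-)=E-\frac{L}{2}\bigl(u(u_++u_-)-u_+u_-\bigr)
\end{align*}
for $u\in[u_+,u_-]$. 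Adding $Lu^2/2$ to both sides yields $\Psi_L(r)\leq E+(L/2)(u-u_+)(u-u_-)$, equivalently $E-\Psi_L(r)\geq(L/2)(u_--u)(u-u_+)$. Converting back via $(u_--u)(u-u_+)=(r_+-r)(r-r_-)/(r^2 r_- r_+)$ gives the claim.

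The main obstacle is (d): identifying that the right substitution is $u=1/r$ and that the resulting function $U_0(1/u)$ is convex is the nontrivial insight, reflecting a classical feature of spherically symmetric Newtonian potentials. Once this is in hand, the bound collapses to the one-line chord inequality. Parts (a)--(c) are then essentially bookkeeping built on standard ODE and implicit function theorem arguments.
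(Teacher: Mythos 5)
Your proof is correct and is essentially the standard argument from the references the paper cites (Guo--Rein, Lemou--M\'ehats--Rapha\"el, Rein--Straub); the key insight for (d) --- the convexity of $u\mapsto U_0(1/u)$, equivalent to $r^4\rho_0(r)\geq 0$ --- is exactly the device used there. Parts (a)--(c) are handled by the same routine monotonicity, implicit-function-theorem, and pointwise potential bounds that those references use, and the algebra in all four parts checks out.
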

\begin{proof} We refer to \cite{GuRe2007,LeMeRa11} or more recently,
  \cite{ReSt20,St19}. In these references, similar and further properties have been shown
  for other classes of steady states. However, the proofs only depend on the properties of
  the stationary potential $U_0$ and can therefore be adapted word by word. \end{proof} 

Note that the effective potential appears in the particle energy when expressed in
$(r,w,L)$-coordinates:
\begin{align*}
E=E(r,w,L) = \frac12w^2 + \frac L{2r^2} + U_0(r) = \frac12w^2 + \Psi_L(r).
\end{align*}
Therefore, for fixed $L>0$, the particle trajectories of the steady state $f_0$ are governed by the characteristic system
\begin{align*}
\dot r=w,\qquad \dot w=-\Psi_L'(r).
\end{align*}
Let $\R \ni t \mapsto (r(t), w(t), L)$ be an arbitrary global solution of this system. Since the particle energy is conserved along these characteristics, there exists $E \in\R$ such that $E = E(r(t), w(t), L)$ for all $t\in\R$. We assume that the solution satisfies $\Psi_L(r_L)<E < 0$, otherwise it is not of interest. For any $t\in\R$ we then have
\begin{align*}
\Psi_L (r_L) \leq \Psi_L (r(t)) \leq \frac 12 w^2 (t) + \Psi_L (r(t)) = E
\end{align*}  
and thus $r_-(E,L) \leq r(t) \leq r_+ (E,L) $. Furthermore, solving for $w$ yields
\begin{align*}
\dot r (t) = w(t)  = \pm \sqrt{ 2 E - 2 \Psi_L (r(t)) }
\end{align*}
for $t\in\R$.  Therefore, $r$ oscillates between $r_-(E,L)$ and $r_+(E,L)$, where $\dot r = 0$
is equivalent to $r=r_\pm (E,L)$ and $\dot{r}$ always switches its sign when reaching $r_\pm(E,L)$.
By applying the inverse function theorem and integrating, we obtain that the period of the
$r$-motion, i.e., the time needed for $r$ to travel from $r_-(E,L)$ to $r_+(E,L)$ and
back to $r_-(E,L)$, is given by following expression:
\begin{defn}\label{defT}
  For $L>0$ and $\Psi_L(r_L)<E<0$ let
  \begin{align}\label{eq:defT}
    {T(E,L)} \coloneqq 2 \int_{r_-(E,L)}^{r_+(E,L)} \frac{\diff r}{\sqrt{2E - 2\Psi_L(r)}},
  \end{align}
  which is referred to as the period function of the steady state.
\end{defn}
Using Lemma~\ref{effpot} (c), (d), it can be shown that the above integral is finite with
\begin{align}\label{eq:Tboundnaive}
T(E,L) \leq 2\pi \frac{M_0^2}{E^2\sqrt L},
\end{align}
see \cite{ReSt20,St19} for a detailed proof.
We only consider $T$ on the interior of $\Omega_0^{EL}$,
since the boundary may contain points with $E=\Psi_L(r_L)$, i.e., $r_\pm$
may not be defined there. However, it is easy to see that $\partial \Omega_0^{EL}$
is a set of measure zero and therefore not of interest later on.

We shall see that the spectrum of the operator $\A$ is closely connected to the period
function $T$. In particular, we establish criteria for the existence of linearly oscillating
galaxies which solely depend on the behavior of the period function of the steady state.
A detailed discussion of the properties of $T$ can therefore be found in Appendix~\ref{sc:T}.

\subsection{Plane symmetric steady states}\label{ssc:ststp}

In the plane symmetric case, we look for stationary solutions of the
Vlasov-Poisson system in the form \eqref{pl_vlasov}--\eqref{pl_rho},
and we recall that for this symmetry class, $x\in \R$ and $v=(v_1,\bar v)\in \R^3$,
cf.~Subsection~\ref{ssc:symmetries}.

The conserved quantities associated with the characteristic flow are  $v_2$, $v_3$,
and the energy 
\begin{align}
\pE(x,v_1) = \frac12v_1^2 + \pU_0(x),
\end{align}
where $\pU_0\colon\R\to\R$ is the stationary potential.
We seek steady states of the form 
\begin{align}\label{eq:iso1D}
\pf_0(x,v)
= \pvarphi(\pE, \pv) = \palpha(\pE)\,\pbeta(\pv),\quad (x,v)\in\R\times\R^3.
\end{align}
This ansatz turns the mass density into a functional of
the potential $\pU_0$,
\begin{align}
  \prho_0(x)
  \coloneqq& \int_{\R^3} \pvarphi\left(\frac12v_1^2+\pU_0(x),\pv\right) \diff v
  \nonumber\\
  =& 2\int_{\R^2}\pbeta(\pv)\diff\pv \;
  \int_{\pU_0(x)}^\infty \frac{\palpha(\pE)}{\sqrt{2\pE-2\pU_0(x)}} \diff\pE
  \eqqcolon h(\pU_0(x)),\quad x\in\R,\label{eq:defh1D}
\end{align}
and the stationary Vlasov-Poisson system is reduced to the equation
\begin{align}\label{eq:1DstatVP}
\pU_0'' = 4\pi h(\pU_0) \text{ on }\R.
\end{align}
Solutions of this equation resulting in steady states with compact support and finite mass
are much easier to obtain than in the spherically symmetric setting,
and we briefly outline the arguments. 
The only requirement on the ansatz function is that the resulting function $h$ in
\eqref{eq:defh1D} is $C^1$, vanishes on $[\pE_0,\infty[$ and is positive and decreasing
on $]-\infty,\pE_0[$,
where $\pE_0$ is some cut-off energy. 
We assume  $\pbeta$ to be continuous with compact support and
\begin{align}\label{E:BETABARASSUMPTION}
\int_{\R^2}\pbeta(\pv)\diff\pv=1,
\end{align}
and for the sake of definiteness and simplicity we require $\palpha$ to be either polytropic
\begin{align}\label{eq:poly1d}
\palpha(\pE) = (\pE_0-\pE)^k_+ 
\end{align} 
for some $k>\frac12$ or of King-type, i.e.,
\begin{align}\label{eq:king1d}
\palpha(\pE) = \left(e^{\pE_0-\pE}-1\right)_+.
\end{align}
Again, $(\ldots)_+$ denotes the positive part. 
As in \cite{RaRe13}, it is convenient to reformulate the problem in terms
of $y\coloneqq \pE_0-\pU_0$. Let $\pPhi(\eta)\coloneqq\palpha(\pE_0-\eta)$,
i.e., $\eta=\pE_0-\pE$. Then $y$ solves
\begin{align}\label{eq:1DstatVPy}
y''= - 4\pi \tilde h(y),
\end{align}
where
\begin{align}
\tilde h(z)\coloneqq 2\int_0^z \frac{\pPhi(\eta)}{\sqrt{2z-2\eta}} \diff\eta,\quad z\in\R.
\end{align}
In order to see the required regularity of $\tilde h$  we rewrite it, using
integration by parts:
\begin{align}
\tilde h(z) = -2 \int_0^z \pPhi(\eta)\;\partial_\eta
\left[\sqrt{2z-2\eta}\right]\diff\eta
= 2 \int_0^z \pPhi'(\eta)\,\sqrt{2z-2\eta}\, \diff\eta,\quad z>0.
\end{align}
Now $\tilde h$ has the same form as in the spherically symmetric case
(cf.~\cite{RaRe13}), with the sole difference that the microscopic equation of
state $\Phi$ contains a derivative under the integral sign.
For our two examples \eqref{eq:poly1d} and \eqref{eq:king1d}, 
$\tilde h\in C^1(\R)$, $\tilde h$
is strictly increasing on $[0,\infty[$ and $\tilde h=0$ on $]-\infty,0]$; for the
polytropic case \eqref{eq:poly1d}, $\tilde h(z)= c_k z_+^{k+1/2}$. We define
\[
H(z) \coloneqq 4\pi \int_0^z \tilde h(s)\diff s
\]
and observe that
\begin{equation}\label{y_levels}
\frac{1}{2} (y')^2 + H(y)
\end{equation}
is a conserved quantity for the autonomous, planar system corresponding to
\eqref{eq:1DstatVPy} in the $(y,y')$-plane. The form of the level sets of this
conserved quantity implies immediately that any non-trivial solution $y$
to \eqref{eq:1DstatVPy} exists globally on $\R$, and there exists a unique
$x^\ast\in \R$ such that $y'(x^\ast)=0$ and $y(x^\ast) >0$; in accordance with
the reflection symmetry contained in \eqref{pl_symm_def} we take
$x^\ast =0$. Since $y(-\cdot)$ is a solution of \eqref{eq:1DstatVPy}
with the same data at $x=0$ it follows that $y(-x) = y(x)$; $y$ is even in $x$
as required by  \eqref{pl_symm_def}.
The form of the level sets of the conserved quantity \eqref{y_levels}
implies that the limits $\lim_{x\to\infty} y'(x) = - \lim_{x\to-\infty} y'(x)$ exist,
and $\lim_{x\to\pm\infty} y(x)=-\infty$. Hence there exists $\pR_0>0$ such that
$\prho_0 \coloneqq \tilde h(y)$ has compact support $[-\pR_0,\pR_0]$,
and
\[
\lim_{x\to\infty} y'(x) = \int_0^\infty y''(x)\diff x = - 4 \pi \int_0^\infty \prho_0(x)\diff x
\eqqcolon- 2 \pi \pM_0;
\]
the non-trivial solutions of \eqref{eq:1DstatVPy} can be uniquely parametrized by
the mass $\pM_0>0$ of the resulting steady state. It remains to recover $\pU_0$
from $y$.
At this point we recall that in the plane symmetric case
the usual boundary condition~\eqref{E:AF} at spatial infinity makes no sense, and instead,
$\pU_0$ is to obey \eqref{pl_poisson}. 
If we take \eqref{pl_poisson} as the definition of
$\pU_0$---notice that $\prho_0$ is already defined---,
then $\lim_{x\to\infty} \pU_0'(x) = -\lim_{x\to\infty} y'(x)$ so that
$\pU_0'+y' = 0$ and $\pU_0 +y = const$. If we evaluate this identity at $x=\pR_0$, it
follows that the proper choice of the cut-off energy is given by
\[
\pE_0 \coloneqq \pU_0(\pR_0) =   2 \pi\int_{-\pR_0}^{\pR_0} (\pR_0-y) \prho_0(y)\diff y
= 2 \pi \pR_0 \pM_0 .
\]
With this choice \eqref{eq:defh1D} holds, and all together we have proven the following.
\begin{prop}\label{existenceplanarstst}
  Let an ansatz of the form \eqref{eq:iso1D} with \eqref{eq:poly1d} or \eqref{eq:king1d}
  be fixed. Then for each
  $\pM_0>0$ there exists a unique corresponding
  steady state $(\pf_0,\pU_0,\prho_0)$ of the plane symmetric Vlasov-Poisson
  system~\eqref{pl_vlasov}--\eqref{pl_rho} with the following properties:
  \begin{enumerate}[label=(\alph*)]
  \item
    $\pM_0=\int_\R \prho_0(x)\diff x$ is the mass of the steady state.
  \item
    $\prho_0\in C^1(\R)$ has compact support $[-\pR_0,\pR_0]$  and is
    strictly decreasing on $[0,\pR_0]$.
  \item
    $\pU_0$ is convex on $\R$ and strictly increasing on $[0,\infty[$,
      $\pU_0(x)=2\pi \pM_0 x$ for $x\geq \pR_0$, and $\pU_0(x) = -2\pi \pM_0 x$
      for $x\leq \pR_0$.		
  \end{enumerate}
\end{prop}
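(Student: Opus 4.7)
The plan is to directly execute the construction that was sketched in the paragraphs preceding the statement, packaging it as an existence-and-uniqueness proof parametrized by the total mass $\pM_0$. First, observe that for each of the two admissible forms \eqref{eq:poly1d}, \eqref{eq:king1d} of $\palpha$, the function $\tilde h$ satisfies $\tilde h\in C^1(\R)$, $\tilde h=0$ on $\,]-\infty,0]$, and $\tilde h$ is strictly increasing on $[0,\infty[$; consequently $H(z)=4\pi\int_0^z \tilde h(s)\diff s$ is $C^2$, vanishes on $\,]-\infty,0]$, and is strictly increasing on $[0,\infty[$. Next, I would study the planar autonomous ODE \eqref{eq:1DstatVPy} with initial data $y(0)=y_0>0$, $y'(0)=0$. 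Standard ODE theory gives a unique local $C^2$ solution, and the conservation law $\tfrac12 (y')^2 + H(y) = H(y_0)$ confines the trajectory to a bounded level set, yielding global existence on $\R$. The symmetry $y(-x)=y(x)$ is immediate from uniqueness applied to $y(-\cdot)$.

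Second, I would use the level set structure to locate the support of $\prho_0 = \tilde h(y)$. Starting from $(y(0),y'(0))=(y_0,0)$ with $H(y_0)>0$, the trajectory moves into the region $y'<0$, and since $H$ vanishes on $\,]-\infty,0]$ the trajectory reaches $y=0$ at a finite time $x=\pR_0>0$, with $y'(\pR_0)=-\sqrt{2H(y_0)}$. For $x\geq \pR_0$ the right-hand side of \eqref{eq:1DstatVPy} vanishes, so $y$ continues as the affine function $y(x)=y'(\pR_0)(x-\pR_0)$; in particular $\prho_0$ has support $[-\pR_0,\pR_0]$, $\prho_0\in C^1(\R)$ follows from $\tilde h\in C^1$ and $y\in C^2$, and the strict monotonicity of $\prho_0$ on $[0,\pR_0]$ follows from the strict monotonicity of $\tilde h$ on $[0,\infty[$ together with the fact that $y$ is strictly decreasing on $[0,\pR_0]$ (again from the level set picture: $y'<0$ once we leave $x=0$, until $y$ reaches $0$). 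Integrating \eqref{eq:1DstatVPy} from $0$ to $\infty$ gives $-y'(\pR_0)=4\pi\int_0^\infty \prho_0(x)\diff x=2\pi\pM_0$, so $\pM_0 = -y'(\pR_0)/(2\pi) = \sqrt{2H(y_0)}/(2\pi)$. Since $y_0\mapsto H(y_0)$ is a strictly increasing continuous bijection of $]0,\infty[$ onto itself, the map $y_0\mapsto \pM_0$ is a bijection $]0,\infty[\to\,]0,\infty[$, which yields the parametrization by mass and the uniqueness of the steady state.

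Third, I would recover $\pU_0$ from $\prho_0$ through formula \eqref{pl_poisson}, which automatically yields $\pU_0\in C^2(\R)$ solving $\pU_0''=4\pi \prho_0$, and from \eqref{pl_dxU} we obtain $\pU_0'(x)\to \pm 2\pi \pM_0$ as $x\to\pm\infty$; in particular $\pU_0'(x)=2\pi\pM_0$ for $x\geq \pR_0$ and $\pU_0'(x)=-2\pi\pM_0$ for $x\leq -\pR_0$. Since $\pU_0''=4\pi \prho_0\geq 0$, $\pU_0$ is convex on $\R$, and by the symmetry $\prho_0(-x)=\prho_0(x)$ together with $\pU_0'(0)=0$ (from \eqref{pl_dxU_sym}), $\pU_0$ is even and strictly increasing on $[0,\infty[$. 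The identity $(\pU_0+y)'=0$ on $\R$ (valid because $\pU_0'+y'$ is continuous, constant on each of the three regions $x\leq -\pR_0$, $|x|\leq \pR_0$ and $x\geq \pR_0$, and matching at the boundaries) fixes $\pU_0+y$ as a constant, which, upon evaluation at $x=\pR_0$ where $y(\pR_0)=0$, gives $\pE_0=\pU_0(\pR_0)=2\pi\int_{-\pR_0}^{\pR_0}(\pR_0-|t|)\prho_0(t)\diff t=2\pi \pR_0 \pM_0$ and reconciles the definitions of $\pE_0$ and of $\prho_0$ through \eqref{eq:defh1D}.

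Finally, since $\pf_0(x,v)=\palpha(\pE(x,v))\pbeta(\pv)$ is built from conserved quantities of the steady characteristic flow, it automatically solves \eqref{pl_vlasov}, and with $\prho_0$, $\pU_0$ as above, the full system \eqref{pl_vlasov}--\eqref{pl_rho} is satisfied. The main technical point—and the step I expect to require the most care—is the matching argument in step three that ensures $\pU_0+y$ is globally constant despite $y$ being determined only up to its construction via the autonomous ODE while $\pU_0$ is determined by the global Poisson formula; the key is to verify that the affine continuation of $y$ outside $[-\pR_0,\pR_0]$ has exactly the slope $-2\pi\pM_0$ dictated by the far-field behavior of $\pU_0'$, which is what the computation $-y'(\pR_0)=2\pi\pM_0$ provides. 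Everything else is a routine consequence of the conserved-quantity analysis and the monotonicity properties of $\tilde h$.
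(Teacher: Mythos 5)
Your proposal follows essentially the same route as the paper: reduce to the autonomous ODE $y''=-4\pi\tilde h(y)$, use the conserved quantity $\tfrac12(y')^2+H(y)$ and the phase portrait to get global existence, compact support of $\prho_0=\tilde h(y)$, and the mass parametrization via $-y'(\pR_0)=2\pi\pM_0$, then recover $\pU_0$ from \eqref{pl_poisson} and conclude $\pU_0+y=\mathrm{const}$. One minor slip: the level set $\{\tfrac12(y')^2+H(y)=H(y_0)\}$ is \emph{not} bounded---for $y\le 0$ it is the pair of horizontal lines $y'=\pm\sqrt{2H(y_0)}$, which is exactly what forces $y\to-\infty$ as $|x|\to\infty$; what actually yields global existence is the resulting bound $|y'|\le\sqrt{2H(y_0)}$, which rules out finite-time blow-up. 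Also, your ``matching at the three regions'' argument for $(\pU_0+y)'=\mathrm{const}$ is more circuitous than needed: since $\pU_0''=4\pi\prho_0$ and $y''=-4\pi\prho_0$ hold on all of $\R$, one has $(\pU_0+y)''\equiv 0$ globally, and the constant $\pU_0'+y'$ is then fixed to be zero by the far-field limits---which is what the paper does.
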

As in the spherically symmetric setting, let
\begin{align*}
  \pOmega_0 \coloneqq \{(x,v)\in\R\times\R^3\mid \pf_0(x,v)\neq0\} =
  \{ (x,v_1)\in\R^2\mid \pE(x,v_1)<E_0 \} \times \{\pbeta\neq0\}
\end{align*}
be the (interior of the) support of the steady state in $(x,v)$-coordinates.
The finite cut-off energy ensures that $\pOmega_0$ is bounded and $\pOmega_0$
is open for the above ansatz functions. We again add an upper index when
expressing this set in different coordinates:
\begin{align*}
  \pOmega_0^{\pE\pv} \coloneqq \{ (\pE(x,v),\pv)\mid(x,v)\in\pOmega_0\}=
         [\pUmin,\pE_0[\times\{\pbeta\neq0\}
\end{align*}
Next, note that $\pvarphi'\coloneqq\partial_{\pE}\pvarphi$ exists on $\inter(\pOmega_0^{\pE\pv})$ with
\begin{align}\label{E:MONPLANAR}
\pvarphi' < 0 \text{ on } \inter(\pOmega_0^{\pE\pv}).
\end{align}
Here, $\inter(\ldots)$ denotes the interior of a set.
Condition~\eqref{E:MONPLANAR} is the analogue of the monotonicity
assumption~\eqref{eq:linstab} in the radial case. In view of a linear stability
analysis it leads to an Antonov-type coercivity bound
(proved later in Proposition~\ref{antonovcoerc1d}) which implies linear stability 
against perturbations which do not exhibit any dependence on $x_2$ or $x_3$.
For the linear stability analysis against general perturbations see~\cite{KuMa1970}.

Before proceeding, we note that we also have an analogue of \eqref{eq:A7prime}
in the plane symmetric setting, more precisely, there exists $C>0$ such that for all $x\in\R$
\begin{align}\label{eq:A7prime1d}
\int_{\R^3} \vert\pvarphi'(\pE(x,v),\pv)\vert\diff v \leq C.
\end{align}
The assumption~\eqref{eq:A7prime1d} however is of mere technical nature and it is expected that it can be relaxed.
While \eqref{eq:A7prime1d} is obvious in the King case, a straight-forward computation yields
\begin{align*}
\int_{\R^3} \vert\pvarphi'(\pE(x,v),\pv)\vert\diff v = 2k \int_{\pU_0(x)}^{\pE_0}\frac{\left(\pE_0-\pE\right)^{k-1}}{\sqrt{2\pE-2\pU_0(x)}}\diff\pE= c_k (\pE_0-\pU_0(x))^{k-\frac12}
\end{align*}
in the polytropic case if $\pU_0(x)<\pE_0$, where $c_k>0$ is some constant depending on $k>\frac12$.

We now consider the characteristic system corresponding to the steady state, i.e.,
\begin{align}\label{eq:charsys1D}
\dot x=v_1,\qquad \dot v_1=-\pU_0'(x).
\end{align}
We left out the trivial $v_2$ and $v_3$ equations. To analyze the solutions of this system we first introduce the following notation similar to Lemma \ref{effpot}:
\begin{lemma}\label{L:XMINUSPLUS}
	For all $\pE>\pUmin=\min(\pU_0)$ there exist unique $x_-(\pE)<0<x_+(\pE)$ satisfying $\pU_0(x_\pm(\pE))=\pE$. $x_\pm$ have the following properties:
	\begin{enumerate}[label=(\alph*)]
		\item $\pU_0(x)< \pE$ is equivalent to $x_-(\pE)<x<x_+(\pE)$.
		\item $x_\pm$ are continuously differentiable on $]\pUmin,\infty[$ with
		\begin{align*}
		x_\pm'(\pE) = \frac1{\pU_0'(x_\pm(\pE))},\quad \pE>\pUmin.
		\end{align*}
		\item $x_+=-x_-$.
		\item $x_+$ is strictly increasing on $]\pUmin,\infty[$, $x_-$ strictly decreasing.
		\item $\lim_{\pE\to\pUmin} x_\pm(\pE)=0$, which is why we set $x_\pm(\pUmin)\coloneqq0$.
	\end{enumerate}
\end{lemma}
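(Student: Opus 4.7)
The plan is to establish each claim directly from the properties of $\pU_0$ recorded in Proposition~\ref{existenceplanarstst}, together with the evenness $\pU_0(-x)=\pU_0(x)$ inherited from the reflection symmetry in~\eqref{pl_symm_def}.

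First I would note that evenness combined with strict monotonicity of $\pU_0$ on $[0,\infty[$ forces $\pUmin=\pU_0(0)$, while the asymptotic form $\pU_0(x)=2\pi\pM_0\,\vert x\vert$ for $\vert x\vert\geq\pR_0$ guarantees $\pU_0(x)\to\infty$ as $\vert x\vert\to\infty$. Hence the restriction $\pU_0|_{[0,\infty[}\colon[0,\infty[\to[\pUmin,\infty[$ is a continuous, strictly increasing bijection, and the intermediate value theorem produces a unique $x_+(\pE)>0$ satisfying $\pU_0(x_+(\pE))=\pE$ for every $\pE>\pUmin$. Setting $x_-(\pE):=-x_+(\pE)$ and invoking evenness yields the unique negative root and simultaneously proves (c). Claim (a) then follows from strict monotonicity of $\pU_0$ on each of the half-lines $[0,\infty[$ and $]-\infty,0]$.

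For the differentiability statement (b) I would apply the inverse function theorem, which reduces to checking that $\pU_0'(x_+(\pE))\neq 0$. Here I would exploit the representation~\eqref{pl_dxU_sym}, namely $\pU_0'(x)=4\pi\int_0^x\prho_0(y)\,\diff y$, together with Proposition~\ref{existenceplanarstst}(b): since $\prho_0$ is continuous, strictly decreasing on $[0,\pR_0]$, and has support exactly $[-\pR_0,\pR_0]$, one gets $\prho_0>0$ on $[0,\pR_0[$, and consequently $\pU_0'(x)>0$ for every $x>0$. Since $x_+(\pE)>0$ whenever $\pE>\pUmin$, the inverse function theorem delivers both the $C^1$ regularity of $x_+$ and the formula $x_+'(\pE)=1/\pU_0'(x_+(\pE))$; part (d) is then immediate from the strict positivity of this derivative together with the identity $x_-=-x_+$.

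Finally, (e) is a continuity argument: since $\pU_0\colon[0,\infty[\to[\pUmin,\infty[$ is a continuous strictly increasing bijection, its inverse, which coincides with $x_+$, is continuous on $[\pUmin,\infty[$ with value $0$ at $\pE=\pUmin$; the corresponding limit for $x_-$ follows from (c). The only mildly non-routine point in the whole argument is the non-vanishing of $\pU_0'$ on $]0,\infty[$, for which \eqref{pl_dxU_sym} together with the central positivity of $\prho_0$ appears to be the cleanest route; everything else is an essentially immediate consequence of Proposition~\ref{existenceplanarstst}.
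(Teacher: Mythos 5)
Your proof is correct, and it supplies a complete argument for a lemma that the paper states without proof (treating it as analogous to Lemma~\ref{effpot} in the radial case and hence routine). All the ingredients you invoke are genuinely available: the evenness of $\pU_0$ comes from the reflection symmetry built into~\eqref{pl_symm_def}, Proposition~\ref{existenceplanarstst}(c) gives strict monotonicity and the linear growth for $\vert x\vert\geq\pR_0$, and~\eqref{pl_dxU_sym} together with Proposition~\ref{existenceplanarstst}(b) gives $\prho_0>0$ on $[0,\pR_0[$ and hence $\pU_0'>0$ on $]0,\infty[$, which is exactly what the inverse function theorem needs. One small remark: the strict positivity of $\pU_0'$ on $]0,\infty[$ can also be read off without the explicit potential formula, since Proposition~\ref{existenceplanarstst}(c) asserts that $\pU_0$ is convex, so $\pU_0'$ is nondecreasing with $\pU_0'(0)=0$ by evenness, and if $\pU_0'(x_0)=0$ for some $x_0>0$ then $\pU_0$ would be constant on $[0,x_0]$, contradicting strict monotonicity. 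Either route is fine; yours has the advantage of being self-contained and not leaning on the convexity claim.
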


Now consider a global solution $\R\ni t\mapsto(x(t),v_1(t))$ of \eqref{eq:charsys1D}. Since $\pE$ is a conserved quantity of the system, there exists $\pE\geq\pUmin$ such that $\pE=\pE(x(t),v_1(t))$ for all $t\in\R$. Solving for $v_1$ yields
\begin{align*}
v_1(t)=\pm\sqrt{2\pE-2\pU_0(x(t))},
\end{align*}
i.e., the solution $(x,v)$ is periodic and $x$ oscillates between $x_-(\pE)$ and $x_+(\pE)$.
\begin{defn} For $\pE>\pUmin$ define
	\begin{align}\label{eq:defT1D}
	\pT(\pE)\coloneqq 2 \int_{x_-(\pE)}^{x_+(\pE)} \frac{\diff x}{\sqrt{2\pE-2\pU_0(x)}} = 4 \int_0^{x_+(\pE)} \frac{\diff x}{\sqrt{2\pE-2\pU_0(x)}}.
	\end{align}
\end{defn}
Then $\pT(\pE)$ is the period of any solution of \eqref{eq:charsys1D} having energy $\pE$, i.e., the time needed for the $x$-component of the solution to travel from $x_-(\pE)$ to $x_+(\pE)$ and back to $x_-(\pE)$, see \cite{BiTr}. Since $\pU_0'(x)>0$ for $x>0$, the integral \eqref{eq:defT1D} exists for every $\pE>\pUmin$. 

We shall see in Section~\ref{ssc:essplane} that the properties of $\pT$ are strongly
related to the spectrum of the planar Antonov operator. In fact, the period functions of systems like \eqref{eq:charsys1D} have widely been studied, see \cite{Bo05,Ch85,ChWa86,Sc85}. A question of particular interest, which is also crucial for the existence of oscillating modes in Section~\ref{sc:mathur}, is whether or not the period function is monotone as a function of the energy. To study this monotonicity we first compute the derivative of $\pT$:
\begin{lemma}\label{Tderplane}
	$\pT$ is continuously differentiable on $]\pUmin,\infty[$ with
	\begin{align*}
	\pT'(\pE)
	=\frac2{\pE-\pUmin} \int_0^{x_+(\pE)}
	\frac{\left(\pU_0'(x)\right)^2-2(\pU_0(x)-\pU_0(0))\pU_0''(x)}{\left(\pU_0'(x)\right)^2}
	\frac{\diff x}{\sqrt{2\pE-2\pU_0(x)}}
	\end{align*}
	for $\pE>\pUmin$.
\end{lemma}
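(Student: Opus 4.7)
\textbf{Plan for Lemma~\ref{Tderplane}.} The plan is to replace the spatial variable in the definition~\eqref{eq:defT1D} of $\pT(\pE)$ by one in which the integration domain becomes independent of $\pE$ and the integrand becomes $C^1$, then to differentiate under the integral, and finally to transform back to $x$.

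First I would introduce the substitution $y = \sqrt{2(\pU_0(x) - \pU_0(0))}$ on $[0, x_+(\pE)]$. Since $\pU_0 \in C^2$ is strictly increasing on $[0,\infty[$ by Proposition~\ref{existenceplanarstst}, with $\pU_0'(0) = 0$ (by symmetry) and $\pU_0''(0) = 4\pi\prho_0(0) > 0$, this is a $C^1$-diffeomorphism onto $[0, y_0]$, where $y_0 := \sqrt{2(\pE - \pUmin)}$. Setting
\begin{align*}
\psi(y) := \frac{y}{\pU_0'(x(y))},
\end{align*}
a Taylor expansion of $\pU_0$ at $0$ yields $\psi(0) = 1/\sqrt{\pU_0''(0)}$ and shows that $\psi$ extends to a $C^1$ function on $[0,y_0]$. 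Using $y\,\diff y = \pU_0'(x)\,\diff x$ and $2(\pE-\pU_0(x)) = y_0^2 - y^2$, followed by the trigonometric substitution $y = y_0 \sin\theta$, the period becomes
\begin{align*}
\pT(\pE) = 4 \int_0^{y_0} \frac{\psi(y)}{\sqrt{y_0^2 - y^2}}\,\diff y = 4 \int_0^{\pi/2} \psi(y_0 \sin\theta)\,\diff \theta.
\end{align*}

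The endpoint singularity at $x_+(\pE)$ has been absorbed and the domain $[0, \pi/2]$ no longer depends on $\pE$; the integrand depends on $\pE$ in a $C^1$ fashion via $y_0(\pE)$ with $y_0'(\pE) = 1/y_0$. Dominated convergence then justifies differentiation under the integral and also gives continuity of $\pT'$, yielding
\begin{align*}
\pT'(\pE) = \frac{4}{y_0} \int_0^{\pi/2} \psi'(y_0 \sin\theta)\,\sin\theta\,\diff \theta.
\end{align*}
A direct computation with $x'(y) = y/\pU_0'(x)$ and $y^2 = 2(\pU_0(x)-\pU_0(0))$ gives
\begin{align*}
\psi'(y) = \frac{(\pU_0'(x))^2 - 2(\pU_0(x)-\pU_0(0))\pU_0''(x)}{(\pU_0'(x))^3}.
\end{align*}

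Finally I would revert the chain $\theta \mapsto y \mapsto x$ using $\sin\theta = y/y_0$ and $\diff\theta = \pU_0'(x)\,\diff x/(y\sqrt{y_0^2-y^2}) = \pU_0'(x)\,\diff x/(\sqrt{2(\pU_0(x)-\pU_0(0))}\sqrt{2(\pE-\pU_0(x))})$. The algebraic simplification produces the prefactor $4/y_0^2 = 2/(\pE-\pUmin)$, and $\psi'(y(x))\,\pU_0'(x)$ collapses to $((\pU_0'(x))^2 - 2(\pU_0(x)-\pU_0(0))\pU_0''(x))/(\pU_0'(x))^2$, yielding exactly the stated formula.

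The only real subtlety is the regularity of $\psi$ at $y = 0$, where $\pU_0'$ vanishes; this is handled by the nondegeneracy $\pU_0''(0) > 0$, which ensures that $x \leftrightarrow y$ is indeed a $C^1$-diffeomorphism up to the origin and keeps $\psi$ together with $\psi'$ bounded on $[0,y_0]$. Everything else is routine.
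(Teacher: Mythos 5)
Your derivation is correct and it reconstructs, in full detail, exactly what the paper's proof delegates to Chow and Wang (\cite{ChWa86}, Theorem~2.1): the substitution $y=\sqrt{2(\pU_0(x)-\pU_0(0))}$ followed by $y=y_0\sin\theta$ is the classical Schaaf--Chow--Wang device that fixes the integration domain, removes the endpoint singularity, and lets one differentiate under the integral; your algebra $\psi'(y)=\pG(x)/(\pU_0'(x))^3$, $\psi'(y(x))\,\pU_0'(x)=\pG(x)/(\pU_0'(x))^2$ and $4/y_0^2=2/(\pE-\pUmin)$ all check out and reproduce the stated formula.

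The one place to be more precise is the claim that ``a Taylor expansion of $\pU_0$ at $0$ \dots\ shows that $\psi$ extends to a $C^1$ function.'' The value $\psi(0)=1/\sqrt{\pU_0''(0)}$ indeed only needs the second-order expansion, but the $C^1$-extension of $\psi$---equivalently, boundedness and continuity of $\pG(x)/(\pU_0'(x))^3$ as $x\to0$---is a third-order statement: one needs $\pG(x)=O(x^3)$, which follows from $\pG(0)=\pG'(0)=0$ together with $\pG'(x)=-8\pi\bigl(\pU_0(x)-\pUmin\bigr)\prho_0'(x)=O(x^2)$, and this uses $\prho_0\in C^1$, i.e.\ $\pU_0\in C^3$, from Proposition~\ref{existenceplanarstst}. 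You do flag this as ``the only real subtlety,'' which is the correct instinct; it is also the same boundedness observation the paper invokes (``the fraction in the integral above is bounded for $x\to0$'') to run its dominated-convergence argument for continuity of $\pT'$.
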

For details on how to calculate this derivative we refer to \cite[Theorem 2.1]{ChWa86}. The continuity of $\pT'$ follows by a straight-forward application of the dominated convergence theorem; note that the fraction in the integral above is bounded for $x\to0$.
Now let
\begin{align}\label{eq:defG}
\pG(x) \coloneqq \left(\pU_0'(x)\right)^2-2(\pU_0(x)-\pU_0(0))\pU_0''(x) ,\quad x\in\R.
\end{align}
Obviously, the non-negativity of $\pG$ will instantly imply the monotonicity of $\pT$.
The former is easy to verify in the planar case, since $\pG(0)=0$ and
\begin{align*}
\pG'(x) = -2 (\pU_0(x)-\pU_0(0)) \pU_0'''(x)
= -8\pi (\pU_0(x)-\pU_0(0))\prho_0'(x) \text{ for }x\in\R,
\end{align*}
i.e., $\pG$ is strictly increasing on $[0,\pR_0]$ and constant on $[\pR_0,\infty[$. Thus,
\begin{prop}\label{Tincreasingplane}
	$\pT$ is strictly increasing on $]\pUmin,\infty[$.
\end{prop}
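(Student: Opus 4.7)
The plan is to combine Lemma~\ref{Tderplane} with the sign analysis of $\pG$ that is already carried out in the paragraph preceding the proposition. First I would record the immediate consequences of the identity $\pG'(x) = -8\pi(\pU_0(x) - \pU_0(0))\prho_0'(x)$: by Proposition~\ref{existenceplanarstst} the potential $\pU_0$ is strictly increasing on $[0,\infty[$ while $\prho_0$ is strictly decreasing on $[0,\pR_0]$, so $\pG'(x) > 0$ for $x \in (0,\pR_0)$, whereas $\pG'(x) = 0$ for $x \geq \pR_0$ because $\prho_0 \equiv 0$ there. Combined with $\pG(0) = 0$ (which comes from $\pU_0'(0) = 0$, a consequence of the evenness of $\pU_0$), this yields $\pG(x) > 0$ for every $x > 0$.

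Given $\pE > \pUmin$, Lemma~\ref{Tderplane} expresses the derivative of the period function as
\begin{align*}
\pT'(\pE) = \frac{2}{\pE - \pUmin}\int_0^{x_+(\pE)} \frac{\pG(x)}{(\pU_0'(x))^2}\,\frac{\diff x}{\sqrt{2\pE - 2\pU_0(x)}}.
\end{align*}
The prefactor $\tfrac{2}{\pE-\pUmin}$ is strictly positive, and on the interval $(0, x_+(\pE))$, which is non-empty by Lemma~\ref{L:XMINUSPLUS}, the integrand is non-negative and strictly positive on a subset of positive measure (any nontrivial portion of $(0,\min\{x_+(\pE),\pR_0\})$). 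Hence $\pT'(\pE) > 0$ for all $\pE > \pUmin$, which gives the strict monotonicity on $]\pUmin,\infty[$.

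The only subtle point in reading positivity off the above formula is the behavior of the integrand at $x = 0$, where both $\pG(x)$ in the numerator and $(\pU_0'(x))^2$ in the denominator vanish. A short Taylor expansion near the origin shows that $\pU_0'(x) \sim \pU_0''(0)\,x$, while $\pG(x) = O(x^2)$ as $x \to 0$ (since $\pG(0) = \pG'(0) = 0$, the latter because the factor $\pU_0(x) - \pU_0(0)$ vanishes to first order), so the ratio stays bounded near $0$ and the integral is well defined. No genuinely new ideas are required beyond this observation: all the hard work has already been packaged into Lemma~\ref{Tderplane} and into the two-line computation of $\pG'$, and the proposition follows by a direct application. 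The main obstacle, such as it is, lies purely in justifying Lemma~\ref{Tderplane} (to which \cite[Theorem~2.1]{ChWa86} is invoked), not in the step above.
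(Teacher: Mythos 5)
Your proof is correct and follows exactly the paper's route: compute $\pG'(x) = -8\pi(\pU_0(x)-\pU_0(0))\prho_0'(x)$ to conclude $\pG>0$ on $]0,\infty[$, then read off $\pT'>0$ from Lemma~\ref{Tderplane}. One small imprecision: from $\pG(0)=\pG'(0)=0$ alone you only get $\pG(x)=o(x)$, not $O(x^2)$; what actually gives boundedness of the integrand near $x=0$ is that $\pG'(x)=O(x^2)$ (since $\pU_0(x)-\pU_0(0)=O(x^2)$ and $\prho_0'$ is bounded), hence $\pG(x)=O(x^3)$ -- but this is already absorbed into the paper's remark after Lemma~\ref{Tderplane} and does not affect your argument.
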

From the monotonicity we easily obtain the boundedness of $\pT$ on the energy-support of the steady state:
\begin{prop}\label{Tboundsplane}
	For all $\pE\in]\pUmin,\pE_0[$,
	\begin{align*}
	0<\frac{2\pi}{\sqrt{\pU_0''(0)}} = \sqrt{\frac\pi{\prho_0(0)}} \eqqcolon \pT(\pUmin) < \pT(\pE)<\pT(\pE_0)<\infty.
	\end{align*}
\end{prop}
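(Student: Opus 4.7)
The strict upper bound $\pT(\pE)<\pT(\pE_0)$ is immediate from the strict monotonicity in Proposition~\ref{Tincreasingplane}. For $\pT(\pE_0)<\infty$, Proposition~\ref{existenceplanarstst}(c) yields $\pU_0(\pR_0)=\pE_0$ and $\pU_0'(\pR_0)=2\pi\pM_0>0$, so $2\pE_0-2\pU_0(x)\geq c\,(\pR_0-x)$ near $x=\pR_0$ and the integrand in \eqref{eq:defT1D} has only an integrable $(\pR_0-x)^{-1/2}$ singularity; elsewhere in $[0,\pR_0]$ the integrand is continuous because $\pE_0>\pU_0(0)=\pUmin$. The algebraic identity $\frac{2\pi}{\sqrt{\pU_0''(0)}}=\sqrt{\pi/\prho_0(0)}$ and the positivity of both sides follow from the planar Poisson relation $\pU_0''(0)=4\pi\prho_0(0)$ together with $\prho_0(0)>0$, which holds by Proposition~\ref{existenceplanarstst}(b).

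The heart of the proof is the strict lower inequality $\pT(\pE)>\pT(\pUmin)$, which I would obtain by first establishing
\[
\lim_{\pE\downarrow\pUmin}\pT(\pE)=\frac{2\pi}{\sqrt{\pU_0''(0)}},
\]
and then upgrading to a strict inequality via Proposition~\ref{Tincreasingplane}: if the limit equals $L$, then for any $\pE>\pUmin$ one may pick $\pE'\in\,]\pUmin,\pE[$ and conclude $L\leq\pT(\pE')<\pT(\pE)$. To compute the limit, substitute $x=\sigma s$ with $\sigma\coloneqq x_+(\pE)$ in \eqref{eq:defT1D} to get
\[
\pT(\pE)=4\int_0^1\frac{ds}{\sqrt{(1-s^2)\,g(s,\sigma)}},\qquad g(s,\sigma)\coloneqq\frac{2\bigl(\pU_0(\sigma)-\pU_0(\sigma s)\bigr)}{(1-s^2)\,\sigma^2}.
\]
Using $\pU_0'(0)=0$ and $\pU_0'(r)=\int_0^r\pU_0''(u)\,du$, Fubini rewrites $g$ as a weighted double integral of $\pU_0''$ over a subset of $[0,\sigma]^2$, which is jointly continuous on $[0,1]\times[0,\pR_0]$ and satisfies $g(s,0)=\pU_0''(0)$ for every $s\in[0,1]$; this is the harmonic-oscillator limit at the bottom of the potential well. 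Since $\sigma\to 0^+$ as $\pE\downarrow\pUmin$ by Lemma~\ref{L:XMINUSPLUS}(e), $g(s,\sigma)\to\pU_0''(0)$ uniformly in $s$, and dominated convergence with the majorant $C/\sqrt{1-s^2}$ passes the limit inside the integral to give $\frac{4}{\sqrt{\pU_0''(0)}}\int_0^1\frac{ds}{\sqrt{1-s^2}}=\frac{2\pi}{\sqrt{\pU_0''(0)}}$.

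The main technical obstacle is justifying the uniformity of the harmonic-oscillator limit: one needs a lower bound on $g(s,\sigma)$ that stays away from zero near the endpoint singularity $s=1$, so that the dominated-convergence majorant is genuinely integrable. This is handled by continuity of $\pU_0''=4\pi\prho_0$ together with $\prho_0(0)>0$, which yield a uniform lower bound on $g$ on $[0,1]\times[0,\sigma_*]$ for $\sigma_*>0$ sufficiently small.
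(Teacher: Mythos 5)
Your proposal is correct and reaches the same harmonic-oscillator limit $\lim_{\pE\downarrow\pUmin}\pT(\pE)=2\pi/\sqrt{\pU_0''(0)}$, but by a genuinely different computational route. The paper substitutes $\eta=\pU_0(x)$ to bring the period into the form $2\sqrt2\int_{\pUmin}^{\pE}(\pE-\eta)^{-1/2}\,\pU_0'(x_+(\eta))^{-1}\,\diff\eta$, applies the extended (Cauchy) mean value theorem to produce $\pU_0'(x_+(\eta))=\sqrt{2(\eta-\pUmin)\,\pU_0''(s)}$ for an intermediate point $s$, and then passes to the limit using $\pU_0''(s)\to\pU_0''(0)$, with the resulting $\int_{\pUmin}^{\pE}[(\pE-\eta)(\eta-\pUmin)]^{-1/2}\,\diff\eta=\pi$ giving the constant. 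You instead rescale $x=\sigma s$ with $\sigma=x_+(\pE)$ to get $\pT(\pE)=4\int_0^1 [(1-s^2)\,g(s,\sigma)]^{-1/2}\,\diff s$, and then identify $g(s,\sigma)$ via Fubini as the \emph{average} of $\pU_0''$ over the triangle-like region $\{(u,v):\sigma s\le u\le\sigma,\ 0\le v\le u\}\subset[0,\sigma]^2$: concretely $g(s,\sigma)=\bigl(\int_0^1\min(1-w,1-s)\,\pU_0''(\sigma w)\,\diff w\bigr)/\bigl(\int_0^1\min(1-w,1-s)\,\diff w\bigr)$, which immediately gives the two-sided bound $\min_{[0,\sigma]}\pU_0''\le g(s,\sigma)\le\max_{[0,\sigma]}\pU_0''$ and hence uniform convergence $g(\cdot,\sigma)\to\pU_0''(0)$ by continuity of $\pU_0''$, with dominated convergence and $\int_0^1(1-s^2)^{-1/2}\diff s=\pi/2$ closing the argument. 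You correctly isolate the one genuine technical obstacle (a uniform lower bound on $g$ near the singular endpoint $s=1$, supplied by $\prho_0(0)>0$), though this is best made fully explicit via the averaging identity above rather than left as an observation. Both arguments use the strict monotonicity of Proposition~\ref{Tincreasingplane} to upgrade the limit to a strict inequality; the paper's MVT route is shorter on the page, while your Fubini/averaging route makes the uniformity and the two-sided control of $g$ completely transparent. Your remark about $\pT(\pE_0)<\infty$ (integrable $(\pR_0-x)^{-1/2}$ singularity because $\pU_0'(\pR_0)>0$) is exactly the reason given in the paper immediately after Definition~\ref{defT} for $\pT$ being well-defined for every $\pE>\pUmin$.
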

\begin{proof}
	The monotonicity of $\pT$ immediately gives the upper bound. For the lower bound it remains to show that
	\begin{align*}
	\lim\limits_{\pE\to\pUmin} \pT(\pE) = \frac{2\pi}{\sqrt{\pU_0''(0)}}.
	\end{align*}
	First, we change variables via $\eta=\pU_0(x)$ to rewrite $\pT(\pE)$ for fixed $\pE>\pUmin$ as follows:
	\begin{align*}
	\pT(\pE) = 2\sqrt2 \int_{\pUmin}^{\pE} \frac{\diff \eta}{\sqrt{\pE-\eta}\;\pU_0'\left(x_+(\eta)\right)},
	\end{align*}
	note that $x_+$ inverts $\pU_0\colon[0,x_+(\pE)]\to[\pUmin,\pE]$. Next, observe that for every $\eta\in ]\pUmin,\pE[$ there exists $s=s(\eta)\in]0,x_+(\eta)[$ such that 
	\begin{align*}
	\frac{\left( \pU_0'\left(x_+(\eta)\right) \right)^2}{\eta} = \frac{2\pU_0''(s)\pU_0'(s)}{\pU_0'(s)}
	\end{align*} 
	by the extended mean value theorem, and hence
	\begin{align*}
	\pU_0'\left(x_+(\eta)\right) = \sqrt{2\eta\,\pU_0''(s)} .
	\end{align*}
	As $\pE\to\pUmin$, we also have $\pU_0''(s)\to\pU_0''(0)>0$ uniformly in $\eta\in]\pUmin,\pE[$, and therefore
	\begin{align*}
	\pT(\pE) \to \frac2{\sqrt{\pU_0''(0)}}\int_{\pUmin}^{\pE} \frac{\diff\eta}{\sqrt{(\pE-\eta)\eta}} = \frac{2\pi}{\sqrt{\pU_0''(0)}}.&\qedhere
	\end{align*}
\end{proof}

We note that the finite extent of the steady state
(i.e., the existence of the cut-off energy $\pE_0$) causes the period to be bounded
from above. Conversely, the steady state having a \enquote{smooth core}
(i.e., $\pU_0''(0)=4\pi\prho_0(0)<\infty$) implies that $\pT$ is bounded away from zero.
These interpretations can also be found in the physics literature, cf. \cite{BiTr}.

\section{Linearization}\label{S:LINEARISATION}

In this section we consider three different ways to linearize the
Vlasov-Poisson system about
some steady state $(f_0,\rho_0,U_0)$ and to derive the corresponding Antonov operator
$\A$. The different approaches yield (essentially) the same operator $\A$, but
each allows for a different interpretation of oscillatory modes of $\A$.
We carry out the computations for three dimensional, spherically symmetric steady states
and only state the results for the case of planar symmetry, since the arguments are
analogous and somewhat simpler in that case.

Our purpose is to arrive at the operator $\A$ by some convincing,
but not necessarily rigorous, manipulations; a rigorous derivation
would only be necessary for deducing results for the non-linear Vlasov-Poisson system
from the linearized spectral analysis. Hence for this section we dispense with
rigor and formulate our findings not in the form of theorems and such.

In Subsection~\ref{ssc:eddritter} we derive an Eddington-Ritter type relation for the oscillation
period in the case of spherically symmetric, polytropic steady states.

\subsection{The Eulerian picture}\label{ssc:euler}

In the literature~\cite{GuLi08,GuRe2001,GuRe2007,IT68,LeMeRa12}
the Vlasov-Poisson system is usually linearized in Eulerian variables,
starting with the formal expansion of the form
$f = f_0 + \varepsilon \delta f + O(\varepsilon^2)$.
This leads to the linearized system
\[
\pa_t \delta f + \D \delta f - \nabla\delta U  \cdot \nabla_v f_0=0, 
\]
\[
\Delta \delta U = 4 \pi \delta\rho,\quad\lim_{\vert x\vert\to\infty}\delta U(t,x)=0,
\]
\[
\delta\rho(t,x) = \int_{\R^3} \delta f(t,x,v) \diff v,
\]
where we recall the definition \eqref{E:Ddef} of the transport operator $\D$.
In the manipulations to follow it will be convenient to use the notations
\begin{align}\label{E:subs_def}
	\rho_g(x) \coloneqq \int_{\R^3} g(x,v) \diff v,\quad j_g (x) \coloneqq\int_{\R^3} v\, g(x,v) \diff v,\quad
	U_g (x) \coloneqq - \frac{1}{|\cdot|}\ast \rho_g
\end{align}
so that $\delta \rho(t) = \rho_{\delta f(t)}$ and $\delta U(t) = U_{\delta f(t)}$.
Following Antonov~\cite{An1961} we write $\delta f = \delta f_+ + \delta f_-$,
where
\begin{align}\label{evenodd}
	\delta f_\pm (t,x,v) = \frac{1}{2} \left(\delta f(t,x,v) \pm \delta f(t,x,-v)\right)
\end{align}
denote the even and odd parts of $\delta f$ with respect to $v$.
Then $\delta U (t) = U_{\delta f_+(t)}$, and
\begin{align}
	\pa_t \delta f_- + \D \delta f_+ &= \nabla U_{\delta f_+} \cdot \nabla_v f_0,
	\label{linEul1}\\
	\pa_t \delta f_+ + \D \delta f_- &=0.
	\label{linEul2}
\end{align}
In order to obtain a second order equation for $\delta f_-$ we wish to differentiate the
first equation with respect to $t$ and substitute the second one. To do so we observe that
$\pa_t U_{\delta f_+} = U_{\pa_t\delta f_+} = - U_{\D \delta f_-}$. Since
$\rho_{\D \delta f_-}=\div j_{\delta f_-}$, we arrive at
\[
\pa_t^2 \delta f_- - \D^2 \delta f_- + \nabla U_{\div j_{\delta f_-}}\cdot \nabla_v f_0 = 0.
\]
For the latter equation to be of the form \eqref{E:LINDYNINTRO}, we define the Antonov operator $\A$ as $\A\coloneqq-\D^2-\B$, where
\begin{align} \label{E:B-def-3d}
	(\B g)(x,v) = - \nabla_v f_0(x,v) \cdot
	\int_{\R^3} \frac{x-y}{|x-y|^3}\,\div j_g (y)\diff y.
\end{align}
If $g=g(r,w,L)$ is spherically symmetric, then
\[
\nabla U_{\div j_g} (x) = \frac{1}{r^2} \int_{|y| \leq r}\div j_g(y) \diff y \frac{x}{r}
= 4\pi j_g(r)\cdot\frac{x}{r} \frac{x}{r},\quad r=\vert x\vert;
\]
notice that $j_g$ is still the vector field defined in \eqref{E:subs_def}. Since
\[
\nabla_v f_0(x,v) = \pa_E \varphi(E,L) v + \pa_L \varphi(E,L) (2 r^2 v - 2 x\cdot v x)
\]
and $(2 r^2 v - 2 x\cdot v x)\cdot \frac{x}{r} = 0$, $\B g$
is spherically symmetric as well, more precisely
\begin{align} \label{E:B-def-ss}
  (\B g)(r,w,L) = -4\pi w\,\varphi'(E,L)\,j_g(r)
  = -\frac{4\pi^2}{r^2}\,w\,\varphi'(E,L) \int_{\R} \int_0^\infty \tilde w\,
  g(r,\tilde w,\tilde L)\diff \tilde L\diff \tilde w,
\end{align}
where $\varphi'(E,L)=\pa_E \varphi(E,L)<0$ and $E=E(r,w,L)=\frac12w^2+\Psi_L(r)$.

The same arguments as above
can be applied to the plane symmetric case, resulting in
\begin{align} \label{E:B-def-pl}
  (\pB g)(x,v) = -  4 \pi \,v_1\, \varphi'(E,\bar v) 
  \int_{\R^3} \tilde v_1\, g(x,\tilde v)\diff \tilde v;
\end{align}
here we recall that the reflection symmetry included in the definition \eqref{pl_symm_def}
of planar symmetry implies the formula \eqref{pl_dxU_sym} for the potential induced by $g$.

We now examine whether the spectral properties of $\A$ obtained in the Eulerian
linearization can explain the oscillations about the steady state which were observed
numerically in~\cite{RaRe2018}. For example the spatial support of the solution
should expand and contract, but also its kinetic and potential energies should
oscillate; see e.g.~\cite{Re07} for the definitions of the energies.
As already observed in the introduction, the former cannot be seen
in the present, Eulerian linearized picture, but the latter can.
Assume that we have a positive eigenvalue $\lambda >0$ of $\A$ with
spherically symmetric eigenfunction
$g_-$; the latter will be real-valued and odd in $w$. Let $\omega\coloneqq\sqrt\lambda$. Then 
\[
\delta f_- = \cos(\omega t) g_-
\]
is a solution of the linearized equation $\pa_t^2\delta f_- + \A \delta f_-=0$;
replacing $\cos(\omega t)$ by $\sin(\omega t)$ gives another one.
According to \eqref{linEul2} the corresponding part which is even in $v$
is given as
\[
\delta f_+ = - \frac{1}{\omega} \sin(\omega t) \D g_-,
\]
and only this part contributes to the kinetic and potential energies:
\begin{align*}
  E_{\mathrm{kin}}(f_0 +\varepsilon \delta f(t))
  &= E_{\mathrm{kin}}(f_0)
  -\frac{\varepsilon}{2\omega}\sin(\omega t) \int_{\R^6} |v|^2 \D g_-(z)\diff z\\
  &= E_{\mathrm{kin}}(f_0)
  -\frac{\varepsilon}{\omega}\sin(\omega t) \int_{\R^3} \nabla U_0\cdot j_{g_-}\diff x,
\end{align*}
which in general oscillates with period $2\pi/\omega$ about the kinetic energy
of the steady state. By the analogous computation for the potential energy,
\[
E_{\mathrm{pot}}(f_0 +\varepsilon \delta f(t)) = E_{\mathrm{pot}}(f_0)
+\frac{\varepsilon}{\omega}\sin(\omega t) \int_{\R^3} \nabla U_0\cdot j_{g_-}\diff x
+ O(\varepsilon^2).
\]
\subsection{Mass-Lagrange coordinates} \label{ssc:masslagrange}
The following derivation is based on the so-called mass-Lagrangian coordinates
which are often used in spherical symmetry for the Euler-Poisson system,
cf~\cite{JaMa2009,mak}.
We restrict ourselves from the start
to the spherically symmetric situation, cf.~Section~\ref{ssc:symmetries}.

We fix a steady state $(f_0,U_0,\rho_0)$ which for the sake of simplicity we
take isotropic, i.e., $f_0 = \varphi(E)$.
We consider a spherically symmetric
solution to the Vlasov-Poisson system with data which are dynamically accessible
from $f_0$ so that in particular the solution $f(t)$ has the same mass $M>0$
as the steady state, and we assume that
$\{r>0\mid\rho(t,r)>0\}=]0,R(t)[$.
Then the mapping
\begin{align} \label{mcoord}
	[0,R(t)] \ni r\mapsto m(t,r) \in[0,M]
\end{align}
is one-to-one, since $\pa_r m = 4\pi r^2 \rho >0$
on $]0,R(t)[$.
Let  
\begin{align} \label{rtcoord}
	[0,M] \ni m\mapsto \tilde r(t,m) \in[0,R(t)]
\end{align}
denote its inverse.
We introduce the new dependent variables
\[
\tilde f(t,m,w,L) = f(t,r,w,L), \quad \tilde \rho(t,m) = \rho(t,r).
\]
For a function $g=g(\cdot,w,L)$ we introduce the abbreviations
\begin{align}\label{RJdef}
  \Ri(g) \coloneqq
  \int_{-\infty}^\infty\int_0^\infty g(\cdot,w,L) \diff L \diff w, \ \
  \J(g) \coloneqq
  \int_{-\infty}^\infty\int_0^\infty w\, g(\cdot,w,L) \diff L \diff w
\end{align}
so that
\begin{align*}
  \rho(t) = \frac{\pi}{r^2} \Ri(f(t)),\quad
  \tilde \rho(t) = \frac{\pi}{\tilde r^2} \Ri(\tilde f(t)).
\end{align*}
In order to rewrite the Vlasov-Poisson system in terms of $(m,w,L)$ we compute
\begin{align*}
	\partial_t m(t,r)
	&=
	4\pi \int_0^r s^2\pa_t \rho(t,s)\, \diff s
	=
	-4\pi^2 \int_0^r \pa_r \int_{-\infty}^\infty\int_0^\infty w\, f(t,r,w,L) \,\diff L\diff w
	\diff s\\
	&=
	-4\pi^2 \J(f(t))(r).
\end{align*}
Hence
\[
\pa_t f  = \pa_t \tilde f + \pa_m\tilde f\,\pa_t m 
= \pa_t \tilde f - 4\pi^2 \J(\tilde f(t))\,\pa_m\tilde f,\quad
\pa_r f  = \pa_m \tilde f\,\pa_r m 
= 4\pi^2 \Ri(\tilde f(t))\,\pa_m \tilde f ,
\]
and the Vlasov equation \eqref{ss_vlasov} takes the form
\begin{align} \label{ss_vlasov_mass}
	\pa_t \tilde f + 4\pi^2  \left(w\, \Ri(\tilde f(t)) -
	\J(\tilde f(t))\right) \pa_m \tilde f +
	\left(\frac{L}{\tilde r^3} - \frac{m}{\tilde r^2}\right) \pa_w \tilde f = 0.
\end{align}
It needs to be supplemented with the relation between the function
$\tilde r(t,m)$ and $\tilde \rho$ respectively $\tilde f$.
Since
\[
\pa_m \tilde r = \frac{1}{4 \pi \tilde r^2 \tilde \rho}
= \frac{1}{4 \pi^2 \Ri(\tilde f)},
\]
it follows that
\begin{align} \label{ss_poisson_mass}
	\tilde r(t,m)  
	=\frac{1}{4\pi^2} \int_0^m \frac {\diff\eta}{\Ri(\tilde f(t))(\eta)},
\end{align}
and \eqref{ss_vlasov_mass}, \eqref{ss_poisson_mass} constitute the
spherically symmetric Vlasov-Poisson system in the coordinates $(m,w,L)$,
where we need to recall the abbreviations introduced in \eqref{RJdef}.

We notice that $\tilde r(t,m)$ is the radius of the ball about the origin
in which mass $m$ of the solution $f(t)$ is contained. In a second change of
variables we relate this to the steady state configuration as follows.
The map
\[
[0,R_0] \ni r\mapsto m_0(r) = 4\pi \int_0^r s^2\rho_0(s)\,\diff s \in[0,M]
\]
is again one-to-one, and
\[
[0,M] \ni m\mapsto r_0(m) \in[0,R_0]
\]
denotes its inverse; $R_0>0$ is the radius of the spatial support of the steady state.
Now we introduce a new radial variable via
\[
[0,M] \ni m \mapsto \bar r := r_0(m) \in [0,R_0],
\]
and new dependent variables
\[
\hat f(t,\bar r,w,L) = \tilde f(t,m,w,L) = f(t,r,w,L), \ \
\hat \rho(t,\bar r) =\tilde \rho(t,m) = \rho(t,r).
\]
If we let
$\hat r(t,\bar r) = \tilde r(t,m_0(\bar r))$,
the spherically symmetric Vlasov-Poisson system becomes
\begin{equation} \label{ss_vlasov_massr}
	\pa_t \hat f +  \left(w\, \frac{\Ri(\hat f(t))}{\Ri(f_0)} -
	\frac{\J(\hat f(t))}{\Ri(f_0)}\right) \pa_{\bar r} \hat f +
	\left(\frac{L}{\hat r^3} - \frac {m_0(\bar r)}{\hat r^2}\right) \pa_w \hat f = 0, 
\end{equation}
\begin{equation} \label{ss_poisson_massr}
	\hat r (t,\bar r)  
	= \int_0^{\bar r} \frac{\Ri(f_0)}{\Ri(\hat f(t))}\,\diff s;
\end{equation}
notice that $\pa m/\pa\bar r = 4\pi^2 \Ri(f_0)$ and
\eqref{ss_poisson_massr} is obtained from \eqref{ss_poisson_mass}
via the change of variables $s\mapsto \eta = m_0(s)$.
The ball of radius $\hat r (t,\bar r)$ about the origin
contains for the time dependent solution $f(t)$
the same amount of mass as the ball of radius $\bar r$ does for the steady state.
In this way the steady state mass distribution is used as the reference frame for
describing the mass distribution of the time dependent solution.
In particular, should $\hat r (t,\bar r)$ on the linearized level oscillate
about $\bar r$, this would exactly explain the pulsating behavior of the
perturbed steady state which was observed numerically.

In order to linearize~\eqref{ss_vlasov_massr}, \eqref{ss_poisson_massr}
about the steady state we write 
\[
\hat f = f_0 + \varepsilon \delta \hat f + O(\varepsilon^2), 
\quad \hat\rho = \rho_0 +\varepsilon \delta \hat\rho + O(\varepsilon^2)
\]
and
\begin{align}\label{rhatexpansion}
	\hat r(t,\bar r) = \bar r + \varepsilon \delta\hat r(t,\bar r)
	+ O(\varepsilon^2),
\end{align}
and expand \eqref{ss_vlasov_massr}, \eqref{ss_poisson_massr} to the first order.
When doing so we observe that the transport operator $\D$ along the characteristic flow
of the steady state now takes the form
\[
\D f 
=w\,\pa_{\bar r} f +  
\left(\frac{L}{\bar r^3}-\frac {m_0(\bar r)}{\bar r^2}\right) \pa_wf.
\]
This implies that
\begin{equation} \label{ss_vlasov_massr_lin}
	\pa_t \delta\hat f + \D \delta\hat f
	+  \left(w\frac{\Ri(\delta\hat f)}{\Ri(f_0)}-\frac{\J(\delta\hat f)}{\Ri(f_0)}\right)
	\pa_{\bar r} f_0
	+ \delta\hat r\, \left(\frac{2m_0(\bar r)}{\bar r^3}-\frac{3L}{\bar r^4}\right)
	\pa_w f_0 = 0,
\end{equation}
\begin{equation} \label{ss_poisson_massr_lin}
	\delta\hat r(t,\bar r) = -\int_0^{\bar r}\frac{\Ri(\delta\hat f(t))}{\Ri(f_0)}\,\diff s.
\end{equation}
Before we turn this system into a second order one for the
odd-in-$w$ part of $\delta \hat f$
to obtain the corresponding Antonov operator we compute the time derivative of
$\delta\hat r(t,\bar r)$.
Using the linearized Vlasov equation \eqref{ss_vlasov_massr_lin}
a short computation shows that
\begin{align} \label{dtRf}
	\pa_t \Ri(\delta\hat f) = - \Ri(f_0) \pa_{\bar r} \left(\frac{\J(\delta\hat f)}{\Ri(f_0)}\right)
\end{align}
and hence
\begin{align} \label{dtrhat}
	\pa_t \delta\hat r = \frac{\J(\delta\hat f)}{\Ri(f_0)} .
\end{align}
Now we again split  $\delta\hat f= \delta\hat f_+ + \delta\hat f_-$
into its even and the odd part with respect to $w$. 
They satisfy the following linear system:
\begin{align}
	\pa_t \delta\hat f_+ + \D  \delta\hat f_- - \frac{\J(\delta\hat f_-)}{\Ri(f_0)}
	\pa_{\bar r} f_0 & = 0,
	\label{ss_vlasov_massr_lin+}\\
	\pa_t \delta\hat f_- + \D  \delta\hat f_+ + w\frac{\Ri(\delta\hat f_+)}{\Ri(f_0)}
	\pa_{\bar r} f_0 +
	\delta\hat r\, \left(\frac{2m_0(\bar r)}{\bar r^3}-\frac{3L}{\bar r^4}\right)
	\pa_w f_0 & = 0,
	\label{ss_vlasov_massr_lin-}
\end{align}
\begin{equation} \label{ss_poisson_massr+}
	\delta\hat r(t,\bar r) = -\int_0^{\bar r}\frac{\Ri(\delta\hat f_+(t))}{\Ri(f_0)}\,\diff s.
\end{equation}
In order to differentiate \eqref{ss_vlasov_massr_lin-}
with respect to $t$ we observe from
\eqref{dtRf} and \eqref{dtrhat} that
\begin{align*} 
	\pa_t \Ri(\delta\hat f_+) =
	- \Ri(f_0)\,\pa_{\bar r} \left(\frac{\J(\delta\hat f_-)}{\Ri(f_0)}\right),\quad
	\pa_t \delta\hat r = \frac{\J(\delta\hat f_-)}{\Ri(f_0)}.
\end{align*}
Thus
\begin{align}\label{secorder_massr}
	0 &  = \pa_t^2 \delta\hat f_- - \D^2 \delta\hat f_-
	+ \D \left(\frac{\J(\delta\hat f_-)}{\Ri(f_0)}\pa_{\bar r}f_0\right) 
	- w\,\pa_{\bar r}\left(\frac{\J(\delta\hat f_-)}{\Ri(f_0)}\right)\pa_{\bar r}f_0\notag\\
	&\qquad \qquad\qquad\qquad\qquad
	+  \left(\frac{2m_0(\bar r)}{\bar r^3}-\frac{3L}{\bar r^4}\right)
	\frac{\J(\delta\hat f_-)}{\Ri(f_0)} \pa_wf_0 \notag  \\
	& = \pa_t^2 \delta\hat f_- - \D^2  \delta\hat f_-  - \B \delta\hat f_-,
\end{align}
where
\begin{align*}
	\B g
	& =- \D \left(\frac{\J(g)}{\Ri(f_0)}\pa_{\bar r} f_0\right) 
	+w\,\pa_{\bar r}\left(\frac{\J(g)}{\Ri(f_0)}\right)\pa_{\bar r} f_0 
	-  \left(\frac{2m_0(\bar r)}{\bar r^3}-\frac{3L}{\bar r^4}\right)
	\frac{\J(g)}{\Ri(f_0)} \pa_w f_0\\
	&=
	\varphi'(E) \left[-\D\left(\frac{\J(g)}{\Ri(f_0)} \pa_{\bar r} E\right)
	+ w \pa_{\bar r} \left(\frac{\J(g)}{\Ri(f_0)}\right)\pa_{\bar r} E
	- w \left(\frac{2m_0(\bar r)}{\bar r^3}-\frac{3L}{\bar r^4}\right)
	\frac{\J(g)}{\Ri(f_0)}\right]\\  
	&=
	\varphi'(E) \left[-w \frac{\J(g)}{\Ri(f_0)} \pa^2_{\bar r} E
	- w \left(\frac{2m_0(\bar r)}{\bar r^3}-\frac{3L}{\bar r^4}\right)
	\frac{\J(g)}{\Ri(f_0)}\right]\\
	& = -4\pi^2 \varphi'(E) \frac{w}{\bar r^2}\J(g); 
\end{align*}
for the last identity we have used the fact that
\[
\pa^2_{\bar r} E =
\pa^2_{\bar r}\left( \frac{1}{2}w^2 + \frac{L}{2\bar r^2} + U_0(\bar r)\right)
= \frac{3L}{\bar r^4}-\frac{2m_0(\bar r)}{\bar r^3}+ 4\pi \rho_0(\bar r).
\]
We have therefore shown that the linearized dynamics in what we refer
to as mass-Lagrange coordinates is governed by \eqref{secorder_massr},
and hence by an equation of the form \eqref{E:LINDYNINTRO} where
the corresponding Antonov operator $\A$ is exactly the same as obtained
in the Eulerian picture under the assumption of spherical symmetry,
cf.~\eqref{E:B-def-ss}. The analogous arguments imply the analogous
result for the plane symmetric case.

Assume now that we have an eigenvalue $\lambda >0$ and a corresponding,
spherically symmetric eigenfunction $g_-$, which must be odd in $w$.
Then $\delta \hat f = \cos(\omega t) g_-$ with $\omega = \sqrt{\lambda}$
solves the above linearized dynamics in mass-Lagrange coordinates \eqref{secorder_massr}.
In view of~\eqref{rhatexpansion} and~\eqref{dtrhat} if follows that
to linear order,
\begin{align} \label{radosc}
	\hat r(t,\bar r) = \bar r +
	\frac{\varepsilon}{\omega}\sin(\omega t) \frac{\J(g_-)}{\Ri(f_0)}.
\end{align}
Hence to linear order the radius of the ball containing a certain mass
$m$ oscillates with period $2\pi/\omega$ about the corresponding radius
for the steady state. Of course the details of this oscillation---like which portions
of the configuration take part in it---depend on the actual eigenfunction,
but \eqref{radosc} nicely explains the numerically observed pulsation behavior.
Notice that while the even part of the solution to the linearized
dynamics governs the oscillation of the kinetic and potential energies as seen at
the end of the previous subsection, its odd part governs the oscillation of the spatial
support here.
\subsection{The Lagrangian picture}\label{ssc:lagrange}
The viewpoint of this section is to focus on the map which re-distributes the particles
in phase space as time proceeds.
We again consider a fixed, spherically symmetric steady state $(f_0,\rho_0,U_0)$ and let
\[
\Omega_0 \coloneqq \{ z=(x,v)\in \R^6 \mid f_0(z)>0\}.
\]
We wish to study the initial value problem for the
Vlasov-Poisson system with data
\[
\mathring f = f_0 \circ \mathring Z^{-1},
\]
extended by $0$ to all of $\R^6$, where
\[
\mathring Z \colon \Omega_0 \to \Omega^0
\]
is a diffeomorphism onto some open set $\Omega^0\subset \R^6$, which is
measure-preserving, i.e., $\det \pa_z \mathring Z = 1$ on $\Omega_0$.
Such data are dynamically accessible from $f_0$,
and physically viable perturbations of $f_0$,
say, by some exterior force acting on the galaxy, are of this form.
It is easy to see that for such data
the initial value problem is equivalent to the following one:
We look for a time dependent family
\[
Z(t)= (X(t),V(t)) \colon \Omega_0 \to \Omega^t
\]
of measure-preserving diffeomorphisms onto some open set $\Omega^t\subset \R^6$ such that
\begin{align}
	\dot X & =  V,  \label{E:CHAR1}\\
	\dot V & = -\nabla U_{Z(t)}(X), \label{E:CHAR2}
\end{align}
where 
\begin{align}\label{E:POTENTIALX}
	U_{Z(t)}(x)
	= - \int_{\mathbb R^6}\frac{1}{|x-X(t,\tilde z)|} f_0(\tilde z)\diff \tilde z, \quad
	\nabla U_{Z(t)}(x)
	= \int_{\mathbb R^6} \frac{x-X(t,\tilde z)}{|x-X(t,\tilde z)|^3} f_0(\tilde z)\diff \tilde z.
\end{align}
We refer to \eqref{E:CHAR1}--\eqref{E:POTENTIALX}
as the Lagrangian formulation of the Vlasov-Poisson system
and supplement it with the initial condition
\begin{align}
	Z(0) = \mathring Z.\label{E:INCOND}
\end{align}
If $Z(t)$ is a solution of this initial value problem,
then $f(t,z)=f_0(Z(t)^{-1}(z))$,
extended by $0$ outside $\Omega^t = Z(t,\Omega_0)$,
solves the corresponding initial value problem for the Vlasov-Poisson system,
and $\overline{\Omega^t}$ is the support of $f(t)$.
The choice $\mathring Z = \mathrm{id}$ leads to the steady state,
and we denote the solution to \eqref{E:CHAR1}--\eqref{E:POTENTIALX}
with initial data $Z(0) = \mathrm{id}$ by $Z_0(t)$, which is the characteristic
flow of the steady state.

In the above reformulation of the Vlasov-Poisson system the basic unknown is the
flow map $Z(t)$ and the configuration space of the system is the set of measure-preserving
(or symplectic) diffeomorphisms. The mathematical structure of this configuration space
has been studied in \cite{EbMar}, see also~\cite{Mar}.
We wish to linearize the system \eqref{E:CHAR1}--\eqref{E:POTENTIALX};
a similar approach was followed  by Vandervoort~\cite{Va1983}. 

For small perturbations of the steady state we expect the composition
$Z(t)\circ Z_0(t)^{-1}$ to remain close to the identity so that
the natural definition of the perturbation is given through the expansion
\begin{align*}
	Z(t)\circ Z_0(t)^{-1} = \mathrm{id} + \varepsilon \delta Z(t) + O(\varepsilon^2),
\end{align*}
or equivalently,
\begin{align}\label{E:PERTURBTWO}
	Z(t) = Z_0(t) + \varepsilon \delta Z(t)\circ Z_0(t) + O(\varepsilon^2).
\end{align}
Differentiating both sides of~\eqref{E:PERTURBTWO} with respect to $t$ we find that
\begin{align}\label{E:PARTIALT}
	\dot Z =
	\dot Z_0 + \varepsilon \left[\left(\pa_t + \mathcal D\right)\delta Z\right] \circ Z_0,
\end{align}
where $\mathcal D = v\cdot\nabla_x - \nabla_xU_0\cdot \nabla_v$ as before,
and where we have dropped higher-order terms in $\varepsilon$.
In order to derive the system governing the evolution of $\delta Z(t)$ we need to
expand the right hand side of \eqref{E:CHAR1} and \eqref{E:CHAR2}. The former is
trivial, but the latter requires some thought:
\begin{align*}
  &
  \nabla U_{Z_0(t) + \varepsilon \delta Z(t,Z_0(t))}(X_0(t) + \varepsilon \delta X(t,Z_0(t)))
  = \nabla U_{Z_0(t)}(X_0(t))\\
  &\qquad + \varepsilon \frac{d}{d\varepsilon}
  \nabla U_{Z_0(t)}(X_0(t) + \varepsilon \delta X(t,Z_0(t)))_{|\varepsilon=0}
  + \varepsilon\frac{d}{d\varepsilon}
  \nabla U_{Z_0(t) + \varepsilon \delta Z(t,Z_0(t))}(X_0(t))_{|\varepsilon=0}\\
  &
  = \nabla U_0(X_0(t)) + \varepsilon I_1 + \varepsilon I_2.
\end{align*}
Since $U_{Z_0(t)} = U_0$ is sufficiently smooth,
\[
I_1 = D^2 U_0(X_0(t))\, \delta X(t,Z_0(t)).
\]
In order to compute the second term $I_2$ we multiply it with a test function
$\psi \in C^\infty_c(\Omega_0)$ and integrate with respect to $z\in \Omega_0$. Then
\begin{align*}
  \int_{\R^6} I_2(z)\, \psi(z)\, \diff z
  &= - \int_{\R^6}\frac{d}{d\varepsilon}
  \nabla U_{Z_0(t)}^\psi (\tilde x + \varepsilon \delta X(t,\tilde z))_{|\varepsilon=0}
  f_0(\tilde z)\, \diff\tilde z \\
  &= - \int_{\R^6} D^2 U_{Z_0(t)}^\psi (\tilde x)\, \delta X(t,\tilde z)\, f_0(\tilde z)\, \diff\tilde z,
\end{align*}
where
\[
U_{Z_0(t)}^\psi(\tilde x) \coloneqq - \int_{\R^6} \frac{1}{|\tilde x - x|}\psi(Z_0(t)^{-1}(z))\, \diff z,
\]
and we changed variables with the measure-preserving diffeomorphism $Z_0(t)$;
note that the latter potential is again smooth. Now we reverse the change of variables,
change the order of integration, and finally drop the integration against the test
function $\psi$ to arrive at
\[
I_2 = - \nabla \div \tilde U_{\delta X(t)} (X_0(t)), \ \mbox{where}\
\tilde U_{\delta X(t)} (x)
\coloneqq - \int_{\R^6} \frac{1}{|x-\tilde x|}\delta X (t,\tilde z)\, f_0(\tilde z)\, \diff\tilde z.
\]
Putting these results into \eqref{E:PARTIALT} and using the fact that
$Z_0(t) \colon \Omega_0 \to \Omega_0$ is a diffeomorphism we conclude that
\begin{align}
  \left(\pa_t + \mathcal D\right)\delta X & = \delta V \label{E:DELTAREQN1},\\
  \left(\pa_t + \mathcal D\right)\delta V & =
  -D^2 U_0\,  \delta X + \nabla \div \tilde U_{\delta X(t)},  \label{E:DELTAWEQN1}
\end{align}
which is the Lagrangian linearization of the Vlasov-Poisson system around the
steady state $f_0$. This system is equation (6) in~\cite{Va1983}.

We will need certain commutator relations between
$\D$ and derivatives with respect to $x$ or $v$. For smooth vector fields
$F$ or functions $f$ it follows by direct computation that
\begin{align}
  \D\,\divv F &= \divv \D F -\divx F, \label{E:COMM1}\\
  \D\,\divx F &= \divx \D F + D^2 U_0 \cdot D_v F, \label{E:COMM2} \\
  \D\,\nabla_v f &= \nabla_v\D f -\nabla_x f, \label{E:COMM3}\\
  \D\,\nabla_x f &= \nabla_x\D f + D^2 U_0\nabla_vf. \label{E:COMM4}
\end{align}
It turns out that phase-space volume is preserved at the linear level
in the following sense. Let $\delta Z$ be a solution to the linearized
dynamics~\eqref{E:DELTAREQN1}, \eqref{E:DELTAWEQN1}. Then 
by \eqref{E:COMM1} and~\eqref{E:COMM2},
\begin{align*}
  &\left(\pa_t + \mathcal D\right)\left(\divx\delta X + \divv \delta V  \right) \\
  & \qquad
  =  D^2 U_0\cdot D_v\delta X + \divx \left(\pa_t + \mathcal D\right)\delta X
  - \divx\delta V + \divv \left(\pa_t + \mathcal D\right)\delta V \\
  & \qquad
  = D^2 U_0\cdot D_v\delta X +\divx \delta V
  -  \divx\delta V + \divv \left(-D^2U_0\delta X +
  \nabla\div\tilde U_{\delta X(t)}\right) = 0.
\end{align*}
This implies that
$\left(\divx\delta X + \divv \delta V\right)(t)=0$ for all $t$,
provided this is true initially.

In what follows we restrict $\delta Z$ to a form which in particular guarantees
this preservation of phase-space volume, namely, we assume that
the perturbation is a skew-gradient, i.e.,
there exists a smooth function $h\colon \R\times \Omega_0\to \R$ such that 
\begin{align}\label{E:HODGE0}
	\delta X = \nabla_v h,\ \delta V = -\nabla_x h. 
\end{align}
We can rewrite the linearized, Lagrangian dynamics
\eqref{E:DELTAREQN1}, \eqref{E:DELTAWEQN1} in terms of the generating
function $h$:
\begin{align*}
  \left(\pa_t + \mathcal D\right) \nabla_v h & = - \nabla_x h, \\
  \left(\pa_t + \mathcal D\right) \nabla_x h & =
  D^2 U_0  \nabla_v h - \nabla\div \tilde U_{\nabla_v h}.
\end{align*}
Using~\eqref{E:COMM3} and \eqref{E:COMM4} this system is equivalent to 
\begin{align*}
  \nabla_v\left(\pa_t + \mathcal D\right)  h &= 0, \\
  \nabla_x\left(\pa_t + \mathcal D\right)  h & = - \nabla \div \tilde U_{\nabla_v h},
\end{align*}
which in turn is equivalent to
\begin{align} \label{fform}
  \left(\pa_t + \mathcal D\right)  h = f(x)\ \mbox{with}\
  \nabla f =- \nabla \div \tilde U_{\nabla_v h};
\end{align}
notice that for every $x\in \R^3$ the set $\{v\in \R^3 \mid (x,v)\in \Omega_0\}$ is connected.
We again split $h=h_++h_-$ into its even and odd parts
with respect to $v$, cf.~\eqref{evenodd}.
Then
\begin{align*}
  \pa_t h_- + \D h_+ &= 0, \\
  \pa_t h_+ + \D h_- &= f(x).
\end{align*}
Differentiating the first equation with respect to $t$, inserting the second one
and using the formula for $\nabla f$ from \eqref{fform}
we find that
\begin{align} 
  \pa_t^2 h_- - \D^2 h_- - v\cdot\nabla \div \tilde U_{\nabla_v h_-} =0 ; \label{E:HODDDYNAMICS}
\end{align}
note that $\nabla_v h_+$ is odd in $v$ and does not contribute to
\[
\tilde U_{\nabla_v h}(x) = \tilde U_{\nabla_v h_-}(x)
=  -\int_{\mathbb R^6}\frac{1}{|x-\tx|} \nabla_v h(t,\tilde z)\,
f_0(\tilde z)\diff \tilde z
\]
since $f_0$ is even in $v$.
In order to proceed we assume that the steady state
is isotropic, i.e., $f_0=\varphi(E)$. Then we integrate by parts in
the integral above, and \eqref{E:HODDDYNAMICS} can be written
in the form
\begin{align}\label{lindynlagr}
	\pa_t^2 h + \tilde\A h =0,
\end{align}
where the corresponding Antonov operator is given as
\[
(\tilde \A g)(x,v) = - (\D^2 g)(x,v)  -  v\cdot \nabla_x \divx
\int_{\mathbb R^6}\frac{1}{|x-\tx|} g(\tilde z)\, \varphi'(\tilde E) \,\tilde v\,\diff \tilde z. 
\]
for suitable $g\colon\R^3\times\R^3\to\R$.
If we restrict ourselves to spherical symmetry
so that by abuse of notation $g(x,v)= g(r,w,L)$, then the integral above
defines a spherically symmetric vector field $F (x)=\frac{x}{r}F(r)$
for which $\nabla\div F= \Delta F$. Thus
\begin{align}\label{A_lagr_ss}
  (\tilde \A g)(x,v)
  &= - (\D^2 g)(x,v)  + 4\pi v\cdot
  \int_{\mathbb R^3} g(x,\tilde v)\, \varphi'(\tilde E) \,\tilde v\,\diff \tilde v \nonumber\\
  &= - (\D^2 g)(r,w,L)  + \frac{4\pi^2}{r^2} w
  \int_{-\infty}^\infty \int_0^\infty  g(r,\tilde w,\tilde L)\, \varphi'(\tilde E) \,\tilde w\,
  \diff \tilde L \diff \tilde w 
\end{align}
is the Antonov operator which governs the linearized, spherically symmetric
dynamics in the Lagrangian formulation of the Vlasov-Poisson system.

Strictly speaking, $\tilde \A$ differs from $\A$ in the exact form of the
integral operator part, but the two are equivalent in the following sense:
Assume that two functions, both of which are odd in $w$, are related through
\[
f(r,w,L) = \varphi'(E)\, g(r,w,L).
\]
Then $f$ is an eigenfunction of the operator $\A$ with eigenvalue $\lambda$,
iff $g$ is an eigenfunction of the operator $\tilde \A$ to the same eigenvalue.

Again, the same arguments apply in the plane symmetric case and result in
\begin{align}\label{A_lagr_pl}
  (\tilde \A g)(x,v)
  &= - (\pD^2 g)(x,v)  + 4\pi v_1 
  \int_{\R^3} g(x,\tilde v)\, \varphi'(\tilde E) \,\tilde v_1 \diff \tilde v . 
\end{align}

Like the previous two subsections,
we conclude the present one by examining its consequences for possible
oscillations of a steady state after perturbation. To this end, let $g$ denote an
eigenfunction of the operator $\tilde \A$ with eigenvalue $\lambda>0$. Then the
closure of $\Omega^t = Z(t,\Omega_0)$ is the support in phase space
of the perturbed solution, and to linear order
\[
\supp f(t) = \{ z + \varepsilon \cos(\omega t) (\nabla_v g(z),-\nabla_xg(z))
\mid z\in \overline{\Omega}_0\},
\]
where $\omega = \sqrt{\lambda}$, so that the phase-space support of the perturbed
solution oscillates about the support of the steady state with period $2\pi/\omega$,
which again fits with the numerical observations.
\subsection{An Eddington-Ritter type relation} \label{ssc:eddritter}
The Eddington-Ritter relation connects the period of the pulsation
of a perturbed steady state to its central density in the context
of the Euler-Poisson system, cf.~\cite{edd,mak,ross}.
In this section we establish its analogue for the case of the Vlasov-Poisson system.
To this end we consider a spherically symmetric,
polytropic steady state $(f_0, U_0, \rho_0)$
with ansatz function of the form
\[
\varphi(E,L) = (E_0 - E)_+^k L^l,
\]
with $k$ and $l$ fixed. Then the function $y(r) = E_0 - U_0(r)$ satisfies the equation
\begin{equation}\label{y_eq_poly}
\frac{1}{r^2}(r^2 y'(r))' = - 4 \pi c_{k,l}\, r^{2l} y(r)_+^{k+l+3/2}
\end{equation}
with $c_{k,l} >0$ determined by $k$ and $l$; this is the spherically symmetric,
semilinear Poisson equation \eqref{eq:statVP}, rewritten in terms of $y$.
Its solutions
are uniquely determined by their value $y(0)>0$ at the origin, and
the induced spatial density is given by
\[
\rho_0(r) = c_{k,l} r^{2l} y(r)_+^{k+l+3/2}.
\]
If $y$ solves \eqref{y_eq_poly}, then for any $\sigma >0$ the rescaled function
\[
\sigma ^{-2} y(\sigma^{-\frac{2(k+l)+1}{2l+2}}r)
\]
solves \eqref{y_eq_poly} as well. This implies that
if $f_0^\ast$ is the steady state
induced by taking $y^\ast (0)=1$, then all other steady states
with the same ansatz function
can be obtained via
\[
f_0(x,v)= \sigma^{-\frac{2k+l}{l+1}} f_0^\ast(\sigma^{-\frac{2(k+l)+1}{2l+2}}x,\sigma v)
\]
and
\[
y(r)= \sigma^{-2} y^\ast(\sigma^{-\frac{2(k+l)+1}{2l+2}}r),
\]
in particular, $\sigma = y(0)^{-1/2}$.

Let $\mathcal A^\ast$ denote the Antonov operator corresponding to the steady state
$y^\ast$, and let $\lambda^\ast>0$ denote its smallest, positive eigenvalue so that
\[
\mathcal A^\ast h = \lambda^\ast h
\]
for some eigenfunction $h$; the central aim of the present paper is to establish the existence
of this eigenvalue. We want to rescale this eigenvalue equation in such a
way that it becomes the eigenvalue equation for a general
steady state obtained from $y^\ast$ by the rescaling above, which results in the function
$y$ and the induced operators $\mathcal A$ and $\mathcal B$.
First we note that
\begin{align*}
E^\ast(\sigma^{-\frac{2(k+l)+1}{2l+2}}x,\sigma v) 
&=
\sigma^2 E(x,v),\\
w(\sigma^{-\frac{2(k+l)+1}{2l+2}}x,\sigma v)
&=
\sigma w(x,v),\\
L(\sigma^{-\frac{2(k+l)+1}{2l+2}}x,\sigma v)
&=
\sigma^{-\frac{2k-1}{l+1}} L(x,v);
\end{align*}
the local energy $E^\ast$ corresponds to $y^\ast$, and $E$ corresponds to
the general steady state $y$.
A lengthy computation shows that
\[
(\mathcal B^\ast h)(\sigma^{-\frac{2(k+l)+1}{2l+2}}x,\sigma v)
= \sigma^{\frac{2k+4l+3}{l+1}} (\mathcal B h_\mathrm{resc})(x,v)
\]
where 
\[
h_{\mathrm{resc}}(x,v)  = h(\sigma^{-\frac{2(k+l)+1}{2l+2}}x,\sigma v).
\]
The same scaling law is true for the operator $\D^2$.
Hence we find that
\[
\mathcal A h_{\mathrm{resc}} = 
\sigma^{-\frac{2k+4l+3}{l+1}}(\mathcal A^\ast h)_{\mathrm{resc}}
= \sigma^{-\frac{2k+4l+3}{l+1}} \lambda^\ast h_{\mathrm{resc}}
\]
so that 
\[
\lambda = \sigma^{-\frac{2k+4l+3}{l+1}} \lambda^\ast
\]
is the smallest, positive eigenvalue of $\mathcal A$;
it has to be the smallest since the scaling preserves order.
Let $P$ denote the period corresponding to the eigenvalue
$\lambda$. Then $P = 2\pi/\sqrt{\lambda}$ and
expressing $\sigma$ by $y(0)$ we find that
\[
P y(0)^{\frac{k+2l+3/2}{2l+2}} = const;
\]
this relation was observed numerically in \cite{RaRe2018}.
If we consider the special case $l=0$ and recall that
then $\rho(0)=c\,y(0)^{k+3/2}$
we find that
\[
P \rho(0)^{1/2} = const,
\]
which is the Eddington-Ritter relation known from the Euler-Poisson case.


\section{The Antonov operators}\label{sc:operator}

We now give a precise definition of the Antonov operators
in the spherically symmetric and in the plane symmetric case
which have been derived in the previous section, 
including the function spaces they act on and some first properties. As explained in Section~\ref{S:LINEARISATION}, their eigenvalues, if they exist, correspond to linearized galaxy oscillations.


\subsection{The radial Antonov operator}\label{ssc:operatorradial}


The operator will be defined on a suitable subspace of the weighted, real-valued $L^2$-space
\begin{align*}
	{L^2_{\frac1{\vert\varphi'\vert}}}(\Omega_0) \coloneqq \{g\colon\Omega_0\to\R\text{ measurable}\mid \|g\|_{\frac1{\vert\varphi'\vert}}<\infty\},
\end{align*}
where
\begin{align*}
	{\|g\|^2_{\frac1{\vert\varphi'\vert}}} \coloneqq \int_{\Omega_0} \frac1{\vert\varphi'(E,L)\vert}\,\vert g(x,v)\vert^2 \diff (x,v);
\end{align*}
recall $\varphi'=\partial_E\varphi<0$ on the steady state support.
The scalar product $\langle\cdot,\cdot\rangle_{\frac1{\vert\varphi'\vert}}$ is defined accordingly. Later, we work on the radial subspace
\begin{align*}
  {L^2_{\frac1{\vert\varphi'\vert},r}(\Omega_0)}
  \coloneqq\{g\in L^2_{\frac1{\vert\varphi'\vert}}(\Omega_0)\mid
  g\text{ is spherically symmetric a.e.~on }\Omega_0\}
\end{align*}
and with odd functions
\begin{align*}
  {L^{2,odd}_{\frac1{\vert\varphi'\vert},r}}(\Omega_0)\coloneqq
  \{g\in L^2_{\frac1{\vert\varphi'\vert},r}(\Omega_0)\mid g\text{ is odd in }v \text{ a.e.~on }\Omega_0\}.
\end{align*}
Spherical symmetry on $\Omega_0$ is defined similarly to~\eqref{ss_symm_def}; note that the set $\Omega_0$ is spherically symmetric.
We will always use a lower index $r$ when restricting some function space to its radial subspace.
For an a.e.~spherically symmetric function $g\colon\Omega_0\to\R$,
we write $g(x,v)={g}(r,w,L)$ with slight abuse of notation; recall \eqref{eq:rwL}.
Similarly to \cite{ReSt20} we use the abbreviations
\begin{align}\label{E:SPACESONE}
  \Ltwo\coloneqq {L^2_{\frac1{\vert\varphi'\vert},r}(\Omega_0)}, \qquad
  \|\cdot\|_{\Ltwo}\coloneqq\|\cdot\|_{\frac1{\vert\varphi'\vert}},  \qquad
  \Ltwo^{odd}\coloneqq {L^{2,odd}_{\frac1{\vert\varphi'\vert},r}}(\Omega_0).
\end{align}

The first part of the linearized operator consists of the squared transport operator which we
define in a weak sense similar to \cite{ReSt20,St19}.
For a smooth function $g\in C^1(\Omega_0)$,
\begin{align*}
  {\D} g(x,v)\coloneqq
  v\cdot\partial_xg(x,v)-\partial_x U_0(x)\cdot\partial_vg(x,v), \quad (x,v)\in\Omega_0.
\end{align*} 
\begin{defn}\label{D:OPERATORD}
  For a spherically symmetric function $g\in L^1_{loc}(\Omega_0)$, ${\D g}$ {\em exists weakly}
  if there exists some spherically symmetric $\mu\in L^1_{loc}(\Omega_0)$ such that for
  every test function $\xi\in C^1_{c,r}(\Omega_0)$,
  \begin{align}
    \int_{\Omega_0} \frac1{\vert\varphi'(E,L)\vert}\,g\;\D\xi \diff(x,v)
    = -\int_{\Omega_0}\frac1{\vert\varphi'(E,L)\vert}\,\mu\;\xi \diff(x,v).
  \end{align}
  In this case, ${\D g\coloneqq \mu}$ {\em weakly}.
  The domain of $\D$ is defined as
  \begin{align}
    {\mathrm D(\D)}\coloneqq \{ g\in \Ltwo  \mid \D g\text{ exists weakly and }\D g\in \Ltwo  \}.
  \end{align}
\end{defn}
Obviously, the weak definition of $\D$ extends the classical one on $C^1_{c,r}(\Omega_0)$.
Furthermore, the resulting operator $\D\colon\mathrm D(\D)\to\Ltwo$ has the following properties:
\begin{prop} \label{transport}
  \begin{enumerate}[label=(\alph*)]
  \item $\D\colon\mathrm D(\D)\to \Ltwo $ is skew-adjoint as a densely defined operator
    on $\Ltwo $, i.e., $\D^*=-\D$.
  \item The kernel of $\D$ is given by
    \begin{align*}
      \ker (\D) &= \{ g\in \Ltwo \mid  \exists f \colon \R^2 \to \R \text{ s.t. }
      g(x,v) = f(E(x,v), L(x,v)) ~ \\
      & \qquad\qquad\qquad\qquad\qquad\qquad\qquad\qquad\qquad~\text{ for a.e. }
      (x,v) \in \Omega_0  \}.
    \end{align*}
  \item Let
    \begin{align*}
      {\mathrm D(\D^2)}\coloneqq \{ g\in\mathrm D(\D)\mid \D g\in\mathrm D(\D) \}.
    \end{align*}
    Then $\D^2\colon\mathrm D(\D^2)\to \Ltwo $ is self-adjoint as a densely defined
    operator on $\Ltwo $.
  \item The restricted operator $\D^2\colon \mathrm D(\D^2)\cap\Ltwo^{odd}\to \Ltwo^{odd}$
    is self-adjoint as a densely defined operator on $\Ltwo^{odd}$.
  \end{enumerate}
\end{prop}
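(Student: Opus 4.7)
The plan is to address the four parts in turn. For (a), the key observation is that the characteristic flow of $\D$ preserves both conserved quantities $E$ and $L$, and therefore preserves the weighted measure $|\varphi'(E,L)|^{-1}\diff(x,v)$ on $\Omega_0$: the bare Lebesgue measure is preserved by the Hamiltonian flow, and the weight depends only on flow invariants. A direct integration by parts on $C^1_{c,r}(\Omega_0)$ then yields the formal skew-symmetry
\begin{equation*}
\langle \D g,\xi\rangle_{\Ltwo} = -\langle g,\D\xi\rangle_{\Ltwo}, \qquad g,\xi \in C^1_{c,r}(\Omega_0).
\end{equation*}
To pass from formal skew-symmetry to $\D^{\ast}=-\D$, the inclusion $\mathrm{D}(\D^{\ast}) \subset \mathrm{D}(\D)$ is essentially tautological from Definition~\ref{D:OPERATORD}: given $g \in \mathrm{D}(\D^{\ast})$ with $\D^{\ast} g = \tilde\mu$, testing the identity $\langle g,\D\xi\rangle_{\Ltwo} = \langle \tilde\mu,\xi\rangle_{\Ltwo}$ against $\xi \in C^1_{c,r}(\Omega_0) \subset \mathrm{D}(\D)$ is precisely the statement that $\D g = -\tilde\mu$ holds weakly, so $g \in \mathrm{D}(\D)$. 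The reverse inclusion requires graph-density of $C^1_{c,r}(\Omega_0)$ in $\mathrm{D}(\D)$, and this is the one genuinely technical step; I would import the mollification-along-characteristics construction from \cite{ReSt20}, which adapts here verbatim because it uses only the $\D$-invariance of the weight.

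For (b), I would read the result off the action-angle representation $\D = T(E,L)^{-1}\partial_\theta$ from~\eqref{E:TRANSPORTANGLE}. The weak equation $\D g = 0$ then reduces to $\partial_\theta g = 0$ on (a.e.~fiber of) $\mathring\Omega_0^{EL} \times \mathbb{S}^1$, which forces $g$ to be a function of $E$ and $L$ alone. This is the weak version of Jeans' theorem, for which I would cite \cite{BaFaHo86,GuRe2007,ReSt20,St19} to justify the manipulation rigorously in the weighted $\Ltwo$-framework (modulo the fact that action-angle variables degenerate at the boundary of $\mathring\Omega_0^{EL}$, which has measure zero and does not contribute).

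For (c), once (a) is established, $i\D$ is self-adjoint and so, by standard spectral calculus, its square $-(i\D)^2=\D^2$ is self-adjoint on $\mathrm{D}((i\D)^2)=\mathrm{D}(\D^2)$; equivalently, $-\D^2=\D^{\ast}\D$ is self-adjoint by von Neumann's theorem. For (d), a parity count shows that $\D = v\cdot\nabla_x - \nabla_x U_0\cdot\nabla_v$ interchanges even-in-$v$ and odd-in-$v$ functions, so $\D^2$ preserves the orthogonal decomposition $\Ltwo = \Ltwo^{odd} \oplus (\Ltwo^{odd})^{\perp}$. Consequently the orthogonal projection $P$ onto $\Ltwo^{odd}$ leaves $\mathrm{D}(\D^2)$ invariant and commutes with $\D^2$ there, and the restriction of a self-adjoint operator to a closed invariant subspace is self-adjoint on that subspace.

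In summary, the only analytically nontrivial obstacle is the graph-density statement needed to close the skew-adjointness argument in (a); (b) reduces to the action-angle formula plus a weak-Jeans reference, while (c) and (d) are abstract consequences of (a) together with parity and spectral calculus.
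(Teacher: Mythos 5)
Your proposal is correct, and structurally it matches the paper's argument: parts (c) and (d) are handled identically (von Neumann's theorem giving $-\D^2=\D^*\D$ self-adjoint, then parity decomposition), while parts (a) and (b) are where the paper simply points to \cite{ReSt20,St19} and your write-up fills in the content. Two remarks on those parts.

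For (a), your observation that $\mathrm D(\D^*)\subset\mathrm D(\D)$ is nearly automatic from Definition~\ref{D:OPERATORD} is exactly the right way to see it: the weak-derivative domain is, by construction, the maximal domain, so the only non-trivial inclusion is $\mathrm D(\D)\subset\mathrm D(\D^*)$, which is where graph density of $C^1_{c,r}(\Omega_0)$ (and hence mollification along characteristics) enters. One small point worth making explicit: the test output $\tilde\mu=\D^* g$ is automatically in $\Ltwo\subset L^1_{loc}(\Omega_0)$ and spherically symmetric, which is what Definition~\ref{D:OPERATORD} actually demands of the weak derivative; these conditions come for free because $\tilde\mu$ already lies in the space on which $\D^*$ acts. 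Also, formal skew-symmetry can be phrased a little more directly than invariance of the weighted measure under the flow: $\D$ is a divergence-free vector field on $\R^3\times\R^3$, and the weight $1/\vert\varphi'(E,L)\vert$ is a first integral, so $\int (\D F)/\vert\varphi'\vert\,\diff(x,v)=0$ for compactly supported $F$, and one applies this to $F=g\xi$.

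For (b), you reach for the action-angle identity $\D = T(E,L)^{-1}\partial_\theta$ on $\mathrm D(\D)$, which is Lemma~\ref{trans1d} in the paper. This is a genuinely different route from the citation the paper gives, but it is explicitly sanctioned there (``the use of action-angle type variables \ldots also offers alternate ways to prove the skew-adjointness and the representation of the kernel''), and the logical order is safe because the proof of Lemma~\ref{trans1d} uses only Definition~\ref{D:OPERATORD} and the chain rule, not Proposition~\ref{transport}. The action-angle route buys a cleaner identification of $\ker(\D)$ (constants in $\theta$, i.e.\ functions of $(E,L)$) at the price of having to first establish the fiberwise $H^1_\theta$ description; the cited-paper route proves weak Jeans directly via approximation along characteristics. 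Either is fine.

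For (d), your phrasing ``closed invariant subspace'' should really be ``reducing subspace'' (invariance of both the subspace and the domain under the orthogonal projection, plus commutation on the domain); you do state the stronger properties in the sentence that follows, so the argument itself is sound, but the terminology as stated is a little loose.
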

\begin{proof}
  For parts (a) and (b) cf.~\cite{ReSt20,St19}; the same proofs work for the present steady states.
  
  Part (c) follows by von Neumann's theorem \cite[Theorem X.25]{ReSi2}
  since $\D^2=-\D^*\D$ and $\mathrm D(\D^2) = \{ g\in\mathrm D(\D)\mid \D g\in\mathrm D(\D^*) \}$
  by (a). 
 
  For (d), note that $\D$ reverses $v$-parity, which can be easily verified for smooth functions
  and then extended to the weak version of $\D$, cf. \cite[Corollary 3.17]{St19}.
  Therefore, $\D^2$ preserves $v$ parity, meaning that the operator restricted to $\Ltwo^{odd}$
  is well-defined. Its self-adjointness follows with part (c) by decomposing all functions
  involved into their even and odd parts in $v$.
\end{proof}

The domains $\mathrm D(\D)$ and $\mathrm D(\D^2)$ may seem quite abstract for now,
but we offer a more intuitive characterization in Section~\ref{ssc:essradial}.
Besides, the use of action-angle type variables in Section~\ref{ssc:essradial}
also offers alternate ways to prove the skew-adjointness and the representation of the kernel.

We now get to the second part of the linearized operator. For $g\in \Ltwo $ let
\begin{align}\label{E:BDEF}
\left( {\B g} \right)(r,w,L) \coloneqq 4\pi^2 \vert \varphi'(E,L)\vert ~\frac w{r^2}~ \J (g)(r)
\end{align}
for a.e.\ $(r,w,L)\in\Omega_0^r$, where
\begin{align*}
{\J} (g)(r) \coloneqq \int_0^\infty\int_\R w~ g(r,w,L) \diff w\diff L ,\quad \text{a.e. } r>0,
\end{align*}
$g$ is extended by $0$ to $\R^3\times\R^3$, and we used the abbreviation $E=E(r,w,L)$.
\begin{lemma} \label{bdef}
  The operator $\B\colon \Ltwo \to \Ltwo $ is well-defined, linear,
  continuous, and symmetric.
\end{lemma}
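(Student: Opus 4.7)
The plan is to verify the three claims in the stated order: linearity is immediate from the definition, boundedness is obtained via a Cauchy--Schwarz estimate on $\J(g)(r)$ combined with the technical assumption \eqref{eq:A7prime}, and symmetry is transparent once $\langle \B g, h\rangle_{\Ltwo}$ is written in $(r,w,L)$-coordinates, in which the weight $1/|\varphi'|$ of the inner product precisely cancels the $|\varphi'|$ present in $\B g$.

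For the boundedness step, I would first record that $w^2$ is uniformly bounded on $\Omega_0$. Indeed, on the support we have $E \leq E_0$ and $\Psi_L(r) \geq U_0(r) \geq U_0(0)$, whence $w^2 = 2(E-\Psi_L(r)) \leq 2(E_0 - U_0(0)) =: C_w$. Recalling that for spherically symmetric functions $\diff v = \frac{\pi}{r^2} \diff w \diff L$, the assumption \eqref{eq:A7prime} reads $\int_\R\!\int_0^\infty |\varphi'(E,L)| \diff L \diff w \leq \frac{C}{\pi} r^2$, hence
\[
\int_\R\int_0^\infty |\varphi'(E,L)|\, w^2 \diff L \diff w \leq C C_w\, r^2.
\]
A Cauchy--Schwarz splitting of the integrand $w\, g = (w\sqrt{|\varphi'|})\cdot(g/\sqrt{|\varphi'|})$ in the definition of $\J(g)(r)$ then yields
\[
|\J(g)(r)|^2 \leq C'\, r^2 \int_\R\int_0^\infty \frac{|g(r,w,L)|^2}{|\varphi'(E,L)|} \diff L \diff w.
\]
In particular, Fubini guarantees that $\J(g)(r)$ is finite for almost every $r$, so that $\B g$ is well-defined pointwise a.e.

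Computing $\|\B g\|_{\Ltwo}^2$ is then a direct insertion: since $\diff x \diff v = 4\pi^2 \diff r \diff w \diff L$ on the spherically symmetric subspace, the $|\varphi'|$-factors combine to leave a single $|\varphi'|$, the $w^2$-integral is absorbed using the bound above, and the remaining $r^{-2}|\J(g)(r)|^2$ is estimated once more by the same Cauchy--Schwarz bound. One arrives at $\|\B g\|_{\Ltwo}^2 \leq \tilde C \|g\|_{\Ltwo}^2$ for a constant depending only on $C$, $C_w$ and $\pi$, which gives both well-definedness and continuity.

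For symmetry, the same change of coordinates gives, for $g,h \in \Ltwo$,
\[
\langle \B g, h\rangle_{\Ltwo} = 4\pi^2 \int_{\Omega_0^r} \frac{1}{|\varphi'|} (\B g)\, h \, \diff r \diff w \diff L = (4\pi^2)^2 \int_0^{R_0} \frac{\J(g)(r)\, \J(h)(r)}{r^2}\diff r,
\]
where the absolute convergence needed to apply Fubini is provided by the boundedness estimate together with Cauchy--Schwarz. The right-hand side is manifestly symmetric in $g$ and $h$, so $\langle \B g, h\rangle_{\Ltwo} = \langle g, \B h\rangle_{\Ltwo}$. I do not expect any real obstacle in the argument; the only point that requires care is to ensure that \eqref{eq:A7prime} is used in exactly the form $\int|\varphi'|\diff w\diff L \lesssim r^2$ rather than $\int|\varphi'|\diff v \lesssim 1$, so that the compensating $r^2$ factor correctly cancels the $r^{-2}$ weight coming from $\B g$.
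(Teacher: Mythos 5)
Your proof is correct and the overall structure (Cauchy--Schwarz on $\J(g)(r)$, then insertion into the norm, then symmetry via the $r$-integral representation) matches the paper. The one place where you diverge is how you bound the intermediate quantity $\int_0^\infty\int_\R w^2|\varphi'(E,L)|\diff w\diff L$: you bound $w^2$ by the constant $2(E_0-U_0(0))$ on the support and then invoke the technical assumption \eqref{eq:A7prime} in the rescaled form $\int|\varphi'|\diff w\diff L \lesssim r^2$. The paper instead uses the exact identity
\begin{equation*}
\frac{\pi}{r^2}\int_0^\infty\int_\R w^2|\varphi'(E,L)|\diff w\diff L = \rho_0(r),
\end{equation*}
obtained by writing $w^2|\varphi'| = -w\,\partial_w[\varphi(E,L)]$ (since $\varphi'<0$ on the support) and integrating by parts in $w$, together with the boundedness of $\rho_0$ (equation \eqref{eq:wintrho}). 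Both arguments give the needed bound $\int w^2|\varphi'|\diff w\diff L \lesssim r^2$, and yours is entirely valid; the paper's route is slightly preferable because it does not rely on the auxiliary assumption \eqref{eq:A7prime} at all (only on $\rho_0\in L^\infty$, which holds automatically for the steady states in question), and because the identity $=\rho_0(r)$ is reused repeatedly elsewhere in the paper. For completeness, you should also check the integration-by-parts boundary terms when establishing well-definedness of $\J(g)$ a.e.\ via Fubini; this follows from the finiteness of the double integral $\|\B g\|_{\Ltwo}^2$, exactly as you argue.
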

\begin{proof}
  For $g\in \Ltwo $ we have
  \begin{align*}
    \|\B g\|_{\Ltwo}^2 &= (2\pi)^6\int_0^\infty \vert \J(g)(r)\vert^2\frac1{r^4} \int_0^\infty\int_{\R} w^2\vert\varphi'(E,L)\vert\diff w\diff L \diff r\\
		&\leq C \int_0^\infty \frac1{r^2}\, \vert\J(g)(r)\vert^2\diff r\\
		&\leq C \int_0^\infty \frac1{r^2}\,\left(\int_0^\infty\int_{\R} w^2\vert\varphi'(E,L)\vert\diff w\diff L\right)\left(\int_0^\infty\int_{\R}\frac{\vert g(r,w,L)\vert^2}{\vert\varphi'(E,L)\vert}\diff w\diff L\right)\diff r\\&\leq C\|g\|_{\Ltwo}^2.
	\end{align*}
  Here, $C>0$ changed from line to line, but depends only on the fixed steady
  state $f_0$. In the first and third inequality, we used that	
	\begin{align}\label{eq:wintrho}
		\frac\pi{r^2} \int_0^\infty&\int_{\R} w^2\vert\varphi'(E,L)\vert\diff w\diff L = -\frac\pi{r^2} \int_0^\infty\int_{\R} w\,\partial_w\left[\varphi(E,L)\right]\diff w\diff L\nonumber\\&= \frac\pi{r^2} \int_0^\infty\int_{\R} \varphi(E,L)\diff w\diff L = \int_{\R^3} \varphi(E,L)\diff v = \rho_0(r)
	\end{align}
	for $r>0$ and that $\rho_0$ is bounded on the support of the steady state. The symmetry of $\B$ follows by
	\begin{align*}
		\langle \B g, h\rangle_{\Ltwo} &= (2\pi)^4 \int_0^\infty \frac1{r^2}~ \J(g)(r) ~ \J(h)(r)\diff r
	\end{align*}
	and the symmetry of the latter expression for $g,h\in \Ltwo $.
\end{proof}

Obviously, $\B g$ is odd in $v$ for every $g\in \Ltwo $, which implies that the restriction $\B\colon \Ltwo^{odd}\to \Ltwo^{odd}$ onto odd function also has the properties from above.

Equation \eqref{eq:uprime} from the appendix yields the following alternate representation of $\B g$ in the case $g\in \mathrm D(\D)$:
\begin{align} \label{eq:bpot}
	\left( \B g \right)(r,w,L) = \vert\varphi'(E,L)\vert~ w ~ U_{\D g}'(r) 
\end{align}
for a.e.~$(r,w,L)\in\Omega_0^r$.

We are now in the position to define the Antonov operator:
\begin{defn} \label{antonovdef}
  Let
  \begin{align*}
    \A \colon \mathrm D(\D^2)\to \Ltwo ,~ \A \coloneqq - \D^2 - \B .
  \end{align*}
  For $g\in \mathrm D(\D^2)$,
  \begin{align*}
    \langle \A g,g\rangle_{\Ltwo}
    &= \|\D g\|_{\Ltwo}^2 -  (2\pi)^4 \int_0^\infty \frac1{r^2}~ \left(\J(g)(r)\right)^2\diff r\\
    &= \|\D g\|_{\Ltwo}^2 - \int_0^\infty r^2 \left(U_{\D g}'(r)\right)^2\diff r\\
    &= \|\D g\|_{\Ltwo}^2 -\frac1{4\pi} \|\partial_x U_{\D g}\|_2^2.
  \end{align*}
  The latter expression is also defined on $\mathrm D(\D)$ which motivates the definition
  \begin{align*}
    \langle \A g,g\rangle_{\Ltwo}
    \coloneqq \|\D g\|_{\Ltwo}^2 - \frac1{4\pi} \|\partial_x U_{\D g}\|_2^2 \quad
    \text{for } g\in\mathrm D(\D).
  \end{align*}
\end{defn}
Up to some factor, the quadratic form of $\A$ equals the second order
variation of an important energy-Casimir functional (see \cite{GuRe2007}),
also known \cite{GuLi08,LeMeRa11} as the Antonov functional.
This is why we call $\A$ the {Antonov operator} or the linearized operator.


Before proceeding we observe that $\A$ inherits the self-adjointness of $\D^2$ and $\B$:

\begin{lemma} \label{antonovselfadjoint}
	The operator $\A\colon \mathrm D(\D^2)\to \Ltwo $ is self-adjoint as a densely defined operator on $\Ltwo $. Its restriction  $\A\colon \mathrm D(\D^2)\cap \Ltwo^{odd}\to \Ltwo^{odd}$ to odd functions is also self-adjoint as a densely defined operator on $\Ltwo^{odd}$.
\end{lemma}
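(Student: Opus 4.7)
The plan is to invoke the standard Kato--Rellich-type observation that a bounded symmetric perturbation of a self-adjoint operator, on the same domain, remains self-adjoint. All of the hard work has already been done in Proposition~\ref{transport} and Lemma~\ref{bdef}.

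First I would note that $-\D^2\colon \mathrm D(\D^2)\to \Ltwo$ is self-adjoint by Proposition~\ref{transport}(c), since multiplying a self-adjoint operator by $-1$ clearly preserves self-adjointness (the adjoint of $-T$ is $-T^*$, and the domain is unchanged). Next, by Lemma~\ref{bdef}, $\B\colon \Ltwo\to \Ltwo$ is bounded and symmetric, hence self-adjoint as an everywhere-defined operator; the same is then true of $-\B$. In particular $\mathrm D(\B) = \Ltwo \supset \mathrm D(\D^2)$, so the sum $\A = -\D^2 - \B$ is well-defined on $\mathrm D(\D^2)$, which is dense in $\Ltwo$.

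To conclude self-adjointness, I would use the elementary fact that if $T$ is self-adjoint on $\mathrm D(T)$ and $S$ is bounded and symmetric everywhere, then $T+S$ is self-adjoint on $\mathrm D(T)$. This is immediate from the definition: symmetry of $\A$ on $\mathrm D(\D^2)$ follows directly from the symmetry of $-\D^2$ and $-\B$; to see $\mathrm D(\A^*) \subset \mathrm D(\D^2)$, observe that for any $g \in \mathrm D(\A^*)$ and $h \in \mathrm D(\D^2)$,
\begin{equation*}
\langle \A^* g, h\rangle_{\Ltwo} = \langle g, \A h\rangle_{\Ltwo} = \langle g, -\D^2 h\rangle_{\Ltwo} + \langle -\B g, h\rangle_{\Ltwo},
\end{equation*}
so $\langle \A^* g + \B g, h\rangle_{\Ltwo} = \langle g, -\D^2 h\rangle_{\Ltwo}$, which shows $g \in \mathrm D((-\D^2)^*) = \mathrm D(\D^2)$ with $(-\D^2) g = \A^* g + \B g$, i.e.\ $\A^* g = -\D^2 g - \B g = \A g$.

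For the restriction to odd-in-$v$ functions, the same argument applies verbatim on the Hilbert space $\Ltwo^{odd}$. The restriction $\D^2\colon \mathrm D(\D^2)\cap \Ltwo^{odd} \to \Ltwo^{odd}$ is self-adjoint by Proposition~\ref{transport}(d), and by the remark directly after Lemma~\ref{bdef}, $\B$ maps $\Ltwo$ (and hence $\Ltwo^{odd}$) into $\Ltwo^{odd}$, yielding a bounded symmetric operator on $\Ltwo^{odd}$. No nontrivial analytic step is required here; the only thing to verify is that the pieces are compatible, which is exactly what has been set up in the preceding results. Since there is no subtle issue—$\B$ is genuinely bounded on all of $\Ltwo$—no essential self-adjointness or form-perturbation arguments are needed.
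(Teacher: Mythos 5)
Your proof is correct and takes essentially the same route as the paper: both rely on Proposition~\ref{transport} for the self-adjointness of $\D^2$ (and its odd-in-$v$ restriction), Lemma~\ref{bdef} for $\B$ being bounded and symmetric, and the standard Kato--Rellich principle for bounded symmetric perturbations. The only difference is that the paper cites this principle as a black box, while you spell out the two-line verification that $\mathrm D(\A^*)\subset\mathrm D(\D^2)$; both are fine.
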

\begin{proof}
	This follows by the self-adjointness of $\D^2$ on $\mathrm D(\D^2)$ and $\mathrm D(\D^2)\cap \Ltwo^{odd}$, see Proposition \ref{transport}, together with the Kato-Rellich theorem (see \cite[Chapter~13]{HiSi}) since $\B$ is bounded and symmetric due to Lemma \ref{bdef}.
\end{proof}

As explained in Section~\ref{S:LINEARISATION}, a positive eigenvalue $\lambda>0$ of the operator $\A$ with spherically symmetric, odd-in-$v$ eigenfunction induces an oscillating or pulsating solution of the linearized Vlasov-Poisson system with period $P$ given by
\begin{align}\label{eq:period}
	P = \frac{2\pi}{\sqrt\lambda}.
\end{align}

\subsection{The planar Antonov operator}\label{ssc:operatorplane}

The function spaces and operators in the plane symmetric setting are defined similarly
to the spherically symmetric case, i.e.,
\begin{align*}
  \pH\coloneqq \{g\colon\pOmega_0\to\R\text{ measurable}\mid \|g\|_{\pH}<\infty\},
\end{align*}
where
\begin{align*}
  \|g\|^2_{\pH} \coloneqq
  \int_{\pOmega_0} \frac1{\vert\pvarphi'(\pE,\pv)\vert}\,\vert g(x,v)\vert^2 \diff (x,v).
\end{align*}
The scalar product $\langle\cdot,\cdot\rangle_{\pH}$ is defined accordingly. 
Functions in $\pH$ are in general not plane symmetric in the sense of~\eqref{pl_symm_def},
but for the sake of generality we consider the operators on $\pH$ too.
Eqn.~\eqref{pl_symm_def} is valid for functions in
\begin{align}
  \p\H\coloneqq&\{g\in \pH
  \mid g\text{ is odd in }v_1 \text{ and }x \text{ a.e.~on }\pOmega_0\}\notag \\
  =&\{g\in \pH\mid \text{ for a.e. }(x,v)\in\pOmega_0\colon g(x,v)=-g(-x,v)=-g(x,-v_1,\pv)\}.
  \label{E:HODDDEF}
\end{align}
Observe that oddness in $x$ and $v_1$ is stronger than the symmetry required
in~\eqref{pl_symm_def}. However, imposing the above symmetry condition simplifies the
following analysis, and, as we shall see in Section~\ref{sc:mathur}, we obtain eigenvalues
of $\A$ in $\p\H$ nonetheless. 

As to the operators, we again start by defining the planar transport operator in a weak sense
similarly to the spherically symmetric setting in the previous section.
For a smooth function $g\in C^1(\pOmega_0)$,
\begin{align*}
  {\pD} g(x,v)\coloneqq v_1\;\partial_xg(x,v)-\pU_0'(x)\;\partial_{v_1}g(x,v), \quad (x,v)\in\pOmega_0.
\end{align*}
\begin{defn}\label{deftransportplanar}
  For $g\in L^1_{loc}(\pOmega_0)$, ${\pD g}$ {\em exists weakly} if there exists some
  $\mu\in L^1_{loc}(\pOmega_0)$ such that for every test function $\xi\in C^1_c(\pOmega_0)$,
  \begin{align}
    \int_{\pOmega_0} \frac1{\vert\pvarphi'(\pE,\pv)\vert} g\;\pD\xi \diff(x,v)
    = -\int_{\pOmega_0}\frac1{\vert\pvarphi'(\pE,\pv)\vert}\mu\;\xi \diff(x,v).
  \end{align}
  In this case, ${\pD g \coloneqq \mu}$ {\em weakly}. The domain of $\pD$ is defined as
  \begin{align}
    {\mathrm D(\pD)}\coloneqq \{ g\in \pH \mid \pD g\text{ exists weakly and }\pD g\in \pH \}.
  \end{align}
\end{defn}

The resulting operator again extends $\pD$ for smooth functions and has the
following further properties:

\begin{prop}\label{transport1D}
	\begin{enumerate}[label=(\alph*)]
		\item $\pD\colon\mathrm D(\pD)\to \pH$ is skew-adjoint as a densely defined operator on $\pH$, i.e., $\pD^*=-\pD$.
		\item The kernel of $\pD$ is given by
		\begin{align*}
			\ker (\pD) &= \{ g\in \pH \mid  \exists f \colon \R^3 \to \R \text{ s.t. }  g(x,v) = f(\pE(x,v_1), \pv) \text{ for a.e. } (x,v) \in \pOmega_0  \}.
		\end{align*}
		\item Let
		\begin{align*}
			{\mathrm D(\pD^2)}\coloneqq \{ g\in\mathrm D(\pD)\mid \pD g\in\mathrm D(\pD) \}.
		\end{align*}
		Then $\pD^2\colon\mathrm D(\pD^2)\to \pH$ is self-adjoint as a densely defined operator on $\pH$.
		\item $\pD$ reverses $v_1$-parity and $x$-parity respectively, i.e., $\pD^2$ preserves these parities. Furthermore, the restricted operator 
		$\pD^2\colon \mathrm D(\pD^2)\cap\p\H\to\p\H$ is self-adjoint as well.
	\end{enumerate}
\end{prop}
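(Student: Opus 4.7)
\medskip

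\noindent\textbf{Proof proposal.} The plan is to mirror the strategy used in the radial case, which is attributed in the excerpt to~\cite{ReSt20,St19}: establish the skew-adjointness of $\bar{\mathcal D}$ first, then read off (c) from von Neumann's theorem $\bar{\mathcal D}^2 = -\bar{\mathcal D}^*\bar{\mathcal D}$, and finally handle the parity statements by an elementary direct computation combined with orthogonal decomposition. Throughout, the planar situation is in fact simpler than the radial one because the characteristic system
\[
\dot x = v_1,\qquad \dot v_1 = -\bar U_0'(x),
\]
has only two conserved quantities $\bar E$ and $\bar v = (v_2,v_3)$, the potential $\bar U_0$ is globally defined and smooth on $\mathbb R$ (by Proposition~\ref{existenceplanarstst}), and there is no singular effective potential of the type $L/(2r^2)$ to handle.

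For part (a), since $\bar{\mathcal D}$ was defined in the weak sense tested against $C^1_c(\bar\Omega_0)$, a formal integration by parts immediately yields $\bar{\mathcal D}\subset -\bar{\mathcal D}^*$, so the content is the reverse inclusion. I would show that $C^\infty_c(\bar\Omega_0)$ is dense in $\mathrm D(\bar{\mathcal D})$ with respect to the graph norm by mollifying along the characteristic flow of the steady state: given $g\in \mathrm D(\bar{\mathcal D})$, convolve $g$ with a standard mollifier, then cut off using a smooth exhaustion of $\bar\Omega_0$ that respects the flow. Because $\bar U_0\in C^1(\mathbb R)$ and the flow is globally defined, the commutator between $\bar{\mathcal D}$ and the mollifier is controlled by a Friedrichs-type lemma. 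For (b), the characterization of $\ker(\bar{\mathcal D})$ is a weak-formulation Jeans theorem: if $\bar{\mathcal D}g = 0$ weakly then $g$ is invariant along the characteristic flow, and since the flow on each level set of $(\bar E,\bar v)$ is ergodic (in fact periodic with the period function $\bar T(\bar E)$ from Definition~\ref{defT}, and essentially transitive on a.e.~level set) $g$ must depend only on $(\bar E,\bar v)$. The trivial $(v_2,v_3)$-dynamics makes this step particularly clean.

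For part (c), once (a) is in hand we apply von Neumann's theorem \cite[Theorem X.25]{ReSi2} to the closed, densely defined operator $\bar{\mathcal D}$: the product $\bar{\mathcal D}^*\bar{\mathcal D} = -\bar{\mathcal D}^2$ is automatically self-adjoint on the natural domain
\[
\{g\in \mathrm D(\bar{\mathcal D}) \mid \bar{\mathcal D}g\in \mathrm D(\bar{\mathcal D}^*)\} = \{g\in \mathrm D(\bar{\mathcal D}) \mid \bar{\mathcal D}g \in \mathrm D(\bar{\mathcal D})\}=\mathrm D(\bar{\mathcal D}^2),
\]
using $\mathrm D(\bar{\mathcal D}^*)=\mathrm D(\bar{\mathcal D})$ from (a). For part (d), the parity claim is a direct verification on smooth functions: substituting $(x,v_1)\mapsto(-x,-v_1)$ into the first-order expression $v_1\partial_x g - \bar U_0'(x)\partial_{v_1}g$ and using that $\bar U_0$ is even (hence $\bar U_0'$ is odd) shows $\bar{\mathcal D}$ reverses $v_1$-parity and $x$-parity separately; this extends to the weak definition because the corresponding reflection is a measure-preserving isometry of $\bar H$ that preserves $\mathrm D(\bar{\mathcal D})$. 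Consequently $\bar{\mathcal D}^2$ preserves both parities, and the orthogonal decomposition $\bar H = \bar H^{++}\oplus\bar H^{+-}\oplus\bar H^{-+}\oplus\bar H^{--}$ with respect to these two $\mathbb Z_2$-actions reduces $\bar{\mathcal D}^2$. Self-adjointness on $\bar{\mathcal H}$ (which is the doubly-odd subspace, cf.~\eqref{E:HODDDEF}) then follows from (c) by restricting to one component of this decomposition.

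The main obstacle is the density/approximation step underpinning (a): one must verify that the mollification procedure actually produces an approximating sequence for $g\in \mathrm D(\bar{\mathcal D})$ in the weighted norm $\|\cdot\|_{\bar H}$, and that the commutator $[\bar{\mathcal D},\text{mollifier}]\,g$ vanishes in $\bar H$. Because the weight $1/|\bar\varphi'|$ may degenerate near $\partial\bar\Omega_0$ (for instance for polytropes with $k$ close to $1/2$, the factor $(\bar E_0-\bar E)^{k-1}$ is singular at the cut-off energy), one has to choose the cut-offs in coordinates adapted to the flow rather than in $(x,v)$-coordinates directly. Once this technicality is handled exactly as in \cite{ReSt20,St19}, everything else is essentially functional-analytic bookkeeping.
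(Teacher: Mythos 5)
Your proposal is essentially correct, and it follows exactly the route the paper names but deliberately \emph{does not} carry out: mollification along the characteristic flow in $(x,v)$-coordinates, as in the radial references \cite{ReSt20,St19}. The paper instead proves Proposition~\ref{transport1D} by deferring to Proposition~\ref{proptransplane}, which uses the action-angle variables $(\theta,\pE,\pv)$ of Section~\ref{ssc:essplane}. There, Lemma~\ref{trans1dplane} shows that $\pD$ acts as $\pT(\pE)^{-1}\partial_\theta$ fiber-wise, and $\mathrm D(\pD)$ is identified with the set of $g$ whose $\theta$-slices lie in the periodic Sobolev space $H^1_\theta$ with a suitable integrability condition. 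Skew-adjointness then reduces to an integration by parts in $\theta$ (all boundary terms vanish by the periodicity built into $H^1_\theta$), the kernel characterization in (b) is exactly the statement that $\ker(\partial_\theta\colon H^1_\theta\to L^2(]0,1[))$ consists of constants, and (c) is von Neumann, just as you say. The payoff of the paper's route is that it bypasses entirely the two issues you flag as "the main obstacle": the commutator control in the Friedrichs lemma and the degeneracy of the weight $1/|\pvarphi'|$ near $\partial\pOmega_0$. Since $\pvarphi'$ depends only on $(\pE,\pv)$, that weight is constant along each $\theta$-fiber, so once the domain is described fiber-wise the weight is simply carried along as a multiplicative constant and never interacts with the mollification; no cut-off adapted to the flow is needed. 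Your approach is viable (the paper itself asserts it works "similarly to the spherically symmetric setting"), but it front-loads precisely the technical work that the action-angle change of coordinates renders unnecessary.

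Two small corrections to your write-up. First, the word "ergodic" in your argument for (b) is misplaced: the flow on a level set of $(\pE,\pv)$ is a single periodic orbit, not an ergodic system in the measure-theoretic sense; what you want is simply that the orbit sweeps out the full level curve, so a flow-invariant function is constant on each such curve. Second, the period function you refer to should be $\pT$ from the unnumbered Definition in Section~\ref{ssc:ststp}, not $T(E,L)$ from Definition~\ref{defT}, which is the radial one. Your parity argument for (d) is correct as stated, and the reduction to $\p\H$ via the $\mathbb Z_2\times\mathbb Z_2$ orthogonal decomposition is the right way to conclude self-adjointness of the restricted operator.
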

\begin{proof}
  All of the above statements can be proven similarly to the spherically symmetric setting,
  see Proposition \ref{transport}. The plane symmetric case however is not covered in
  \cite{ReSt20,St19}. 	
  The use of action-angle type variables in Section~\ref{ssc:essplane}
  offers alternative and more direct proofs, see Proposition~\ref{proptransplane}.
\end{proof}

The second part of the linearized operator is given by
\begin{align}\label{E:BARBDEF}
	\left( {\pB g} \right)(x,v) \coloneqq 4\pi \vert \pvarphi'(\pE,\pv)\vert ~v_1~ \pJ (g)(x)
\end{align}
for $g\in\pH$ and a.e.~$(x,v)\in\pOmega_0$, where
\begin{align}
	{\pJ} (g)(x) \coloneqq \int_{\R^3} v_1\, g(x,v) \diff v ,\quad \text{a.e. } x\in\R,
\end{align}
and $g$ is extended by $0$ to $\R\times\R^3$.
The resulting operator has the following properties:

\begin{lemma} \label{bdef1D}
  $\pB\colon \pH\to\pH$ is well-defined, linear, continuous, and symmetric.
\end{lemma}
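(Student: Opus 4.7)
The plan is to follow the template of the proof of Lemma~\ref{bdef} almost verbatim, since the structure of $\pB$ parallels that of $\B$. Linearity is immediate from the definition~\eqref{E:BARBDEF}, so the real content is well-definedness, boundedness, and symmetry.

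First I would compute directly
\[
\|\pB g\|_{\pH}^2 = 16\pi^2 \int_\R |\pJ(g)(x)|^2 \int_{\R^3} v_1^2\,|\pvarphi'(\pE,\pv)|\, \diff v \, \diff x,
\]
after using the definition of the norm and Fubini. The inner $v$-integral is the planar analogue of~\eqref{eq:wintrho}: since $\partial_{v_1}\pvarphi(\pE,\pv) = v_1\,\pvarphi'(\pE,\pv)$, integration by parts in $v_1$ and the compact $v$-support (from the cut-off energy and the compact support of $\pbeta$) yield
\[
\int_{\R^3} v_1^2\,|\pvarphi'(\pE,\pv)|\, \diff v = -\int_{\R^3} v_1\,\partial_{v_1}\pvarphi(\pE,\pv)\, \diff v = \int_{\R^3} \pvarphi(\pE,\pv)\, \diff v = \prho_0(x),
\]
which is continuous with compact support by Proposition~\ref{existenceplanarstst}, hence bounded on $\R$ by some constant $C$.

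Next I would estimate $|\pJ(g)(x)|^2$ pointwise by Cauchy-Schwarz, factoring the weight $\tfrac{1}{|\pvarphi'|}$:
\[
|\pJ(g)(x)|^2 \leq \left(\int_{\R^3} v_1^2\,|\pvarphi'(\pE,\pv)|\, \diff v\right)\left(\int_{\R^3} \frac{|g(x,v)|^2}{|\pvarphi'(\pE,\pv)|}\, \diff v\right) \leq C \int_{\R^3} \frac{|g(x,v)|^2}{|\pvarphi'(\pE,\pv)|}\, \diff v.
\]
Combining the two displays gives $\|\pB g\|_{\pH}^2 \leq C^2 \|g\|_{\pH}^2$, which proves both that $\pB g \in \pH$ (well-definedness) and continuity. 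Note this tacitly also establishes that $\pJ(g)(x)$ is defined for a.e.\ $x$ for every $g\in\pH$.

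For symmetry, the weight $\tfrac{1}{|\pvarphi'|}$ in the scalar product cancels the factor $|\pvarphi'|$ in the definition of $\pB g$, so
\[
\langle \pB g,h\rangle_{\pH} = 4\pi\int_\R \pJ(g)(x)\,\pJ(h)(x)\, \diff x,
\]
which is manifestly symmetric in $g$ and $h$; this integral is finite by the same estimate used above. I do not expect any real obstacle here—the argument is essentially a rearrangement of the one in the radial case, with the only point requiring attention being that $\prho_0$ is indeed bounded, which follows from Proposition~\ref{existenceplanarstst}.
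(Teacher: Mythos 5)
Your proposal is correct and follows the same route as the paper: Fubini to isolate the inner $v$-integral, the identity $\int_{\R^3} v_1^2\,|\pvarphi'(\pE,\pv)|\,\diff v = \prho_0(x)$ (the paper's~\eqref{eq:v1intrho}) applied twice together with boundedness of $\prho_0$ and Cauchy-Schwarz, and finally the observation that the weight cancels $|\pvarphi'|$ in the scalar product to give the symmetric expression $4\pi\int_\R \pJ(g)\pJ(h)\,\diff x$. The only cosmetic difference is that you phrase the integration by parts via $\partial_{v_1}\pvarphi = v_1\pvarphi'$ rather than via $\palpha$, which amounts to the same computation.
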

\begin{proof}
  For $g\in \pH$,
  \begin{align*}
    \|\pB g\|_{\pH}^2
    &= 16\pi^2\int_{\pOmega_0} \vert\pvarphi'(\pE,\pv)\vert\; v_1^2\;
    \vert \pJ(g)(x)\vert^2 \diff (x,v)\\
    &= 16\pi^2\int_\R \vert \pJ(g)(x)\vert^2\int_{\R^3} \vert\pvarphi'(\pE,\pv)\vert\;
    v_1^2\diff v \diff x\\
    &\leq C\int_\R\vert \pJ(g)(x)\vert^2\diff x =
    C\int_\R\left( \int_{\R^3} v_1\, g(x,v)\diff v \right)^2\diff x\\
    &\leq C\int_\R \left(\int_{\R^3}v_1^2\,\vert\pvarphi'(\pE,\pv)\vert\diff v \right)
    \left(\int_{\R^3} \frac{\vert g(x,v)\vert^2}{\vert\pvarphi'(\pE,\pv)\vert}
    \diff v \right)\diff x\leq C\|g\|_{\pH}^2,
  \end{align*}
  where in the first and third inequality we used that
  \begin{align}\label{eq:v1intrho}
    \int_{\R^3} v_1^2&\;\vert\pvarphi'(\pE(x,v_1),\pv)\vert\diff v
    = - \int_{\R} v_1^2\; \palpha'(\pE(x,v_1))\diff v_1
    = - \int_{\R} v_1\; \partial_{v_1}\left[\palpha(\pE(x,v_1))\right]\diff v_1\nonumber\\
    &=\int_{\R} \palpha(\pE(x,v_1))\diff v_1=\prho_0(x),\quad x\in\R,
  \end{align}
  together with the fact that $\prho_0$ is bounded by $\prho_0(0)<\infty$.
  The symmetry of $\pB$ follows by
  \begin{align*}
    \langle \pB g, h\rangle_{\pH}
    &= 4\pi\int_{\pOmega_0} v_1\,\pJ(g)(x)\,h(x,v) \diff (x,v)
    = 4\pi\int_{\R} \pJ(g)(x)\,\pJ(h)(x)\diff x
  \end{align*}
  and the symmetry of the latter expression for $g,h\in \pH$.
\end{proof}

Since $\pB$ preserves $x$-parity and the image of $\pB$ is always an odd function in $v_1$, the restricted operator 
$\pB\colon\p\H\to\p\H$ has the analogous properties.

Eqn.~\eqref{eq:uprimeplane} yields the following alternative representation
of $\pB g$ in the case $g\in \mathrm D(\pD)$:
\begin{align} \label{eq:bpot1D}
	\left( \pB g \right)(x,v) = \vert\pvarphi'(\pE,\pv)\vert\; v_1\; \pU_{\pD g}'(x) 
\end{align}
for a.e.~$(x,v)\in\pOmega_0$. We refer to Section~\ref{ssc:potplane} in the appendix for a short discussion of the potentials induced by images of the planar transport operator.

We are now in the position to define the linearized operator in the planar setting:
\begin{defn} \label{antonovdef1D}
  Let
  \begin{align*}
    \pA \colon \mathrm D(\pD^2)\to \pH,~ \pA \coloneqq - \pD^2 - \pB .
  \end{align*}
  For $g\in \mathrm D(\pD^2)$,
  \begin{align*}
    \langle \pA g,g\rangle_{\pH} &= \|\pD g\|_{\pH}^2 -\frac1{4\pi} \| \pU_{\pD g}'\|_2^2.
  \end{align*}
  The latter expression is also defined for $g\in\mathrm D(\pD)$.
  $\pA$ is the (planar) Antonov or linearized operator.
\end{defn}

We again have the following properties:

\begin{lemma} \label{antonovselfadjoint1D}
  The operator $\pA\colon \mathrm D(\pD^2)\to \pH$ is self-adjoint as a densely defined
  operator on $\pH$. Its restriction 
  $\pA\colon \mathrm D(\pD^2)\cap \p\H\to \p\H$ to odd functions is also self-adjoint.
\end{lemma}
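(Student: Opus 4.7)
The proof should mirror the one of Lemma~\ref{antonovselfadjoint} in the spherically symmetric setting, so the plan is to invoke the Kato-Rellich perturbation theorem (e.g., \cite[Chapter~13]{HiSi}), using that a bounded symmetric operator is automatically $0$-bounded with respect to any self-adjoint operator, so its addition preserves self-adjointness on the same domain.

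For the full statement on $\pH$, I would proceed as follows. By Proposition~\ref{transport1D}(c), the operator $\pD^2\colon \mathrm D(\pD^2)\to\pH$ is self-adjoint and densely defined. By Lemma~\ref{bdef1D}, $\pB\colon\pH\to\pH$ is bounded and symmetric; in particular its domain is all of $\pH\supset\mathrm D(\pD^2)$, and it is relatively $\pD^2$-bounded with relative bound $0$. The Kato-Rellich theorem then gives that $\pA=-\pD^2-\pB$ is self-adjoint on $\mathrm D(\pA)=\mathrm D(\pD^2)$. Density of $\mathrm D(\pD^2)$ in $\pH$ follows from Proposition~\ref{transport1D}(c).

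For the restriction to $\p\H$, the key point is to verify that $\p\H$ is an invariant subspace for both $\pD^2$ and $\pB$, so that the restriction of $\pA$ is indeed well-defined. The invariance under $\pD^2$ is part of Proposition~\ref{transport1D}(d), since $\pD$ reverses both $x$- and $v_1$-parities and hence $\pD^2$ preserves them. For $\pB$, observe from the definition \eqref{E:BARBDEF} that the prefactor $v_1$ ensures $\pB g$ is odd in $v_1$ for any $g\in\pH$, while $|\pvarphi'(\pE,\pv)|$ is even in $x$; hence, provided that the current $\pJ(g)(x)=\int v_1 g(x,v)\diff v$ is odd in $x$, $\pB g$ will be odd in $x$. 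If $g\in\p\H$ then $g(-x,v_1,\pv)=-g(x,v_1,\pv)$, so a direct change of variable in the $v$-integral shows $\pJ(g)(-x)=-\pJ(g)(x)$, and hence $\pB(\p\H)\subset\p\H$. With both operators leaving $\p\H$ invariant, decomposing a general $g\in\pH$ into its four parity components (even/odd in $x$, even/odd in $v_1$) and running the same Kato-Rellich argument on $\mathrm D(\pD^2)\cap\p\H$ yields the second claim.

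The only potentially delicate point is to verify that $\mathrm D(\pD^2)\cap\p\H$ is dense in $\p\H$, which one obtains via a symmetrization argument: given any $g\in\p\H$ approximate it in $\pH$ by elements of $\mathrm D(\pD^2)$ (dense by Proposition~\ref{transport1D}(c)), and then take the projection
\[
P g(x,v):=\tfrac14\bigl(g(x,v)-g(-x,v_1,\pv)-g(x,-v_1,\pv)+g(-x,-v_1,\pv)\bigr),
\]
which preserves the domain since $\pD^2$ commutes with $P$, and which converges to $g$ in $\pH$ because $g$ is already invariant under $P$. No further obstacle is expected; the argument is essentially mechanical once Proposition~\ref{transport1D} and Lemma~\ref{bdef1D} are in hand.
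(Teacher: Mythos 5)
Your proof is correct and takes the same approach as the paper, which simply invokes the self-adjointness of $\pD^2$ from Proposition~\ref{transport1D} and the boundedness and symmetry of $\pB$ from Lemma~\ref{bdef1D} (the Kato--Rellich step being implicit, as in the radial Lemma~\ref{antonovselfadjoint}). You supply more detail than the paper does, in particular the invariance of $\p\H$ under $\pD^2$ and $\pB$ and the density of $\mathrm D(\pD^2)\cap\p\H$ in $\p\H$, but these are already covered by Proposition~\ref{transport1D}(d) and the remark following Lemma~\ref{bdef1D}.
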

\begin{proof}
  We use the self-adjointness of $\pD^2$ from Proposition \ref{transport1D}
  and the symmetry and boundedness of $\pB$ from Lemma \ref{bdef1D}.
\end{proof}

\section{The essential spectra of the Antonov operators} \label{sc:ess}

In this section we rigorously describe the essential spectra of $\A$ and $\pA$, see Theorems~\ref{essspecA} and~\ref{T:ESSENTIALSPECTRUMABAR} respectively. 
Broadly speaking, the essential spectrum contains all elements of the spectrum which are not isolated eigenvalues of finite multiplicity. A key consequence of our results is the existence of the so-called principal gap in the essential spectra of $\A$ and $\pA$, which plays a crucial role in the study of the existence of isolated eigenvalues in Section~\ref{sc:mathur}.

\subsection{The essential spectrum of the radial Antonov operator}\label{ssc:essradial}

Let $\A$ denote the self-adjoint operator $\A\colon\mathrm D(\D)\to \Ltwo $
introduced in Definition~\ref{antonovdef}; most of the results below also hold
true for its restriction to odd functions.

We will see in Theorem~\ref{essspecA} that the essential spectrum of $\A=-(\D^2+\B)$
is solely determined by the one of $-\D^2$. This is why we first analyze the squared
transport operator. We do this by introducing one of the key tools in our work,
the action-angle variables.

For fixed $(r,w,L)\in \Omega_0^r$ let $\R\ni t\mapsto (R,W)(t,r,w,L)$
be the unique global solution to the characteristic system
\begin{align}\label{eq:charsystrad}
\dot R = W,\quad \dot W = -\Psi_L'(R)
\end{align}
satisfying the initial condition $(R,W)(0,r,w,L)=(r,w)$.
As discussed in Section \ref{ssc:ststradial}, $(R,W)(\cdot,r,w,L)$
is periodic with period $T(E,L)$, where $E=E(r,w,L)=\frac12w^2+\Psi_L(r)$.
We now use the variables $\theta\in[0,1]$, $(E,L)\in\Omega_0^{EL}$ given by
\begin{align*}
(r,w,L) = \left( (R,W)(\theta T(E,L), r_-(E,L),0,L), L \right).
\end{align*}
For functions $g\in \Ltwo $ we write
\begin{align*}
{g} (\theta,E,L) = g\left( (R,W)(\theta T(E,L), r_-(E,L),0,L), L \right)
\end{align*}
for a.e.~$(\theta,E,L)\in{\Omega_0^\theta}\coloneqq [0,1]\times \Omega_0^{EL}$
by slight abuse of notation. Integrals change via
\begin{align}\label{eq:actionanglevolume}
\diff x\diff v = 4\pi^2 T(E,L) \diff\theta\diff E\diff L;
\end{align}
note that the mapping
$[0,\frac12]\ni\theta\mapsto R(\theta T(E,L),r_-(E,L),0,L)\in[r_-(E,L),r_+(E,L)]$
is bijective for $(E,L)\in\mathring\Omega_0^{EL}$ with inverse given by 
\begin{align}\label{eq:tauofr}
\theta(r,E,L) \coloneqq \frac1{T(E,L)} \int_{r_-(E,L)}^r \frac{\diff s}{\sqrt{2E-2\Psi_L(s)}},
\end{align}
$r_-(E,L)\leq r\leq r_+(E,L)$.
In particular, the transformation $(x,v)\mapsto(\theta,E,L)$ is not measure preserving
as it would be in the case of \enquote{true} action-angle variables, see \cite{Arnold,LaLi,LB1994}.
However, $(\theta,E,L)$ have the same interpretation as action-angle variables,
since $(E,L)$ fix a trajectory of the stationary characteristic system and $\theta\in[0,1[$
gives the position along the characteristic flow. Actual action-angle variables have
been used in \cite{GuLi08} without the restriction to spherically symmetric functions.
%

The reason why it is useful to use $(\theta,E,L)$-variables when working with the
transport operator is that $\D$ corresponds to a $\theta$-derivative in these variables:

\begin{lemma} \label{trans1dsmooth}
  For every $g\in C^1_r(\Omega_0)$,
  \begin{align}\label{E:DFORMULAONE}
    \left(\D g\right) (\theta,E,L)
    = \frac1{T(E,L)} (\partial_\theta g) (\theta, E,L), \quad
    0\leq\theta\leq1,~ (E,L)\in\mathring\Omega_0^{EL}.
  \end{align}
  Similarly, for $g\in C^2_r(\Omega_0)$,
  \begin{align*}
    \left(\D^2 g\right) (\theta,E,L) = \frac1{T(E,L)^2} (\partial_\theta^2 g) (\theta, E,L), \quad
    0\leq\theta\leq1,~ (E,L)\in\mathring\Omega_0^{EL}.
  \end{align*}
\end{lemma}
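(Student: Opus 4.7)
The approach is to interpret $\D$ as the directional derivative along the characteristic flow of the steady state. In the action-angle variables $(\theta, E, L)$, the angle $\theta$ parametrizes position along a periodic orbit, scaled so that one full period corresponds to $\theta$ advancing by $1$; hence by the chain rule one picks up exactly the factor $1/T(E,L)$.

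First I would recall that for a spherically symmetric $g = g(r,w,L)$, the transport operator reduces to
\begin{align*}
  \D g = w\,\partial_r g - \Psi_L'(r)\,\partial_w g,
\end{align*}
which coincides with $\frac{d}{dt}\big|_{t=0} g(R(t,r,w,L),W(t,r,w,L),L)$, where $(R,W)$ solves the characteristic system \eqref{eq:charsystrad}. Thus along any characteristic, $\D g$ is simply the time derivative of $g$ evaluated on the trajectory.

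Next I would use the definition of the angle variable. By the flow (semigroup) property of \eqref{eq:charsystrad} and the fact that $E$ and $L$ are preserved along it, for fixed $(E,L)\in\mathring\Omega_0^{EL}$ and any $\theta\in[0,1]$ the map
\begin{align*}
  s \mapsto g\big(\theta + s/T(E,L),\,E,\,L\big) = g\big((R,W)(\theta T(E,L)+s,\,r_-(E,L),0,L),\,L\big)
\end{align*}
is precisely the value of $g$ along the characteristic through the point labeled by $(\theta,E,L)$, parametrized by the physical time $s$. Differentiating at $s=0$ and using the first step gives
\begin{align*}
  \tfrac{1}{T(E,L)}\,\partial_\theta g(\theta,E,L) = (\D g)(\theta,E,L),
\end{align*}
which is the first assertion.

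For the second formula I would iterate. Since $T(E,L)$ depends only on the conserved quantities $E$ and $L$, it is constant along characteristics, so $\D$ commutes with multiplication by $1/T(E,L)$. If $g\in C^2_r(\Omega_0)$, then $\D g\in C^1_r(\Omega_0)$ and $\partial_\theta g$ is again $C^1$ in $\theta$, so applying the first identity to $\D g$ yields $\D^2 g = \frac{1}{T(E,L)^2}\,\partial_\theta^2 g$. The only mildly delicate point, and the main place one must be careful, is verifying that the substitution $(r,w,L)\mapsto(\theta,E,L)$ is smooth on the relevant region — the characteristic flow is smooth away from the turning points $r=r_\pm(E,L)$, and on the interior $\mathring\Omega_0^{EL}$ with $\theta\in[0,1]$ the parametrization $\theta\mapsto (R(\theta T,r_-,0,L),W(\theta T,r_-,0,L))$ is a smooth closed curve, so both sides of \eqref{E:DFORMULAONE} make classical sense and the pointwise identity follows from the chain rule computation above.
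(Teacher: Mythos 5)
Your proof is correct and is precisely the chain-rule argument the paper invokes (its proof of Lemma~\ref{trans1dsmooth} consists of the single sentence ``The assertions follow by the chain rule''); you have simply spelled it out by identifying $\D$ with the derivative along the characteristic flow and using the flow property to reparametrize time by $\theta$. No gap; same approach, more detail.
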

\begin{proof}
  The assertions follow by the chain rule.
\end{proof} 

To analyze the operator $-\D^2$ and its spectrum, we need a formula similar to the ones
derived in Lemma \ref{trans1dsmooth} for all functions in $\mathrm D(\D)$ and
$\mathrm D (\D^2)$. We therefore define
\begin{align}\label{eq:defh1tau}
{H^1_\theta}\coloneqq \left\{ y\in H^1(]0,1[) \mid y(0)=y(1) \right\};
\end{align}
note that $H^1(]0,1[)\hookrightarrow C([0,1])$,
i.e., the boundary condition is imposed for the continuous representative. 
In the following lemma we provide an alternate description of $\mathrm D(\D)$
(see Definition~\ref{D:OPERATORD}) and the extension of the
formula~\eqref{E:DFORMULAONE} to $\mathrm D(\D)$.

\begin{lemma} \label{trans1d} It holds that
  \begin{align*}
    \mathrm D(\D) = \{ g\in \Ltwo  \mid&\; \text{for a.e. }(E,L)\in\Omega_0^{EL},\ 
    g(\cdot,E,L)\in H^1_\theta, \\
    &\;\text{and } \int_{\Omega_0^{EL}} \frac{T(E,L)^{-1}}{\vert \varphi'(E,L)\vert}
    \int_0^1 \vert \partial_\theta g(\theta,E,L)\vert^2\diff\theta\diff(E,L)<\infty  \}.
  \end{align*}
  If $g\in\mathrm D(\D)$,
  \begin{align*}
    \left(\D g\right) (\theta,E,L) = \frac1{T(E,L)} (\partial_\theta g) (\theta, E,L)
  \end{align*}
  for a.e.~$ (\theta,E,L)\in\Omega_0^\theta$.
\end{lemma}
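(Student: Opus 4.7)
The plan is to use the action-angle coordinates $(\theta,E,L)$ and the formula $\D = T(E,L)^{-1}\partial_\theta$ from Lemma \ref{trans1dsmooth} to transfer the weak definition of $\D g$ into a fibrewise weak $\theta$-derivative condition on $g(\cdot,E,L)$, exploiting the Jacobian identity $\diff x\,\diff v = 4\pi^2 T(E,L)\,\diff\theta\,\diff E\,\diff L$.

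For the forward inclusion, take $g \in \mathrm D(\D)$ with $\mu \coloneqq \D g$. For any $\xi \in C^1_{c,r}(\Omega_0)$, changing variables in the defining weak identity and substituting $\D\xi = T^{-1}\partial_\theta \xi$ yields
\[
\int_{\Omega_0^{EL}}\frac{1}{|\varphi'|}\int_0^1 g\,\partial_\theta\xi\,\diff\theta\,\diff(E,L) \;=\; -\int_{\Omega_0^{EL}}\frac{T}{|\varphi'|}\int_0^1 \mu\,\xi\,\diff\theta\,\diff(E,L).
\]
I would then restrict to product test functions $\xi(x,v) = \psi(\theta(x,v))\,\chi(E(x,v),L(x,v))$ with $\psi \in C^\infty(\mathbb S^1)$ smooth and $1$-periodic and $\chi \in C^\infty_c(\mathring\Omega_0^{EL})$. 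Such $\xi$ lie in $C^1_{c,r}(\Omega_0)$ because the characteristic map $\Phi:(\theta,E,L)\mapsto(r,w,L)$ is a smooth local diffeomorphism with Jacobian $T(E,L)>0$ (by Lemma \ref{effpot} and standard ODE regularity), the periodicity of $\psi$ gives a smooth match across $\theta\equiv 0$, and compactness of $\supp\chi \subset \mathring\Omega_0^{EL}$ keeps the pull-back compactly inside $\Omega_0^r$. Inserting such $\xi$ and applying the fundamental lemma of the calculus of variations in $(E,L)$---using a countable dense family of $\psi$'s to swap the quantifiers---gives, for a.e.\ $(E,L)\in\Omega_0^{EL}$ and every smooth $1$-periodic $\psi$,
\[
\int_0^1 g(\theta,E,L)\,\psi'(\theta)\,\diff\theta \;=\; -T(E,L)\int_0^1 \mu(\theta,E,L)\,\psi(\theta)\,\diff\theta.
\]
Restricting $\psi$ to $C^\infty_c(]0,1[)$ shows that $g(\cdot,E,L)$ has $L^2$ weak derivative $T(E,L)\,\mu(\cdot,E,L)$, while allowing $\psi$ with $\psi(0)=\psi(1)\neq 0$ forces $g(0,E,L)=g(1,E,L)$ on the continuous representative. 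Hence $g(\cdot,E,L)\in H^1_\theta$ with $\partial_\theta g = T\,\D g$ almost everywhere, and the integrability bound in the claimed set reduces to $\|\D g\|_H^2 < \infty$ after the change of variables.

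Conversely, given $g$ in the right-hand side set, define $\mu \coloneqq T^{-1}\partial_\theta g$; the integrability hypothesis yields $\mu\in H$. For any $\xi \in C^1_{c,r}(\Omega_0)$, pull $\xi$ back to $(\theta,E,L)$-coordinates, where it is $C^1$, $1$-periodic in $\theta$ and compactly supported in $(E,L)$, apply Lemma \ref{trans1dsmooth}, and integrate by parts fibrewise on the circle. The boundary terms at $\theta \in \{0,1\}$ cancel by periodicity of $\xi(\cdot,E,L)$ and the endpoint match of $g(\cdot,E,L) \in H^1_\theta$. The resulting identity is exactly the weak-$\D$ relation, so $g \in \mathrm D(\D)$ with $\D g = \mu$.

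The principal technical delicacy is to verify that the product test functions genuinely belong to $C^1_{c,r}(\Omega_0)$. The subtlety here is that while the coordinate $\theta$ itself, viewed as a function of $(r,w,L)$ at fixed $(E,L)$, has a square-root singularity at the turning points $w=0$, the map $\Phi$ in the opposite direction remains smooth there with positive Jacobian $T(E,L)$; consequently $\psi \circ \Phi^{-1}$ for smooth periodic $\psi$ is genuinely smooth on $\Omega_0^r$. Periodicity of $\psi$ takes care of the match at $\theta \equiv 0 \bmod 1$. In the isotropic case $L_0 = 0$, the locus $L=0$ may touch $\mathring\Omega_0^{EL}$; this is handled by approximating $\chi$ with a sequence supported in $\{L\geq\varepsilon\}$ and a density argument.
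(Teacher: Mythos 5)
Your proof is correct and follows essentially the same route as the paper: change to action-angle coordinates, restrict to product test functions, invoke the fundamental lemma in $(E,L)$ via a countable dense family, and read off the fibrewise weak $\theta$-derivative, the $L^2$ bound, and the endpoint match, with the converse established by fibrewise integration by parts. The one small cosmetic difference is that you use general smooth $1$-periodic $\psi$ (so the periodic boundary condition and the weak derivative come out of the same class of test functions) whereas the paper uses $\zeta\in C^\infty_c(]0,1[)$ first and then the constant $\zeta\equiv 1$ separately; these are equivalent, and the extra remark you make about $L_0=0$ is unnecessary, since $\chi\in C^\infty_c(\mathring\Omega_0^{EL})$ is already supported away from $L=0$.
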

\begin{proof}
	First, consider $g\in\mathrm D(\D)$. By the weak definition of $\D$, a change of variables and Lemma \ref{trans1dsmooth}, we obtain the following for every test function $\xi\in C^1_{c,r}(\Omega_0)$:
	\begin{align*}
	4\pi^2 \int_{\Omega_0^{EL}} & \frac1{\vert\varphi'(E,L)\vert} \int_0^1 \partial_\theta\xi (\theta,E,L)\; g(\theta,E,L)\diff\theta\diff(E,L)  \\
	&=4\pi^2 \int_{\Omega_0^{EL}} \frac{T(E,L)}{\vert\varphi'(E,L)\vert} \int_0^1 (\D \xi) (\theta,E,L)\; g(\theta,E,L)\diff\theta\diff(E,L)  \\
	&= \int_{\Omega_0} \frac1{\vert\varphi'(E,L)\vert} \D \xi \; g \diff (x,v) = - \int_{\Omega_0} \frac1{\vert\varphi'(E,L)\vert} \xi \; \D g \diff (x,v)  \\
	&= - 4\pi^2 \int_{\Omega_0^{EL}} \frac{T(E,L)}{\vert\varphi'(E,L)\vert} \int_0^1 \xi (\theta,E,L)\; (\D g)(\theta,E,L)\diff\theta\diff(E,L).
	\end{align*}
	We now choose $\xi$ to be factorized in $\theta$ and $(E,L)$, i.e.,
	\begin{align*}
	\xi(\theta,E,L) = \zeta(\theta)\;\chi(E,L) ,\quad (\theta,E,L)\in\Omega_0^\theta,
	\end{align*}
	where $\zeta\in C^\infty_c(]0,1[)$ and $\chi\in C^\infty_c(\mathring\Omega_0^{EL})$. Note that every such choice of $\zeta$ and $\chi$ induces some $\xi$ in $C^1_{c,r}(\Omega_0)$. Inserting this into the above calculation yields
	\begin{align*}
	\int_{\Omega_0^{EL}} & \chi(E,L) \frac1{\vert\varphi'(E,L)\vert} \int_0^1 \dot\zeta(\theta)\; g(\theta,E,L)\diff\theta\diff(E,L)\\
	&= - \int_{\Omega_0^{EL}} \chi(E,L) \frac{T(E,L)}{\vert\varphi'(E,L)\vert} \int_0^1 \zeta(\theta)\; (\D g)(\theta,E,L)\diff\theta\diff(E,L).
	\end{align*}
	Since this holds true for every $\chi\in C^\infty_c(\mathring\Omega_0^{EL})$, it follows that
	\begin{align}\label{eq:helpertrans1d}
	\int_0^1 \dot\zeta(\theta)\; g(\theta,E,L)\diff\theta = - T(E,L) \int_0^1 \zeta(\theta)\; (\D g)(\theta,E,L)\diff\theta
	\end{align}
	for a.e.\footnote{The set of measure zero can be chosen independently of the test function $\zeta\in C^\infty_c(]0,1[)$ by considering a countable subset of $C^\infty_c(]0,1[)$ which is dense with respect to $\|\cdot\|_{H^1(]0,1[)}$.}~$(E,L)\in\Omega_0^{EL}$. Since $\zeta\in C^\infty_c(]0,1[)$ is arbitrary, this in turn means that $g(\cdot,E,L)$ is weakly differentiable for a.e.~$(E,L)\in\Omega_0^{EL}$ with
	\begin{align*}
	\partial_\theta g(\cdot,E,L) = T(E,L) ~ (\D g)(\cdot,E,L).
	\end{align*} 
	In addition, 
	\begin{align*}
	4\pi^2\int_{\Omega_0^{EL}}& \frac{T(E,L)^{-1}}{\vert \varphi'(E,L)\vert} \int_0^1 \vert \partial_\theta g(\theta,E,L)\vert^2\diff\theta\diff(E,L)  \\
	&= 4\pi^2\int_{\Omega_0^{EL}} \frac{T(E,L)}{\vert \varphi'(E,L)\vert} \int_0^1 \vert (\D g)(\theta,E,L)\vert^2\diff\theta\diff(E,L)= \|\D g\|_{\Ltwo}^2<\infty,
	\end{align*}
	in particular, $\partial_\theta g(\cdot,E,L)\in L^2(]0,1[)$ for a.e.~$(E,L)\in\Omega_0^{EL}$. What remains to show is the boundary condition $g(0,E,L)=g(1,E,L)$. Observe that \eqref{eq:helpertrans1d} also holds true for $\zeta(\theta)\coloneqq1$, $0\leq\theta\leq1$, since this still leads to a test function $\xi\in C^1_{c,r}(\Omega_0)$. Therefore, integrating by parts yields
	\begin{align*}
	0 &=\frac1{T(E,L)} \int_0^1 \dot\zeta(\theta)\; g(\theta,E,L)\diff\theta = - \int_0^1 \zeta(\theta)\; \partial_\theta g(\theta,E,L)\diff\theta  \\
	&= \int_0^1 \dot\zeta(\theta)\; g(\theta,E,L)\diff\theta - \zeta (\theta) g(\theta,E,L)\Big|_{\theta=0}^{\theta=1}  \\
	&= g(0,E,L)-g(1,E,L)
	\end{align*}
	for a.e.~$(E,L)\in\Omega_0^{EL}$, i.e., $g(\cdot,E,L)\in H^1_\theta$ and we have proven the first implication.
	
	Conversely, let $g\in \Ltwo $ be such that $g(\cdot,E,L)\in H^1_\theta$ for a.e.~$(E,L)\in\Omega_0^{EL}$ and 
	\begin{align}\label{eq:helpertrans1d2}
	\int_{\Omega_0^{EL}} \frac{T(E,L)^{-1}}{\vert \varphi'(E,L)\vert} \int_0^1 \vert \partial_\theta g(\theta,E,L)\vert^2\diff\theta\diff(E,L)<\infty.
	\end{align}
	For any test function $\xi\in C^1_{c,r}(\Omega_0)$ and $(E,L)\in\mathring\Omega_0^{EL}$ we obviously have $\xi(\cdot,E,L)\in C^1([0,1])$ with $\xi(0,E,L)=\xi(1,E,L)$. Thus,
	\begin{align*}
	\int_{\Omega_0}\frac1{\vert\varphi'(E,L)\vert} g\; \D \xi \diff (x,v) & =4\pi^2 \int_{\Omega_0^{EL}} \frac1{\vert\varphi'(E,L)\vert} \int_0^1g(\theta,E,L) \; \partial_\theta\xi (\theta,E,L)\diff\theta\diff(E,L)\\
	&=-4\pi^2 \int_{\Omega_0^{EL}} \frac1{\vert\varphi'(E,L)\vert} \int_0^1\partial_\theta g(\theta,E,L) \; \xi (\theta,E,L)\diff\theta\diff(E,L)\\
	&= - \int_{\Omega_0}\frac1{\vert\varphi'(E,L)\vert}~\frac{(\partial_\theta g)}{T(E,L)} \; \xi \diff (x,v),
	\end{align*} 
	where we again used Lemma \ref{trans1dsmooth} and integrated by parts in the inner integral in the weak sense; note that the boundary terms vanish since $g(\cdot,E,L)\in H^1_\theta$. By the weak definition of $\D$, the above means that $\D g$ exists weakly and 
	\begin{align*}
	(\D g)(\theta,E,L) = \frac1{T(E,L)} \partial_\theta g(\theta,E,L)
	\end{align*}
	for a.e.~$(\theta,E,L)\in\Omega_0^\theta$.
        Eqn.~\eqref{eq:helpertrans1d2} and a change of variables
        then also show $\D g\in \Ltwo $, i.e., $g\in\mathrm D(\D)$.
\end{proof}


\begin{remark}[Oddness-in-$v$]
Recall the definition~\eqref{E:SPACESONE} of the space $\Ltwo^{odd}$ of odd-in-$v$ functions in $\Ltwo$.
It is of interest to describe the oddness with respect to the $v$-coordinate in the action-angle coordinates.
To that end let 
	\begin{align*}
	{L^{2,odd}}(]0,1[) \coloneqq \{ y\in L^2(]0,1[) \mid y(\theta) = -y(1-\theta) \text{ for a.e. } \theta\in]0,1[ \}.
	\end{align*}
It is then easy to check that  for every $g\in \Ltwo $,
	\begin{align*}
	g\in \Ltwo^{odd} \quad\Leftrightarrow\quad g(\cdot,E,L)\in L^{2,odd}(]0,1[) \text{ for a.e. }(E,L)\in\Omega_0^{EL}.
	\end{align*}
\end{remark}

To obtain a representation of the domain of $\D^2$ similar to Lemma~\ref{trans1d}, let
\begin{align}\label{eq:defh2tau}
  {H^2_\theta} \coloneqq
  \left\{ y\in H^2(]0,1[)\mid y(0)=y(1) \text{ and } \dot y(0)=\dot y(1) \right\}  
    =\left\{ y\in H^1_\theta\mid \dot y\in H^1_\theta \right\} ;
\end{align}
note $H^2(]0,1[)\hookrightarrow C^1([0,1])$, i.e., the boundary conditions are imposed for the continuously differentiable representative. Then, by analogy to 
Lemma~\ref{trans1d} we obtain

\begin{cor} \label{trans1dsquare}
  It holds that
  \begin{align*}
    \mathrm D(\D^2) =
    \{ g\in \Ltwo \mid& \; \text{for a.e. }(E,L)\in\Omega_0^{EL},\
    g(\cdot,E,L)\in H^2_\theta, \\
    &\; \text{and } \sum_{j=1}^2 \int_{\Omega_0^{EL}} \frac{T(E,L)^{1-2j}}{\vert \varphi'(E,L)\vert}
    \int_0^1 \vert \partial_\theta^j g(\theta,E,L)\vert^2\diff\theta\diff(E,L)<\infty\}.
  \end{align*}
  Furthermore, if $g\in\mathrm D(\D^2)$,
  \begin{align*}
    \left(\D^2 g\right) (\theta,E,L) = \frac1{T(E,L)^2} (\partial_\theta^2 g) (\theta, E,L)
  \end{align*}
  for a.e.~$ (\theta,E,L)\in\Omega_0^\theta$.
\end{cor}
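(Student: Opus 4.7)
The plan is to deduce this corollary directly from Lemma~\ref{trans1d} by iterating it, using the definition $\mathrm D(\D^2) = \{ g\in\mathrm D(\D)\mid \D g\in\mathrm D(\D)\}$ recalled in Proposition~\ref{transport}(c). The decisive structural observation is that the period function $T(E,L)$ depends only on the conserved quantities $(E,L)$ and is strictly positive on $\mathring\Omega_0^{EL}$, so multiplication or division by $T$ acts as a bounded, measurable factor that commutes with the $\theta$-derivative and preserves the $\theta$-regularity classes $H^1_\theta$ and $H^2_\theta$ fiberwise.

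First I would take $g\in\mathrm D(\D^2)$. By Lemma~\ref{trans1d} applied to $g$, we have $g(\cdot,E,L)\in H^1_\theta$ for a.e.\ $(E,L)\in\Omega_0^{EL}$, the identity $(\D g)(\theta,E,L)=T(E,L)^{-1}\partial_\theta g(\theta,E,L)$ holds a.e., and the $j=1$ summand of the claimed integrability condition is finite. Applying Lemma~\ref{trans1d} to the function $\D g\in\mathrm D(\D)$ shows that $(\D g)(\cdot,E,L)\in H^1_\theta$ a.e. Since $T(E,L)$ is $\theta$-independent and nonzero, this is equivalent to $\partial_\theta g(\cdot,E,L)\in H^1_\theta$ for a.e.\ $(E,L)$, which together with $g(\cdot,E,L)\in H^1_\theta$ yields $g(\cdot,E,L)\in H^2_\theta$ by the definition~\eqref{eq:defh2tau}. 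Using Lemma~\ref{trans1d} once more,
\[
(\D^2 g)(\theta,E,L)=\frac1{T(E,L)}\partial_\theta(\D g)(\theta,E,L)=\frac1{T(E,L)^2}\partial_\theta^2 g(\theta,E,L),
\]
and the integrability of $\D(\D g)$ in $\Ltwo$ translates via the change of variables~\eqref{eq:actionanglevolume} into the $j=2$ summand of the asserted condition.

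For the converse, I would take $g\in\Ltwo$ with $g(\cdot,E,L)\in H^2_\theta$ a.e. and the full $j=1,2$ sum finite. The $j=1$ summand together with $g(\cdot,E,L)\in H^1_\theta$ gives $g\in\mathrm D(\D)$ by Lemma~\ref{trans1d}, and $\D g=T^{-1}\partial_\theta g$. Now $g(\cdot,E,L)\in H^2_\theta$ means precisely that $\partial_\theta g(\cdot,E,L)\in H^1_\theta$, so dividing by the $\theta$-independent, positive factor $T(E,L)$ gives $(\D g)(\cdot,E,L)\in H^1_\theta$. The $j=2$ summand, after a change of variables, says that $T(E,L)^{-2}\partial_\theta^2 g=\partial_\theta(\D g)/T$ has finite $\Ltwo$-norm, which is exactly the integrability condition for $\D g$ appearing in Lemma~\ref{trans1d}. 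Hence $\D g\in\mathrm D(\D)$ and $g\in\mathrm D(\D^2)$.

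The argument is essentially a bookkeeping exercise rather than involving a genuine obstacle; the only point requiring mild care is verifying that the factor $T(E,L)$ can be moved freely in and out of the $\theta$-regularity statements and the boundary conditions defining $H^1_\theta$ and $H^2_\theta$, which is justified by Proposition~\ref{Tbounded} (boundedness of $T$ above and away from zero on $\mathring\Omega_0^{EL}$) together with the $\theta$-independence of $T$.
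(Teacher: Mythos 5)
Your argument is correct and matches the paper's intended approach exactly: the paper's proof is the single sentence that both implications follow by applying Lemma~\ref{trans1d} twice, which is precisely what you carry out in detail. A minor note: the appeal to Proposition~\ref{Tbounded} at the end is superfluous, since the fiberwise claims only require $0<T(E,L)<\infty$ pointwise in $(E,L)\in\mathring\Omega_0^{EL}$, and the integrability conditions for $g$ and $\D g$ transform into one another exactly (not merely up to constants) via the identity $\D g = T^{-1}\partial_\theta g$.
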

\begin{proof}
  Both implications follow from Lemma~\ref{trans1d}, applied twice.
\end{proof}

We next apply Lemma~\ref{trans1d} to obtain the following useful lemma:
\begin{lemma} \label{transinversetauel}
  For every $h\in \Ltwo $ with $h\perp\ker(\D)$ there exists $g\in\mathrm D(\D)$
  such that $\D g=h$. In particular,
  \begin{align}
    \ker(\D)^\perp = \mathrm{im}(\D).
  \end{align}
\end{lemma}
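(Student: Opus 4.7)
The plan is to work in the action-angle coordinates $(\theta,E,L)$ and reduce the equation $\D g = h$ to a one-parameter family of one-dimensional antiderivative problems in $\theta$, one for each fixed $(E,L) \in \mathring\Omega_0^{EL}$. By Proposition~\ref{transport}(b) together with the representation in Lemma~\ref{trans1d}, the kernel of $\D$ in $\Ltwo$ consists exactly of those functions whose action-angle representative $g(\theta,E,L)$ is $\theta$-independent. In particular, for every $k \in C^\infty_c(\mathring\Omega_0^{EL})$ (extended by zero), the function $(\theta,E,L)\mapsto k(E,L)$ lies in $\ker(\D)\cap\Ltwo$, so the orthogonality $h\perp\ker(\D)$ combined with the change-of-variables formula~\eqref{eq:actionanglevolume} yields
\[
\int_{\Omega_0^{EL}} \frac{T(E,L)}{|\varphi'(E,L)|}\, k(E,L)\, \bar h(E,L)\, \diff(E,L) = 0, \qquad \bar h(E,L)\coloneqq \int_0^1 h(\theta,E,L)\,\diff\theta.
\]
Since $T/|\varphi'|$ is strictly positive and locally bounded on $\mathring\Omega_0^{EL}$ (Proposition~\ref{Tbounded}), varying $k$ over a countable dense family forces $\bar h=0$ almost everywhere on $\Omega_0^{EL}$.

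Next, I would set
\[
g(\theta,E,L) \coloneqq T(E,L)\int_0^\theta h(s,E,L)\,\diff s \qquad \text{for a.e. } (\theta,E,L)\in \Omega_0^\theta.
\]
The mean-zero property just established gives $g(0,E,L)=g(1,E,L)=0$ for a.e.\ $(E,L)$, so each fiber $g(\cdot,E,L)$ belongs to $H^1_\theta$ with weak derivative $\partial_\theta g(\cdot,E,L) = T(E,L)\, h(\cdot,E,L)$. To confirm $g \in \mathrm D(\D)$ via Lemma~\ref{trans1d} I must verify both integrability conditions. For the derivative,
\[
\int_{\Omega_0^{EL}} \frac{T^{-1}(E,L)}{|\varphi'(E,L)|}\int_0^1|\partial_\theta g|^2\,\diff\theta\,\diff(E,L)
= \int_{\Omega_0^{EL}} \frac{T(E,L)}{|\varphi'(E,L)|}\int_0^1 |h|^2\,\diff\theta\,\diff(E,L) = \tfrac{1}{4\pi^2}\|h\|_{\Ltwo}^2,
\]
which is finite. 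For $g$ itself, a Cauchy--Schwarz estimate gives $|g(\theta,E,L)|^2 \le T(E,L)^2 \int_0^1 |h(s,E,L)|^2\,\diff s$, and the uniform upper bound on $T$ on $\Omega_0^{EL}$ then yields $\|g\|_{\Ltwo}^2 \lesssim (\sup T)^2\,\|h\|_{\Ltwo}^2$. Applying the formula in Lemma~\ref{trans1d} once more, $(\D g)(\theta,E,L) = T(E,L)^{-1}\partial_\theta g(\theta,E,L) = h(\theta,E,L)$ almost everywhere, so $\D g = h$ in $\Ltwo$.

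For the displayed identity $\ker(\D)^\perp = \mathrm{im}(\D)$, the inclusion $\supseteq$ is automatic from skew-adjointness (Proposition~\ref{transport}(a)): $\mathrm{im}(\D)\subseteq \ker(\D^*)^\perp = \ker(\D)^\perp$. The reverse inclusion is precisely what the construction above establishes, and as a byproduct $\mathrm{im}(\D)$ is closed. The main obstacle I anticipate is bookkeeping the measurability and a.e.\ statements: the exceptional null sets for $\bar h=0$, for the $H^1_\theta$-membership of each fiber, and for the applicability of Lemma~\ref{trans1d} must be chosen compatibly so that the pointwise (in $(E,L)$) construction assembles into a genuine $\Ltwo$-element, which is standard but requires invoking Fubini and a countable dense family of test functions $k$.
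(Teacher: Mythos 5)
Your proof is correct and takes essentially the same route as the paper: define $g(\theta,E,L) = T(E,L)\int_0^\theta h(s,E,L)\,\diff s$ in the action-angle representation, verify the two integrability conditions from Lemma~\ref{trans1d}, and use the orthogonality $h\perp\ker(\D)$ to show $\int_0^1 h(\cdot,E,L)=0$ a.e.\ so that the periodic boundary condition holds. The only difference is that you spell out the deduction of $\bar h=0$ via testing against $k\in C^\infty_c(\mathring\Omega_0^{EL})$, which the paper states more briefly; this is a welcome elaboration but not a different argument.
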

Note that as a skew-adjoint operator $\D$ satisfies $\ker(\D)^\perp=\overline{\mathrm{im}(\D)}$,
see e.g.~\cite[Corollary~2.18]{Brezis}. Lemma~\ref{transinversetauel}
thus shows that the range of $\D$ is closed, cf.~\cite[Theorem~2.19]{Brezis}.
\begin{proof}
  We define the spherically symmetric function $g\colon\Omega_0\to\R$ via
  \begin{align*}
    g (\theta,E,L)\coloneqq T(E,L) \int_0^\theta h(s,E,L)\diff s
    \quad\text{for a.e. } (\theta,E,L)\in\Omega_0^\theta
  \end{align*}
  and apply Lemma~\ref{trans1d} to verify the claimed properties of $g$.
  First, by the weak version of the main theorem of calculus, it follows that
  $g(\cdot,E,L)$ is weakly differentiable for a.e.~$(E,L)\in\Omega_0^{EL}$ with
  \begin{align*}
    \partial_\theta g(\cdot,E,L) = T(E,L)~ h(\cdot,E,L).
  \end{align*}
  In addition,
  \begin{align*}
    \vert g(\theta,E,L)\vert^2 \leq T(E,L)^2 \int_0^1 \vert h(s,E,L)\vert^2\diff s,
  \end{align*}
  i.e., $g(\cdot,E,L)\in L^2(]0,1[)$ for a.e.~$(E,L)\in\Omega_0^{EL}$ and $g\in \Ltwo $, since
    \begin{align*}
      \|g\|_{\Ltwo}^2 &=
      4\pi^2 \int_{\Omega_0^{EL}} \frac{T(E,L)}{\vert\varphi'(E,L)\vert}
      \int_0^1 \vert g(\theta,E,L)\vert^2 \diff\theta \diff(E,L) \\
      &\leq 4\pi^2 \int_{\Omega_0^{EL}} \frac{T(E,L)^3}{\vert\varphi'(E,L)\vert}
      \int_0^1 \vert h(s,E,L)\vert^2 \diff s \diff(E,L)\\
      &\leq {\sup_{\mathring\Omega_0^{EL}}}^2(T) \|h\|_{\Ltwo}^2;
    \end{align*}
    note that $T$ is bounded by Proposition \ref{Tbounded}.
    Furthermore, $\partial_\theta g(\cdot,E,L)\in L^2(]0,1[)$,
    i.e., $g(\cdot,E,L) \in H^1(]0,1[)$ for a.e.~$(E,L)\in\Omega_0^{EL}$. Moreover,
	\begin{align*}
	g(1,E,L)= \int_0^1 h(s,E,L)\diff s = 0 = g(0,E,L),
	\end{align*}
	for a.e.~$(E,L)\in\Omega_0^{EL}$, since $h\perp\ker(\D)$. Lastly,
	\begin{align*}
	4\pi^2&  \int_{\Omega_0^{EL}} \frac{T(E,L)^{-1}}{\vert \varphi'(E,L)\vert} \int_0^1 \vert \partial_\theta g(\theta,E,L)\vert^2\diff\theta\diff(E,L) \\ 
	&= 4\pi^2 \int_{\Omega_0^{EL}} \frac{T(E,L)}{\vert \varphi'(E,L)\vert} \int_0^1 \vert h(\theta,E,L)\vert^2\diff\theta\diff(E,L) = \| h\|_{\Ltwo}^2 < \infty.\qedhere
	\end{align*}
\end{proof}

Because of Corollary~\ref{trans1dsquare} we now turn our attention to the one-dimensional
Laplacian as an operator on $H^2_\theta$:

\begin{lemma} \label{laplace1d}
  The operator
  \begin{align*}
    -\partial_\theta^2 \colon H^2_\theta\to L^2(]0,1[), ~y\mapsto-\ddot y
  \end{align*}
  is self-adjoint as a densely defined operator on the Hilbert space $L^2(]0,1[)$.
  Its spectrum is given by 
  \begin{align*}
    \sigma(-\partial_\theta^2) = (2\pi\N_0)^2 \coloneqq \{ (2\pi k)^2\mid k\in\N_0 \}.
  \end{align*}
  In particular, every element of the spectrum is an eigenvalue.
  The eigenspace of the eigenvalue $0$ consists of all constant functions.
  For $k\in\N$, the eigenspace of $(2\pi k)^2$ is
  $\{ c_1 \cos(2\pi k\cdot) + c_2\sin(2\pi k\cdot)\mid c_1,c_2\in\R\}$.
\end{lemma}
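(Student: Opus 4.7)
The plan is to diagonalize $-\partial_\theta^2$ by Fourier series. The real Fourier system
\[
e_0(\theta) = 1,\qquad c_k(\theta) = \sqrt{2}\cos(2\pi k\theta),\qquad s_k(\theta)=\sqrt{2}\sin(2\pi k\theta),\quad k\in\N,
\]
is a complete orthonormal basis of $L^2(]0,1[)$, and each basis vector is a classical eigenfunction of $-\partial_\theta^2$ with eigenvalue $(2\pi k)^2$ satisfying the periodic boundary conditions built into $H^2_\theta$. So every element of $(2\pi\N_0)^2$ is at least an eigenvalue, and the listed functions lie in the corresponding eigenspaces.

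Next I would characterize $H^2_\theta$ in terms of Fourier coefficients: for $y\in L^2(]0,1[)$ with expansion coefficients $(a_k),(b_k)$ against $\{e_0,c_k,s_k\}$, a direct computation (integrating by parts twice, using the boundary conditions $y(0)=y(1)$, $\dot y(0)=\dot y(1)$ which are exactly what kills all boundary contributions) shows
\[
y\in H^2_\theta\;\Longleftrightarrow\; \sum_{k\in\N} k^4\bigl(a_k^2+b_k^2\bigr)<\infty,
\]
and on this domain
\[
-\partial_\theta^2 y \;=\; \sum_{k\in\N}(2\pi k)^2\bigl(a_k c_k+b_k s_k\bigr)
\]
in $L^2(]0,1[)$. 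Symmetry of $-\partial_\theta^2$ on $H^2_\theta$ is then immediate either directly from two integrations by parts (boundary terms vanish because both $y$ and $\dot y$ are periodic) or from the diagonal form above.

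For self-adjointness I would argue as follows. The operator $T\colon H^2_\theta\to L^2(]0,1[)$ is unitarily equivalent, via the Fourier isomorphism $\Phi\colon L^2(]0,1[)\to\ell^2$, to the multiplication operator $M\colon \ell^2\supset D(M)\to \ell^2$ by the real sequence $\bigl((2\pi k)^2\bigr)_{k\in\N_0}$ (with multiplicity two for $k\geq 1$), whose natural maximal domain $D(M)=\{c\in\ell^2:\sum k^4|c_k|^2<\infty\}$ is precisely $\Phi(H^2_\theta)$. Multiplication by a real-valued function on its maximal domain is self-adjoint, hence so is $T$; and the spectrum of such a multiplication operator is the essential range of the multiplier, which here is the closure of $\{(2\pi k)^2: k\in\N_0\}=(2\pi\N_0)^2$, a closed set. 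Each point is an eigenvalue with the eigenspaces as stated: $\ker T$ equals the span of $e_0$ (constants), and for $k\geq 1$ the eigenspace of $(2\pi k)^2$ is exactly the two-dimensional span of $c_k,s_k$, since multiplication operators have eigenvectors only at atoms of the multiplier.

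The only slightly delicate step is the Fourier characterization of $H^2_\theta$; the inclusion $H^2_\theta\subset \{c:\sum k^4|c_k|^2<\infty\}$ follows by integration by parts for trigonometric polynomials and density. For the reverse inclusion, given a sequence with $\sum k^4|c_k|^2<\infty$, the Fourier series converges in $H^2(]0,1[)$ by Parseval applied to the term-by-term differentiated series, and the partial sums satisfy the periodic boundary conditions, which pass to the $H^2\hookrightarrow C^1$ limit. Everything else is then a routine consequence of the spectral theorem for multiplication operators.
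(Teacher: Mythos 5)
Your proof is correct and rests on the same basic tool the paper uses, namely Fourier diagonalization of $-\partial_\theta^2$ on $]0,1[$ with periodic boundary conditions. The paper's version is terser: it asserts self-adjointness from integration by parts, finds the eigenvalues by solving the ODE with the periodic boundary conditions, and then sketches the resolvent for $\lambda\notin(2\pi\N_0)^2$ by Fourier expansion, referring to the Riesz-spectral operator framework of Curtain and Zwart. You instead carry the diagonalization through systematically, characterizing $H^2_\theta$ by the decay $\sum_k k^4(a_k^2+b_k^2)<\infty$ of Fourier coefficients and then reading off self-adjointness, spectrum and eigenspaces from the unitary equivalence with a real multiplication operator on $\ell^2$. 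Both routes are valid; yours has the advantage that self-adjointness (which requires identifying the adjoint domain, not merely checking symmetry) follows automatically from the multiplication-operator picture rather than being left as a claim that it is straightforward, and your Fourier characterization of $H^2_\theta$ makes explicit the point the paper leaves implicit when it expands ``all functions involved'' in Fourier series.
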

\begin{proof}
  It is straight forward to verify that $-\partial_\theta^2$ is self-adjoint;
  note that the boundary conditions built into $H^2_\theta$ cause all boundary terms
  to vanish when integrating by parts.
	
  Using basic ODE theory, it can be easily verified that the set of proper eigenvalues
  of $-\partial_\theta^2$ equals $(2\pi\N_0)^2$. What remains to show is that the spectrum
  of $-\partial_\theta^2$ does not contain any other elements. One way to do this is by
  explicitly deriving the resolvent operator for $\lambda\notin(2\pi\N_0)^2$ by expanding
  all functions involved into their Fourier series, since the eigenfunctions of the
  operator form an orthonormal basis of $L^2(]0,1[)$ (up to a factor).
  The latter techniques are also applied in the case of Riesz-spectral
  operators, see for example \cite[Theorem 2.3.5]{CuZw}.
\end{proof}

Similar to the above Lemma one can also show that the operator
\begin{align*}
-\partial_\theta^2 \colon H^2_\theta\cap L^{2,odd}(]0,1[)\to L^{2,odd}(]0,1[), ~y\mapsto-\ddot y
\end{align*}
is self-adjoint as a densely defined operator on the Hilbert space $L^{2,odd}(]0,1[)$ and that its spectrum is $(2\pi\N)^2$; note that non-zero constant functions are not part of $H^2_\theta\cap L^{2,odd}(]0,1[)$, which is why $0$ is not an eigenvalue of the operator restricted to odd functions.

Combining Corollary~\ref{trans1dsquare},
i.e., \enquote{$-\D^2 = -\frac1{T(E,L)^2}\partial_\theta^2$}, and Lemma \ref{laplace1d},
i.e., \enquote{$\sigma(-\partial_\theta^2)=(2\pi\N_0)^2$},
allows us to explicitly determine the spectrum of $-\D^2$:

\begin{theorem} \label{transsquarespectrum}
  The spectrum of the self-adjoint operator $-\D^2\colon \mathrm D(\D^2)\to \Ltwo $ is
  \begin{align*}
    \sigma(-\D^2) = \overline{ \left(\frac{2\pi\N_0}{T(\mathring\Omega_0^{EL})}\right)^2 }
    \coloneqq \overline{ \left\{ \left( \frac{2\pi k}{T(E,L)} \right)^2 ~\Big|~ k\in\N_0,~(E,L)
      \in\mathring\Omega_0^{EL} \right\} }.
  \end{align*}
  Furthermore, the spectrum is purely essential, i.e.,
  \begin{align*}
    \sigma_{ess}(-\D^2) = \sigma(-\D^2).
  \end{align*}
\end{theorem}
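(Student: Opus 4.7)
My plan is to exploit the action-angle representation from Corollary~\ref{trans1dsquare}, which exhibits $-\D^2$ as (essentially) a direct integral over $(E,L)\in\mathring\Omega_0^{EL}$ of the rescaled periodic one-dimensional Laplacians $\frac1{T(E,L)^2}(-\partial_\theta^2)$ acting on $H^2_\theta\subset L^2(\,]0,1[\,)$. By Lemma~\ref{laplace1d} each fiber has pure point spectrum $(2\pi\N_0/T(E,L))^2$, and both halves of the claimed identity then reduce to concrete computations at the level of these fibers. I will verify the two inclusions directly, avoiding the abstract direct-integral machinery.

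For $\overline{(2\pi\N_0/T(\mathring\Omega_0^{EL}))^2}\subseteq\sigma_{ess}(-\D^2)$ I would fix $k\in\N_0$ and $(E_\ast,L_\ast)\in\mathring\Omega_0^{EL}$, set $\lambda_\ast=(2\pi k/T(E_\ast,L_\ast))^2$, and construct a Weyl singular sequence of the form
\[
 g_n(\theta,E,L)=c_n\,\chi_n(E,L)\,e_k(\theta),
\]
where $e_k\in H^2_\theta$ is an eigenfunction of $-\partial_\theta^2$ for the eigenvalue $(2\pi k)^2$ (namely $e_0\equiv 1$, and $e_k=\sqrt 2\cos(2\pi k\cdot)$ for $k\geq 1$), the cutoffs $\chi_n\in C^\infty_c(\mathring\Omega_0^{EL})$ have pairwise disjoint supports shrinking to $(E_\ast,L_\ast)$, and $c_n>0$ is chosen so that $\|g_n\|_{\Ltwo}=1$. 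Pairwise disjoint supports force $g_n\rightharpoonup 0$ weakly, while Corollary~\ref{trans1dsquare} gives
\[
 \bigl(-\D^2-\lambda_\ast\bigr)g_n(\theta,E,L)
 = c_n\chi_n(E,L)\Bigl(\tfrac{4\pi^2k^2}{T(E,L)^2}-\lambda_\ast\Bigr)e_k(\theta).
\]
Continuity of $T$ on $\mathring\Omega_0^{EL}$ makes the oscillation of $4\pi^2k^2/T(E,L)^2$ over $\supp\chi_n$ tend to $0$, which combined with $\|g_n\|_{\Ltwo}=1$ yields $\|(-\D^2-\lambda_\ast)g_n\|_{\Ltwo}\to 0$. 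Hence $\lambda_\ast\in\sigma_{ess}(-\D^2)$, and closedness of $\sigma_{ess}$ supplies the closure.

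For the reverse inclusion $\sigma(-\D^2)\subseteq\overline{(2\pi\N_0/T(\mathring\Omega_0^{EL}))^2}$, take $\lambda$ at positive distance $d>0$ from this set and construct the resolvent $(-\D^2-\lambda)^{-1}$ fiberwise. Given $h\in\Ltwo$, expand $h(\cdot,E,L)$ in the orthonormal basis $\{e^{2\pi i k\cdot}\}_{k\in\Z}$ of $L^2(\,]0,1[\,)$ and define $g(\cdot,E,L)$ to have Fourier coefficients $\hat g_k(E,L)=\hat h_k(E,L)\big/\bigl(\tfrac{4\pi^2 k^2}{T(E,L)^2}-\lambda\bigr)$. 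The distance hypothesis gives the uniform pointwise bound $|\hat g_k(E,L)|\leq d^{-1}|\hat h_k(E,L)|$, and analogously $\bigl|\tfrac{4\pi^2 k^2}{T(E,L)^2}\hat g_k(E,L)\bigr|\leq d^{-1}\bigl(|\hat h_k(E,L)|+|\lambda|\,|\hat g_k(E,L)|\bigr)$. Parseval in $\theta$ and integration against the weight $T(E,L)/|\varphi'(E,L)|$ coming from \eqref{eq:actionanglevolume} then yield $g\in\Ltwo$ together with $g(\cdot,E,L),\,\partial_\theta g(\cdot,E,L)\in H^1_\theta$ and the integrability stated in Corollary~\ref{trans1dsquare}, so $g\in\mathrm D(\D^2)$ with $\|g\|_{\Ltwo}\leq d^{-1}\|h\|_{\Ltwo}$. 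Hence $\lambda$ lies in the resolvent set. The last statement $\sigma_{ess}(-\D^2)=\sigma(-\D^2)$ is now immediate: the first step already showed $\sigma(-\D^2)\subseteq\sigma_{ess}(-\D^2)$, and the opposite inclusion is automatic.

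The main obstacle I anticipate is the bookkeeping around the weight $T(E,L)/|\varphi'(E,L)|$ in the measure \eqref{eq:actionanglevolume}: it must be tracked in both the normalization $c_n$ of the Weyl sequence and in the Parseval estimates for the resolvent. Since $T$ is continuous and bounded away from $0$ and $\infty$ on $\mathring\Omega_0^{EL}$ by Proposition~\ref{Tbounded}, and since $|\varphi'|$ is locally bounded on the interior of the support, these weights are harmless on shrinking supports in the Weyl-sequence construction, and the uniform fiber bound $d^{-1}$ survives integration in $(E,L)$. A minor subsidiary point is the verification of the two periodic boundary conditions encoded in $H^2_\theta$ for the resolvent output; these hold automatically because Fourier expansion on $[0,1]$ produces genuinely periodic functions.
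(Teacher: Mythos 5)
Your argument is correct; the two halves of the theorem are handled differently than in the paper, so let me compare.

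For the inclusion $\overline{(2\pi\N_0/T(\mathring\Omega_0^{EL}))^2}\subseteq\sigma_{ess}(-\D^2)$ your Weyl-sequence construction matches the paper's almost exactly, modulo cosmetic choices: you use $\cos(2\pi k\cdot)$ instead of $\sin(2\pi k\cdot)$ (both lie in $H^2_\theta$), you treat $k=0$ together with $k\geq1$ rather than as a separate case, and you enforce weak convergence via pairwise disjoint supports (forcing orthogonality) rather than the paper's Cauchy–Schwarz argument over shrinking supports. None of these changes the substance, and the weight bookkeeping you flag is handled the same way: the paper builds the factor $\sqrt{|\varphi'(E,L)|/T(E,L)}$ directly into the definition of $g_j$, whereas you absorb it into the normalization $c_n$.

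For the reverse inclusion the routes genuinely diverge. The paper takes the contrapositive: assuming $\lambda\in\sigma(-\D^2)$, it invokes the weak Weyl criterion to obtain unit vectors $g_j$ with $\|(-\D^2-\lambda)g_j\|_{\Ltwo}\to0$, and derives a contradiction by integrating the fiberwise estimate $\|z\|_{L^2}\leq c^{-1}\|(-\partial_\theta^2-\lambda T(E,L)^2)z\|_{L^2}$ over $(E,L)$. You instead build the resolvent explicitly as a fiberwise Fourier multiplier and estimate it. Both arguments rest on precisely the same distance bound $\mathrm{dist}(\lambda T(E,L)^2,(2\pi\N_0)^2)\geq c$, which requires $T$ bounded away from zero (Proposition~\ref{Tbounded}). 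The paper's version is slightly shorter because for a self-adjoint operator the coercivity $\|(-\D^2-\lambda)g\|_{\Ltwo}\geq c'\|g\|_{\Ltwo}$ on the domain is itself equivalent to $\lambda\in\rho(-\D^2)$, whereas your construction of a right inverse leaves a small (but standard) extra step implicit: once $-\D^2-\lambda$ is surjective onto $\Ltwo$, self-adjointness and reality of $\lambda$ force $\ker(-\D^2-\lambda)=\mathrm{ran}(-\D^2-\lambda)^\perp=\{0\}$, so the right inverse is in fact the inverse. You should state this. Your approach does have the advantage of being constructive and making the block-diagonal structure of the resolvent explicit.

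Two minor nits: you use the complex exponential basis $\{e^{2\pi ik\cdot}\}_{k\in\Z}$ while $\Ltwo$ consists of real-valued functions, so you should either switch to the real trigonometric basis or note that the multiplier $(4\pi^2k^2/T^2-\lambda)^{-1}$ is even in $k$ and preserves the conjugate-symmetry $\hat h_{-k}=\overline{\hat h_k}$, hence maps real data to real output. Also, the displayed inequality $|4\pi^2k^2T^{-2}\hat g_k|\leq d^{-1}(|\hat h_k|+|\lambda|\,|\hat g_k|)$ has an extraneous factor $d^{-1}$; from $(4\pi^2k^2T^{-2}-\lambda)\hat g_k=\hat h_k$ one gets $|4\pi^2k^2T^{-2}\hat g_k|\leq|\hat h_k|+|\lambda|\,|\hat g_k|\leq(1+|\lambda|/d)|\hat h_k|$, which is what you actually need. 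Neither point affects the conclusion.
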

\begin{proof}
	We first show $\left( 2\pi k~ T(E^*,L^*)^{-1} \right)^2\in\sigma_{ess}(-\D^2)$ for fixed $k\in\N$ and $(E^*,L^*)\in\mathring\Omega_0^{EL}$. The idea is that 
	\begin{align*}
	(\theta,E,L)\mapsto \delta_{(E^*,L^*)}(E,L) \; \sin(2\pi k\theta)
	\end{align*}
	defines an \enquote{eigendistribution} of $\left( 2\pi k~ T(E^*,L^*)^{-1} \right)^2$ (where $\delta$ denotes Dirac's delta distribution) and that by approximating, we can show that $\left( 2\pi k~ T(E^*,L^*)^{-1} \right)^2$ is indeed an approximate eigenvalue.\\
	We therefore prove $\left( 2\pi k~ T(E^*,L^*)^{-1} \right)^2\in\sigma_{ess}(-\D^2)$ by verifying Weyl's criterion \cite[Theorem~7.2]{HiSi}, i.e., we construct a sequence $(g_j)_{j\in\N}\subset\mathrm D(\D^2)$ with
	\begin{enumerate}[label=(\roman*)]
		\item $\displaystyle \|g_j\|_{\Ltwo} = 1$ for every $j\in\N$,
		\item $\displaystyle \left\| -\D^2g_j - \left( 2\pi k~ T(E^*,L^*)^{-1} \right)^2 g_j \right\|_{\Ltwo}\to0$ as $j\to\infty$,
		\item $\displaystyle g_j\weakto0$ in $\displaystyle \Ltwo $ as $j\to\infty$.
	\end{enumerate}
	To construct such a Weyl sequence, we approximate the Dirac distribution as follows: For $j\in\N$ let $\chi_j\colon\R^2\to\R$ be such that
	\begin{enumerate}[label=(\greek*)]
		\item $\displaystyle\supp(\chi_j)\subset \mathring\Omega_0^{EL}\cap B_{\frac1j}(E^*,L^*)$,
		\item $\displaystyle \int_{\Omega_0^{EL}} \chi_j^2(E,L)\diff (E,L) = \frac1{2\pi^2}$.
	\end{enumerate}
	It is obvious that such $\chi_j$ exist; they can be explicitly defined by a rescaling scheme (at least for sufficiently large $j$). Note however that $\chi_j\not\to\delta_{(E^*,L^*)}$ in the distributional sense.\\ 
	For $j\in\N$ we now define the spherically symmetric function $g_j\colon\Omega_0\to\R$ by
	\begin{align*}
	g_j (\theta,E,L)\coloneqq \sqrt{\frac{\vert\varphi'(E,L)\vert}{T(E,L)}}~\chi_j(E,L)\; \sin(2\pi k\theta),\quad(\theta,E,L)\in\Omega_0^\theta.
	\end{align*}
	Then $(g_j)_{j\in\N}$ is indeed a Weyl sequence. First, $g_j\in\mathrm D(\D^2)$ for $j\in\N$ by Corollary~\ref{trans1dsquare}, note that $T>0$ is continuous on $\mathring\Omega_0^{EL}$ by Lemma \ref{partperiodcontinuous} and therefore $\frac1T$ is bounded on the support of $\chi_j$.
   Now to properties (i)--(iii):
	\begin{enumerate}[label=(\roman*)]
		\item For every $j\in\N$ changing variables yields
		\begin{align*}
		\|g_j\|_{\Ltwo}^2 &= 4\pi^2\int_{\Omega_0^{EL}} \frac{T(E,L)}{\vert\varphi'(E,L)\vert} \int_0^1 \vert g(\theta,E,L)\vert^2\diff\theta\diff(E,L) \\
		&= 4\pi^2 \int_{\Omega_0^{EL}}\chi_j^2(E,L) \int_0^1\sin^2(2\pi k\theta)\diff\theta\diff(E,L) = 1.
		\end{align*}
		\item It follows from Corollary~\ref{trans1dsquare} that
		\begin{align*}
		(-\D^2g_j) (\theta, E,L) = \left(\frac{2\pi k}{T(E,L)}\right)^2 g_j(\theta,E,L) \text{ for a.e. } (\theta,E,L)\in\Omega_0^\theta.
		\end{align*}
		Thus,
		\begin{align*}
		\big\| &-\D^2g_j - \left( 2\pi k~ T(E^*,L^*)^{-1} \right)^2 g_j \big\|_{\Ltwo}^2  \\
		&=4\pi^2 \int_{\Omega_0^{EL}} \frac{T(E,L)}{\vert\varphi'(E,L)\vert} \left| \left(\frac{2\pi k}{T(E,L)}\right)^2 - \left(\frac{2\pi k}{T(E^*,L^*)}\right)^2 \right|^2 \int_0^1 \left| g_j(\theta,E,L)\right|^2\diff\theta\diff(E,L)  \\
		&= (2\pi)^6 k^4 \int_{\Omega_0^{EL}} \chi_j^2(E,L) \left| \frac1{T(E,L)^2} - \frac1{T(E^*,L^*)^2} \right|^2 \int_0^1\sin^2(2\pi k\theta)\diff\theta\diff(E,L)  \\
		&= 2^5\pi^6k^4 \int_{\Omega_0^{EL}} \chi_j^2(E,L) \left| \frac1{T(E,L)^2} - \frac1{T(E^*,L^*)^2} \right|^2 \diff(E,L).
		\end{align*}
		Since $T>0$ is continuous on $\mathring\Omega_0^{EL}$ by Lemma \ref{partperiodcontinuous}, $\left| {T(E,L)^{-2}} - {T(E^*,L^*)^{-2}} \right|$ tends to zero for $(E,L)\in\supp(\chi_j)$ as $j\to\infty$ by ($\alpha$). Together with ($\beta$) this implies (ii).
		\item For every $h\in \Ltwo $ the Cauchy-Schwarz inequality yields
		\begin{align*}
		\left| \langle g_j,h\rangle_{\Ltwo} \right| \leq \|g_j\|_{\Ltwo} \left( 4\pi^2 \int_{\supp(\chi_j)} \frac{T(E,L)}{\vert\varphi'(E,L)\vert} \int_0^1 \vert h(\theta,E,L)\vert^2\diff\theta\diff(E,L) \right)^{\frac12},
		\end{align*}
		with $\|g_j\|_{\Ltwo}=1$ and the right integral tending to zero as $j\to\infty$. Due to the Riesz representation theorem we obtain (iii).
	\end{enumerate}
	By a completely similar proof, we also get $0\in\sigma_{ess}(\D^2)$. More precisely, we just have to replace $\sin(2\pi k\theta)$ in the definition of $g_j$ by a non-zero, constant function. Note that $0$ is in fact an eigenvalue of $-\D^2$, but its multiplicity is not finite (corresponding to the fact that the eigenspace $\ker(\D)$ is infinite dimensional).
	
	Altogether, we have proven
	\begin{align*}
	\left( \frac{2\pi\N_0}{T(\mathring\Omega_0^{EL})} \right)^2 \subset\sigma_{ess}(-\D^2).
	\end{align*}
	Since the spectrum of an operator is always closed and the boundary values being non-isolated, we obtain 
	\begin{align*}
	\overline{\left( \frac{2\pi\N_0}{T(\mathring\Omega_0^{EL})} \right)^2} \subset\sigma_{ess}(-\D^2) \subset\sigma(-\D^2).
	\end{align*}
	It remains to show that
	\begin{align*}
	\sigma(-\D^2) \subset \overline{\left( \frac{2\pi\N_0}{T(\mathring\Omega_0^{EL})} \right)^2}.
	\end{align*}
	Fix an arbitrary $\lambda\in\R\setminus \overline{\left( \frac{2\pi\N_0}{T(\mathring\Omega_0^{EL})} \right)^2}$; note $\sigma(-\D^2)\subset\R$ by the self-adjointness of $-\D^2$.
        Due to the radial particle period $T$ being bounded away from zero by
        Proposition~\ref{Tbounded}, there exists $c>0$ such that
	\begin{align} \label{eq:helper}
	\mathrm{dist} \left( \lambda\, T(E,L)^2, (2\pi\N_0)^2 \right)\geq c \text{ for every } (E,L)\in\mathring\Omega_0^{EL}.
	\end{align}
	Since $(2\pi\N_0)^2=\sigma(-\partial_\theta^2)$, we then obtain the following estimate for the one-dimensional resolvent operator $ \left( -\partial_\theta^2 - \lambda\, T(E,L)^2 \right)^{-1} \colon L^2(]0,1[)\to H^2_\theta$ by \eqref{eq:helper}:
	\begin{align*}
	\left\| \left( -\partial_\theta^2 - \lambda\, T(E,L)^2 \right)^{-1} y \right\|_2 \leq \frac1c \|y\|_2 \quad\text{for } y\in L^2(]0,1[),~ (E,L)\in\mathring\Omega_0^{EL},
	\end{align*}
	see for example \cite[Theorem 5.8]{HiSi}. Therefore, inserting $y=\left( -\partial_\theta^2 - \lambda\, T(E,L)^2 \right)z$ in the above estimate yields
	\begin{align} \label{eq:helper2}
	\|z\|_2\leq\frac1c \left\| \left( -\partial_\theta^2 - \lambda\, T(E,L)^2 \right)z \right\|_2 \quad\text{for } z\in H^2_\theta ,~(E,L)\in\mathring\Omega_0^{EL}.
	\end{align}
	Now, suppose $\lambda\in\sigma(-\D^2)$. Then, by a weaker version of Weyl's criterion (see \cite[Theorem~5.10]{HiSi}), there exists a sequence $(g_j)_{j\in\N}\subset \mathrm D(\D^2)$ such that $\|g_j\|_{\Ltwo} = 1$ for $j\in\N$ and $\left\| -\D^2g_j - \lambda g_j \right\|_{\Ltwo}\to0$ as $j\to\infty$. However,
	\begin{align*}
		\big\| -&\D^2g_j - \lambda g_j \big\|_{\Ltwo}^2 \\
		&= 4\pi^2 \int_{\Omega_0^{EL}} \frac{T(E,L)^{-3}}{\vert \varphi'(E,L)\vert} \int_0^1 \left| -\partial_\theta^2 g_j(\theta,E,L) - \lambda\, T(E,L)^2 g_j(\theta,E,L) \right|^2\diff\theta\diff(E,L)
	\end{align*}
	by Corollary~\ref{trans1dsquare}. Then, applying \eqref{eq:helper2} to
        $g_j(\cdot,E,L)\in H^2_\theta$ for a.e.~$(E,L)\in\Omega_0^{EL}$ yields
	\begin{align*}
	\big\| -\D^2g_j - \lambda g_j \big\|_{\Ltwo}^2
	&\geq 4\pi^2 c^2 \int_{\Omega_0^{EL}} \frac{T(E,L)^{-3}}{\vert \varphi'(E,L)\vert}
        \int_0^1 \left| g_j(\theta,E,L) \right|^2\diff\theta\diff(E,L)\\
	&\geq \frac{c^2}{\sup_{\mathring\Omega_0^{EL}}^4(T)} \|g_j\|_{\Ltwo}^2
        = \frac{c^2}{\sup_{\mathring\Omega_0^{EL}}^4(T)} > 0,
	\end{align*}
	and therefore the desired contradiction.
\end{proof}

In particular, we obtain the following estimate:
\begin{cor}[A Poincar\'e inequality] \label{estimatetranssquare}
	For every $g\in\mathrm D(\D)$ with $g\perp\ker(\D)$ we have
	\begin{align*}
	\|\D g\|_{\Ltwo}^2 \geq \frac{4\pi^2}{\sup^2_{\mathring\Omega_0^{EL}}(T)} \|g\|_{\Ltwo}^2.
	\end{align*}
\end{cor}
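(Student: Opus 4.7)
The strategy is to transfer the inequality to action-angle coordinates, where the operator $\D$ reduces to a $\theta$-derivative and the orthogonality condition $g\perp\ker(\D)$ becomes a zero-mean condition in $\theta$; one then applies the classical one-dimensional Poincar\'e--Wirtinger inequality on $H^1_\theta$ and estimates the resulting weight in terms of $\sup T$.

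First I would translate the two sides of the inequality into action-angle variables using Lemma \ref{trans1d} and the volume formula \eqref{eq:actionanglevolume}. This gives
\[
\|g\|_H^2 = 4\pi^2 \int_{\Omega_0^{EL}} \frac{T(E,L)}{|\varphi'(E,L)|} \int_0^1 |g(\theta,E,L)|^2\,d\theta\,d(E,L),
\]
\[
\|\D g\|_H^2 = 4\pi^2 \int_{\Omega_0^{EL}} \frac{T(E,L)^{-1}}{|\varphi'(E,L)|} \int_0^1 |\partial_\theta g(\theta,E,L)|^2\,d\theta\,d(E,L).
\]
Next I would characterize the orthogonality condition. By Proposition~\ref{transport}(b), $\ker(\D)$ consists of functions depending only on $(E,L)$, and testing $\langle g,h\rangle_H=0$ against all such $h\in H\cap\ker(\D)$ (for instance $h=\mathbf{1}_K(E,L)$ with $K\subset\mathring\Omega_0^{EL}$ compact) yields $\int_0^1 g(\theta,E,L)\,d\theta = 0$ for a.e.\ $(E,L)\in\Omega_0^{EL}$.

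Since $g\in\mathrm D(\D)$, Lemma~\ref{trans1d} guarantees $g(\cdot,E,L)\in H^1_\theta$ for a.e.\ $(E,L)$. For such a periodic function with zero mean, the classical Poincar\'e--Wirtinger inequality gives
\[
\int_0^1 |\partial_\theta g(\theta,E,L)|^2\,d\theta \;\geq\; (2\pi)^2 \int_0^1 |g(\theta,E,L)|^2\,d\theta,
\]
which is nothing but the statement that the lowest eigenvalue of $-\partial_\theta^2$ on the mean-zero subspace of $H^2_\theta$ is $(2\pi)^2$ and is therefore already implicit in Lemma~\ref{laplace1d}. Inserting this fiberwise bound into the expression for $\|\D g\|_H^2$ and finally using $T(E,L)^{-1} \geq T(E,L)/\sup_{\mathring\Omega_0^{EL}}^2(T)$ (which holds since $T$ is bounded above by Proposition~\ref{Tbounded}) yields
\[
\|\D g\|_H^2 \;\geq\; \frac{(2\pi)^2}{\sup_{\mathring\Omega_0^{EL}}^2(T)}\, \|g\|_H^2,
\]
as desired.

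The only step requiring care is the fiberwise zero-mean characterization, where one must justify exchanging the integrals over $\theta$ and $(E,L)$ and choosing a sufficiently rich family of test elements of $\ker(\D)\cap H$; once that is in place the rest is routine. No compactness or spectral machinery beyond Lemma~\ref{trans1d} and the elementary one-dimensional Wirtinger inequality is needed, which is the point of introducing the action-angle framework.
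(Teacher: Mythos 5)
Your proof is correct, but it takes a different route than the paper's. The paper first establishes the bound on $\mathrm D(\D^2)$ by invoking the full spectral description $\sigma(-\D^2)=\overline{(2\pi\N_0/T(\mathring\Omega_0^{EL}))^2}$ from Theorem~\ref{transsquarespectrum} together with the abstract spectral-theorem fact that for a self-adjoint operator $\langle Ag,g\rangle \ge \operatorname{dist}(0,\sigma(A)\setminus\{0\})\,\|g\|^2$ on the orthogonal complement of the zero-eigenspace, and then extends from $\mathrm D(\D^2)$ to $\mathrm D(\D)$ by a mollification argument in the $\theta$-variable. Your argument instead works fiberwise directly on $\mathrm D(\D)$: after translating $g\perp\ker(\D)$ into the pointwise zero-mean condition $\int_0^1 g(\cdot,E,L)\,d\theta=0$ (which you correctly justify by testing against indicator functions of compact subsets of $\mathring\Omega_0^{EL}$, noting that $T/|\varphi'|$ is locally integrable and a.e.\ positive there), you apply the scalar Poincar\'e--Wirtinger inequality on $H^1_\theta$ in each fiber and then bound $T(E,L)^{-1}\ge T(E,L)/\sup^2_{\mathring\Omega_0^{EL}}(T)$. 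Your route is more elementary and self-contained: it only needs Lemma~\ref{trans1d}, the kernel characterization, the one-dimensional Wirtinger inequality, and the upper bound on $T$, and it entirely avoids both the essential-spectrum computation and the density/mollification step (since $H^1_\theta$ rather than $H^2_\theta$ suffices). The paper's route buys uniformity with the spectral picture developed in Section~\ref{sc:ess} and records the estimate as a corollary of that machinery; the paper even remarks after the corollary that a more elementary approach, essentially the one you give, is possible (citing~\cite{ReSt20}).
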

\begin{proof}
	Using the fact that $\ker(\D)$ is the eigenspace of $0$, the statement on $\mathrm D(\D^2)$ follows immediately by the skew-adjointness of $\D$ together with the spectral properties of $-\D^2$ stated in the above theorem, see for example \cite[Proposition~5.12]{HiSi}.
	
	Mollifying extends this estimate to $\mathrm D(\D)$.
        One way to do this is to express $g$ in $(\theta,E,L)$-coordinates and mollify the $\theta$-function for a.e.~$(E,L)\in\mathring\Omega_0^{EL}$, for example by using Fourier partial series. Note that the mollification has to preserve the property $g\perp\ker(\D)$. 
\end{proof} 

Alternatively, an estimate as above can also be shown by more fundamental techniques without determining the spectrum of $-\D^2$, see e.g.\ \cite[proof of Theorem~2.3]{ReSt20}.

We now use the representation of $\sigma_{ess}(-\D^2)$ from Theorem~\ref{transsquarespectrum} to explicitly determine $\sigma_{ess}(\A)$ as well. In fact, the key property of the essential spectrum is that it is stable under certain types of perturbations. In our situation, this means that the essential spectrum of $\A=-\left(\D^2+\B\right)$ (see \eqref{E:BDEF} for the definition of $\B$) is
equal to the one of $-\D^2$:

\begin{theorem}\label{essspecA}
  The operator $(-\B)$ is relatively $(-\D^2)$-compact, cf.\ \cite[Definition 14.1]{HiSi}.
  Therefore, by the Weyl theorem \cite[Theorem 14.6]{HiSi},
	\begin{align*}
	\sigma_{ess}(\A) = \sigma_{ess} (-\D^2).
	\end{align*}
	Thus, by Theorem~\ref{transsquarespectrum},
	\begin{align}\label{eq:essspecA}
	\sigma_{ess}(\A) =  \overline{ \left(\frac{2\pi\N_0}{T(\mathring\Omega_0^{EL})}\right)^2 }, 
	\end{align}
	where $\A$ denotes the unrestricted operator $\A\colon\mathrm D(\D^2)\to\Ltwo$. Similarly,
	\begin{align}\label{eq:essspecAodd}
	\sigma_{ess}(\A\big|_{\Ltwo^{odd}}) = \overline{ \left(\frac{2\pi\N}{T(\mathring\Omega_0^{EL})}\right)^2 } .
	\end{align}
	Here, we use the convention $\min(\N)=1$, $\N_0=\N\cup\{0\}$.
\end{theorem}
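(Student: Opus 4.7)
The entire theorem reduces to the core statement that $-\B$ is relatively $(-\D^2)$-compact on $\Ltwo$ (and also on $\Ltwo^{odd}$). Once this is shown, Weyl's theorem \cite[Theorem 14.6]{HiSi} immediately gives $\sigma_{ess}(\A)=\sigma_{ess}(-\D^2)$, and Theorem~\ref{transsquarespectrum} supplies the explicit formula in \eqref{eq:essspecA}. For \eqref{eq:essspecAodd} the same compactness proof applies verbatim on $\Ltwo^{odd}$ (since $\B$ preserves $v$-parity), and one only needs to note that in action-angle coordinates oddness in $v$ corresponds to $g(\theta,E,L)=-g(1-\theta,E,L)$, which eliminates the constant-in-$\theta$ eigenmode of $-\partial_\theta^2$; the analogue of Lemma~\ref{laplace1d} then yields spectrum $(2\pi\N)^2$ rather than $(2\pi\N_0)^2$.

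\textbf{Reduction to a scalar compactness statement.} Let $(g_n)\subset\mathrm{D}(\D^2)$ be bounded in the graph norm. From $\|\D g_n\|_\Ltwo^2=-\langle g_n,\D^2 g_n\rangle_\Ltwo\leq\|g_n\|_\Ltwo\|\D^2 g_n\|_\Ltwo$, also $(\D g_n)$ is bounded in $\Ltwo$. By the alternative representation \eqref{eq:bpot},
\begin{align*}
\|\B g_n-\B g_m\|_\Ltwo^2=\int_{\Omega_0}\frac{|\varphi'(E,L)|\,w^2}{|\varphi'(E,L)|^{-1}\cdot|\varphi'(E,L)|}\bigl|U_{\D g_n}'(r)-U_{\D g_m}'(r)\bigr|^2\,d(x,v),
\end{align*}
which, upon applying \eqref{eq:wintrho}, collapses to $C\int_0^{R_0}r^2|U_{\D g_n}'(r)-U_{\D g_m}'(r)|^2\,dr$. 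So the task is to extract a subsequence along which $U_{\D g_n}'$ is Cauchy in $L^2([0,R_0],r^2\,dr)$.

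\textbf{Explicit formula for the induced field.} For spherically symmetric $g$, integration by parts in $v$ gives $\rho_{\D g}=\div j_g$; hence, using the divergence theorem on balls and the identity $j_g^{\mathrm{rad}}(r)=(\pi/r^2)\J(g)(r)$, one obtains
\begin{align*}
U_{\D g}'(r)=\frac{4\pi}{r^2}\int_0^r s^2\rho_{\D g}(s)\,ds=\frac{4\pi^2}{r^2}\J(g)(r).
\end{align*}
Thus the scalar problem becomes: prove that $r\mapsto\J(g_n)(r)$ is precompact in $L^2([0,R_0],r^{-2}\,dr)$.

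\textbf{Hölder estimate and Arzel\`a--Ascoli.} The assumption \eqref{eq:A7prime} combined with Cauchy--Schwarz yields the pointwise bound
\begin{align*}
|\rho_{\D g}(r)|^2\leq\int_{\R^3}\frac{|\D g|^2}{|\varphi'|}\,dv\cdot\int_{\R^3}|\varphi'|\,dv\leq C\int_{\R^3}\frac{|\D g|^2}{|\varphi'|}\,dv,
\end{align*}
so that $\int_0^{R_0}r^2|\rho_{\D g}(r)|^2\,dr\leq C\|\D g\|_\Ltwo^2$. Since $\J(g)(0)=0$ and $\J(g)(r)=\tfrac{1}{\pi}\int_0^r s^2\rho_{\D g}(s)\,ds$, Cauchy--Schwarz gives, for $0\leq r_1\leq r_2\leq R_0$,
\begin{align*}
|\J(g)(r_2)-\J(g)(r_1)|\leq C(r_2^3-r_1^3)^{1/2}\|\D g\|_\Ltwo.
\end{align*}
The family $(\J(g_n))$ is therefore uniformly bounded and uniformly $\tfrac{1}{2}$-H\"older continuous on $[0,R_0]$ with value $0$ at the origin, so Arzel\`a--Ascoli produces a subsequence converging uniformly to some $\J_\infty$.

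\textbf{Passage through the singular weight.} The uniform estimate $|\J(g_n)(r)|\leq C r^{3/2}$ gives $|\J(g_n)(r)|^2/r^2\leq Cr$, which is integrable on $[0,R_0]$ with a bound independent of $n$. Since $\J(g_n)/r\to\J_\infty/r$ pointwise on $(0,R_0]$ and $|(\J(g_n)-\J_\infty)/r|^2\leq 4Cr$, dominated convergence yields $\J(g_n)\to\J_\infty$ in $L^2([0,R_0],r^{-2}\,dr)$. By the explicit formula this gives convergence of $U'_{\D g_n}$ in $L^2([0,R_0],r^2\,dr)$, and hence of $\B g_n$ in $\Ltwo$, completing the proof of relative compactness.

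\textbf{Main obstacle.} The delicate point is the singular weight $r^{-2}$ in the representation $U'_{\D g}(r)=4\pi^2\J(g)(r)/r^2$, which forbids a direct use of uniform convergence alone. The rescue is the compensating smallness $\J(g)(r)=O(r^{3/2})$ near the origin, obtained from $\J(g)(0)=0$ together with the quantitative $\tfrac{1}{2}$-H\"older estimate; that estimate in turn relies crucially on the pointwise bound \eqref{eq:A7prime} for $\int|\varphi'|\,dv$. Without this technical assumption the whole compactness chain would break.
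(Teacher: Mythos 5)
Your proof is correct, but the key compactness step is executed by a genuinely different route than the paper's. The paper (proof of Theorem~\ref{essspecA}) bounds the full potential $U_{\D g_k}$ in $H^2(\R^3)$ via the estimate~\eqref{eq:poth2} and then invokes the Rellich--Kondrachov compact embedding $H^2(B_{R_0}(0))\Subset H^1(B_{R_0}(0))$ to extract a strongly $L^2$-convergent subsequence of $(\partial_x U_{\D g_k})_k$. You instead work directly with the scalar function $\J(g_n)$: from the bound $\int_0^{R_0}r^2|\rho_{\D g_n}(r)|^2\,dr\le C\|\D g_n\|_\Ltwo^2$ (which uses~\eqref{eq:A7prime} exactly as~\eqref{eq:poth2} does) you obtain a uniform $\tfrac12$-H\"older modulus $|\J(g)(r_2)-\J(g)(r_1)|\le C(r_2^3-r_1^3)^{1/2}\|\D g\|_\Ltwo$ and, specializing $r_1=0$, the compensating decay $|\J(g_n)(r)|\le Cr^{3/2}$; you then extract via Arzel\`a--Ascoli and pass through the singular weight $r^{-2}$ by dominated convergence. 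Both approaches use the same auxiliary ingredients (\eqref{eq:A7prime}, \eqref{eq:wintrho}, the representation~\eqref{eq:bpot}, and the fact that graph-norm boundedness controls $\|\D g_n\|_\Ltwo$—you derive the latter from the quadratic form, the paper via Corollary~\ref{estimatetranssquare}, both valid). Your route is more self-contained and elementary, bypassing Sobolev compactness in favor of explicit real-variable estimates; the paper's route is shorter because it delegates the compactness to a standard theorem, and it is the one that generalizes to the vector-valued field $\partial_xU_{\D g}$ without first reducing to spherical symmetry. The explicit $r^{3/2}$ vanishing you exploit at the origin is, in the paper's proof, implicit in the boundedness of the weight $r^2\rho_0(r)$ appearing after~\eqref{eq:wintrho} is applied. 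Your remark about oddness in action-angle coordinates and the shift from $(2\pi\N_0)^2$ to $(2\pi\N)^2$ matches the paper's treatment.
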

\begin{proof}
  By Lemma \ref{bdef}, $-\B$ is continuous on $\Ltwo $, i.e.,
  relatively $(-\D^2)$-bounded with relative bound $0$. Furthermore, $(-\D^2)$
  is self-adjoint and $\rho(-\D^2)\neq \emptyset$.
  In this situation, the relative $(-\D^2)$-compactness of $-\B$
  is equivalent to the following
  (see for example \cite[III~Definition~2.15 and III~Exercise~2.18(1)]{EnNa}):
  \begin{align*}
    -\B\colon \left( \mathrm D(\D^2), \|\D^2\cdot\|_{\Ltwo} + \|\cdot\|_{\Ltwo} \right)
    \to \Ltwo  \quad\text{ is compact};
  \end{align*}
  the domain of $-\B$ in this statement is $\mathrm D(\D^2)$,
  equipped with the graph norm of $(-\D^2)$.
	
  To prove this statement, let $(g_k)_{k\in\N}\subset\mathrm D(\D^2)$ be such that $(g_k)_{k\in\N},~(\D^2g_k)_{k\in\N} \subset \Ltwo $ are bounded. Corollary \ref{estimatetranssquare}---note $\D g_k\in\mathrm{im}(\D)\subset\ker(\D)^\perp$ for $k\in\N$---then yields that $(\D g_k)_{k\in\N} \subset \Ltwo $ is bounded as well. Thus, by \eqref{eq:poth2}, $(U_{\D g_k})_{k\in\N}\subset C\cap H^2(\R^3)$ is bounded in $H^2(\R^3)$. In addition, \eqref{eq:uprime} yields $\supp(\partial_x U_{\D g_k})\subset B_{R_0}(0)$ for every $k\in\N$. By the compact embedding $H^2(B_{R_0}(0))\Subset H^1(B_{R_0}(0))$, $(\partial_x U_{\D g_k})_{k\in\N}$ is strongly convergent in $L^2(\R^3;\R^3)$, at least after extracting a subsequence. Representing $\B g_k$ as in \eqref{eq:bpot} and rewriting $\int w^2\vert\varphi'(E,L)\vert\diff(w,L)$ as in \eqref{eq:wintrho}, it follows that $(\B g_k)_{k\in\N}$ is a Cauchy sequence in $\Ltwo $,
  and thus strongly convergent in $\Ltwo $.
	
	The analogous relative compactness result also holds true when we restrict all operators to $H^{odd}$, i.e., functions odd in $v$. In addition, it can be shown similarly to Theorem~\ref{transsquarespectrum} that the spectrum of the restricted squared transport operator is given by
	\begin{align}
		\sigma_{ess}(-\D^2\big|_{\Ltwo^{odd}}) = \overline{ \left(\frac{2\pi\N}{T(\mathring\Omega_0^{EL})}\right)^2 },
	\end{align}
	since $\sin(2\pi k\cdot)\in H^2_\theta\cap L^{2,odd}(]0,1[)$ for $k\in\N$, but non-zero constant functions---which correspond to the eigenvalue $0$---are not in $L^{2,odd}(]0,1[)$.
\end{proof}

The sets \eqref{eq:essspecA}, \eqref{eq:essspecAodd}
may look qualitatively different for different steady state models
depending on the behavior of the period function $T$:

\begin{remark}\label{essspecform}
If $\sup_{\mathring\Omega_0^{EL}}(T) \geq 2 \inf_{\mathring\Omega_0^{EL}}(T)$,
\begin{align*}
\sigma_{ess}(\A) = \{0\}\cup \left[ \frac{4\pi^2}{\sup^2_{\mathring\Omega_0^{EL}}(T)},\infty \right[.
\end{align*}
Otherwise, i.e., $\sup_{\mathring\Omega_0^{EL}}(T) < 2 \inf_{\mathring\Omega_0^{EL}}(T)$, there may appear further gaps in the essential spectrum. For example, if $\frac32\leq\frac{\sup_{\mathring\Omega_0^{EL}}(T)}{\inf_{\mathring\Omega_0^{EL}}(T)}<2$,
\begin{align*}
\sigma_{ess}(\A) = \{0\}\cup \left[ \frac{4\pi^2}{\sup^2_{\mathring\Omega_0^{EL}}(T)},\frac{4\pi^2}{\inf^2_{\mathring\Omega_0^{EL}}(T)} \right] ~\dot\cup~ \left[ \frac{16\pi^2}{\sup^2_{\mathring\Omega_0^{EL}}(T)},\infty \right[.
\end{align*}
In general, the number of gaps in the essential spectrum (including the one at zero) is given by
\begin{align*}
1+\sup \left\{ k\in\N_0 \mid (k+1){\inf_{\mathring\Omega_0^{EL}}(T)} > k~{\sup_{\mathring\Omega_0^{EL}}(T)} \right\}.
\end{align*}
Note that we have infinitely many gaps iff $T$ is constant on $\mathring\Omega_0^{EL}$. 
\end{remark}
In any case, the essential spectrum of $\A$ possesses a gap at the origin since $T$ is bounded from above, see Section~\ref{ssc:Tupper} in the appendix. Following Mathur \cite{Ma}, 
we call this the principal gap of the essential spectrum; although in \cite{Ma} this term is used for the gap of the spectrum of $-i\,\D$ around the origin.
For the existence of an eigenvalue $\lambda\in\R$ of $\A$ in the principal gap, i.e., 
\begin{align*}
	\lambda\in\mathcal G\coloneqq \left]0, \min \left( \sigma_{ess}(\A)\setminus \{0\} \right)\right[=\left]0, \frac{4\pi^2}{\sup^2_{\mathring\Omega_0^{EL}}(T)}\right[,
\end{align*}
it is now sufficient to show that there exists some $g\in \mathrm D(\D^2)$ with $g\perp\ker(\D)$ such that
\begin{align} \label{eq:goal}
\frac{\langle \A g,g\rangle_{\Ltwo}}{\| g\|_{\Ltwo}^2} < \frac{4\pi^2}{\sup^2_{\mathring\Omega_0^{EL}}(T)}.
\end{align}
If such an eigenvalue exists and the corresponding eigenfunction $g$
can be chosen odd in $v$, it would lead to an oscillating mode with period $P$ satisfying
\begin{align*}
P > \sup_{\mathring\Omega_0^{EL}}(T),
\end{align*}
see \eqref{eq:period}. The latter inequality means that the period of the oscillation
of the whole solution is greater then the radial period of every single particle motion
and also their supremum.
We refer to Section~\ref{sc:mathur} for the derivation of a criterion for the existence of an eigenvalue in the principal gap.

\subsection{The essential spectrum of the planar Antonov operator}\label{ssc:essplane}

We now show the analogues of the above results for the planar linearized operator $\pA$ introduced in Definition~\ref{antonovdef1D}. 
Since most of the proofs are similar to the radial setting, see Subsection~\ref{ssc:essradial}, we shall skip some of the details. We again start by analyzing the squared transport operator $\pD^2$. We do this by representing all the functions involved in action-angle variables.

For fixed $(x,v_1)\in\R^2$ let $\R\ni t\mapsto(X,V_1)(t,x,v_1)$
be the unique global solution to the characteristic system 
\begin{align}\label{eq:charsystplanar}
\dot X = V_1,\quad \dot V_1= -\pU_0'(X)
\end{align}  
satisfying the initial condition $(X,V_1)(0,x,v_1)=(x,v_1)$.
As derived in Section \ref{ssc:ststp}, $(X,V_1)(\cdot,x,v_1)$ is periodic with period
$\pT(\pE)$, where $\pE\equiv\pE((X,V_1)(t,x,v_1))$ for $t\in\R$.
We now use the variable $\theta\in[0,1]$ given by
\begin{align}
(x,v_1)=(X,V_1)( \theta \pT(\pE), x_-(\pE),0 )
\end{align}  
together with $\pE\in[\pUmin,\pE_0[$ and $\pv$ on the steady state support. 
The major benefit of the plane symmetric case compared to the spherically symmetric setting
is that the $(x,v_1)$-motion can be described by one angle $\theta$ and only one conserved
quantity $\pE$; it is independent of the other integrals $\pv$.
This corresponds to the fact that planar symmetry puts the problem into
a lower dimensional setting.
For functions $g\colon\pOmega_0\to\R$ we write
\begin{align*}
g (\theta,\pE,\pv) \coloneqq g((X,V_1)( \theta \pT(\pE), x_-(\pE),0 ),\pv)
\end{align*}
for $(\theta,\pE,\pv)\in\pOmega_0^\theta\coloneqq [0,1]\times\pOmega_0^{\pE\pv}=[0,1]\times[\pUmin,\pE_0[\times\{\pbeta\neq0\}$ by slight abuse of notation.
As in the spherically symmetric case
the transformation $(x,v)\mapsto(\theta,\pE,\pv)$ is not measure preserving, since
\begin{align}
\diff x\diff v= \pT(\pE) \diff\theta\diff\pE\diff\pv.
\end{align}
Regarding the latter statement, we observe that the mapping
$[0,\frac12]\ni\theta\mapsto X( \theta \pT(\pE), x_-(\pE),0)\in[x_-(\pE),x_+(\pE)]$
is bijective for $\pE>\pUmin$ and its inverse is given by
\begin{align}\label{E:THETAPLANARDEF}
\theta(x,\pE) = \frac{1}{\pT(\pE)} \int_{x_-(\pE)}^x \frac{1}{\sqrt{2\pE-2\pU_0(y)}}\diff y.
\end{align}
Related planar action-angle coordinates are also used for the study of BGK waves for the one-dimensional Vlasov-Poisson system in the plasma case~\cite{GuLi2017}.

The transport operator can now be written as a $\theta$-derivative:
\begin{lemma}
	For $g\in C^1(\pOmega_0)$ and $(\theta,\pE,\pv)\in\pOmega_0^\theta$,
	\begin{align*}
	(\pD g)(\theta,\pE,\pv) = \frac1{\pT(\pE)} (\partial_\theta g)(\theta,\pE,\pv).
	\end{align*}
	Similarly, for $g\in C^2(\pOmega_0)$,
	\begin{align*}
	(\pD^2g)(\theta,\pE,\pv) = \frac1{\pT(\pE)^2} (\partial_\theta^2g)(\theta,\pE,\pv).
	\end{align*}
\end{lemma}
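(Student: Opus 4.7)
The plan is to prove this by invoking the defining property of the transport operator as the generator of the characteristic flow, together with the group property of that flow. Since this is the planar analogue of Lemma~\ref{trans1dsmooth}, which the authors dispatch by ``the chain rule'', the same strategy should work here, with only minimal adjustments to account for the simpler planar geometry (one conserved $\pE$ instead of two conserved $(E,L)$, and no $L/r^3$ term).

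First, I would fix $(\theta,\pE,\pv)\in\mathring\pOmega_0^\theta$ with corresponding $(x,v_1)=(X,V_1)(\theta\pT(\pE),x_-(\pE),0)$. The key observation is the group property of the Hamiltonian flow of \eqref{eq:charsystplanar}: for any $s\in\R$,
\begin{align*}
(X,V_1)(s,x,v_1)=(X,V_1)\bigl((\theta\pT(\pE)+s),x_-(\pE),0\bigr).
\end{align*}
Since $\pE$ is conserved along this flow, the point $(X,V_1)(s,x,v_1)$ in the $(\theta,\pE)$-coordinates is exactly $(\theta+s/\pT(\pE),\pE)$ modulo the period, while $\pv$ is unchanged. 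Therefore, for $g\in C^1(\pOmega_0)$ the composed function satisfies
\begin{align*}
g\bigl((X,V_1)(s,x,v_1),\pv\bigr)=g\!\left(\theta+\tfrac{s}{\pT(\pE)},\pE,\pv\right),
\end{align*}
interpreting the first argument in the $(\theta,\pE,\pv)$ variables.

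Next, I would differentiate in $s$ at $s=0$. On one side, the definition of $\pD$ as the directional derivative along \eqref{eq:charsystplanar} gives
\begin{align*}
\left.\tfrac{d}{ds}\right|_{s=0} g\bigl((X,V_1)(s,x,v_1),\pv\bigr)=v_1\,\partial_x g(x,v)-\pU_0'(x)\,\partial_{v_1}g(x,v)=(\pD g)(x,v),
\end{align*}
which in the new variables is precisely $(\pD g)(\theta,\pE,\pv)$. On the other side, the chain rule applied to the right-hand side yields $\frac{1}{\pT(\pE)}(\partial_\theta g)(\theta,\pE,\pv)$. Comparing the two expressions gives the first identity.

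For the second identity, I would apply the first to $\pD g\in C^1(\pOmega_0)$ (using $g\in C^2(\pOmega_0)$) to obtain $(\pD^2 g)(\theta,\pE,\pv)=\frac{1}{\pT(\pE)}\partial_\theta\bigl((\pD g)(\theta,\pE,\pv)\bigr)$, then substitute the first identity inside and use that $\pT(\pE)$ is independent of $\theta$ to pull the factor outside the derivative, producing $\frac{1}{\pT(\pE)^2}\partial_\theta^2 g$. There is no real obstacle here; the only subtlety worth checking is the smoothness of the coordinate change $(x,v_1)\mapsto(\theta,\pE)$ on $\mathring\pOmega_0$, which follows from the explicit inverse formula \eqref{E:THETAPLANARDEF} together with Lemma~\ref{L:XMINUSPLUS} and the fact that $\pU_0'(x)>0$ for $x>0$, ensuring that $\theta(x,\pE)$ is a $C^1$-diffeomorphism away from the turning points.
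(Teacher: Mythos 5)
Your proposal is correct and is essentially the same chain-rule argument the paper invokes (the paper simply writes ``The assertions follow by the chain rule''), just spelled out explicitly via the group property of the characteristic flow and differentiation at $s=0$.
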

\begin{proof}
  The assertions follow by the chain rule.
\end{proof}

Next we generalize the above lemma onto the weak extension of $\pD$---introduced in Definition~\ref{deftransportplanar}---and give an alternate representation of $\mathrm D(\pD)$ analogous to Lemma~\ref{trans1d}:

\begin{lemma}\label{trans1dplane}
  It holds that
  \begin{align*}
    \mathrm D(\pD) = \{ g\in \pH \mid&\; \text{for a.e. }(\pE,\pv)\in\pOmega_0^{\pE\pv},
    \ g(\cdot,\pE,\pv)\in H^1_\theta, \\
    &\;\text{and } \int_{\pOmega_0^{\pE\pv}} \frac{\pT(\pE)^{-1}}{\vert \pvarphi'(\pE,\pv)\vert}
    \int_0^1 \vert \partial_\theta g(\theta,\pE,\pv)\vert^2\diff\theta\diff(\pE,\pv)<\infty  \},
  \end{align*}
  where $H^1_\theta$ is given by \eqref{eq:defh1tau}. If $g\in\mathrm D(\pD)$,
  \begin{align*}
    \left(\pD g\right) (\theta,\pE,\pv) = \frac1{\pT(\pE)} (\partial_\theta g) (\theta, \pE,\pv)
  \end{align*}
  for a.e.~$(\theta, \pE,\pv)\in\pOmega_0^\theta$.
\end{lemma}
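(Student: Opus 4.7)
The plan is to mirror the proof of Lemma~\ref{trans1d}, the main simplification being that in the planar case there is only a single angle variable $\theta$ and a single energy variable $\pE$, with $\pv$ playing the role of a passive parameter. The only ingredients needed are the smooth identity $(\pD\xi)(\theta,\pE,\pv)=\frac{1}{\pT(\pE)}\partial_\theta\xi$ established just before the lemma, the volume change of variables $\diff x\,\diff v=\pT(\pE)\,\diff\theta\,\diff\pE\,\diff\pv$, and the observation that the pullback of any function on $\pOmega_0$ to action--angle coordinates is automatically $1$-periodic in $\theta$ because the characteristic flow~\eqref{eq:charsystplanar} has period $\pT(\pE)$.

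For the inclusion ``$\subseteq$'', I would take $g\in\mathrm D(\pD)$ and apply the weak definition to factorized test functions $\xi(\theta,\pE,\pv)=\zeta(\theta)\,\chi(\pE,\pv)$ with $\zeta\in C^\infty_c(]0,1[)$ and $\chi\in C^\infty_c(\inter(\pOmega_0^{\pE\pv}))$, which pull back to $C^1_c(\pOmega_0)$ test functions on $\pOmega_0$. After changing variables and using the smooth formula for $\pD\xi$, the arbitrariness of $\chi$ yields
\begin{equation*}
\int_0^1 \dot\zeta(\theta)\,g(\theta,\pE,\pv)\,\diff\theta
= -\,\pT(\pE)\int_0^1 \zeta(\theta)\,(\pD g)(\theta,\pE,\pv)\,\diff\theta
\end{equation*}
for a.e.~$(\pE,\pv)\in\pOmega_0^{\pE\pv}$, which identifies $\partial_\theta g(\cdot,\pE,\pv)=\pT(\pE)(\pD g)(\cdot,\pE,\pv)$ in the weak sense; the integrability condition then follows from $\pD g\in\pH$ and the volume formula. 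The periodic boundary condition $g(0,\pE,\pv)=g(1,\pE,\pv)$ is obtained, exactly as in the radial case, by plugging in $\zeta\equiv 1$, which still delivers a valid $C^1_c(\pOmega_0)$ test function since $\chi$ is compactly supported inside $\inter(\pOmega_0^{\pE\pv})$.

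For the converse inclusion, given $g\in\pH$ satisfying the two stated conditions, I would take an arbitrary $\xi\in C^1_c(\pOmega_0)$, note that its pullback satisfies $\xi(\cdot,\pE,\pv)\in C^1([0,1])$ with $\xi(0,\pE,\pv)=\xi(1,\pE,\pv)$ for every interior $(\pE,\pv)$, and compute
\begin{equation*}
\int_{\pOmega_0}\frac{g\,\pD\xi}{\vert\pvarphi'(\pE,\pv)\vert}\,\diff(x,v)
= \int_{\pOmega_0^{\pE\pv}}\frac{1}{\vert\pvarphi'(\pE,\pv)\vert}\int_0^1 g\,\partial_\theta\xi\,\diff\theta\,\diff(\pE,\pv).
\end{equation*}
Integrating by parts in $\theta$ (with vanishing boundary contribution by periodicity of both $g(\cdot,\pE,\pv)$ and $\xi(\cdot,\pE,\pv)$) and changing back to $(x,v)$-variables exhibits $\mu:=\frac{1}{\pT(\pE)}\partial_\theta g$ as the weak $\pD$-derivative of $g$, while the integrability hypothesis gives $\mu\in\pH$.

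The only delicate point, and essentially the same one as in the radial case, is the assertion that factorized $\xi=\zeta\chi$ in action--angle variables corresponds to a genuine $C^1_c(\pOmega_0)$ test function. This requires $\chi$ to be supported away from $\pE=\pUmin$ (where the $(x,v)\mapsto(\theta,\pE,\pv)$ map degenerates, since $x_\pm(\pUmin)=0$ by Lemma~\ref{L:XMINUSPLUS}(e)) and away from the boundary of $\{\pbeta\neq 0\}$; both are ensured by compact support of $\chi$ in $\inter(\pOmega_0^{\pE\pv})$. Since this verification is carried out in the radial setting, I would simply invoke it rather than re-derive it here.
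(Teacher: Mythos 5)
Your proposal is correct and follows exactly the approach the paper intends: the paper gives no independent proof of Lemma~\ref{trans1dplane} but simply asserts that it is ``completely analogous to the one of Lemma~\ref{trans1d},'' and your argument is precisely that adaptation, with $\pv$ treated as a passive parameter. A minor clarification on one of your stated ``ingredients'': periodicity of the pullback in $\theta$ is automatic only for genuine pointwise-defined functions (such as the $C^1_c$ test functions $\xi$), not a priori for the a.e.-defined $H^1$ representative of $g(\cdot,\pE,\pv)$ --- but you correctly recover $g(0,\pE,\pv)=g(1,\pE,\pv)$ rigorously via the $\zeta\equiv 1$ test, so your argument stands as written.
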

The proof is completely analogous to the one of Lemma~\ref{trans1d}. The
$v_1$-parity and $x$-parity can also be translated into $(\theta,\pE,\pv)$-coordinates:
\begin{remark}
	Consider $g\colon\pOmega_0\to\R$. Then
	\begin{enumerate}[label=(\alph*)]
		\item $g$ is odd in $v_1$ if and only if $g(\theta,\pE,\pv)=-g(1-\theta,\pE,\pv)$ for $(\theta,\pE,\pv)\in\pOmega_0^\theta$.
		\item $g$ is odd in $x$ if and only if $g(\theta,\pE,\pv)=-g(\frac12-\theta,\pE,\pv)$ as well as $g(1-\theta,\pE,\pv)=-g(\theta+\frac12,\pE,\pv)$ for $0\leq\theta\leq\frac12$ and $(\pE,\pv)\in\pOmega_0^{\pE\pv}$.
	\end{enumerate}
	Analogous equivalences apply for $g$ being odd almost everywhere.
\end{remark}
As to the squared transport operator in the plane symmetric setting:
\begin{cor}\label{trans1dsquareplane}
  With $H^2_\theta$ as defined in \eqref{eq:defh2tau},
  \begin{align*}
    \mathrm D(\pD^2) = \{ g\in \pH \mid&\; \text{for a.e. }(\pE,\pv)\in\pOmega_0^{\pE\pv},\
    g(\cdot,\pE,\pv)\in H^2_\theta, \\
    &\;\text{and }
    \sum_{j=1}^2\int_{\pOmega_0^{\pE\pv}} \frac{\pT(\pE)^{1-2j}}{\vert \pvarphi'(\pE,\pv)\vert}
    \int_0^1 \vert \partial_\theta^j g(\theta,\pE,\pv)\vert^2\diff\theta\diff(\pE,\pv)<\infty\}.
  \end{align*}
\end{cor}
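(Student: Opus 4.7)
The plan is to apply Lemma~\ref{trans1dplane} twice, in direct analogy to how Corollary~\ref{trans1dsquare} follows from Lemma~\ref{trans1d} in the radial setting. By definition, $g \in \mathrm D(\pD^2)$ means $g \in \mathrm D(\pD)$ together with $\pD g \in \mathrm D(\pD)$, so both pieces of information should be extracted separately and then combined via the action-angle representation of $\pD$.

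For the forward direction, I would first apply Lemma~\ref{trans1dplane} to $g \in \mathrm D(\pD)$: this gives $g(\cdot,\pE,\pv) \in H^1_\theta$ for a.e.\ $(\pE,\pv) \in \pOmega_0^{\pE\pv}$, the identity $(\pD g)(\theta,\pE,\pv) = \pT(\pE)^{-1} \partial_\theta g(\theta,\pE,\pv)$, and the $j=1$ term of the norm condition encoding $\pD g \in \pH$. Applying Lemma~\ref{trans1dplane} a second time to $\pD g \in \mathrm D(\pD)$ yields $(\pD g)(\cdot,\pE,\pv) \in H^1_\theta$ a.e., which via the previous identity translates into $\partial_\theta g(\cdot,\pE,\pv) \in H^1_\theta$ a.e. Combined with $g(\cdot,\pE,\pv) \in H^1_\theta$, this is precisely the statement that $g(\cdot,\pE,\pv) \in H^2_\theta$, cf.~\eqref{eq:defh2tau}. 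The norm bound from the second application, together with the identity $(\pD^2 g)(\theta,\pE,\pv) = \pT(\pE)^{-2} \partial_\theta^2 g(\theta,\pE,\pv)$ coming from iterating the $\theta$-derivative formula, yields the $j=2$ term in the norm condition. Both $j=1$ and $j=2$ terms being finite is then equivalent to $g, \pD g \in \pH$ together with the finiteness of $\|\pD^2 g\|_{\pH}$, as one sees after changing variables via $\diff x\diff v = \pT(\pE) \diff\theta\diff\pE\diff\pv$.

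For the reverse direction, suppose $g \in \pH$ satisfies the stated $H^2_\theta$-regularity and the two norm conditions. The $j=1$ part of the norm condition, together with $g(\cdot,\pE,\pv) \in H^2_\theta \subset H^1_\theta$, is exactly the hypothesis of (the converse direction of) Lemma~\ref{trans1dplane}; hence $g \in \mathrm D(\pD)$ with $\pD g$ given in action-angle variables by $\pT(\pE)^{-1} \partial_\theta g$. Since $g(\cdot,\pE,\pv) \in H^2_\theta$ means $\partial_\theta g(\cdot,\pE,\pv) \in H^1_\theta$, the function $\pD g$ satisfies the $H^1_\theta$-regularity hypothesis of Lemma~\ref{trans1dplane} with derivative $\partial_\theta(\pD g)(\theta,\pE,\pv) = \pT(\pE)^{-1} \partial_\theta^2 g(\theta,\pE,\pv)$. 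The $j=2$ norm condition is then precisely what is needed to conclude $\pD g \in \mathrm D(\pD)$ via Lemma~\ref{trans1dplane}, so $g \in \mathrm D(\pD^2)$.

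There is no real obstacle: the argument is essentially bookkeeping, since Lemma~\ref{trans1dplane} already does all the analytic work (the weak-derivative characterization in $\theta$ and the boundary conditions $y(0)=y(1)$ built into $H^1_\theta$). The only minor point requiring care is to ensure that, when iterating, the boundary conditions $\dot y(0)=\dot y(1)$ needed for $H^2_\theta$ are recovered; these come for free because $\partial_\theta g(\cdot,\pE,\pv) = \pT(\pE)(\pD g)(\cdot,\pE,\pv) \in H^1_\theta$ already satisfies the periodicity condition by the first application of Lemma~\ref{trans1dplane}.
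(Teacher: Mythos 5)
Your proposal is correct and takes exactly the approach the paper uses: the paper's entire proof of Corollary~\ref{trans1dsquareplane} is the single sentence that both implications follow by applying Lemma~\ref{trans1dplane} twice. Your write-up spells out precisely the bookkeeping that the paper leaves implicit, including the identification of the $j=1,2$ norm conditions with $\|\pD g\|_{\pH}$ and $\|\pD^2 g\|_{\pH}$ under the change of variables $\diff x\,\diff v = \pT(\pE)\,\diff\theta\,\diff\pE\,\diff\pv$, and the observation that periodicity of $\partial_\theta g$ is inherited from the $H^1_\theta$-membership of $\pD g$.
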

\begin{proof}
  Both implications follow by applying Lemma~\ref{trans1dplane} twice.
\end{proof}

The above results allow us to conclude the following fundamental properties of
the transport operator and its square which have partly been stated in
Proposition~\ref{transport1D} before.
\begin{prop}\label{proptransplane}
  \begin{enumerate}[label=(\alph*)]
  \item $\pD\colon \mathrm D(\pD)\to\pH$ is skew-adjoint and
    $\pD^2\colon\mathrm D(\pD^2)\to\pH$ is self-adjoint.
    The restrictions to $\p\H$ are skew-adjoint respectively self-adjoint as well.
  \item The kernel of $\pD$ consists of all functions depending solely on $\pE$ and $\pv$ a.e..
  \item For every $h\in\pH$ with $h\perp\ker(\pD)$ there exists
    $g\in\mathrm D(\pD)$ such that $\pD g=h$. In particular,
    \begin{align}
      \ker(\pD)^\perp = \mathrm{im}(\pD).
    \end{align}
  \end{enumerate}
\end{prop}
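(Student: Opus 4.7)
The plan is to exploit the action-angle representation established in Lemma~\ref{trans1dplane} and Corollary~\ref{trans1dsquareplane}, which reduce $\pD$ on $\mathrm D(\pD)$ to $\pT(\pE)^{-1}\partial_\theta$ on the periodic interval $[0,1]$. Under the change of variables $(x,v)\mapsto(\theta,\pE,\pv)$ (with Jacobian factor $\pT(\pE)$), the inner product on $\pH$ becomes
\[
\langle g,h\rangle_{\pH}=\int_{\pOmega_0^{\pE\pv}}\frac{\pT(\pE)}{|\pvarphi'(\pE,\pv)|}\int_0^1 g(\theta,\pE,\pv)\,h(\theta,\pE,\pv)\diff\theta\diff(\pE,\pv),
\]
and the boundedness of $\pT$ from above and below (Proposition~\ref{Tboundsplane}) guarantees that this structure behaves well under the manipulations below.

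For part (a), skew-symmetry of $\pD$ on $\mathrm D(\pD)$ follows by writing $\langle\pD g,h\rangle_{\pH}$ in action-angle coordinates; the factors of $\pT(\pE)$ cancel and one integrates by parts in $\theta$, with the boundary terms vanishing by the periodicity condition built into $H^1_\theta$ (see \eqref{eq:defh1tau}). To upgrade to skew-adjointness one takes $h\in\mathrm D(\pD^*)$ with adjoint image $\mu$ and tests $\langle\pD\xi,h\rangle_{\pH}=\langle\xi,\mu\rangle_{\pH}$ against $\xi\in C^1_c(\pOmega_0)\subset\mathrm D(\pD)$; Definition~\ref{deftransportplanar} then reads off that $\pD h$ exists weakly with $\pD h=-\mu$, so $h\in\mathrm D(\pD)$. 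Self-adjointness of $\pD^2$ then follows from von Neumann's theorem applied to $\pD^2=-\pD^*\pD$, exactly as in the proof of Proposition~\ref{transport}(c). The restrictions to $\p\H$ follow because $\pD$ reverses both $v_1$-parity and $x$-parity (verifiable directly from the formula $\pD=v_1\partial_x-\pU_0'\partial_{v_1}$ and the definition of the weak extension), so $\pD^2$ preserves these parities; self-adjointness transfers to $\p\H$ by decomposing all functions involved into their parity components.

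Part (b) is immediate from Lemma~\ref{trans1dplane}: if $\pD g=0$ then for a.e.\ $(\pE,\pv)$ the function $\theta\mapsto g(\theta,\pE,\pv)\in H^1_\theta$ satisfies $\partial_\theta g(\cdot,\pE,\pv)=0$ a.e., so it is constant in $\theta$, i.e., $g$ depends only on $(\pE,\pv)$. For part (c), the orthogonality $h\perp\ker(\pD)$, tested against arbitrary $(\pE,\pv)$-functions in $\pH$, forces $\int_0^1 h(\theta,\pE,\pv)\diff\theta=0$ for a.e.\ $(\pE,\pv)$. Define
\[
g(\theta,\pE,\pv)\coloneqq \pT(\pE)\int_0^\theta h(s,\pE,\pv)\diff s.
\]
Then $g(0,\cdot)=g(1,\cdot)=0$, and weakly $\partial_\theta g=\pT(\pE)h$, so by Lemma~\ref{trans1dplane} we get $g\in\mathrm D(\pD)$ with $\pD g=h$. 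Cauchy--Schwarz combined with Proposition~\ref{Tboundsplane} yields $\|g\|_{\pH}\le\sup(\pT)\,\|h\|_{\pH}$, confirming $g\in\pH$ and in particular that the range of $\pD$ is closed and equals $\ker(\pD)^\perp$ (as it must by skew-adjointness, via \cite[Corollary~2.18]{Brezis}).

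The main obstacle is not any single computation but the careful bookkeeping of domain conditions through the change of variables: one must consistently verify the $H^1_\theta$ boundary conditions after each construction and control the weights $\pT(\pE)^{\pm 1}/|\pvarphi'|$ in the norms. This bookkeeping is streamlined by the fact that, in the planar setting, the angular variable $\theta$ alone parametrizes the characteristic flow (independent of $\pv$), so the subtle behavior near $\pE\to\pUmin$ that plagues the radial case disappears entirely.
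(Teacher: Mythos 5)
Your proposal is correct and follows essentially the same route as the paper: transfer to action-angle coordinates via Lemma~\ref{trans1dplane}, integrate by parts in $\theta$ for skew-symmetry, upgrade to skew-adjointness by testing the adjoint against $C^1_c$ test functions and reading off the weak definition, invoke von Neumann's theorem for $\pD^2$, use parity decomposition for the restriction to $\p\H$, and construct the preimage in part (c) by the indefinite $\theta$-integral with the boundary condition supplied by $h\perp\ker(\pD)$. The paper's own proof is terse (referring back to the spherically symmetric analogues), and your version fills in those details faithfully; no discrepancy.
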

\begin{proof}
  Interpreting $\pD$ as a $\theta$-derivative, we integrate by parts in the
  $\theta$-integral to obtain the skew-symmetry of $\pD$ and the symmetry of $\pD^2$.
  From there we easily obtain the skew-adjointness and self-adjointness respectively.
	
  Part (b) follows since the kernel of $\partial_\theta\colon H^1_\theta\to L^2(]0,1[)$
  consists of all functions which are constant a.e.. 
	
  As to the last part, we define $g\colon\pOmega_0\to\R$ via
  \begin{align*}
    g (\theta,\pE,\pv)\coloneqq \pT(\pE) \int_0^\theta h(s,\pE,\pv)\diff s
    \quad\text{for a.e. } (\theta,\pE,\pv)\in\pOmega_0^\theta.
  \end{align*}
  It is then straight-forward to verify that $g\in\pH$ with
  \begin{align*}
    \|g\|_{\pH}^2\leq\pT(\pE_0)^2\,\|h\|_{\pH}^2,
  \end{align*}
  and $g(\cdot,\pE,\pv)\in H^1_\theta$ for a.e.~$(\pE,\pv)\in\pOmega_0^{\pE\pv}$,
  in particular $g(0,\pE,\pv)=0=g(1,\pE,\pv)$ since $h\perp\ker(\D)$.
  In addition, $\pD g=h$ as required. For more details we refer to the proof of
  Lemma \ref{transinversetauel}
  in the spherically symmetric setting.
\end{proof}

Finally we combine Corollary \ref{trans1dsquareplane} and the spectral properties of $-\partial_\theta^2\colon H^2_\theta\to L^2(]0,1[)$ from Lemma \ref{laplace1d} to explicitly determine the spectrum of $-\pD^2$:

\begin{theorem}\label{transsquarespectrumplane}
	The spectrum of the self-adjoint operator $-\pD^2\colon \mathrm D(\pD^2)\to \pH$ is
	\begin{align*}
	\sigma(-\pD^2) = \left(\frac{2\pi\N_0}{\pT([\pUmin,\pE_0])}\right)^2  \coloneqq \left\{ \left( \frac{2\pi k}{\pT(\pE)} \right)^2 ~\Big|~ k\in\N_0,~\pUmin\leq\pE\leq\pE_0 \right\} .
	\end{align*}
	Furthermore, the spectrum is purely essential, i.e.,
	\begin{align*}
	\sigma_{ess}(-\pD^2) = \sigma(-\pD^2).
	\end{align*}
\end{theorem}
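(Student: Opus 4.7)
The plan is to mimic the proof of Theorem~\ref{transsquarespectrum} from the radial setting, exploiting the fact that Corollary~\ref{trans1dsquareplane} lets us rewrite $-\pD^2$ as $-\pT(\pE)^{-2}\partial_\theta^2$ on a domain built out of $H^2_\theta$ in $\theta$, while Lemma~\ref{laplace1d} completely describes the spectrum of $-\partial_\theta^2$ on $H^2_\theta$. The planar situation is in fact simpler than the radial one because the period function depends only on $\pE$, and because $\pT$ is continuous on the compact interval $[\pUmin,\pE_0]$ and, by Proposition~\ref{Tboundsplane}, bounded both above and away from zero there; consequently the set $(2\pi\N_0/\pT([\pUmin,\pE_0]))^2$ is a countable union of compact intervals whose lower endpoints $(2\pi k/\pT(\pE_0))^2$ diverge with $k$, so it is already closed and no closure is needed.

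For the inclusion $(2\pi\N_0/\pT([\pUmin,\pE_0]))^2 \subset \sigma_{ess}(-\pD^2)$, fix $k\in\N$ and $\pE^\ast\in{}]\pUmin,\pE_0[$, pick $\pv^\ast\in\{\pbeta\neq 0\}$, and verify Weyl's criterion by choosing cutoffs $\chi_j\in C^\infty_c$ with $\supp\chi_j\subset\pOmega_0^{\pE\pv}\cap B_{1/j}(\pE^\ast,\pv^\ast)$ and $\int\chi_j^2\,\diff(\pE,\pv)=\pT(\pE^\ast)^{-1}$, then setting
\begin{align*}
g_j(\theta,\pE,\pv)\coloneqq\sqrt{\tfrac{|\pvarphi'(\pE,\pv)|}{\pT(\pE)}}\,\chi_j(\pE,\pv)\sin(2\pi k\theta).
\end{align*}
The change-of-variables formula $\diff x\diff v=\pT(\pE)\diff\theta\diff\pE\diff\pv$ and $\int_0^1\sin^2(2\pi k\theta)\diff\theta=\tfrac12$ give $\|g_j\|_{\pH}=1$. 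Corollary~\ref{trans1dsquareplane} yields $-\pD^2 g_j=(2\pi k/\pT(\pE))^2 g_j$, and continuity of $\pT$ on $\mathring\pOmega_0^{\pE\pv}$ (Proposition~\ref{Tboundsplane}) combined with the shrinking support of $\chi_j$ gives the eigenvalue approximation $\|-\pD^2 g_j-(2\pi k/\pT(\pE^\ast))^2 g_j\|_{\pH}\to 0$; weak convergence $g_j\rightharpoonup 0$ follows from Cauchy--Schwarz together with the vanishing measure of $\supp\chi_j$, as in the radial proof. The case $k=0$ is handled identically by replacing $\sin(2\pi k\theta)$ with a nonzero constant, which also shows that $0$ is an eigenvalue of infinite multiplicity (its eigenspace is $\ker(\pD)$). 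Since the spectrum is closed, this yields the full inclusion including the boundary values $\pE=\pUmin$ and $\pE=\pE_0$.

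For the reverse inclusion $\sigma(-\pD^2)\subset(2\pi\N_0/\pT([\pUmin,\pE_0]))^2$, fix $\lambda\in\R$ outside this set. By compactness of $[\pUmin,\pE_0]$, continuity of $\pT$, and the lower bound $\pT\geq\pT(\pUmin)>0$, the distance
\begin{align*}
\mathrm{dist}\left(\lambda\,\pT(\pE)^2,(2\pi\N_0)^2\right)\geq c>0
\end{align*}
is attained uniformly in $\pE\in[\pUmin,\pE_0]$; note that only finitely many integers $k$ are relevant because $\lambda\pT(\pE)^2$ ranges over a bounded set. The resolvent bound from Lemma~\ref{laplace1d} then gives $\|z\|_2\leq c^{-1}\|(-\partial_\theta^2-\lambda\pT(\pE)^2)z\|_2$ for every $z\in H^2_\theta$ and every $\pE$, and inserting this pointwise in $\pE$ into the $(\theta,\pE,\pv)$-representation from Corollary~\ref{trans1dsquareplane} yields $\|-\pD^2 g-\lambda g\|_{\pH}\geq c\,\pT(\pE_0)^{-2}\|g\|_{\pH}$ for all $g\in\mathrm D(\pD^2)$. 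Thus $\lambda\notin\sigma(-\pD^2)$ by the weaker Weyl criterion, concluding the proof.

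The only mildly delicate step is the uniform distance bound in the last paragraph; it uses in an essential way that $\pT$ is bounded and stays positive on the compact energy interval, both of which are provided by Proposition~\ref{Tboundsplane}. Everything else is a direct translation of the radial argument, with the pleasant bonus that no closure operation is required because the spectrum already comes out closed.
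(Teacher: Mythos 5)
Your proof follows the paper's argument closely and is essentially correct: rewrite $-\pD^2$ via Corollary~\ref{trans1dsquareplane}, use Lemma~\ref{laplace1d} for the spectrum of $-\partial_\theta^2$ on $H^2_\theta$, construct a Weyl sequence concentrated near a fixed $\pE^\ast$ for the forward inclusion, and use a uniform distance/resolvent bound for the reverse inclusion. Your observation that the resulting set is already closed (so no closure bar is needed, unlike in the radial case) is a nice explicit justification for something the paper leaves implicit. Two small slips, though: (i) your normalization $\int\chi_j^2\,\diff(\pE,\pv)=\pT(\pE^\ast)^{-1}$ does not give $\|g_j\|_{\pH}=1$; with your ansatz one computes $\|g_j\|_{\pH}^2=\tfrac12\int\chi_j^2\,\diff(\pE,\pv)$, so you should take $\int\chi_j^2\,\diff(\pE,\pv)=2$ (the paper instead factorizes $\chi_j(\pE)\sqrt{\pbeta(\pv)}$ with $\int_{\pUmin}^{\pE_0}\chi_j^2\,\diff\pE=2$, which amounts to the same thing); (ii) the continuity of $\pT$ you invoke should be attributed to Lemma~\ref{Tderplane}, not Proposition~\ref{Tboundsplane}, the latter providing the bounds but not the continuity. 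Neither affects the substance of the argument.
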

\begin{proof}
  Since the proof is very similar to the one of Theorem \ref{transsquarespectrum},
  we only outline it for the plane symmetric setting.
  For fixed $\pE^*\in]\pUmin,\pE_0[$ and $k\in\N$
    we show $\left(2\pi k\,\pT(\pE^*)^{-1}\right)^2\in\sigma_{ess}(-\pD^2)$
    by constructing approximate eigenfunctions to the eigendistribution
	\begin{align}\label{eq:eigendistributionplane}
	(\theta,\pE,\pv)\mapsto\delta_{\pE^*}(\pE)\,\sin(2\pi k\theta),
	\end{align}
	i.e., a \enquote{Weyl-sequence} $(g_j)_{j\in\N}\subset\mathrm D(\pD^2)$ satisfying
	\begin{enumerate}[label=(\roman*)]
		\item $\displaystyle \|g_j\|_{\pH} = 1$ for every $j\in\N$,
		\item $\displaystyle \left\| -\pD^2g_j - \left( 2\pi k\;\pT(\pE^*)^{-1} \right)^2 g_j \right\|_{\pH}\to0$ as $j\to\infty$,
		\item $\displaystyle g_j\weakto0$ in $\displaystyle \pH$ as $j\to\infty$.
	\end{enumerate}
	To this end choose $\chi_j\colon\R\to\R$ such that $\supp(\chi_j)\subset]\pE^*-\frac1j,\pE^*+\frac1j[\cap]\pUmin,\pE_0[$ and
	\begin{align*}
	\int_{\pUmin}^{\pE_0}\chi_j^2(\pE)\diff\pE=2
	\end{align*}
	for $j\in\N$. Then
	\begin{align*}
	g_j (\theta,\pE,\pv)\coloneqq \sqrt{\frac{\vert\pvarphi'(\pE,\pv)\vert}{\pT(\pE)}} \;\chi_j(\pE)\;\sqrt{\pbeta(\pv)}\;\sin(2\pi k\theta),\qquad (\theta,\pE,\pv)\in\pOmega_0^\theta,\;j\in\N,
	\end{align*}
	defines a Weyl-sequence with the three properties claimed above. In particular, $g_j\in\mathrm D(\pD^2)$ for $j\in\N$ by Corollary \ref{trans1dsquareplane} since $\pT$ is bounded away from zero, see~Proposition \ref{Tboundsplane}. Furthermore,
	\begin{align*}
	\big\| -&\pD^2g_j - \left( 2\pi k\,\pT(\pE^*)^{-1} \right)^2 g_j \big\|_{\pH} = 2^3 (\pi k)^4 \int_{\pUmin}^{\pE_0} \chi_j^2(\pE) \left| \frac1{\pT(\pE)^2} - \frac1{\pT(\pE^*)^2} \right|^2\diff\pE\to0
	\end{align*}
	as $j\to\infty$ by the continuity of $\pT>0$ on $]\pUmin,\pE_0[$, see Lemma \ref{Tderplane}. In addition, the essential spectrum contains $0$ (since the kernel of $\pD$ is an eigenspace of infinite multiplicity) and is closed, which means that we have shown 
	\begin{align*}
	\left(\frac{2\pi\N_0}{\pT([\pUmin,\pE_0])}\right)^2\subset \sigma_{ess}(-\pD^2)\subset\sigma(-\pD^2).
	\end{align*} 
	Conversely, for fixed $\lambda\in\R\setminus \left(\frac{2\pi\N_0}{\pT([\pUmin,\pE_0])}\right)^2$ there exists $c>0$ such that 
	\begin{align*}
	\mathrm{dist} \left( \lambda\; \pT(\pE)^2, (2\pi\N_0)^2 \right)\geq c \text{ for every } \pE\in[\pUmin,\pE_0]
	\end{align*}
	due to the boundedness of $\pT$ away from zero, see Proposition \ref{Tboundsplane}. Since $(2\pi\N_0)^2$ equals the spectrum of $-\partial_\theta^2\colon H^2_\theta\to L^2(]0,1[)$,
	\begin{align*}
	\|z\|_2\leq\frac1c \left\| \left( -\partial_\theta^2 - \lambda\;\pT(\pE)^2 \right)z \right\|_2 \quad\text{for } z\in H^2_\theta ,~\pE\in[\pUmin,\pE_0],
	\end{align*}
	see \cite[Theorem 5.8]{HiSi}. However, this rules out the existence of a sequence $(g_j)_{j\in\N}\subset \mathrm D(\pD^2)$ such that $\|g_j\|_{\pH} = 1$ for $j\in\N$ and $\left\| -\pD^2g_j - \lambda g_j \right\|_{\pH}\to0$ as $j\to\infty$, since in that case,
	\begin{align*}
	\left\| -\pD^2g_j - \lambda g_j \right\|_{\pH}^2 &= \int_{\pOmega_0^{\pE\pv}} \frac{\pT(\pE)^{-3}}{\vert\pvarphi'(\pE,\pv)\vert} \int_0^1 \vert -\partial_\theta^2g_j(\theta,\pE,\pv)-\lambda\,\pT(\pE)^2g_j(\theta,\pE,\pv)\vert^2\diff\theta\diff(\pE,\pv)\\
	&\geq \frac{c^2}{\pT(\pE)^4} \int_{\pOmega_0^{\pE\pv}} \frac{\pT(\pE)}{\vert\pvarphi'(\pE,\pv)\vert} \int_0^1 \vert g_j(\theta,\pE,\pv)\vert^2\diff\theta\diff(\pE,\pv)= \frac{c^2}{\pT(\pE)^4}>0
	\end{align*}
	for $j\in\N$. Therefore, by a weak version of Weyl's criterion \cite[Theorem 5.10]{HiSi} we conclude $\lambda\notin\sigma(-\pD^2)$.
\end{proof}

A simple consequence of the spectral properties of $-\pD^2$ is the following estimate:

\begin{cor}[A Poincar\'e inequality]\label{estgbyDg}
  For every $g\in\mathrm D(\pD)$ with $g\perp\ker(\pD)$,
  \begin{align*}
    \|\pD g\|_{\pH}^2\geq \frac{4\pi^2}{\pT(\pE_0)^2}\;\|g\|_{\pH}^2.
  \end{align*}
\end{cor}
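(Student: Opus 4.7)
The plan is to mirror the proof of Corollary~\ref{estimatetranssquare} in the radial case, exploiting the explicit description of $\sigma(-\pD^2)$ from Theorem~\ref{transsquarespectrumplane} together with Proposition~\ref{Tincreasingplane}. Since $\pT$ is strictly increasing on $\,]\pUmin,\pE_0[$, we have $\sup_{[\pUmin,\pE_0]}\pT = \pT(\pE_0)$, so the smallest \emph{nonzero} element of
\[
\sigma(-\pD^2) = \left(\frac{2\pi\N_0}{\pT([\pUmin,\pE_0])}\right)^2
\]
equals $4\pi^2/\pT(\pE_0)^2$, which is exactly the constant that appears in the inequality.

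First I would treat the case $g\in\mathrm D(\pD^2)$ with $g\perp\ker(\pD)$. By Proposition~\ref{proptransplane}(b), $\ker(\pD)$ is precisely the eigenspace of $-\pD^2$ corresponding to the eigenvalue $0$. Hence $g$ lies in the spectral subspace of $-\pD^2$ associated with $\sigma(-\pD^2)\setminus\{0\}\subset[4\pi^2/\pT(\pE_0)^2,\infty[$, and the spectral theorem for self-adjoint operators (see e.g.~\cite[Proposition~5.12]{HiSi}) gives
\[
\langle -\pD^2 g,g\rangle_{\pH} \geq \frac{4\pi^2}{\pT(\pE_0)^2}\,\|g\|_{\pH}^2.
\]
On the other hand, the skew-adjointness of $\pD$ from Proposition~\ref{proptransplane}(a) yields $\langle -\pD^2 g,g\rangle_{\pH} = \|\pD g\|_{\pH}^2$, which establishes the inequality on $\mathrm D(\pD^2)$.

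To pass from $\mathrm D(\pD^2)$ to $\mathrm D(\pD)$, I would argue by approximation in action-angle coordinates, as suggested in the radial proof. Given $g\in\mathrm D(\pD)$ with $g\perp\ker(\pD)$, we invoke Lemma~\ref{trans1dplane} to see that $g(\cdot,\pE,\pv)\in H^1_\theta$ for a.e.~$(\pE,\pv)\in\pOmega_0^{\pE\pv}$, and the orthogonality $g\perp\ker(\pD)$ translates (using the volume element $\diff x\diff v = \pT(\pE)\diff\theta\diff\pE\diff\pv$) into $\int_0^1 g(\theta,\pE,\pv)\diff\theta = 0$ for a.e.~$(\pE,\pv)$. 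For each $N\in\N$, let $g_N(\theta,\pE,\pv)$ denote the $N$-th partial sum of the Fourier series of $g(\cdot,\pE,\pv)$ on $[0,1]$ with the constant mode removed; then $g_N(\cdot,\pE,\pv)\in H^2_\theta$, its derivatives of all orders lie in $L^2(]0,1[)$ uniformly with respect to $N$ after multiplication by the appropriate weights, and by Corollary~\ref{trans1dsquareplane} one checks that $g_N\in\mathrm D(\pD^2)$ and $g_N\perp\ker(\pD)$. Dominated convergence applied to the Parseval identity for $g(\cdot,\pE,\pv)$ and $\partial_\theta g(\cdot,\pE,\pv)$ gives $g_N\to g$ in $\pH$ and $\pD g_N\to \pD g$ in $\pH$. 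Applying the inequality already proved for $g_N$ and taking $N\to\infty$ completes the proof.

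The only subtlety to watch for is the construction of the approximants $g_N$: they must simultaneously lie in $\mathrm D(\pD^2)$, remain orthogonal to $\ker(\pD)$, and converge in the graph norm of $\pD$. Truncating Fourier series on the angle circle accomplishes all three, because the constant mode in $\theta$ coincides with the kernel of $\pD$ fiberwise, higher modes automatically gain one extra derivative (squared) in $\theta$ needed for $\mathrm D(\pD^2)$, and $L^2$ convergence of Fourier partial sums yields the required convergence in $\pH$. No other step presents a real obstacle, since the spectral bottom of $-\pD^2|_{\ker(\pD)^\perp}$ has already been pinned down explicitly.
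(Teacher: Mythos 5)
Your proof is correct and mirrors the paper's own argument exactly: the bound on $\mathrm D(\pD^2)$ follows from the spectral theorem combined with the explicit description of $\sigma(-\pD^2)$ in Theorem~\ref{transsquarespectrumplane} and the identification of $\ker(\pD)$ as the $0$-eigenspace, while passage to $\mathrm D(\pD)$ uses the $\theta$-Fourier truncation outlined in Remark~\ref{approxplane}. One small imprecision: the claim that the $\theta$-derivatives of $g_N$ are bounded in $L^2$ \emph{uniformly in $N$} is false (the $j$-th derivative grows like $N^{j-1}$); fortunately this is immaterial, since all you need is that each $g_N$ individually lies in $\mathrm D(\pD^2)$, which holds because $\pT$ is bounded above and away from zero (Proposition~\ref{Tboundsplane}).
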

While the above estimate for $g\in\mathrm D(\pD^2)$ follows immediately from Theorem~\ref{transsquarespectrumplane} and the skew-symmetry of $\pD$, we have to mollify an element of $\mathrm D(\pD)$ in some way to get the result there as well. This approximation will be useful later on as well.

\begin{remark}\label{approxplane}
	Let $g\in\mathrm D(\pD)$. To approximate $g$ by smooth functions while keeping $\pD g$ under control, we expand $g$ in its $\theta$-Fourier series, i.e.,
	\begin{align}\label{eq:thetafourierplane}
	g(\theta,\pE,\pv)= \sum_{k=0}^\infty a_k(\pE,\pv)\cos(2k\pi\theta) + \sum_{k=1}^\infty b_k(\pE,\pv)\sin(2\pi k\theta),
	\end{align}
	where the coefficients $a_k$, $b_k$ are given by
	\begin{align*}
	a_0(\pE,\pv) &\coloneqq \int_0^1 g (\theta,\pE,\pv)\diff\theta,\\
	a_k(\pE,\pv) &\coloneqq 2\int_0^1 g (\theta,\pE,\pv)\cos(2\pi k\theta)\diff\theta, \ \ 
	b_k(\pE,\pv) \coloneqq 2\int_0^1 g (\theta,\pE,\pv)\sin(2\pi k\theta)\diff\theta,
	\end{align*}
	for $k\in\N$ and $(\pE,\pv)\in\pOmega_0^{\pE\pv}$. Furthermore, by Lemma~\ref{trans1dplane},
	\begin{align}\label{eq:thetafourierderplane}
	\pD g(\theta,\pE,\pv)= -\frac{2\pi}{\pT(\pE)}\sum_{k=1}^\infty k\,a_k(\pE,\pv)\sin(2k\pi\theta) +\frac{2\pi}{\pT(\pE)}\sum_{k=1}^\infty k\,b_k(\pE,\pv)\cos(2\pi k\theta).
	\end{align}
	Eqns.~\eqref{eq:thetafourierplane} and \eqref{eq:thetafourierderplane}
        both hold as limits in $\pH$, which can be seen by using the properties of the scalar $\theta$-Fourier series and Lebesgue's dominated convergence theorem. Thus we may assume $g$ to be of the form 
	\begin{align}\label{eq:thetafourierplaneapprox}
	g(\theta,\pE,\pv)= \sum_{k=0}^K a_k(\pE,\pv)\cos(2k\pi\theta) + \sum_{k=1}^K b_k(\pE,\pv)\sin(2\pi k\theta)
	\end{align}
	for some $K\in\N$. Observe that $g\perp\ker(\pD)$ is equivalent to $a_0=0$ on $\pOmega_0^{\pE\pv}$, i.e., this property carries over to the approximation. Similarly, $g$ being odd in $v_1$ is equivalent to $a_k=0$ for $k\in\N_0$ and $g$ being odd in $x$ is equivalent to $a_{2k}=0=b_{2k+1}$ for $k\in\N_0$, i.e., these properties carry over too.
	
	To achieve $g\in\mathrm D(\pD^2)$, we have to mollify $g$ in the $(\pE,\pv)$-direction as well. More precisely, we replace $a_k,\,b_k$ by approximations $\tilde a_k,\,\tilde b_k\in C^\infty_c(\inter(\pOmega_0^{\pE\pv}))$ in \eqref{eq:thetafourierplaneapprox} such that the resulting function $\tilde g$ is in $C^2_c(\pOmega_0)$; note that we only have to treat a finite number of coefficients.  Then $\tilde g$ and $\pD\tilde g$ approximate $g$ and $\pD g$ respectively. In particular, by requiring $\tilde a_k=0$ or $\tilde b_k=0$ if $a_k=0$ or $b_k=0$ respectively, the possible
        parity properties of $g$ carry over to $\tilde g$. 
\end{remark} 

We use Theorem~\ref{transsquarespectrumplane} to determine the spectrum of
$\pA=-(\pD^2+\pB)$ by showing that adding $\pB$ does not change the essential spectrum;
see \eqref{E:BARBDEF} for the definition of $\pB$.

\begin{theorem}\label{T:ESSENTIALSPECTRUMABAR}
	$(-\pB)$ is relatively $(-\pD^2)$-compact. Therefore, by the Weyl theorem,
	\begin{align*}
	\sigma_{ess}(\pA) = \sigma_{ess}(-\pD^2).
	\end{align*}
	Together with Theorem~\ref{transsquarespectrumplane}, this implies
	\begin{align}\label{eq:essspecantonovplane}
	\sigma_{ess}(\pA) =  \left( \frac{2\pi\N_0}{\pT([\pUmin,\pE_0])} \right)^2 = 4\pi^2 \left( \frac{\N_0}{\pT([\pUmin,\pE_0])} \right)^2,
	\end{align}
	where $\pA$ denotes the operator on the whole space $\mathrm D(\pD^2)$ with no symmetry restrictions. In addition, 
	\begin{align}
	\sigma_{ess}(\pA\big|_{\p\H}) &=  4\pi^2 \left( \frac{2\N}{\pT([\pUmin,\pE_0])} \right)^2;\label{eq:essspecantonovplaneoddv1x}
	\end{align}
	recall that 
	$\p\H$ denotes the functions in $\pH$ which are odd in $v_1$ and $x$. 
	Here, $\min(\N)=1$, $\N_0=\N\cup\{0\}$.
\end{theorem}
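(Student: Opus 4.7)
The plan is to follow the structure of the spherically symmetric result, Theorem~\ref{essspecA}. I would first show that $-\pB$ is relatively $(-\pD^2)$-compact, so that Weyl's theorem~\cite[Theorem~14.6]{HiSi} yields $\sigma_{ess}(\pA)=\sigma_{ess}(-\pD^2)$; the explicit expression~\eqref{eq:essspecantonovplane} then follows from Theorem~\ref{transsquarespectrumplane}. For the assertion on $\p\H$ I would separately verify that $\pA$ restricts to a self-adjoint operator there, repeat the relative compactness argument, and recompute $\sigma_{ess}(-\pD^2|_{\p\H})$ in action-angle coordinates.

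For the relative compactness, Lemma~\ref{bdef1D} already gives boundedness of $\pB$ on $\pH$, hence relative $(-\pD^2)$-boundedness with relative bound zero. To obtain compactness, let $(g_k)\subset\mathrm D(\pD^2)$ be a sequence with $(g_k)$ and $(\pD^2 g_k)$ bounded in $\pH$. Since $\pD g_k\in\mathrm{im}(\pD)\subset\ker(\pD)^\perp$ by Proposition~\ref{proptransplane}, Corollary~\ref{estgbyDg} applied to $\pD g_k$ yields
\begin{align*}
\|\pD^2 g_k\|_{\pH}^2 \geq \frac{4\pi^2}{\pT(\pE_0)^2}\,\|\pD g_k\|_{\pH}^2,
\end{align*}
so that $(\pD g_k)$ is bounded in $\pH$ as well. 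By~\eqref{eq:bpot1D} one has $\pB g_k=|\pvarphi'|\,v_1\,\pU'_{\pD g_k}$, with the potentials supported in $[-\pR_0,\pR_0]$ thanks to~\eqref{pl_dxU_sym} and the compact support of $\prho_0$. The key observation is that $\pD g_k \mapsto \pU'_{\pD g_k}$ sends $\pH$-bounded sequences into $H^1$-bounded families on $[-\pR_0,\pR_0]$: $\pU'_{\pD g_k}$ is the antiderivative of $4\pi\rho_{\pD g_k}$, and Cauchy--Schwarz combined with~\eqref{eq:v1intrho} controls $\|\rho_{\pD g_k}\|_{L^2}$ by $\|\pD g_k\|_{\pH}$. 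Rellich--Kondrachov then delivers a subsequence along which $\pU'_{\pD g_k}$ converges in $C([-\pR_0,\pR_0])$; multiplying by $|\pvarphi'|\,v_1$, whose weighted $v$-integral is bounded by $\prho_0(x)$ via~\eqref{eq:v1intrho}, gives strong convergence of $(\pB g_k)$ in $\pH$.

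For the restricted assertion, Proposition~\ref{transport1D}(d) together with the discussion following Lemma~\ref{bdef1D} ensures that $-\pD^2$ and $-\pB$ both map $\p\H$ into itself, so $\pA|_{\p\H}$ is self-adjoint and the relative compactness argument applies verbatim. It therefore remains to identify $\sigma_{ess}(-\pD^2|_{\p\H})$. Transferring the definition~\eqref{E:HODDDEF} to action-angle coordinates via the remark preceding Corollary~\ref{trans1dsquareplane}, $g(\theta,\pE,\pv)\in\p\H$ iff $g(\theta,\cdot)=-g(1-\theta,\cdot)=-g(\tfrac{1}{2}-\theta,\cdot)$, and a short computation with $\sin(2\pi k(\tfrac{1}{2}-\theta))=-(-1)^k\sin(2\pi k\theta)$ shows that this forces the $\theta$-Fourier expansion of $g$ to consist of $\sin(2\pi k\theta)$-modes with $k$ \emph{even}. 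I would then rerun the Weyl-sequence construction of Theorem~\ref{transsquarespectrumplane} using approximants of the form $\sqrt{|\pvarphi'|/\pT}\,\chi_j(\pE)\sqrt{\pbeta(\pv)}\,\sin(4\pi k\theta)$ for $k\in\N$, which puts every $\left(4\pi k/\pT(\pE^*)\right)^2$ with $\pE^*\in\,]\pUmin,\pE_0[$ into $\sigma_{ess}(-\pD^2|_{\p\H})$; the reverse inclusion follows from the same resolvent estimate used in that proof, now applied in the closed subspace of $H^2_\theta$ spanned by $\{\sin(4\pi k\theta)\}_{k\in\N}$.

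The main obstacle I anticipate is the potential-theoretic compactness step---turning the $\pH$-bound on $\pD g_k$ into strong convergence of $\pU'_{\pD g_k}$. Here the one-dimensional nature of planar symmetry actually helps: \eqref{pl_dxU_sym} realizes $\pU'_{\pD g_k}$ as an elementary antiderivative on the compact interval $[-\pR_0,\pR_0]$, so $H^1$-boundedness is essentially free and the Rellich embedding applies directly, in contrast to the three-dimensional embedding $H^2(B_{\pR_0})\Subset H^1(B_{\pR_0})$ needed in the proof of Theorem~\ref{essspecA}. With this compactness in hand, the remaining spectral identification and symmetry bookkeeping are routine adaptations of Theorem~\ref{transsquarespectrumplane}.
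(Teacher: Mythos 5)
Your proposal follows the paper's argument essentially verbatim: boundedness of $\pD g_k$ via Corollary~\ref{estgbyDg}, relative compactness through an $H^1$-type compact embedding for the one-dimensional potentials $\pU'_{\pD g_k}$ supported in $[-\pR_0,\pR_0]$, Weyl's theorem, and then transferring the parity conditions into $\theta$-coordinates to pick out even $\sin(4\pi k\theta)$ modes for the restriction to $\p\H$. The only slip is a citation: bounding $\|\rho_{\pD g_k}\|_{L^2}$ by $\|\pD g_k\|_{\pH}$ via Cauchy--Schwarz requires \eqref{eq:A7prime1d} (i.e.\ $\int|\pvarphi'|\,\diff v\leq C$), not \eqref{eq:v1intrho}; the latter is the correct tool only for the final step where you pass from convergence of $\pU'_{\pD g_k}$ to convergence of $\pB g_k$.
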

\begin{proof}
	Since $\pB$ is continuous on $\pH$ and $\pD^2$ is self-adjoint with non-empty resolvent set, the claimed relative compactness is equivalent to the operator
	\begin{align}
		-\pB\colon\left( \mathrm D(\pD^2),\;\|\pD^2\cdot\|_{\pH} + \|\cdot\|_{\pH} \right)\to\pH
	\end{align}
	being compact; we refer to the proof of Theorem~\ref{essspecA} for more details. Let $(g_k)_{k\in\N}\subset\mathrm D(\pD^2)$ be such that $(g_k)_{k\in\N}$ and $(\pD^2g_k)_{k\in\N}$ are bounded in $\pH$. Then $(\pD g_k)_{k\in\N}\subset\pH$ is also bounded, see Corollary \ref{estgbyDg}. Thus, $(\pU_{\pD g_k}')_{k\in\N}$ and $(\pU_{\pD g_k}'')_{k\in\N}$ are bounded in $L^2(\R)$ due to \eqref{eq:poth2plane}, and $\supp(\pU_{\pD g_k}'),\,\supp(\pU_{\pD g_k}'')\subset[-\pR_0,\pR_0]$ for $k\in\N$. Therefore, the compact embedding $H^1([-\pR_0,\pR_0])\Subset L^2([-\pR_0,\pR_0])$ yields that $(\pU_{\pD g_k}')_{k\in\N}$ converges strongly in $L^2(\R)$, at least after extracting a subsequence. Representing $\pB$ as in \eqref{eq:bpot1D} then gives
	\begin{align*}
		\|\pB g_k-\pB g_l\|_{\pH}^2 &= \int_{\pOmega_0} \vert\pvarphi(\pE,\pv)\vert\,v_1^2\;\vert\pU_{\pD g_k}'(x)-\pU_{\pD g_l}'(x)\vert^2\diff(x,v)\\&= \int_{\R} \prho_0(x)\,\vert\pU_{\pD g_k}'(x)-\pU_{\pD g_l}'(x)\vert^2\diff x\to0\quad\text{as }k,l\to\infty,
	\end{align*}
	where we have rewritten the $v$-integral as in \eqref{eq:v1intrho}. We therefore conclude that $(\pB g_k)_{k\in\N}$ is strongly convergent in $\pH$.
	
	Translating oddness in $v_1$ or $x$ into the $\theta$-coordinate allows us to conclude the statements for the restriction of $\pA$ onto $\p\H$.
\end{proof}

As in the spherically symmetric case, these sets contain a finite number of gaps iff $\pT$
is non-constant. The precise form of the sets depends on the steady state and the behavior
of the period function $\pT$, see Remark \ref{essspecform}. In any case, the boundedness of
$\pT$ from above (Proposition \ref{Tboundsplane}) again yields the existence of a gap in the
essential spectrum of $\pA$ at the origin which we refer to as the {\em principal gap}.

\section{Kurth solutions}\label{sc:kurth}

In this section we present two families of semi-explicit, time dependent
solutions which are exactly time-periodic and arise by a suitable
perturbation of a corresponding steady state. The first family solves the spherically
symmetric Vlasov-Poisson system and was introduced by R.~Kurth \cite{Ku78}.
The second family is the analogue for the plane symmetric case.
 We are in particular interested
in the relation of the period of the time-periodic solutions close to the
corresponding steady state and the orbital period(s) of the particles in the steady
state configuration itself. The latter will give us a first intuition of where the
eigenvalue of the linearized operator corresponding to the aforementioned oscillations
can be positioned relative to the essential spectrum.


\subsection{The spherically symmetric Kurth family}\label{ssc:kurthradial}

Following Kurth \cite{Ku78} we define,
in the spherically symmetric situation,
\[
f_0(x,v)\coloneqq\frac{3}{4\pi^3}
\left\{
\begin{array}{cl} 
\left(1-|x|^2 - |v|^2 +|x\times v|^2\right)^{-1/2}&,\
\mbox{where}\ (\ldots)>0 \\
&\ \ \mbox{and}\ |x\times v|<1,\\
0&,\ \mbox{else}.
\end{array} \right.
\]
The induced spatial density is
\[
\rho_0 = \frac{3}{4\pi} \id_{B_1(0)},
\]
which in turn induces the spherically symmetric potential
\[
U_0(x) =
\left\{
\begin{array}{cl}
\frac{1}{2} |x|^2 - \frac{3}{2}&,\ |x|\leq 1,\\
-\frac{1}{|x|} &,\ |x|>1,
\end{array} \right.
\]
and the gravitational field
\[
\nabla U_0(x) =
\left\{
\begin{array}{cl}
x&,\ |x|\leq 1,\\
\frac{x}{|x|^3} &,\ |x|>1.
\end{array} \right.
\]
On the support of $f_0$ the particle energy equals
\[
E = \frac{1}{2} |v|^2 + U_0(x) = \frac{1}{2} \left(|v|^2 + |x|^2\right) -\frac{3}{2}. 
\]
Thus, $f_0$ depends on the invariants $E$ and $L$ only and is
therefore a stationary solution of the radial Vlasov-Poisson system
corresponding to the ansatz function
\[
\varphi(E,L)=\frac3{4\pi^3}
\left\{
\begin{array}{cl} 
\left(-2-2E+L\right)^{-1/2}&,\
\mbox{where}\ (\ldots)>0 \ \mbox{and}\ L<1,\\
0&,\ \mbox{else};
\end{array} \right.
\]
we emphasize the fact that $f_0$ is singular at the boundary of its support.
Setting
\begin{align}\label{eq:kurthpert3d}
f(t,x,v) \coloneqq f_0\left(\frac{x}{R(t)},R(t)v - \dot R(t) x\right)
\end{align}
embeds this steady state into a family of time-periodic solutions,
with induced spatial mass density 
\[
\rho(t) = \frac{3}{4\pi} \frac{1}{R^3(t)} \id_{B_{R(t)}}, 
\]
provided the function $R(t)$ solves the differential equation
\begin{equation}\label{eq:R_eq_sph}
\ddot R - \frac{1}{R^3} + \frac{1}{R^2} = 0.
\end{equation}
We supplement this equation with initial data 
\begin{equation}\label{eq:R_eqdata_pl}
R(0)=1,\ \dot R(0)=\gamma.
\end{equation}
For $\gamma =0$ we find that $R(t)=1$ is constant and recover the steady state
$f_0$. For $0<|\gamma|<1$, the solution is time-periodic, which can be seen as follows.
The equation has the conserved quantity
\begin{align*}
\frac{1}{2} \dot R^2 + \frac{1}{2 R^2} - \frac{1}{R} = const
= \frac{1}{2}\gamma^2 - \frac{1}{2} \eqqcolon E_\gamma.
\end{align*}
The behavior of the corresponding potential $\frac{1}{2 R^2} - \frac{1}{R}$
implies that energy levels
$- \frac{1}{2}< E_\gamma < 0$, i.e., $0<|\gamma|<1$,
correspond to closed orbits and non-trivial, time-periodic solutions of~\eqref{eq:R_eq_sph}.
Their periods are given by
\begin{align} \label{eq:talpha_sph}
P(\gamma) 
&= 2\int_{R_-(\gamma)}^{R_+(\gamma)}
\frac{\diff r}{\sqrt{2 E_\gamma +\frac{2}{r} -\frac{1}{r^2}}}\nonumber\\
&= \frac{2}{\sqrt{-2E_\gamma}} \int_{R_-(\gamma)}^{R_+(\gamma)}
\frac{r\diff r}{\sqrt{(R_+(\gamma)-r) (r-R_-(\gamma))}},
\end{align}
where $0<R_-(\gamma) < R_+(\gamma)$ are the two positive roots
of the polynomial $2 E_\gamma r^2 + 2 r -1$. In the integral
in \eqref{eq:talpha_sph},
$R_-(\gamma) \leq r \leq R_+(\gamma)$, and the remaining integral is equal to $\pi$.
Hence the fact
that these roots converge to $1$ as $\gamma\to 0$ implies that
\[
\lim_{\gamma \to 0} P(\gamma) = 2\pi.
\]

On the other hand, a straight-forward calculation shows that the radial particle period
of all the particles in the steady state equals $\pi$, i.e.,
\begin{align*}
T(E,L) = \pi ,\quad (E,L)\in\mathring\Omega_0^{EL}.
\end{align*}
In view of the spectral considerations of the previous section, this means that the
eigenvalue corresponding to the limiting period $2\pi$ lies in the principal gap of
the essential spectrum of the linearized operator. Note that the Kurth steady state
does not satisfy our general assumptions, but the results from Section \ref{ssc:essradial}
are still expected to hold true in the Kurth setting. However, the discrepancy between the
limiting period and the period of the particle trajectories is only present when
restricting the latter to the radial motion. When the particle trajectories are
considered in $(x,v)$-coordinates, all particles have period $2\pi$, caused
by an azimuthal period of $2\pi$. This illustrates that the restriction to the
spherically symmetric setting may be important when searching for isolated eigenvalues
in the principal gap of the essential spectrum.

\subsection{A planar Kurth-type family}\label{ssc:kurthplane}
Let
\begin{align*}
\pf_0(x,v)\coloneqq \frac1{4\pi^2} \left(1-x^2-v_1^2\right)_+^{-1/2} \,\pbeta(\pv)
\end{align*}
for $(x,v)\in\R\times\R^3$, where $\pbeta$ is as specified in Section \ref{ssc:ststp},
in particular $\int_{\R^2}\pbeta=1$; also recall~\eqref{E:+conv}.
Then $\pf_0(x,v)=0$ if $|x|\geq 1$, and for $|x|<1$ the induced spatial
density becomes
\begin{align*}
\prho_0(x)
&=
\int_{\R^3} \pf_0(x,v)\, \diff v
= \frac1{4\pi^2} \int_{v_1^2 < 1-x^2}\left(1-x^2 - v_1^2\right)^{-1/2} \diff v_1\\
&= \frac1{4\pi^2} \int_{x^2}^1\frac{\diff\eta}{\sqrt{1-\eta}\sqrt{\eta-x^2}}= \frac1{4\pi}
\end{align*}
so that
\[
\prho_0 = \frac{1}{4\pi} \id_{]-1,1[}.
\]
This density induces the potential
\begin{align*}
  \pU_0(x)= 2\pi \int_{\R} \vert x-y\vert\, \prho_0(y)\diff y =\left\{ \begin{array}{cl}
    x&,\ x\geq 1,\\
    \frac12\left(1+x^2\right)&,\ -1<x<1,\\
    -x&,\ x\leq  -1,
  \end{array}\right.
\end{align*}
and 
\begin{align*}
  \pU_0'(x)=\left\{ \begin{array}{cl}
    1&,\ x\geq 1,\\
    x&,\ -1<x<1,\\
    -1&,\ x\leq  -1.
  \end{array}\right.
\end{align*}
On the support of $\pf_0$ the particle energy $\pE$ takes the form
\[
\pE(x,v_1) = \frac12 v_1^2 + \pU_0(x) = \frac12\left(1+x^2 + v_1^2\right),
\]
i.e., $\pf_0$ only depends on the conserved quantities $\pE$ and $\pv$ via the ansatz function
\begin{align*}
\pvarphi(\pE,\pv) = \frac1{4\sqrt2\pi^2} \left(1-\pE\right)^{-1/2}_+ \,\pbeta(\pv).
\end{align*}
With the exception of the factor $(4\sqrt2\pi^2)^{-1}$, which we inserted to make $\pf_0$
look similar to the radial case, $\pvarphi$ is exactly of the polytropic
form \eqref{eq:poly1d} with index $k=-\frac12$. In particular, $\pf_0$
indeed induces a stationary solution of the planar Vlasov-Poisson system~\eqref{pl_vlasov}--\eqref{pl_rho}.

As in the spherically symmetric setting, we embed this steady state into a family of time-periodic
solutions
\[
\pf(t,x,v) \coloneqq \pf_0\left(\frac{x}{R(t)}, R(t) v_1 - \dot R(t) x,\pv\right),
\]
where the function $R(t)$ still needs to be determined. This phase space density
induces the spatial density
\[
\prho(t,x) = \frac{1}{R(t)}\, \prho_0\left(\frac{x}{R(t)}\right)
\]
and potential
\[
\pU(t,x) = R(t)\, \pU_0 \left(\frac{x}{R(t)}\right),
\]
so that
\[
\pU'(t,x) = \pU'_0 \left(\frac{x}{R(t)}\right).
\]
Substituting all this into the Vlasov equation and observing that
$\pf_0$ satisfies the stationary Vlasov equation with potential $\pU_0$
we see that $\pf$ satisfies the Vlasov equation with its induced potential $\pU$,
iff
\begin{equation}\label{eq:R_eq_pl}
\ddot R - \frac{1}{R^3} +1 =0.
\end{equation}
We again supplement this with the initial data \eqref{eq:R_eqdata_pl};
the choice $\gamma=0$ recovers the steady state $\pf_0$. For $\gamma\neq 0$
the solution is time-periodic, which can be seen similarly to the radial setting.
Along solutions of \eqref{eq:R_eq_pl} the following quantity is conserved:
\begin{equation}\label{eq:R_energy_pl}
\frac{1}{2} \dot R^2 + \frac{1}{2 R^2} + R = const
= \frac{1}{2}\gamma^2 + \frac{3}{2} =: E_\gamma.
\end{equation}
The corresponding potential $\frac{1}{2 R^2}+R$ has a unique, strict,
global minimum at $R=1$ and becomes infinite both for $R\to 0$ and
$R\to \infty$. Hence every level set of the energy \eqref{eq:R_energy_pl}
with $E_\gamma >\frac{3}{2}$, i.e., $\gamma \neq 0$, corresponds to a closed
orbit and a non-trivial, time-periodic solution of \eqref{eq:R_eq_pl}
with data \eqref{eq:R_eqdata_pl}.

We now want to determine the period of the above time-periodic solutions
in the limit $\gamma \to 0$ and compare it to the periods of the particle
trajectories in the steady state. As to the latter, they are given
as the solutions to
\[
\ddot x = -x,
\]
i.e.,
\[
\pT(\pE) = 2\pi,\quad \frac12<\pE\leq1.
\]
The period of the time-periodic solution depends on $\gamma$ and is given as
\[
P(\gamma) =
2\int_{R_-(\gamma)}^{R_+(\gamma)} \frac{\diff r}{\sqrt{2 E_\gamma -2 r -1/r^2}}
= 2\int_{R_-(\gamma)}^{R_+(\gamma)} \frac{r\diff r}{\sqrt{2 E_\gamma r^2 -2 r^3 -1}}.
\]
Here $0<R_-(\gamma) < R_+(\gamma)$ are the two positive roots
of the cubic polynomial
\[
p_\gamma (r)=2 E_\gamma r^2 -2 r^3 -1;
\]
notice that $p'_\gamma(r)=0$ for $r=0$ and $r=\frac{2}{3}E_\gamma$ and
$p_\gamma(\frac{2}{3}E_\gamma)>0$ if $E_\gamma>\frac{3}{2}$, i.e., $\gamma\neq 0$.
If we denote the third, negative root of $p_\gamma$ by $r^\ast(\gamma)$,
we can
factor the polynomial $p_\gamma$ and conclude that
\[
r^\ast(\gamma) = -\frac{1}{2 R_+(\gamma) R_-(\gamma)}.
\]
Hence,
\[
P(\gamma) 
= \sqrt{2}\int_{R_-(\gamma)}^{R_+(\gamma)}
\frac{r\diff r}
{\sqrt{(R_+(\gamma)-r) (r-R_-(\gamma)) (r-r^\ast(\gamma))}}.
\]
Now we observe that
\[
\int_{R_-(\gamma)}^{R_+(\gamma)} \frac{\diff r}{\sqrt{(R_+(\gamma)-r)(r-R_-(\gamma))}} = \pi.
\]
Using the fact that
the function $r/\sqrt{r-r^\ast(\gamma)}$ is strictly increasing on $[0,\infty[$, we
obtain the estimate
\[
\sqrt{2} \pi
\frac{R_-(\gamma)}{\sqrt{R_-(\gamma)-r^\ast(\gamma)}}
< P(\gamma) <
\sqrt{2} \pi
\frac{R_+(\gamma)}{\sqrt{R_+(\gamma)-r^\ast(\gamma)}}.
\]
For $\gamma \to 0$, both $R_+(\gamma) \to 1$ and
$R_-(\gamma) \to 1$, and thus
\[
\lim_{\gamma \to 0} P(\gamma) = \frac{2\pi}{\sqrt{3}}.
\]
In contrast to the radial case, the limiting period is strictly smaller than the period of
the particle trajectories. In the light of the spectral analysis this means that the
eigenvalue corresponding to the limiting period $2\pi/\sqrt3$ is not in the principal gap
of the planar linearized operator---again assuming that the results from
Section~\ref{ssc:essplane} also apply to the plane symmetric Kurth steady state.
However, when restricting $\pA$ to the space of functions being odd in $v_1$ and $x$,
the aforementioned eigenvalue lies in the principal gap since the non-zero bottom of
the essential spectrum quadruples, see \eqref{eq:essspecantonovplaneoddv1x}.
This shows that assuming oddness in $v_1$ and $x$ can be beneficial in the search for eigenvalues; note that such symmetries are needed for a function to be plane symmetric in the sense of~\eqref{pl_symm_def} anyway.


\section{The spectral gap}\label{sc:gap}

We  show that the spectra of $\A$ and $\pA$ are contained in $[0,\infty[$
(Corollaries~\ref{C:SPECTRUMNONNEGATIVE} and \ref{C:SPECTRUMNONNEGATIVEplane})
and then characterize the respective nullspaces (Corollary~\ref{antonovev0} and
Theorem \ref{gapplane}). In particular, we show that the whole spectra posses a
gap at the origin; in the plane symmetric case we restrict the linearized operator
$\pA$ appropriately to obtain the latter results.

In both the radial and planar setting, the above results are all based on certain forms
of Antonov's coercivity bound.

\subsection{Spherically symmetric case}

Let $\A$ denote the self-adjoint Antonov operator on $\Ltwo $ with domain $\mathrm D(\D^2)$;
similar statements hold true for its restriction to $\Ltwo^{odd}$.
First, we restate Antonov's coercivity bound:
\begin{prop}\label{antonovcoerc}
  For $g\in C^2_{c,r}(\Omega_0)$ odd in $v$,
  \begin{align}\label{eq:antonovcoerc}
    \langle\A g,g\rangle_{\Ltwo} \geq
    \int_{\Omega_0}\frac1{\vert\varphi'(E,L)\vert}\,\frac{U_0'(r)}r \;\vert g(x,v)\vert^2 \diff(x,v).
  \end{align}
\end{prop}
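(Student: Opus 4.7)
The plan is to follow the classical Antonov approach, carried out rigorously in~\cite{GuRe2001, GuRe2007} and refined in~\cite{LeMeRa11, LeMeRa12}. First I would note that since $g$ is odd in $v$ we have $\rho_g\equiv 0$ and $U_g\equiv 0$; by Definition~\ref{antonovdef} this gives
\[
\langle\A g,g\rangle_{\Ltwo} = \|\D g\|_{\Ltwo}^2 - \tfrac{1}{4\pi}\|\partial_x U_{\D g}\|_2^2,
\]
so the task reduces to extracting the claimed lower bound by combining an Antonov-type identity for the kinetic term with a suitable estimate of the potential energy.

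For the kinetic term, since $g\in C^2_{c,r}(\Omega_0)$ is odd in $w$ and hence vanishes at $w=0$, I would write $g = w\,\tilde g$ with $\tilde g(r,w,L)\coloneqq \int_0^1\partial_w g(r,tw,L)\,dt$ even in $w$ and $C^1$. The characteristic identities $\D w = -\Psi_L'(r)$ and $\D\Psi_L'(r) = w\Psi_L''(r)$ (the latter since $\D r = w$ and $\D L = 0$), combined with the skew-adjointness of $\D$ on $\Ltwo$ and the fact that $1/|\varphi'|\in\ker(\D)$, allow one to expand
\[
(\D g)^2 = (\Psi_L')^2\tilde g^2 - 2w\,\Psi_L'\,\tilde g\,\D\tilde g + w^2(\D\tilde g)^2
\]
and integrate by parts in the cross-term to obtain the Antonov identity
\[
\|\D g\|_{\Ltwo}^2 = \int_{\Omega_0}\frac{\Psi_L''(r)\,g^2}{|\varphi'(E,L)|}\,d(x,v) + \int_{\Omega_0}\frac{w^2(\D\tilde g)^2}{|\varphi'(E,L)|}\,d(x,v);
\]
the second term is manifestly nonnegative.

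It then remains to show that $\int\Psi_L''(r)\,g^2/|\varphi'|$ dominates $\tfrac{1}{4\pi}\|\partial_x U_{\D g}\|_2^2 + \int(U_0'/r)\,g^2/|\varphi'|$. Using the Poisson relation $U_0''(r) + 2U_0'(r)/r = 4\pi\rho_0(r)$, the centrifugal contribution $\Psi_L''(r) = U_0''(r) + 3L/r^4$, the formula $\tfrac{1}{4\pi}\|\partial_x U_{\D g}\|_2^2 = 16\pi^4\int_0^\infty \J(g)(r)^2/r^2\,dr$, and the representation $\J(g)(r) = \int w^2\tilde g\,dw\,dL$, the problem reduces to an $r$-pointwise Cauchy-Schwarz estimate for $\J(g)(r)$ in the $(w,L)$-variables.

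The main obstacle is sharpness of this last step. A naive Cauchy-Schwarz with weight $|\varphi'|$ together with the identity $\rho_0 = (\pi/r^2)\int w^2|\varphi'|\,dw\,dL$ from~\eqref{eq:wintrho} yields $\J(g)(r)^2 \leq (r^2\rho_0(r)/\pi)\int g^2/|\varphi'|\,dw\,dL$, hence $\tfrac{1}{4\pi}\|\partial_x U_{\D g}\|_2^2 \leq 4\pi\int \rho_0(r)\,g^2/|\varphi'|\,d(x,v)$; combining with the identity above and the Poisson relation produces a negative coefficient on $U_0'/r$, which is the wrong sign. Obtaining exactly $+U_0'(r)/r$ requires a refined Cauchy-Schwarz adapted to the orbit geometry, in which the centrifugal term $3L/r^4$ in $\Psi_L''$ is exploited to save the missing $3\int(U_0'/r)\,g^2/|\varphi'|$. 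I would carry out this step following~\cite{GuRe2007, LeMeRa11} rather than re-derive it.
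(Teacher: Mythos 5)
Your reduction of $\langle\A g,g\rangle_{\Ltwo}$ via Definition~\ref{antonovdef}, the splitting $g=w\tilde g$, the integration by parts using $\D(w\Psi_L')=-\Psi_L'^2+w^2\Psi_L''$, and the Cauchy--Schwarz bound $\langle \B g,g\rangle_{\Ltwo}\leq 4\pi\int\rho_0\,g^2/|\varphi'|$ are all correct; in particular your identity
\[
\|\D g\|_{\Ltwo}^2 = \int_{\Omega_0}\frac{\Psi_L''(r)\,g^2}{|\varphi'|}\,\diff(x,v) + \int_{\Omega_0}\frac{w^2(\D\tilde g)^2}{|\varphi'|}\,\diff(x,v)
\]
holds. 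You are also right that combining these two, since $\Psi_L''-4\pi\rho_0 = -\tfrac{2U_0'}{r}+\tfrac{3L}{r^4}$, leaves a coefficient of the wrong sign. However, your diagnosis of how the cited proofs fix this is off, and your proposed repair would fail.

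The fix in~\cite{GuRe2007,LeMeRa11} is \emph{not} a refined Cauchy--Schwarz in which the centrifugal term $3L/r^4$ absorbs $3U_0'/r$; that inequality $L/r^3\geq U_0'(r)$ is simply false for $r>r_L$ (where $\Psi_L'>0$), so it cannot be exploited pointwise. The actual mechanism is a different substitution: one writes $g=wr\,\mu$ (the spherical analogue of part~(b) of Proposition~\ref{antonovcoerc1d}, where $\mu=g/(v_1 x)$, rather than of part~(a), where $\mu=g/v_1$). Expanding $|\D g|^2$ for $g=wr\mu$ and integrating the cross term by parts produces $-\D^2(wr)\cdot wr\cdot\mu^2$, and a direct calculation gives $\D^2(wr)=-w\left(3\Psi_L'+r\Psi_L''\right)$, hence
\[
\|\D g\|_{\Ltwo}^2 = \int_{\Omega_0}\frac{1}{|\varphi'|}\left(\frac{3\Psi_L'(r)}{r}+\Psi_L''(r)\right)g^2\,\diff(x,v) + \int_{\Omega_0}\frac{w^2r^2(\D\mu)^2}{|\varphi'|}\,\diff(x,v).
\]
The decisive point is that
\[
\frac{3\Psi_L'}{r}+\Psi_L'' = \frac{3U_0'}{r}-\frac{3L}{r^4}+U_0''+\frac{3L}{r^4} = \frac{3U_0'}{r}+U_0'' = \frac{U_0'}{r}+4\pi\rho_0,
\]
i.e., the centrifugal contributions cancel \emph{exactly}, they do not dominate anything. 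The remaining $4\pi\rho_0$ is then killed by the \emph{same} elementary Cauchy--Schwarz bound on $\langle\B g,g\rangle_{\Ltwo}$ you already derived, and the $\tfrac{U_0'(r)}{r}$ term survives cleanly. So the gap in your argument is not the sharpness of Cauchy--Schwarz but the choice of decomposition: $g=w\tilde g$ cannot yield~\eqref{eq:antonovcoerc}, while $g=wr\mu$ does, with no further refinement needed.
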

An estimate of the above type was first shown by V.~A.~Antonov \cite{An1961}.  
For recent proofs we refer the reader to \cite[Lemma 1.1]{GuRe2007} or \cite[(4.6)]{LeMeRa11}.
In the former reference the result is only proven for isotropic steady states, but the same
proof can also be applied to show the coercivity bound for general linearly stable models
and then provides a sharper estimate than the one stated above.
We use \eqref{eq:antonovcoerc} to deduce the following two corollaries:

\begin{cor}\label{C:SPECTRUMNONNEGATIVE}
  The quadratic form associated with $\A$ is non-negative on $\mathrm D(\D)$.
  Thus, $\sigma(\A)\subset[0,\infty[$.
\end{cor}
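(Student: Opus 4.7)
The plan is to reduce to an Antonov-type estimate for odd-in-$v$ functions via parity decomposition, then extend the inequality~\eqref{eq:antonovcoerc} from the class of smooth compactly supported test functions to all of $\mathrm D(\D)\cap\Ltwo^{odd}$ by density. Once non-negativity of the quadratic form on $\mathrm D(\D)$ is established, the inclusion $\mathrm D(\A)=\mathrm D(\D^2)\subset\mathrm D(\D)$ together with the self-adjointness of $\A$ from Lemma~\ref{antonovselfadjoint} yields $\sigma(\A)\subset[0,\infty[$ by the spectral theorem for self-adjoint operators.

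First I would carry out the reduction. Let $g\in\mathrm D(\D)$ and write $g=g_++g_-$ with $g_\pm$ its even and odd parts in $v$. Since $\D$ reverses $v$-parity (cf.\ the proof of Proposition~\ref{transport}(d)), both $g_\pm\in\mathrm D(\D)$, and $\D g_+$, $\D g_-$ have opposite parities, so $\|\D g\|_{\Ltwo}^2=\|\D g_+\|_{\Ltwo}^2+\|\D g_-\|_{\Ltwo}^2$. The defining formula~\eqref{E:BDEF} shows that $\B$ annihilates any function which is even in $w$---the $w$-integral in $\J$ then vanishes by parity---so $\B g_+=0$, and moreover $\rho_{\D g_+}=0$ since $\D g_+$ is odd in $w$, whence $\partial_xU_{\D g}=\partial_xU_{\D g_-}$. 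Therefore
\[
\langle\A g,g\rangle_{\Ltwo}=\|\D g_+\|_{\Ltwo}^2+\|\D g_-\|_{\Ltwo}^2-\tfrac{1}{4\pi}\|\partial_xU_{\D g_-}\|_2^2=\|\D g_+\|_{\Ltwo}^2+\langle\A g_-,g_-\rangle_{\Ltwo},
\]
and it suffices to show $\langle\A g_-,g_-\rangle_{\Ltwo}\geq 0$ for $g_-\in\mathrm D(\D)\cap\Ltwo^{odd}$.

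For this, I would approximate $g_-$ by a sequence $\xi_n\in C^2_{c,r}(\Omega_0)$ of functions odd in $v$ such that $\xi_n\to g_-$ and $\D\xi_n\to\D g_-$ in $\Ltwo$. Using the action-angle representation of Lemma~\ref{trans1d}, in which oddness in $v$ reads $\xi(\theta,E,L)=-\xi(1-\theta,E,L)$ and $\D$ acts as $T(E,L)^{-1}\partial_\theta$, such $\xi_n$ can be produced by first truncating the $\theta$-sine-series of $g_-$ and then mollifying the resulting coefficients $b_k(E,L)$ against smooth cutoffs supported in $\mathring\Omega_0^{EL}$, in close analogy with Remark~\ref{approxplane}. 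Along this sequence $\|\D\xi_n\|_{\Ltwo}\to\|\D g_-\|_{\Ltwo}$ and, by the continuity estimates for the gravitational response collected in Appendix~\ref{sc:potential}, $\|\partial_xU_{\D\xi_n}\|_2\to\|\partial_xU_{\D g_-}\|_2$. Since $U_0'(r)=m_0(r)/r^2\geq 0$, Antonov's bound~\eqref{eq:antonovcoerc} applied to $\xi_n$ gives $\langle\A\xi_n,\xi_n\rangle_{\Ltwo}\geq 0$, and the claim follows by passing to the limit.

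The main obstacle will be carrying out the density step uniformly in the graph norm of $\D$ while preserving oddness in $v$ and compactness of support inside $\Omega_0$---the latter being delicate when $L_0>0$ creates an inner vacuum region. The action-angle characterization of $\mathrm D(\D)$ from Lemma~\ref{trans1d} is what makes the argument transparent, as it decouples the $\theta$-regularity---controlled by Fourier truncation, which automatically preserves the symmetry $\theta\mapsto 1-\theta$---from the $(E,L)$-regularity, which is handled by mollification against cutoffs of $\mathring\Omega_0^{EL}$.
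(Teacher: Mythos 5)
Your proof is correct and follows essentially the same route as the paper: decompose $g$ into its even and odd parts in $v$, observe that $\B$ annihilates the even part so the quadratic form splits into $\|\D g_+\|_{\Ltwo}^2$ plus the odd-part contribution, extend Antonov's coercivity bound~\eqref{eq:antonovcoerc} to odd-in-$v$ functions in $\mathrm D(\D)$ by approximation, and then invoke self-adjointness to pass from non-negativity of the form to $\sigma(\A)\subset[0,\infty[$. The only difference is that you spell out the density step via $\theta$-Fourier truncation in action-angle variables, whereas the paper defers this to a cited reference.
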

\begin{proof}
  To apply Proposition \ref{antonovcoerc} we split $g\in\mathrm D(\D)$ into its even and
  odd part in $v$, defined by
  \begin{align}\label{eq:gevenoddv}
    g_{\pm}(x,v)\coloneqq \frac12\left(g(x,v)\pm g(x,-v)\right),\quad (x,v)\in\Omega_0.
  \end{align}
  We then use an approximation argument to extend Antonov's coercivity bound to $\mathrm D(\D)$,
  see for example \cite[Proposition 2]{ReSt20}, and obtain $\langle \A(g_-),g_-\rangle_{\Ltwo}\geq0$.
  On the other hand, $\B(g_+)=0$, and therefore $\langle \A(g_+),g_+\rangle_{\Ltwo}\geq0$.
  Since the odd and even subspaces are orthogonal to each other and $\A(g_\pm)=(\A g)_\pm$
  since $\D^2$ preserves $v$-parity, we conclude that 
  \begin{align*}
    \langle \A g,g \rangle_{\Ltwo} = \langle \A(g_+),g_+\rangle_{\Ltwo} +
    \langle \A(g_-),g_-\rangle_{\Ltwo} \geq 0.
  \end{align*}
  For the equivalence of the non-negativity of the quadratic form of $\A$ and
  $\sigma(\A)\subset[0,\infty[$ we refer to \cite[Proposition 5.12]{HiSi}.
\end{proof}

\begin{cor}\label{antonovev0}
	$\ker(\A)=\ker(\D)$.
\end{cor}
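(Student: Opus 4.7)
The plan is to prove the two containments $\ker(\D)\subset\ker(\A)$ and $\ker(\A)\subset\ker(\D)$ separately, with the harder direction being the second one, handled via a parity decomposition and Antonov's coercivity bound from Proposition~\ref{antonovcoerc}.

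For the inclusion $\ker(\D)\subset\ker(\A)$, let $g\in\ker(\D)$. By Proposition~\ref{transport}(b), $g$ is a function of $E$ and $L$ only a.e., so in the coordinates $(r,w,L)$ it is even in $w$. In particular $g\in\mathrm D(\D^2)$ trivially with $\D^2 g=0$, and the integral
\[
\J(g)(r)=\int_0^\infty\!\int_\R w\,g(r,w,L)\diff w\diff L
\]
vanishes pointwise because $w\mapsto w\,g(r,w,L)$ is odd. Hence $\B g=0$ by~\eqref{E:BDEF}, and therefore $\A g=-\D^2 g-\B g=0$.

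For the reverse inclusion $\ker(\A)\subset\ker(\D)$, I would take $g\in\mathrm D(\D^2)$ with $\A g=0$ and split it into its even and odd parts in $v$, $g=g_++g_-$, as in~\eqref{eq:gevenoddv}. The first step is to observe that $g_\pm\in\mathrm D(\D^2)$ since $\D$ reverses $v$-parity (and hence $\D^2$ preserves it); this was already noted in the proof of Proposition~\ref{transport}(d). Next, the image of $\B$ consists of odd-in-$w$ functions, and $\B g_+=0$ because $\J(g_+)=0$ by the same oddness argument as above. Consequently $\A g_+=-\D^2 g_+$ is even in $v$ while $\A g_-$ is odd, so from $\A g=0$ one deduces $\A g_+=0$ and $\A g_-=0$ separately.

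From $-\D^2 g_+=0$, taking the $H$-inner product with $g_+$ and using skew-adjointness of $\D$ (Proposition~\ref{transport}(a)) yields $\|\D g_+\|_{\Ltwo}^2=0$, so $g_+\in\ker(\D)$. For the odd part, I would apply Antonov's coercivity bound extended from $C^2_{c,r}(\Omega_0)$ to $\mathrm D(\D)$ by the same density/mollification argument already invoked in the proof of Corollary~\ref{C:SPECTRUMNONNEGATIVE}, giving
\[
0=\langle\A g_-,g_-\rangle_{\Ltwo}\geq\int_{\Omega_0}\frac{1}{|\varphi'(E,L)|}\,\frac{U_0'(r)}{r}\,|g_-(x,v)|^2\diff(x,v)\geq0.
\]
The main (and only genuine) obstacle is to justify that the right-hand side controls $g_-$ in $\Ltwo$. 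This reduces to noting that $U_0'(r)/r>0$ on the $r$-projection of $\Omega_0$: indeed, $U_0'(r)=m_0(r)/r^2$, and for every $r$ with $(r,w,L)\in\Omega_0$ one has $m_0(r)>0$ (in the $L_0=0$ isotropic case this holds down to $r=0$ with the limit $\tfrac{4\pi}{3}\rho_0(0)$, while in the polytropic case with $L_0>0$ one has $r\geq R_{\min}>0$ and $m_0(r)>0$ on the support). Together with $|\varphi'|>0$ on $\mathring\Omega_0^{EL}$, the integrand is a strictly positive weight times $|g_-|^2$, forcing $g_-=0$ a.e.~on $\Omega_0$. Hence $g=g_+\in\ker(\D)$, completing the proof.
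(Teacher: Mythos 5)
Your proof is correct and follows essentially the same route as the paper's: parity decomposition of $g$ into even and odd parts, $\B g_+=0$, $\D^2 g_+=0$ forcing $g_+\in\ker(\D)$ via skew-adjointness, and Antonov's coercivity bound (Proposition~\ref{antonovcoerc}) with the a.e.-positive weight $U_0'(r)/r$ forcing $g_-=0$. The only cosmetic difference is in the direction $\ker(\D)\subset\ker(\A)$, where you argue via evenness in $w$ so that $\J(g)=0$, while the paper invokes the representation~\eqref{eq:bpot}; both are one-line observations.
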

\begin{proof}
  The representation \eqref{eq:bpot} of $\B$ immediately yields $\ker(\D)\subset \ker(\A)$.
	
  Conversely, if $\A g=0$ for some $g\in\mathrm D(\D^2)$,
  we split $g$ into its even and odd part w.r.t.~$v$ as in \eqref{eq:gevenoddv} to obtain
  \begin{align}
    0 &= (\A g)_+ = -\D^2 (g_+),\label{eq:gkerneleven}\\
    0 &= (\A g)_- = -\D^2 (g_-) - \B(g_-).\label{eq:gkernelodd}
  \end{align}
  We then extend Proposition \ref{antonovcoerc} to $\mathrm D(\D^2)$
  by approximation to obtain $g_-=0$. For the approximation process,
  note that the weight $\frac{U_0'(r)}r$ is positive and bounded on the radial support,
  in particular $\lim_{r\to0} \frac{U_0'(r)}r= U_0''(0) = \frac{4\pi}3\rho_0(0)<\infty$
  if the steady state does not have an inner radial vacuum region. 
	
  On the other hand, \eqref{eq:gkerneleven} implies $\|\D (g_+)\|_{\Ltwo}^2=0$
  and therefore $g_+\in\ker(\D)$, i.e., overall we conclude that $g\in\ker(\D)$.
\end{proof}

To obtain an oscillating solution of the linearized Vlasov-Poisson system we have to show that there is a strictly 
positive eigenvalue of $\A$ restricted to $\Ltwo^{odd}$. One approach to this problem is to show that
\begin{align}\label{E:LAMBDAONEDEF}
{\lambda_1} \coloneqq \inf_{\substack{g\in\mathrm D(\D)\setminus\{0\}\\g\perp\ker(\D)}} \frac{\langle \A g,g\rangle_{\Ltwo}}{\|g\|_{\Ltwo}^2}
\end{align}
is positive, i.e., $\lambda_1>0$, and that the infimum is obtained by some odd-in-$v$ function. 

While the existence of eigenvalues will be approached in Section~\ref{sc:mathur} by a different method, we next show $\lambda_1>0$. 
Note that such a bound would follow by Antonov's coercivity bound~\eqref{eq:antonovcoerc} if the weight $\frac{U_0'(r)}r$ were bounded away from zero on the radial support.
However, this is not true in general, since in the case of a polytropic ansatz \eqref{E:POLYTROPE} with $L_0>0$, $U_0'$ vanishes on the whole inner vacuum region and the weight
$\frac{U_0'(r)}r$ can not be bounded away from zero on the support.
We pursue an alternative route and consider the following intermediate variational problem:

\begin{prop} \label{helpervariation}
  Let 
  \begin{align*}
    {\tilde \lambda} \coloneqq
    \inf_{\substack{g\in\mathrm D(\D)\\g\notin\ker(\D)}}
    \frac{\langle \A g,g\rangle_{\Ltwo}}{ \|\D g\|_{\Ltwo}^2}
    =  \inf_{\substack{g\in\mathrm D(\D)\\g\notin\ker(\D)}} 1 -
    \frac{\|\partial_xU_{\D g}\|_2^2}{4\pi\|\D g\|_{\Ltwo}^2}.
  \end{align*}
  Then $0<\tilde\lambda<1$ and the infimum is obtained by a minimizer.
\end{prop}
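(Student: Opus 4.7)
The bound $\tilde\l\ge 0$ is immediate from Corollary~\ref{C:SPECTRUMNONNEGATIVE}, since the numerator $\langle\A g,g\rangle_{\Ltwo}$ is non-negative on $\mathrm{D}(\D)$. To get the strict bound $\tilde\l<1$ it suffices to exhibit a single $g\in\mathrm{D}(\D)\setminus\ker(\D)$ with $\pa_x U_{\D g}\not\equiv 0$; any smooth, spherically symmetric $g$ that is odd in $v$, compactly supported in $\Omega_0$, and for which $\J(g)\not\equiv 0$ does the job, since then $F(g):=\langle\A g,g\rangle_{\Ltwo}/\|\D g\|_{\Ltwo}^2<1$.

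For the existence of a minimizer I would apply the direct method. Pick a minimizing sequence $g_n\in\mathrm{D}(\D)\setminus\ker(\D)$. Since $F$ is invariant under $g\mapsto g-P_{\ker(\D)}g$ (which does not change $\D g$) and under scaling, I may assume $g_n\perp\ker(\D)$ and $\|\D g_n\|_{\Ltwo}=1$. The Poincar\'e inequality of Corollary~\ref{estimatetranssquare} then bounds $\|g_n\|_{\Ltwo}$ uniformly, so along a subsequence $g_n\weakto g^*$ weakly in $\Ltwo$. The skew-adjointness (hence closedness) of $\D$ gives $\D g_n\weakto \D g^*$ weakly in $\Ltwo$, norm lower semicontinuity yields $\|\D g^*\|_{\Ltwo}\le 1$, and $g^*\in\ker(\D)^\perp$.

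The heart of the argument is the strong convergence $\pa_x U_{\D g_n}\to\pa_x U_{\D g^*}$ in $L^2(\R^3)$, which I would obtain by recycling the compactness argument of Theorem~\ref{essspecA}: boundedness of $\D g_n$ in $\Ltwo$ together with assumption~\eqref{eq:A7prime} gives $\rho_{\D g_n}$ bounded in $L^2(\R^3)$ with support in $B_{R_0}(0)$; elliptic regularity yields $U_{\D g_n}$ bounded in $H^2(B_{R_0}(0))$; and the Rellich embedding $H^2\Subset H^1$, combined with the fact that mass conservation ($\int\rho_{\D g_n}=0$) forces $\pa_x U_{\D g_n}$ to vanish outside $B_{R_0}(0)$, produces a strongly convergent subsequence in $L^2(\R^3)$; the weak convergence already established identifies its limit as $\pa_x U_{\D g^*}$. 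Passing to the limit in $F(g_n)=1-\|\pa_x U_{\D g_n}\|_2^2/(4\pi)$ yields $\tilde\l=1-\|\pa_x U_{\D g^*}\|_2^2/(4\pi)$. Since $\tilde\l<1$ forces $\pa_x U_{\D g^*}\not\equiv 0$, a short case analysis using $F(g^*)\ge\tilde\l$ and $\|\D g^*\|_{\Ltwo}\le 1$ shows that in fact $\|\D g^*\|_{\Ltwo}=1$, so $g^*$ is a genuine minimizer.

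Finally, $\tilde\l>0$ is the Euler--Lagrange corollary. Testing the stationarity of $F$ at $g^*$ against $h\in\mathrm{D}(\D)$ yields $(1-\tilde\l)\langle\D g^*,\D h\rangle_{\Ltwo}=\langle\B g^*,h\rangle_{\Ltwo}$. If $\tilde\l=0$, the identity states $\D g^*\in\mathrm{D}(\D^*)=\mathrm{D}(\D)$ with $-\D^2 g^*=\B g^*$, that is $\A g^*=0$, so Corollary~\ref{antonovev0} forces $g^*\in\ker(\A)=\ker(\D)$, contradicting $\|\D g^*\|_{\Ltwo}=1$. The main obstacle is the compactness step for $\pa_x U_{\D g_n}$, which crucially requires only the boundedness of $\D g_n$ (rather than of $\D^2 g_n$ as in Theorem~\ref{essspecA}); the remainder of the proof is bookkeeping for the direct method.
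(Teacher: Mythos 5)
Your proof is correct and runs along the same structural lines as the paper's (direct method on a normalized minimizing sequence, compactness of the induced potentials via Rellich on the ball $B_{R_0}(0)$, then identification of the limit). Two points of comparison are worth spelling out. First, you project off $\ker(\D)$ and use the Poincar\'e inequality of Corollary~\ref{estimatetranssquare} to bound $\|g_n\|_{\Ltwo}$, take a weak limit $g^*$ directly, and invoke weak closedness of the graph of $\D$ to get $\D g_n\weakto\D g^*$. The paper instead extracts a weak limit $h$ of $\D g_k$ alone, observes $h\in\mathrm{im}(\D)\subset\ker(\D)^\perp$ is preserved under weak limits, and uses the closed-range Lemma~\ref{transinversetauel} to manufacture a $g$ with $\D g=h$. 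Both rest on the same closed-range property of $\D$; yours is slightly more in the spirit of a textbook direct method. Second---and this is a genuine improvement---you supply the Euler--Lagrange argument for $\tilde\lambda>0$: if $\tilde\lambda=0$ then stationarity at $g^*$ gives $\D g^*\in\mathrm D(\D)$ with $-\D^2 g^*=\B g^*$, so $\A g^*=0$ and Corollary~\ref{antonovev0} forces $g^*\in\ker(\D)$, contradicting $\|\D g^*\|_{\Ltwo}=1$. The paper's displayed proof establishes only $\tilde\lambda<1$ and the existence of a minimizer; the strict lower bound, which is part of the stated proposition and used crucially in Theorem~\ref{antonovgap}, requires exactly your argument. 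One small gloss in your write-up: when you say "the weak convergence already established identifies its limit as $\partial_x U_{\D g^*}$", the cleanest justification is that $\rho\mapsto\partial_xU_\rho$ is a bounded linear map $L^2(B_{R_0}(0))\to L^2(\R^3;\R^3)$, and $\D g_n\weakto\D g^*$ together with~\eqref{eq:A7prime} yields $\rho_{\D g_n}\weakto\rho_{\D g^*}$ in $L^2$; combined with the strong (Rellich) convergence, uniqueness of weak limits finishes it. The paper uses a harmonic-function uniqueness argument in $L^6(\R^3)$ instead; both work.
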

\begin{proof}
  Let $(g_k)_{k\in\N}\subset \mathrm D(\D)$ be a minimizing sequence,
  i.e., $\|\D g_k\|_{\Ltwo}^2 =1$ for $k\in\N$ and $\langle \A g_k,g_k\rangle_{\Ltwo}\to\tilde\lambda$
  as $k\to\infty$.
  
  \noindent
  \textit{Convergence of the potentials.}
  Due to Section \ref{ssc:potradial} in the appendix, $U_{\D g_k}\in C\cap H^2(\R^3)$
  for $k\in\N$ and $(U_{\D g_k})_{k\in\N}$ is bounded in $H^2(\R^3)$ by \eqref{eq:poth2}.
  Let $\psi\in H^2(\R^3)$ be such that
  \begin{align*}
    U_{\D g_k} \weakto \psi \text{ in } H^2(\R^3),~k\to\infty,
  \end{align*}
  after extracting a subsequence.
  \eqref{eq:uprime} yields $\supp (\partial_xU_{\D g_k})\subset B_{R_0}(0)$
  for every $k\in\N$, and therefore $\supp(\nabla\psi)\subset B_{R_0}(0)$.
  Together with the compact embedding $H^2(B_{R_0}(0))\Subset H^1(B_{R_0}(0))$ we obtain
  \begin{align*}
    \partial_xU_{\D g_k} \to \nabla\psi \text{ in } L^2(\R^3;\R^3),~k\to\infty.
  \end{align*}
  \noindent
  \textit{Convergence of $(\D g_k)_{k\in\N}$.}
  $(\D g_k)_{k\in\N}$ is bounded in $\Ltwo $. Thus, there exists $h\in \Ltwo $ such that
  \begin{align*}
    \D g_k\weakto h \text{ in } \Ltwo ,~k\to\infty,
  \end{align*}
  again after extracting a subsequence.
  
  \noindent
  \textit{The connection between the above limits.}
  Using \eqref{eq:A7prime}, it follows from the above step that
  \begin{align*}
    \int_{\R^3} \D g_k(\cdot,v)\diff v
    = \rho_{\D g_k} \weakto \int_{\R^3} h(\cdot,v)\diff v \text{ in } L^2(\R^3),~k\to\infty.
  \end{align*}
  As before, we extend all functions by $0$. Since $\Delta U_{\D g_k} = 4\pi \rho_{\D g_k}$,
  the uniqueness of weak limits in $L^2(\R^3)$ then yields
  \begin{align*}
    \Delta \psi = \int_{\R^3} h(\cdot,v)\diff v \text{ in } L^2(\R^3).
  \end{align*}
  From this equality it follows that $\psi=U_h$, where
  $U_h\coloneqq-\frac1{\vert\cdot\vert}\ast\rho_h$ is the gravitational potential induced by
  $\rho_h\coloneqq\int_{\R^3}h(\cdot,v)\diff v$. To see this, first note
  $\rho_h\in L^1\cap L^2(\R^3)$ since $\supp(\rho_h)\subset B_{R_0}(0)$.
  Then basic potential theory yields $U_h\in L^6(\R^3)$.
  On the other hand, $\psi\in L^6(\R^3)$ by the embedding $H^1(\R^3)\hookrightarrow L^6(\R^3)$.
  Thus, $u\coloneqq U_h-\psi\in L^6(\R^3)$ and $u$ is harmonic.
  The mean value property of harmonic functions then yields $u=0$, i.e.,
  \begin{align*}
    \psi=U_h.
  \end{align*}		
  \noindent
 \textit{The minimizer.} 
 Since $\D g_k\in \mathrm{im}(\D)\subset\ker(\D)^\perp$ for every $k\in\N$ by the
 skew-adjointness of $\D$, we also have $h\perp\ker(\D)$. By Lemma \ref{transinversetauel}
 there exists $g\in \mathrm D(\D)$ such that
 \begin{align*}
   \D g = h.
 \end{align*}
 Then the above convergences imply
 \begin{align*}
   1-\frac1{4\pi}\|\partial_xU_{\D g}\|_2^2
   = 1-\frac1{4\pi}\|\nabla\psi\|_2^2 = \lim_{k\to\infty}\left(1-\frac1{4\pi}\|\partial_x U_{\D g_k}\|_2^2\right) = \tilde\lambda.
 \end{align*}
 If $\|\partial_xU_{\D g}\|_2^2=0$ we would instantly get $\tilde\lambda=1$.
 However, this contradicts the non-triviality of the steady state $f_0$,
 since we can easily choose some $f\in\mathrm D(\D)\setminus\ker(\D)$
 such that $\partial_x U_{\D f}\neq0$.
		
 Thus, $\|\partial_xU_{\D g}\|_2^2>0$, in particular, $\D g\neq0$.
 This means that $g$ can be taken as a test function in the infimum $\tilde\lambda$, leading to
 \begin{align*}
   1-\frac1{4\pi}\|\partial_xU_{\D g}\|_2^2
   = \tilde\lambda \leq 1-\frac{\|\partial_xU_{\D g}\|_2^2}{4\pi\|\D g\|_{\Ltwo}^2},
 \end{align*}
 i.e., $\|\D g\|_{\Ltwo}^2\geq1$. On the other hand, the weak lower semicontinuity of
 $L^2$-norms yields
 \begin{align*}
   \|\D g\|_{\Ltwo}^2 \leq \liminf_{k\to\infty}\|\D g_k\|_{\Ltwo}^2 = 1.
 \end{align*}
 Overall, $\|\D g\|_{\Ltwo}^2=1$ and therefore
 \begin{align*}
   \tilde\lambda=1-\frac1{4\pi}\|\partial_xU_{\D g}\|_2^2
   =\langle\A g,g\rangle_{\Ltwo} = \frac{\langle\A g,g\rangle_{\Ltwo}}{\|\D g\|_{\Ltwo}^2},
 \end{align*}
 i.e., $g$ is the desired minimizer.\qedhere
\end{proof}

The techniques we employed in the above proof are similar to the ones used for
Schr\"odinger type operators~\cite[Chapter~11]{LiLo01} and for the Guo-Lin operator, see \cite[Lemma~3.1]{GuLi08} and \cite[Proposition~4.8]{St19}.

We now apply Proposition~\ref{helpervariation} together with the Poincar\'e inequality
from Corollary~\ref{estimatetranssquare} to estimate $\lambda_1$.

\begin{theorem} \label{antonovgap} 
  The constant $\lambda_1$ defined in~\eqref{E:LAMBDAONEDEF} is strictly positive.
  In particular,
  \begin{align}\label{eq:specAbound}
    \sigma(\A)\setminus\{0\}\subset \left[\lambda_1,\infty\right[,
  \end{align}
  i.e, there is a gap in the spectrum of $\A$.
\end{theorem}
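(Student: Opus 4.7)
The plan is to combine the two already established ingredients: the Poincaré-type inequality of Corollary \ref{estimatetranssquare}, which controls $\|g\|_H$ by $\|\D g\|_H$ on $\ker(\D)^\perp$, and the auxiliary variational estimate $\tilde\lambda>0$ from Proposition \ref{helpervariation}, which controls $\|\D g\|_H^2$ by $\langle \A g,g\rangle_H$. Chaining these two bounds yields a lower bound on the Rayleigh quotient defining $\lambda_1$.

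More concretely, fix any $g\in\mathrm D(\D)\setminus\{0\}$ with $g\perp\ker(\D)$. If $g\in\ker(\D)$ there is nothing to check (the quotient is $0/0$ and $g=0$ by our standing assumption), so we may assume $\D g\neq 0$, i.e.\ $g\notin\ker(\D)$. Then by the definition of $\tilde\lambda$ in Proposition~\ref{helpervariation},
\[
\langle \A g,g\rangle_H \;\geq\; \tilde\lambda\,\|\D g\|_H^2,
\]
while Corollary~\ref{estimatetranssquare} (applicable because $g\perp\ker(\D)$) gives
\[
\|\D g\|_H^2 \;\geq\; \frac{4\pi^2}{\sup_{\mathring\Omega_0^{EL}}^2(T)}\,\|g\|_H^2.
\]
Taking the infimum over admissible $g$ yields
\[
\lambda_1 \;\geq\; \frac{4\pi^2\,\tilde\lambda}{\sup_{\mathring\Omega_0^{EL}}^2(T)} \;>\;0,
\]
where positivity uses $\tilde\lambda>0$ from Proposition~\ref{helpervariation} and the boundedness of $T$ from above (Proposition \ref{Tbounded}).

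For the spectral inclusion \eqref{eq:specAbound}, I would invoke the standard variational characterization of the bottom of the spectrum of a non-negative self-adjoint operator, applied to the restriction of $\A$ to $\ker(\A)^\perp$. By Corollary~\ref{antonovev0} we have $\ker(\A)=\ker(\D)$, and $\A$ is self-adjoint by Lemma~\ref{antonovselfadjoint} and non-negative by Corollary~\ref{C:SPECTRUMNONNEGATIVE}. Since the form domain of $\A$ contains $\mathrm D(\D)$, and since $\ker(\A)$ is invariant (trivially) under $\A$, the spectral theorem yields
\[
\inf\bigl(\sigma(\A)\setminus\{0\}\bigr) \;=\; \inf_{\substack{g\in\mathrm D(\D)\setminus\{0\}\\ g\perp\ker(\D)}} \frac{\langle\A g,g\rangle_H}{\|g\|_H^2} \;=\;\lambda_1.
\]
Combined with $\sigma(\A)\subset[0,\infty[$, this gives $\sigma(\A)\setminus\{0\}\subset[\lambda_1,\infty[$, so that $0$ is an isolated point of the spectrum.

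The only real work is already done in the two preceding results; the present step is essentially bookkeeping. The subtle point to state carefully is that the variational characterization uses the quadratic-form extension of $\A$ to $\mathrm D(\D)$, which is consistent with the Rayleigh–Ritz principle for semibounded self-adjoint operators, rather than the operator domain $\mathrm D(\D^2)$. No additional minimization/compactness argument is needed here, since the positivity of $\tilde\lambda$ in Proposition~\ref{helpervariation} already incorporates the compactness step.
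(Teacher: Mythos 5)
Your proof of $\lambda_1>0$ is exactly the paper's: you chain Proposition~\ref{helpervariation} with Corollary~\ref{estimatetranssquare} to obtain $\langle\A g,g\rangle_H \geq \tilde\lambda\,\frac{4\pi^2}{\sup^2_{\mathring\Omega_0^{EL}}(T)}\,\|g\|_H^2$ for admissible $g$, which is precisely the paper's estimate \eqref{eq:gapestimate}. For the spectral inclusion \eqref{eq:specAbound} you invoke the abstract Rayleigh--Ritz variational characterization for $\A$ restricted to $\ker(\A)^\perp$, whereas the paper argues more concretely via Theorem~\ref{essspecA}: every point of $\sigma(\A)$ inside the principal gap is an isolated eigenvalue whose eigenfunction is orthogonal to $\ker(\A)=\ker(\D)$, so the estimate \eqref{eq:gapestimate} applies to it directly. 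Both routes rest on the same ingredients (self-adjointness, non-negativity, $\ker(\A)=\ker(\D)$), and both are correct; yours packages the second half into a single appeal to the spectral theorem for semibounded self-adjoint operators, which is a clean but equivalent way to finish.
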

\begin{proof}
  Combining Proposition~\ref{helpervariation} and Corollary \ref{estimatetranssquare} yields
  \begin{align}\label{eq:gapestimate}
    \langle\A g,g\rangle_{\Ltwo} \geq
    \tilde\lambda \| \D g\|_{\Ltwo}^2 \geq
    \frac{4\pi^2}{\sup^2_{\mathring\Omega_0^{EL}}(T)}\,\tilde\lambda\; \| g\|_{\Ltwo}^2
  \end{align}
  for every $g\in\mathrm D(\D)$ with $g\perp\ker(\D)$. Therefore
  \begin{align*}
    \lambda_1\geq\tilde\lambda\frac{4\pi^2}{\sup_{\mathring\Omega_0^{EL} }^2(T)}>0.
  \end{align*}
  To conclude \eqref{eq:specAbound}, recall that we have explicitly characterized the
  essential spectrum of $\A$ in Theorem~\ref{essspecA}. In particular, every element of
  $\sigma(\A)$ within the principal gap $\mathcal G$ has to be an eigenvalue. Since $\A$
  is self-adjoint, eigenfunctions corresponding to different eigenvalues are orthogonal
  to each other and the eigenspace of the eigenvalue $0$ equals $\ker(\D)$, see
  Corollary~\ref{antonovev0}. Thus, applying \eqref{eq:gapestimate} to an eigenfunction
  of a non-zero eigenvalue indeed yields \eqref{eq:specAbound}.
\end{proof}

\begin{remark}
  \begin{itemize}
    \item[(a)]
      Estimate~\eqref{eq:gapestimate} can be viewed as an alternate or
      improved version of the Antonov estimate \eqref{eq:antonovcoerc}.
    \item[(b)]\label{gapoddv}
      Since $\Ltwo^{odd}\subset\ker(\D)^\perp$, the spectrum of the restricted operator
      $\A\colon\Ltwo^{odd}\cap\mathrm D(\D^2)\to\Ltwo^{odd}$ is bounded from below by $\lambda_1>0$.
    \item[(c)]
      A naive approach to show the existence of an eigenvalue within the principal gap of
      $\A$ is to find a function odd in $v$ which minimizes the variational problem
      \eqref{E:LAMBDAONEDEF}, i.e., there holds equality in \eqref{eq:gapestimate}.
      However, the latter estimate contains the two separate variational problems
      Theorem~\ref{essspecA} and Proposition~\ref{helpervariation} with differing minimizers,
      which is why we do not pursue this approach any further.
  \end{itemize}
\end{remark}

\subsection{Plane symmetric case}\label{ssc:gapplane}

We now turn our attention to the planar linearized operator
$\pA\colon\mathrm D(\pD^2)\to\pH$, see Definition~\ref{antonovdef1D}.
First, we prove a coercivity bound analogous to Proposition~\ref{antonovcoerc}
in the radial setting.

\begin{prop}\label{antonovcoerc1d}
  Let $g\in C^2_c(\pOmega_0)$ be odd in $v_1$.
  \begin{enumerate}[label=(\alph*)]
  \item $\langle \pA g,g\rangle_{\pH}\geq0$.
  \item If $g$ is also odd in $x$,
    \begin{align}\label{eq:antonov1doddxv1}
      \langle  \pA g,g\rangle_{\pH}\geq
      3 \int_{\pOmega_0}\frac1{\vert\pvarphi'(\pE,\pv)\vert}\; \frac{\pU_0'(x)}x\;
      \vert g(x,v)\vert^2\diff(x,v).
    \end{align}
  \end{enumerate}
\end{prop}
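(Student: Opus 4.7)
The strategy follows the classical Antonov substitution~\cite{KuMa1970,GuRe2007,LeMeRa11} adapted to the planar setting. The starting point is a simplification of the gravitational term: from $\pD=v_1\partial_x-\pU_0'\partial_{v_1}$ one computes $\rho_{\pD g}=\partial_x\pJ(g)$, so integration by parts in~\eqref{pl_dxU} gives $\pU'_{\pD g}=4\pi\,\pJ(g)$, and hence
\begin{align*}
\langle\pA g,g\rangle_\pH=\|\pD g\|_\pH^2-4\pi\int_\R\pJ(g)(x)^2\diff x.
\end{align*}
For part~(a), Taylor's theorem factors $g\in C^2_c$ odd in $v_1$ as $g=v_1\psi$ with $\psi\in C^1$ even in $v_1$. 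Using $\pD v_1=-\pU_0'$ together with the product identity $\pD(v_1\psi^2)=2v_1\psi\,\pD\psi-\pU_0'\psi^2$, the cross terms can be eliminated to obtain the key identity $|\pD g|^2=v_1^2|\pD\psi|^2-\pU_0'\,\pD(v_1\psi^2)$. Since $\pD|\pvarphi'|=0$, integration by parts in the $|\pvarphi'|^{-1}$-weighted measure, combined with $\pD(\pU_0')=v_1\pU_0''$, produces
\begin{align*}
\|\pD g\|_\pH^2=\int_{\pOmega_0}\frac{v_1^2|\pD\psi|^2}{|\pvarphi'|}\diff(x,v)+\int_{\pOmega_0}\frac{v_1^2\,\pU_0''(x)\,\psi^2}{|\pvarphi'|}\diff(x,v).
\end{align*}
A Cauchy-Schwarz estimate on $\pJ(g)=\int v_1^2\psi\diff v$ using~\eqref{eq:v1intrho} together with $\pU_0''=4\pi\prho_0$ yields $4\pi\int\pJ(g)^2\le\int v_1^2\pU_0''\psi^2/|\pvarphi'|\diff(x,v)$; subtracting, $\langle\pA g,g\rangle_\pH\ge\int v_1^2|\pD\psi|^2/|\pvarphi'|\diff(x,v)\ge 0$, proving~(a).

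For part~(b), the additional oddness in $x$ forces $g(0,\cdot,\cdot)=0$, so a second Taylor argument yields the iterated factorization $g=x v_1\tilde\psi$ with $\tilde\psi\in C^0$ even in both $x$ and $v_1$. Setting $\tilde g:=v_1\tilde\psi$ so that $g=x\tilde g$, a direct calculation gives $\pD g=v_1\tilde g+x\,\pD\tilde g$ and, using $\pD(xv_1\tilde g^2)=v_1^2\tilde g^2-x\pU_0'\tilde g^2+2xv_1\tilde g\,\pD\tilde g$, produces the identity $|\pD g|^2=\pD(xv_1\tilde g^2)+x\pU_0'\tilde g^2+x^2|\pD\tilde g|^2$. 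Applying the part~(a) computation to $\tilde g=v_1\tilde\psi$, but now carrying an extra weight $x^2$ through the integration by parts---which accordingly picks up $\pD(x^2\pU_0')=v_1 x(2\pU_0'+x\pU_0'')$---yields
\begin{align*}
\|\pD g\|_\pH^2=\int_{\pOmega_0}\frac{x^2v_1^2|\pD\tilde\psi|^2}{|\pvarphi'|}\diff(x,v)+\int_{\pOmega_0}\frac{v_1^2 x(3\pU_0'+x\pU_0'')\,\tilde\psi^2}{|\pvarphi'|}\diff(x,v).
\end{align*}
The Cauchy-Schwarz bound applied with the new factorization controls $4\pi\int\pJ(g)^2$ by $\int x^2v_1^2\pU_0''\tilde\psi^2/|\pvarphi'|\diff(x,v)$; the $\pU_0''$ contributions cancel exactly, leaving $\int x^2v_1^2|\pD\tilde\psi|^2/|\pvarphi'|\diff(x,v)+3\int v_1^2 x\pU_0'\tilde\psi^2/|\pvarphi'|\diff(x,v)\ge 3\int(\pU_0'/x)g^2/|\pvarphi'|\diff(x,v)$, which is the inequality~\eqref{eq:antonov1doddxv1}.

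\textbf{Expected main obstacle.} The arithmetic in~(b) is the delicate point: two integrations by parts, each producing a $\pU_0''$ contribution, must be arranged so that these contributions cancel precisely against the Cauchy-Schwarz remainder, leaving the clean factor $3$ multiplying $\pU_0'/x$. Secondary issues---the regularity of both Taylor factorizations (routine for $C^2_c$ data with the stated parities) and the skew-symmetry of $\pD$ in the weighted measure (granted by Proposition~\ref{transport1D})---do not present serious difficulty.
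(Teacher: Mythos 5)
Your proposal is correct and follows essentially the same route as the paper: the substitutions $g=v_1\psi$ (for part~(a)) and $g=xv_1\tilde\psi$ (for part~(b)), the same Cauchy--Schwarz bound on $\langle\pB g,g\rangle_\pH$ using $\int v_1^2|\pvarphi'|\diff v=\prho_0=\pU_0''/(4\pi)$, and the same integration-by-parts identities. The only (superficial) difference is how the exact $\pD$-derivative terms are grouped: you split off $\pD(xv_1\tilde g^2)$ and perform a second integration by parts on the $x^2\pU_0'\pD(v_1\tilde\psi^2)$ piece, while the paper collects everything into $\pD\bigl(v_1x\,\mu^2\pD(v_1x)\bigr)$ and reads off the $3g^2\pU_0'/x$ and $4\pi g^2\prho_0$ contributions from $\pD^2(v_1x)$—these are algebraically equivalent.
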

\begin{proof}
  We proceed similarly to \cite[Proof of Lemma 1.1]{GuRe2007}:
  For every $g\in C^2_c(\pOmega_0)$,
  \begin{align*}
    \langle\pB g,g\rangle_{\pH}
    &= 4\pi \int_{\R} \left( \int_{\R^3}v_1\,g(x,v)\diff v \right)^2\diff x \\
    &\leq 4\pi \int_{\R} \left( \int_{\R^3}v_1^2\,\vert\pvarphi'(\pE,\pv)\vert \diff v \right)
    \left( \int_{\R^3}\frac{\vert g(x,v)\vert^2}{\vert\pvarphi'(\pE,\pv)\vert} \diff v \right)
    \diff x  \\
    &= 4\pi \int_{\pOmega_0} \prho_0(x)\frac{\vert g(x,v)\vert^2}{\vert\pvarphi'(\pE,\pv)\vert}
    \diff(x,v),
  \end{align*}
  where we used \eqref{eq:v1intrho} for the last equality. Thus,
  \begin{align}
    \langle \pA g,g\rangle_{\pH}\geq
    \int_{\pOmega_0} \frac1{\vert\pvarphi'(\pE,\pv)\vert} \left(\vert\pD g(x,v)\vert^2
    - 4\pi\prho_0(x)\,\vert g(x,v)\vert^2\right)\diff(x,v).
  \end{align}
  For (a) consider
  \begin{align*}
    \mu(x,v)\coloneqq \frac1{v_1}g(x,v),\quad (x,v)\in\pOmega_0.
  \end{align*}
  Since $g$ is odd in $v_1$, we obtain $\mu\in C^2_c(\pOmega_0)$. Furthermore, 
  \begin{align*}
    \pD g  = \pD\left(v_1\,\mu\right) = v_1 \pD\mu + \mu\,\pD v_1,
  \end{align*}
  and therefore
  \begin{align*}
    \vert \pD g\vert^2
    &= v_1^2 \vert \pD\mu\vert^2 + v_1 \pD\left(\mu^2\right)\pD v_1 + \mu^2\vert\pD v_1\vert^2  
    = v_1^2 \vert \pD\mu\vert^2 + \pD\left( v_1\mu^2\pD v_1 \right) - v_1\mu^2\pD^2(v_1)  \\
    &= v_1^2 \vert \pD\mu\vert^2 + \pD\left( v_1\mu^2\pD v_1 \right) + 4\pi g^2 \prho_0.
  \end{align*}
  Thus,
  \begin{align*}
    \langle \pA g,g\rangle_{\pH} \geq
    \int_{\pOmega_0} \frac1{\vert\pvarphi'(\pE,\pv)\vert}\,v_1^2\, \vert\pD
    \mu(x,v)\vert^2\diff(x,v)\geq0;
  \end{align*}
  note that there are no boundary terms when we integrate by parts to obtain
  \begin{align}\label{eq:antonovibp}
    \int_{\pOmega_0} \frac1{\vert\pvarphi'(\pE,\pv)\vert}\,
    \pD\left( v_1\mu^2\pD v_1 \right)\diff(x,v)=0.
  \end{align}
  For part (b) we consider 
  \begin{align*}
    \mu(x,v)\coloneqq \frac1{v_1\,x}g(x,v),\quad (x,v)\in\pOmega_0;
  \end{align*}
  $\mu\in C^2_c(\pOmega_0)$, since $g$ is odd in $x$ and $v_1$.
  As in the first part,
  \begin{align*}
    \pD g  = \pD\left(v_1x\,\mu\right) = v_1x\,\pD\mu + \mu\,\pD\left(v_1x\right),
  \end{align*}
  and hence
  \begin{align*}
    \vert \pD g\vert^2
    &= v_1^2x^2 \vert \pD\mu\vert^2
    + \pD\left( v_1x\,\mu^2\pD\left(v_1x\right)\right) - v_1x\,\mu^2\pD^2(v_1x) \\
    &= v_1^2x^2 \vert \pD\mu\vert^2 + \pD\left( v_1x\,\mu^2\pD\left(v_1x\right)\right)
    + 4\pi g^2 \prho_0 + 3 g^2 \frac{\pU_0'}x.
  \end{align*}
  Thus,
  \begin{align*}
    \langle \pA g,g\rangle_{\pH} \geq
    \int_{\pOmega_0} \frac1{\vert\pvarphi'(\pE,\pv)\vert}
    \left(v_1^2x^2\, \vert\pD \mu(x,v)\vert^2 + 3\frac{\pU_0'(x)}x\vert g(x,v)\vert^2 \right)
    \diff(x,v),
  \end{align*}
  where we again integrated by parts similarly to \eqref{eq:antonovibp}.
  The reason why we required $g$ to be odd in $x$ for part (b) is that otherwise
  the occurring boundary terms at $x=0$ do not vanish as they do in the spherically
  symmetric setting.
\end{proof}

We again obtain the non-negativity of $\pA$:

\begin{cor}\label{C:SPECTRUMNONNEGATIVEplane}
  The quadratic form associated with $\pA$ is non-negative on $\mathrm D(\pD)$.
  Thus, $\sigma(\pA)\subset[0,\infty[$.
\end{cor}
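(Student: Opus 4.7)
The argument will parallel the proof of Corollary~\ref{C:SPECTRUMNONNEGATIVE} in the spherically symmetric case, adapted to the planar setting using Proposition~\ref{antonovcoerc1d}(a) and the approximation scheme of Remark~\ref{approxplane}. Given $g\in\mathrm D(\pD)$, I first split it into its $v_1$-even and $v_1$-odd parts,
\begin{align*}
  g_\pm(x,v)\coloneqq \tfrac12\bigl(g(x,v)\pm g(x,-v_1,\pv)\bigr),
\end{align*}
both of which again lie in $\mathrm D(\pD)$ because $\pD$ reverses $v_1$-parity by Proposition~\ref{transport1D}(d). Since $\pB(g_+)\equiv 0$ (the integrand of $\pJ(g_+)$ is odd in $v_1$), we immediately get $\langle \pA g_+,g_+\rangle_{\pH}=\|\pD g_+\|_{\pH}^2\ge 0$, and by orthogonality of the $v_1$-even and $v_1$-odd subspaces together with $\pA(g_\pm)=(\pA g)_\pm$, it suffices to show $\langle \pA g_-,g_-\rangle_{\pH}\ge 0$.

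The core of the proof is therefore the extension of Proposition~\ref{antonovcoerc1d}(a) from $C^2_c(\pOmega_0)$ (restricted to functions odd in $v_1$) to all of $\mathrm D(\pD)$ intersected with the $v_1$-odd subspace. For this I use the approximation procedure outlined in Remark~\ref{approxplane}: expand $g_-$ as a $\theta$-Fourier series in the action-angle coordinates, truncate to a finite partial sum, and then mollify the finitely many Fourier coefficients $a_k(\pE,\pv), b_k(\pE,\pv)$ by compactly supported smooth functions $\tilde a_k, \tilde b_k\in C^\infty_c(\inter(\pOmega_0^{\pE\pv}))$. Remark~\ref{approxplane} guarantees that this construction preserves oddness in $v_1$ (so that $\tilde g\in C^2_c(\pOmega_0)$ is odd in $v_1$) and produces $\tilde g\to g_-$ in $\pH$ together with $\pD\tilde g\to \pD g_-$ in $\pH$. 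The latter yields $\pJ(\tilde g)\to \pJ(g_-)$ in a strong enough sense (via \eqref{eq:v1intrho} and Cauchy--Schwarz) to pass to the limit in the quadratic form of $\pB$. Applying Proposition~\ref{antonovcoerc1d}(a) to each $\tilde g$ and taking the limit then gives $\langle \pA g_-,g_-\rangle_{\pH}\ge 0$.

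Combining the two contributions yields $\langle \pA g,g\rangle_{\pH}\ge 0$ for every $g\in\mathrm D(\pD)$, which is exactly the first assertion. The second assertion $\sigma(\pA)\subset[0,\infty[$ then follows from the standard characterization of the spectrum of a self-adjoint operator by its quadratic form, as in \cite[Proposition~5.12]{HiSi}, using the self-adjointness of $\pA$ from Lemma~\ref{antonovselfadjoint1D}.

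The main obstacle is the approximation step: I must ensure that the truncation and mollification in Remark~\ref{approxplane} simultaneously preserves the $v_1$-oddness (so that Proposition~\ref{antonovcoerc1d}(a) is applicable), keeps $\tilde g$ in $C^2_c(\pOmega_0)$, and gives convergence of both $\tilde g$ in $\pH$ and $\pD\tilde g$ in $\pH$, the latter being needed so that $\langle \pA \tilde g,\tilde g\rangle_{\pH}=\|\pD\tilde g\|_{\pH}^2-\langle \pB\tilde g,\tilde g\rangle_{\pH}$ converges to $\langle \pA g_-,g_-\rangle_{\pH}$. The boundedness of $\pB$ from Lemma~\ref{bdef1D} makes the convergence of the $\pB$-term routine; the convergence of the kinetic term $\|\pD\tilde g\|_{\pH}^2$ is precisely what the Fourier-series truncation in Remark~\ref{approxplane} is designed to deliver.
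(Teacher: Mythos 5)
Your argument is correct and matches the paper's proof essentially line for line: the same $v_1$-parity splitting, the same observation that $\pB(g_+)=0$ disposes of the even part, and the same appeal to Remark~\ref{approxplane} to extend Proposition~\ref{antonovcoerc1d}(a) from $C^2_c(\pOmega_0)$ to $v_1$-odd elements of $\mathrm D(\pD)$. You spell out the orthogonality bookkeeping and the role of Lemma~\ref{bdef1D} slightly more explicitly than the paper does, but these are the same steps the paper is implicitly invoking via the phrase ``analogously to.''
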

\begin{proof}
  We split $g\in\mathrm D(\pD)$ into its even and odd part $g_{\pm}$ in $v_1$
  analogously to \eqref{eq:gevenoddv}.
  Then $\langle \pA (g_+),g_+\rangle_{\pH}\geq0$ since $\pB(g_+)=0$.
  On the other hand, an approximation argument (as in Remark \ref{approxplane})
  allows us to extend Proposition \ref{antonovcoerc1d} (a) onto
  functions in $\mathrm D(\pD)$ which are odd in $v_1$, 
  i.e., $\langle \A (g_-),g_-\rangle_{\pH}\geq0$.
\end{proof}

We now prove that $\pA$, when restricted to functions being odd in $v_1$ and $x$,
possesses a gap in its spectrum as well. Since in the plane symmetric setting
$\prho_0(0)$ is always positive, the proof is easier than in the spherically symmetric case.

\begin{theorem}\label{gapplane}
  There exists $c>0$ such that for all $g\in\mathrm D(\pD)\cap\p\H$,
  \begin{align*}
    \langle \pA g,g\rangle_{\pH} \geq c \| g\|_{\pH}^2.
  \end{align*}
  In particular, the kernel of $\pA\colon\mathrm D(\pD^2)\cap\p\H\to\p\H$ is trivial.
\end{theorem}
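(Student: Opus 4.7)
The plan is to combine the refined Antonov coercivity bound \eqref{eq:antonov1doddxv1} from Proposition \ref{antonovcoerc1d}(b), which is tailored to functions odd in both $v_1$ and $x$, with the fact that in the planar case the weight $\pU_0'(x)/x$ is uniformly bounded away from zero on the $x$-projection of $\pOmega_0$. This last observation is the qualitatively new feature compared to the radial setting, where the analogous weight $U_0'(r)/r$ can degenerate on an inner vacuum region.

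First I would establish the lower bound $\pU_0'(x)/x \geq c_0 > 0$ on $[-\pR_0,\pR_0]$. By Proposition \ref{existenceplanarstst} the potential $\pU_0$ is convex, even, and strictly increasing on $[0,\infty[$ with $\pU_0'(0)=0$ and $\pU_0''(0)=4\pi\prho_0(0)>0$. Hence $\pU_0'(x)/x$ extends continuously to $x=0$ with value $\pU_0''(0)>0$, is positive on $]0,\pR_0]$ (since $\pU_0'(x)=4\pi\int_0^x\prho_0(y)\diff y>0$ by \eqref{pl_dxU_sym}), and is even, so it attains a strictly positive minimum $c_0$ on $[-\pR_0,\pR_0]$. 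Since $\pOmega_0\subset[-\pR_0,\pR_0]\times\R^3$, this gives
\[
3\int_{\pOmega_0}\frac{1}{\vert\pvarphi'(\pE,\pv)\vert}\,\frac{\pU_0'(x)}{x}\,\vert g(x,v)\vert^2\diff(x,v)\ \geq\ 3c_0\,\|g\|_{\pH}^2
\]
for any $g\in\pH$.

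Next, for $g\in C^2_c(\pOmega_0)\cap\p\H$, Proposition \ref{antonovcoerc1d}(b) applies directly and combined with the above yields $\langle\pA g,g\rangle_{\pH}\geq 3c_0\,\|g\|_{\pH}^2$. To extend the estimate to every $g\in\mathrm D(\pD)\cap\p\H$, I would apply the mollification scheme from Remark \ref{approxplane}: expand $g$ in its $\theta$-Fourier series, truncate to finitely many modes, and then smoothly mollify the coefficients $a_k,b_k$ in the $(\pE,\pv)$-variables. The remark explicitly records that the conditions $a_k=0$, $b_k=0$ carry over, so the oddness in $v_1$ and $x$ is preserved by the approximation. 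This produces $g_n\in C^2_c(\pOmega_0)\cap\p\H$ with $g_n\to g$ and $\pD g_n\to\pD g$ in $\pH$.

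Passing to the limit in $\langle\pA g_n,g_n\rangle_{\pH}\geq 3c_0\,\|g_n\|_{\pH}^2$ requires continuity of the quadratic form
\[
\langle \pA g,g\rangle_{\pH} = \|\pD g\|_{\pH}^2 - \frac{1}{4\pi}\|\pU_{\pD g}'\|_2^2
\]
with respect to the graph norm of $\pD$. The first term is immediate. For the second, the bound \eqref{eq:poth2plane} together with $\supp(\pU_{\pD g_n}')\subset[-\pR_0,\pR_0]$ gives $\pU_{\pD g_n}'\to\pU_{\pD g}'$ in $L^2(\R)$, so the quadratic form is continuous. This establishes the main estimate with $c=3c_0$. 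The final assertion is then immediate: if $\pA g=0$ for $g\in\mathrm D(\pD^2)\cap\p\H$, then $\langle\pA g,g\rangle_{\pH}=0$, forcing $\|g\|_{\pH}=0$. The main (essentially only) technical point is organizing the approximation in Remark \ref{approxplane} so that all three features, membership in $\p\H$, convergence in $\pH$, and convergence of $\pD g_n$ in $\pH$, hold simultaneously; everything else is direct.
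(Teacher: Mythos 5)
Your proposal is correct and takes essentially the same approach as the paper: apply the refined coercivity bound of Proposition~\ref{antonovcoerc1d}(b), note that $\pU_0'(x)/x$ is bounded below on the support because $\prho_0(0)>0$ and $\pU_0'\neq0$ off the origin, and extend from $C^2_c(\pOmega_0)\cap\p\H$ to $\mathrm D(\pD)\cap\p\H$ via the approximation scheme of Remark~\ref{approxplane}. Your write-up is simply more explicit than the paper's about why the limit can be passed in the quadratic form, which is a welcome addition rather than a different route.
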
 
\begin{proof}
  Approximating as in Remark \ref{approxplane} allows us to extend the coercivity
  bound~\eqref{eq:antonov1doddxv1} to $\mathrm D(\pD)\cap\p\H$;
  note that the weight $\frac{\pU_0'(x)}x$ is bounded on $\R$,
  since $\lim_{x\to0}\frac{\pU_0'(x)}x = \pU_0''(0)=4\pi\prho_0(0)<\infty$.
  Furthermore, since $\pU_0'\neq0$ on $\R\setminus\{0\}$ and $\prho_0(0)>0$,
  the weight $\frac{\pU_0'(x)}x$ is also bounded from below on the
  support of the steady state, which immediately implies the above estimate.
\end{proof}

In other words, the spectrum of $\pA\colon\mathrm D(\pD)\cap\p\H\to\p\H$
is bounded away from zero, cf.~\cite[Proposition~5.12]{HiSi}

Note that we do not establish the existence of a spectral gap of the operator $\pA$ on the whole function space $\pH$ with no added symmetry assumptions as we did in the radial case. The reason for that is that we need Antonov's coercivity bound to deduce that the kernel of $\pA$ equals the kernel of $\pD$, and our proof of this result requires oddness in $x$ in the planar case. 

\section{Existence of eigenvalues}\label{sc:mathur}

As explained in the introduction, S.~Mathur~\cite{Ma} used a version of the Birman-Schwinger
principle to show the existence of periodic solutions to the linearized Vlasov-Poisson system
in the presence of an external potential. 
In this section we use this approach to derive a criterion for the existence of periodic oscillations without such an external potential.
More precisely, we are interested in the existence of eigenfunctions
for the operators $\A$ and $\pA$, see Definitions~\ref{antonovdef}
and~\ref{antonovdef1D} respectively. In the radial case, we seek an eigenfunction
which is odd in $v$/$w$, in the planar case the eigenfunction has to be plane
symmetric in the sense of~\eqref{pl_symm_def}. 
This is why we always restrict $\A$ to odd functions in this section, i.e., by $\A$ we denote
\begin{align*}
  \A=\A\big|_{\Ltwo^{odd}} \colon\mathrm D(\D^2) \cap\Ltwo^{odd}\to\Ltwo^{odd},
\end{align*}
$\D^2$ and $\B$ are restricted accordingly; see Section \ref{ssc:operatorradial} for the definitions of these function spaces and operators.

In the planar setting we restrict the linearized operator $\pA$ onto the functions which are odd in $v_1$ and $x$, i.e., by $\pA$ we denote
\begin{align*}
	\pA=\pA\big|_{\p\H} \colon\mathrm D(\pD^2) \cap\p\H\to\p\H,
\end{align*}
$\pD^2$ and $\pB$ are restricted accordingly; the function spaces and operators are defined in Section~\ref{ssc:operatorplane}, see in particular~\eqref{E:HODDDEF} for the definition of $\p\H$. 


\subsection{Mathur's argument and a criterion for the existence of eigenvalues}

We reformulate the eigenvalue problem using a Birman-Schwinger type argument.
For $g\in \Ltwo^{odd}\cap\mathrm D(\D^2)$ and $\lambda\notin\sigma_{ess}(\A)$ let 
\begin{align*}
	h \coloneqq \left(-\D^2 - \lambda \right) g \in \Ltwo^{odd}.
\end{align*} 
Since $\sigma_{ess}(\A) = \sigma_{ess}(-\D^2) = \sigma(-\D^2)$, the resolvent operator
\begin{align*}
	R_{-\D^2}(\lambda) \coloneqq \left(-\D^2 - \lambda \right)^{-1}\colon\Ltwo^{odd}\to\Ltwo^{odd}
\end{align*}
is bounded and we can recover $g$ from $h$ via $g = R_{-\D^2}(\lambda)h$. 
It is clear that $g$ is an eigenfunction of $\A$ with eigenvalue $\lambda$ if and only if
$
	\left(-\D^2 - \lambda\right) g = \B g,
$
i.e., 
\begin{align*}
	h = \B g= \B R_{-\D^2}(\lambda)h.
\end{align*}
In the planar case, we conclude similarly that $g\in\mathrm D(\pD^2)\cap\p\H$ is an eigenfunction of $\pA$ with eigenvalue $\lambda\notin\sigma_{ess}(\pA)$ if and only if
$\left(-\pD^2 - \lambda\right) g = \pB g$, or equivalently $h = \pB R_{-\pD^2}(\lambda) h$ with $h=\left(-\pD^2 - \lambda \right) g$.
It is therefore natural to introduce the $\l$-parametrized families of operators
\begin{align}
Q_\l\coloneqq& \B R_{-\D^2}(\lambda)\colon\Ltwo^{odd}\to\Ltwo^{odd}, \\
\pQ_\l\coloneqq& \pB R_{-\pD^2}(\lambda)\colon\p\H\to\p\H.
\end{align}
Before proving some general properties of these operators in Lemma \ref{Qlambdaprop}
we first discuss the connection between their eigenvalues and the ones of $\A$ and $\pA$
respectively.
We have shown above that $\lambda$ is an eigenvalue of $\A$ ($\pA$) if and only if $1$
is an eigenvalue of $Q_\lambda$ ($\pQ_\l$).
In fact, it is easy to see that the following lemma holds.


\begin{lemma}\label{L:BS}
  Let $\lambda\notin\sigma_{ess}(\A)$ ($\lambda\notin\sigma_{ess}(\pA)$) and $\mu\geq1$.
  Then $\lambda$ is an eigenvalue of $-\D^2 - \frac1\mu\B$ ($-\pD^2 - \frac1\mu\pB$)
  if and only if $\mu$ is an eigenvalue of $Q_\lambda$ ($\pQ_\l$).
\end{lemma}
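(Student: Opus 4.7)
The plan is to exploit that $\sigma(-\D^2)=\sigma_{ess}(-\D^2)=\sigma_{ess}(\A)$ (by Theorems~\ref{transsquarespectrum} and~\ref{essspecA}), so the assumption $\lambda\notin\sigma_{ess}(\A)$ upgrades to $\lambda\notin\sigma(-\D^2)$; hence the resolvent $R_{-\D^2}(\lambda)=(-\D^2-\lambda)^{-1}\colon\Ltwo^{odd}\to\Ltwo^{odd}\cap\mathrm D(\D^2)$ is a well-defined bounded bijection. The same upgrade applies in the planar case via Theorems~\ref{transsquarespectrumplane} and~\ref{T:ESSENTIALSPECTRUMABAR}. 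After this the argument is a direct algebraic manipulation, and I will only write the radial case since the planar one is verbatim.

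For the forward direction, suppose $g\in\mathrm D(\D^2)\cap\Ltwo^{odd}\setminus\{0\}$ satisfies $(-\D^2-\tfrac1\mu\B)g=\lambda g$, which I rewrite as $(-\D^2-\lambda)g=\tfrac1\mu \B g$. Setting $h\coloneqq\B g\in\Ltwo^{odd}$ and applying $R_{-\D^2}(\lambda)$ gives $\mu g = R_{-\D^2}(\lambda)h$, and applying $\B$ to this identity yields $Q_\lambda h=\B R_{-\D^2}(\lambda)h=\mu\B g=\mu h$. It remains to see that $h\neq 0$: if $h=\B g$ were zero, then $(-\D^2-\lambda)g=0$, so $g$ would be an eigenfunction of $-\D^2$ at $\lambda$, contradicting $\lambda\notin\sigma(-\D^2)$.

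For the converse, let $h\in\Ltwo^{odd}\setminus\{0\}$ with $Q_\lambda h=\mu h$, and define $g\coloneqq\tfrac1\mu R_{-\D^2}(\lambda)h\in\mathrm D(\D^2)\cap\Ltwo^{odd}$; this is legitimate since $\mu\geq 1>0$. Then $(-\D^2-\lambda)(\mu g)=h$ and $\B(\mu g)=\B R_{-\D^2}(\lambda)h=\mu h$, i.e., $\B g=h$. Substituting gives $(-\D^2-\lambda)g=\tfrac{1}{\mu}h=\tfrac{1}{\mu}\B g$, equivalently $(-\D^2-\tfrac1\mu\B)g=\lambda g$. Finally $g\neq 0$: else $R_{-\D^2}(\lambda)h=0$, and by injectivity of the resolvent $h=0$, again a contradiction.

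There is no real obstacle here — the content of the lemma is essentially the algebraic Birman–Schwinger identity, and the only things that could go wrong (namely the trivially-zero alternatives $h=0$ or $g=0$) are ruled out by $\lambda\notin\sigma(-\D^2)$ together with injectivity of the resolvent. The mild care needed is bookkeeping: checking that $Q_\lambda$ is indeed well defined on $\Ltwo^{odd}$ (which follows since $\B$ maps into the odd sector by \eqref{E:B-def-ss}, and the restricted $-\D^2$ is self-adjoint on $\Ltwo^{odd}$ by Proposition~\ref{transport}(d)), and, in the planar case, that $\pB$ preserves oddness in both $v_1$ and $x$ so that $\pQ_\lambda$ acts on $\p\H$ — this is immediate from \eqref{E:B-def-pl}.
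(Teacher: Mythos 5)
Your proof is correct, and it supplies exactly the algebraic Birman--Schwinger verification that the paper elides (the paper merely states ``it is easy to see that the following lemma holds,'' having already established in the preceding paragraph that $\sigma_{ess}(\A)=\sigma_{ess}(-\D^2)=\sigma(-\D^2)$ so that $R_{-\D^2}(\lambda)$ is a bounded bijection on $\Ltwo^{odd}$). The substitution $h=\B g$, the converse construction $g=\tfrac1\mu R_{-\D^2}(\lambda)h$, and the non-vanishing checks via injectivity of $-\D^2-\lambda$ and of the resolvent are precisely the intended bookkeeping, in both the radial and planar cases.
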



Lemma~\ref{L:BS} is simply a version of the Birman-Schwinger principle~\cite{LiLo01}.
In particular, the non-negativity of $\B$ ($\pB$) (in the sense of quadratic forms)
allows us to conclude the existence of an eigenvalue of $\A$ ($\pA$) in the principal
gap if we know that there exists an eigenvalue of
$-\D^2 - \frac1\mu\B$ ($-\pD^2 - \frac1\mu\pB$) in the principal gap for
sufficiently large $\mu$. This is shown in the next lemma.

\begin{lemma}\label{L:BSONE}
  \begin{enumerate}[label=(\alph*)]
  \item
    If there exists $\mu\geq1$ such that $-\D^2 - \frac1\mu\B$
    possesses an eigenvalue in the principal gap
    \begin{align}\label{eq:defprincgap}
      \mathcal G= \left]0,\frac{4\pi^2}{\sup^2_{\mathring\Omega_0^{EL}}(T)}\right[,
    \end{align}
    then $\A$ also has an eigenvalue in $\mathcal G$.
  \item
    If there exists $\mu\geq1$ such that $-\pD^2 - \frac1\mu\pB$ possesses an
    eigenvalue in the principal gap 
    \begin{align}\label{eq:defprincgapplanar}
      \p{\mathcal G} := \left]0, 4\frac{4\pi^2}{\pT^2(\pE_0)}\right[,
    \end{align}
    then $\pA$ also has an eigenvalue in $\p{\mathcal G}$.
  \end{enumerate}
\end{lemma}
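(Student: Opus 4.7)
\emph{Proof plan.} The plan is to prove part~(a); part~(b) is strictly analogous. By the Birman--Schwinger principle of Lemma~\ref{L:BS}, the hypothesis provides $\mu\geq 1$ and $\lambda^\ast\in\mathcal G$ such that $\lambda^\ast$ is an eigenvalue of $\A_{1/\mu}:=-\D^2-\tfrac{1}{\mu}\B$ on $\Ltwo^{odd}$. The natural idea is to interpolate between $\A_{1/\mu}$ and $\A=\A_1$ via the one-parameter family
\[
\A_s := -\D^2 - s\,\B,\qquad s\in[1/\mu,1],
\]
and to track the bottom of the spectrum of $\A_s\big|_{\Ltwo^{odd}}$ by means of the min--max principle. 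Each $\A_s$ is self-adjoint, and by the relative compactness of $\B$ with respect to $-\D^2$ established in Theorem~\ref{essspecA} we have $\sigma_{ess}(\A_s)=\sigma_{ess}(-\D^2)$; in particular the principal gap $\mathcal G$ is free of essential spectrum for every $s\in[1/\mu,1]$.

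Setting $\mu_1(s):=\inf\sigma(\A_s\big|_{\Ltwo^{odd}})$, I would then establish two bounds on $\mu_1$. The key observation is that $\B$ is non-negative: from the computation in the proof of Lemma~\ref{bdef},
\[
\langle \B g,g\rangle_{\Ltwo} = (2\pi)^4\int_0^\infty r^{-2}\,\bigl(\J(g)(r)\bigr)^2\,\diff r\geq 0.
\]
Consequently the quadratic form $\langle\A_s g,g\rangle_{\Ltwo}=\|\D g\|_{\Ltwo}^2-s\langle\B g,g\rangle_{\Ltwo}$ is non-increasing in $s$, and hence so is $s\mapsto\mu_1(s)$ by min--max. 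Since $\lambda^\ast$ is an eigenvalue of $\A_{1/\mu}$, this yields
\[
\mu_1(1)\leq \mu_1(1/\mu)\leq \lambda^\ast<\frac{4\pi^2}{\sup^2_{\mathring\Omega_0^{EL}}(T)}.
\]
In the opposite direction, the same non-negativity of $\B$ gives $\langle\A_s g,g\rangle_{\Ltwo}\geq\langle\A g,g\rangle_{\Ltwo}$ for every $s\leq 1$, and the spectral gap at the origin from Theorem~\ref{antonovgap} then forces $\mu_1(s)\geq\lambda_1>0$.

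Combining the two bounds gives $\mu_1(1)\in[\lambda_1,\lambda^\ast]\subset\mathcal G$, so $\mu_1(1)$ lies strictly below $\inf\sigma_{ess}(\A\big|_{\Ltwo^{odd}})$. The standard characterization of the spectrum of a self-adjoint operator below its essential spectrum then identifies $\mu_1(1)$ as an isolated eigenvalue of $\A\big|_{\Ltwo^{odd}}$ of finite multiplicity, which furnishes the desired eigenvalue of $\A$ in $\mathcal G$. For part~(b), the same argument applies verbatim after replacing Theorem~\ref{essspecA} by Theorem~\ref{T:ESSENTIALSPECTRUMABAR}, Theorem~\ref{antonovgap} by Theorem~\ref{gapplane}, $\mathcal G$ by $\p{\mathcal G}$, and $\Ltwo^{odd}$ by $\p\H$.

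The hard part, though technically mild, is to prevent $\mu_1(s)$ from escaping the principal gap at its lower endpoint as $s$ increases; this is precisely where the spectral gap at~$0$ established in Section~\ref{sc:gap} enters in an essential way. Everything else reduces to the non-negativity of $\B$ (respectively $\pB$) and the min--max characterization of eigenvalues below the essential spectrum.
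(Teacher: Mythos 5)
Your proof is correct and follows essentially the same route as the paper's: both rest on the operator ordering $\A\leq-\D^2-\frac1\mu\B$ (from $\B\geq0$ and $1/\mu\leq1$), the spectral gap at the origin from Theorem~\ref{antonovgap} (resp.~\ref{gapplane}), and the identification of the essential spectrum from Theorem~\ref{essspecA} (resp.~\ref{T:ESSENTIALSPECTRUMABAR}). The one-parameter family $\A_s$ you introduce is unnecessary scaffolding, since you only ever compare the two endpoints $s=1$ and $s=1/\mu$; and the appeal to Lemma~\ref{L:BS} at the start is superfluous, as the hypothesis of the lemma already hands you $\mu$ and the eigenvalue $\lambda^\ast$ directly.
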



\begin{proof}
  For part (a), $\B\geq0$ (in the sense of quadratic forms, see \cite[Definition 5.11]{HiSi})
  and $\frac1\mu\leq1$ yield $-\D^2-\frac1\mu\B\geq\A$.
  Note that $-\D^2-\frac1\mu\B$ is self-adjoint since $\D^2$ is self-adjoint and $\B$
  is bounded and symmetric. By assuming the existence of an eigenvalue of $-\D^2-\frac1\mu\B$
  in the principal gap, we have $-\D^2 - \frac1\mu\B<\frac{4\pi^2}{\sup^2(T)} \mathrm{id}$,
  where $<$ is defined as $\not\geq$.
  Thus, transitivity yields $\A<\frac{4\pi^2}{\sup^2(T)} \mathrm{id}$, which means that 
  \begin{align*}
    \sigma(\A) \not\subset \left[ \frac{4\pi^2}{\sup^2_{\mathring\Omega_0^{EL}}(T)}, \infty \right[.
  \end{align*}
  Since we have restricted ourselves to functions odd in $v$, Theorem~\ref{antonovgap}
  (see also Remark~\ref{gapoddv}~(b)) implies $\sigma(\A)\subset]0,\infty[$,
  i.e., we obtain $\sigma(\A)\cap\mathcal G\neq\emptyset$. By Theorem~\ref{essspecA}
  every element of the spectrum of $\A$ within $\mathcal G$
  has to be an (isolated) eigenvalue of $\mathcal A$.
	
	The proof of part (b) is analogous; the positivity of the resulting eigenvalue is ensured by Theorem~\ref{gapplane}. Recall however that we have restricted the operators onto $\p\H$ in the planar setting and the essential spectrum of $\pA$ is given by \eqref{eq:essspecantonovplaneoddv1x}. In particular, the bottom of the essential spectrum is $4\frac{4\pi^2}{\pT^2(\pE_0)}$.
\end{proof}


We now investigate the properties of $Q_\lambda$ and $\pQ_\l$ for fixed $\lambda\in\rho(-\D^2)$ and $\lambda\in\rho(-\pD^2)$ respectively. As usual, $\rho\coloneqq\C\setminus\sigma$ denotes the resolvent set. Since all involved operators are self-adjoint, we always restrict ourselves to real $\lambda$. 


\begin{lemma}\label{Qlambdaprop}
  The operators $Q_\lambda\colon \Ltwo^{odd}\to \Ltwo^{odd}$ and $\pQ_\l:\p\H\to \p\H$ are
  both linear, continuous, and compact.
\end{lemma}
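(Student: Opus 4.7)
Linearity of $Q_\lambda$ and $\pQ_\l$ is immediate from the linearity of $\B$, $\pB$, and of the resolvents. For continuity, observe that since $\lambda\in\rho(-\D^2)$ (respectively $\lambda\in\rho(-\pD^2)$), the resolvent $R_{-\D^2}(\lambda)$ is a bounded operator on $\Ltwo^{odd}$, and similarly $R_{-\pD^2}(\lambda)$ is bounded on $\p\H$; here one should briefly note that $\D^2$ preserves $v$-parity (by Proposition~\ref{transport}(d)) and $\pD^2$ preserves the oddness in $v_1$ and $x$ (by Proposition~\ref{transport1D}(d)), so the self-adjoint restrictions to the invariant subspaces $\Ltwo^{odd}$ and $\p\H$ are well defined and have the same essential spectrum structure. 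Composing with the bounded operators $\B$ (Lemma~\ref{bdef}) and $\pB$ (Lemma~\ref{bdef1D}) yields continuity; the fact that $\B g$ is automatically odd in $v$ and $\pB g$ is automatically odd in $v_1$ and $x$ (as noted after Lemmas~\ref{bdef} and~\ref{bdef1D}) shows that the ranges land in the correct spaces.

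The main point is compactness. I will deduce it from the relative compactness of $-\B$ with respect to $-\D^2$ already established in the proof of Theorem~\ref{essspecA}, namely that the map
\[
-\B:\bigl(\mathrm D(\D^2)\cap\Ltwo^{odd},\ \|\D^2\cdot\|_{\Ltwo}+\|\cdot\|_{\Ltwo}\bigr)\to\Ltwo^{odd}
\]
is compact. The resolvent identity $\D^2 R_{-\D^2}(\lambda)h=-h-\lambda\,R_{-\D^2}(\lambda)h$ gives
\[
\|\D^2 R_{-\D^2}(\lambda)h\|_{\Ltwo}+\|R_{-\D^2}(\lambda)h\|_{\Ltwo}\le \bigl(1+(1+|\lambda|)\|R_{-\D^2}(\lambda)\|\bigr)\|h\|_{\Ltwo},
\]
so $R_{-\D^2}(\lambda)$ is a bounded map from $\Ltwo^{odd}$ into $\mathrm D(\D^2)\cap\Ltwo^{odd}$ equipped with the graph norm. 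Writing $Q_\lambda=\B\circ R_{-\D^2}(\lambda)$ as the composition of this bounded operator with the compact one $-\B$ (up to a sign) yields the compactness of $Q_\lambda$.

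The planar case is analogous, using the relative compactness of $-\pB$ with respect to $-\pD^2$ from Theorem~\ref{T:ESSENTIALSPECTRUMABAR}, the graph-norm bound
\[
\|\pD^2 R_{-\pD^2}(\lambda)h\|_{\pH}+\|R_{-\pD^2}(\lambda)h\|_{\pH}\le \bigl(1+(1+|\lambda|)\|R_{-\pD^2}(\lambda)\|\bigr)\|h\|_{\pH},
\]
and the factorization $\pQ_\l=\pB\circ R_{-\pD^2}(\lambda)$. There is no real obstacle here beyond bookkeeping: the nontrivial content (the relative compactness) is already contained in Theorems~\ref{essspecA} and~\ref{T:ESSENTIALSPECTRUMABAR}, and one just needs to package it together with the bounded resolvent. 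One should only be mildly careful to check that everything restricts correctly to the odd/plane-symmetric subspaces, but this is handled by the parity-preservation properties of $\D^2$, $\pD^2$, $\B$, and $\pB$ recorded earlier in the paper.
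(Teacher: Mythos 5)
Your proof is correct and follows essentially the same route as the paper's: the paper simply notes that compactness of $Q_\lambda$ is, by definition of relative compactness (\cite[Definition 14.1]{HiSi}), equivalent to the relative $(-\D^2)$-compactness of $\B$ established in Theorem~\ref{essspecA}. You unpack that equivalence explicitly by showing the resolvent is bounded into the graph-normed domain and composing it with the graph-norm-to-$\Ltwo$ compactness of $\B$ — which is exactly the equivalence the paper references in the proof of Theorem~\ref{essspecA} via \cite[III Definition 2.15 and III Exercise 2.18(1)]{EnNa} — so this is a more detailed rendering of the same argument, not a different one.
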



\begin{proof}
  We only prove the claim for $Q_\l$ as the claim for $\pQ_\l$ follows analogously.
  Linearity is obvious, continuity follows by Lemma \ref{bdef} together with the boundedness of the resolvent operator $R_{-\D^2}(\lambda)$ in the case $\lambda\in\rho(-\D^2)$. The compactness of $Q_\lambda$ is equivalent to the relative $(-\D^2)$-compactness of $\B$, see Theorem~\ref{essspecA} (and \cite[Definition 14.1]{HiSi}).
\end{proof}


Since $\text{im}(Q_\l)\subset\text{im}(\mathcal B)$, it is of interest to characterize
measurable functions $G\colon[0,\infty[\to\R$ such that
$g(r,w,L) = \vert\varphi'(E,L)\vert\,w\,G(r)\in \Ltwo$;
recall the definition of $\B$ in \eqref{E:BDEF}.


\begin{lemma} \label{Wintegrability}
  \begin{enumerate}[label=(\alph*)]
  \item If a spherically symmetric $g\colon\Omega_0\to\R$ is of the form 
    $g(r,w,L) = \vert\varphi'(E,L)\vert\,w\,G(r)$, then
    \begin{align*}
      g\in\Ltwo \Leftrightarrow \int_0^\infty r^2\,\rho_0(r)\,G^2(r) \diff r < \infty.
    \end{align*}
    In particular, by the boundedness of $\rho_0$, $\int_0^\infty r^2\,G^2(r) \diff r < \infty$
    implies $g\in \Ltwo$.
  \item If $g\colon\pOmega_0\to\R$ is of the form 
    $g(x,v) = \vert\pvarphi'(\pE,\pv)\vert\,v_1\,G(x)$, then
    \begin{align*}
      g\in \pH \Leftrightarrow \int_{\R} \prho_0(x)\,G^2(x) \diff x < \infty.
    \end{align*}
    In particular, by the boundedness of $\prho_0$,
    $\int_{\mathbb R} G^2(x) \diff x < \infty$ implies $g\in \pH$.
    Furthermore, $g$ is odd in $x$ if and only if $G$ is odd in $x$.
  \end{enumerate}
\end{lemma}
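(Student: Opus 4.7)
Both parts reduce to a direct computation of the weighted $L^2$-norm using the correct change of variables and the identities \eqref{eq:wintrho} and \eqref{eq:v1intrho} already established in the paper.

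For part (a), I would first recall that for a spherically symmetric function the phase-space measure satisfies $\diff x\diff v = 4\pi^2\,\diff r\diff w\diff L$ (via the standard radial change of variables together with $\diff\bar v = \frac{\pi}{r^2}\diff L$ for the two angular velocity components). Substituting $g(r,w,L)=|\varphi'(E,L)|\,w\,G(r)$ gives
\begin{align*}
\|g\|_{\Ltwo}^2
= 4\pi^2\int_0^\infty G^2(r)\int_0^\infty\!\!\int_\R w^2\,|\varphi'(E,L)|\,\diff w\diff L\,\diff r.
\end{align*}
The identity \eqref{eq:wintrho} shows that the inner double integral equals $\frac{r^2}{\pi}\rho_0(r)$, hence
\begin{align*}
\|g\|_{\Ltwo}^2 = 4\pi\int_0^\infty r^2\,\rho_0(r)\,G^2(r)\,\diff r,
\end{align*}
which proves the stated equivalence. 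The ``in particular'' assertion is immediate from the boundedness of $\rho_0$ on the (compact) steady-state support, which was established in Section~\ref{ssc:ststradial}.

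For part (b), the argument is completely analogous but one dimension lower: starting from $g(x,v)=|\pvarphi'(\pE,\pv)|\,v_1\,G(x)$ and expanding the norm,
\begin{align*}
\|g\|_{\pH}^2 = \int_\R G^2(x)\int_{\R^3} v_1^2\,|\pvarphi'(\pE,\pv)|\,\diff v\,\diff x,
\end{align*}
and the inner integral equals $\prho_0(x)$ by \eqref{eq:v1intrho}. The sufficiency of $\int G^2\,\diff x<\infty$ uses that $\prho_0$ is bounded by $\prho_0(0)$, as recorded in Proposition~\ref{existenceplanarstst}. For the parity claim, I would observe that $\pE(x,v_1)=\tfrac12 v_1^2+\pU_0(x)$ is even in $v_1$ and (by Proposition~\ref{existenceplanarstst} again) $\pU_0$ is even in $x$, so $|\pvarphi'(\pE,\pv)|$ is even in both $x$ and $v_1$. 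Consequently $g(-x,v)=|\pvarphi'(\pE,\pv)|\,v_1\,G(-x)$, which is odd in $x$ precisely when $G$ is.

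There is no serious obstacle: the entire lemma is a book-keeping exercise, with the only non-trivial inputs being the two integration-by-parts identities \eqref{eq:wintrho} and \eqref{eq:v1intrho} which have already been verified. The statement is recorded here because it is the characterization needed in Section~\ref{sc:mathur} to identify the natural function space $\mathcal F$ on which the Mathur operator acts.
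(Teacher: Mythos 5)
Your proof coincides with the paper's: both compute $\|g\|^2$ directly using $\diff x\diff v = 4\pi^2\,\diff r\diff w\diff L$ (resp.\ a one-dimensional spatial integral) and then invoke the identities~\eqref{eq:wintrho} and~\eqref{eq:v1intrho} to identify the inner $v$-integral with $\rho_0$ (resp.\ $\prho_0$), with the correct constant $4\pi$. The parity observation for part~(b), which the paper does not spell out, is handled correctly via the evenness of $\pU_0$ and of $\pE$ in $v_1$.
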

\begin{proof}
  As to part (a), Fubini's theorem shows that $g\in \Ltwo$ is equivalent to 
  \begin{align*}
    \infty > \|g\|_{\Ltwo}^2
    &= 4\pi^2\int_{\Omega_0^r} \vert\varphi'(E,L)\vert\,w^2\,G^2(r) \diff(r,w,L) \\
    &= 4\pi^2 \int_0^\infty G^2(r) \int_0^\infty \int_{\R} w^2\,\vert\varphi'(E,L)\vert
    \diff w \diff L \diff r \\
    &= 4\pi\int_0^\infty r^2\,\rho_0(r)\,G^2(r)\diff r,
  \end{align*}
  where we used \eqref{eq:wintrho} for the last equality.
	
  Similarly, for $g\colon\pOmega_0\to\R$,
  \begin{align*}
    \|g\|_{\pH}^2
    & = \int_{\R}G^2(x)\int_{\R^3}v_1^2\, \vert\pvarphi'(\pE,\pv)\vert\diff v\diff x
    = \int_{\mathbb R}\prho_0(x)\,G^2(x)\diff x
  \end{align*}	
  by \eqref{eq:v1intrho}, and part (b) follows.
\end{proof}


Let 
\begin{align}
S &\coloneqq \left\{ r\geq0\mid\rho_0(r)\neq0 \right\}\subset[0,R_0[, \\
\pS&\coloneqq \left\{ x\in\mathbb R\mid\prho_0(x)\neq0 \right\}=]-\pR_0,\pR_0[,
\end{align}
denote the spatial support of the respective steady states.
Based on the previous lemma, it is natural to introduce the spaces
\begin{align}
		\mathcal F&\coloneqq\left\{ G\colon S\to\R\text{ measurable}\mid \int_S r^2\,\rho_0(r)\,G^2(r)\diff r < \infty \right\},\\
		\mathcal F_1&\coloneqq\left\{ G\colon S\to\R\text{ measurable}\mid \int_S r^2\,G^2(r)\diff r < \infty \right\}\subset \mathcal F, \label{E:FONEDEF}
	\end{align}
	and similarly
	\begin{align}
		\p{\mathcal F}&\coloneqq\left\{ G\colon\pS\to\R\text{ measurable and odd}\mid \int_{\pS} \prho_0(x) G^2(x)\diff x < \infty \right\},\\
		\p{\mathcal F}_1&\coloneqq\left\{ G\colon\pS\to\R\text{ measurable and odd}\mid \int_{\pS} G^2(x)\diff x < \infty \right\}\subset \p{\mathcal F}. \label{E:FONEDEFBAR}
	\end{align}
We treat $\mathcal F, \mathcal F_1, \p{\mathcal F}, \p{\mathcal F}_1$ as (subsets of weighted) $L^2$-spaces, i.e., we identify functions which are equal a.e.~on $S$ or $\pS$ respectively. Their norms $\|\cdot\|_{\mathcal F}$, $\|\cdot\|_{\mathcal F_1}$, $\|\cdot\|_{\p{\mathcal F}}$, $\|\cdot\|_{\p{\mathcal F}_1}$ and scalar products $\langle\cdot,\cdot\rangle_{\mathcal F}$, $\langle\cdot,\cdot\rangle_{\mathcal F_1}$, $\langle\cdot,\cdot\rangle_{\p{\mathcal F}}$, $\langle\cdot,\cdot\rangle_{\p{\mathcal F}_1}$ are defined accordingly.

\begin{defnrem}[The Mathur operators]\label{defmathur}
  \begin{enumerate}[label=(\alph*)]
  \item
    Let $G\in\mathcal F$ and define $g(r,w,L)\coloneqq\vert\varphi'(E,L)\vert\,w\,G(r)$.
    Then by Lemma~\ref{Wintegrability}, $g\in \Ltwo^{odd}$. Since $Q_\lambda g \in\mathrm{im}(\B)$,
    there exists $F\in\mathcal F$ such that
    $(Q_\lambda g)(r,w,L)=\vert\varphi'(E,L)\vert\,w\,F(r)$
    for a.e.~$(r,w,L)\in\Omega_0^r$, and $F$ is uniquely
    determined by $G$. This defines a map
    \begin{align*}
      \mathcal M_\l\colon\mathcal{F}\to \mathcal F,~ G\mapsto F .
    \end{align*}
  \item Let $G\in\p{\mathcal F}$ and define
    $g(x,v)\coloneqq\vert\pvarphi'(\pE,\pv)\vert\, v_1\, G(x)$. Then by Lemma~\ref{Wintegrability},
    $g\in\p\H$. Since $\pQ_\l g\in\mathrm{im}(\pB)$,
    there exists a unique $F\in\p{\mathcal F}$ such that
    $(\pQ_\l g)(x,v)=\vert\pvarphi'(\pE,\pv)\vert\, v_1\, F(x)$ for a.e.~$(x,v)\in\pOmega_0$.
    This defines a map
    \begin{align*}
      \p{\mathcal M}_\l \colon \p{\mathcal{F}}\to \p{\mathcal F},~ G\mapsto F .
    \end{align*}
  \end{enumerate}
\end{defnrem}

These operators were introduced by Mathur~\cite{Ma}, and we refer to $\mathcal M_\l$
$(\p{\mathcal M}_\l)$ as the {\em (planar) Mathur operator}.
The following key integral kernel representation of $\mathcal M_\l$
is essentially also contained in~\cite{Ma}.

\begin{prop}\label{Klambdadef}
  For any $G\in\mathcal F$,
  \begin{align}\label{eq:mathurkernel}
    (\mathcal M_\l G)(r)= \int_S K_\lambda(r,\sigma)\,G(\sigma)\diff\sigma
  \end{align} 
  for a.e.~$r\in S$, where
  \begin{align}\label{E:KLAMBDADEF}
    K_\lambda(r,\sigma)\coloneqq
    \frac{32\pi^2}{r^2} \sum_{k=1}^\infty
    \int_{\Omega_0^{EL}(r)\cap\Omega_0^{EL}(\sigma)}
    \frac{\vert\varphi'(E,L)\vert}{T(E,L)}
    \frac{\sin(2\pi k\theta(r,E,L))\sin(2\pi k\theta(\sigma,E,L))}
    {\frac{4\pi^2}{T(E,L)^2}k^2-\lambda}\diff(E,L)
  \end{align}
  for $r,\sigma>0$,
  \begin{align*}
    \Omega_0^{EL}(r) \coloneqq
    \left\{ (E,L)\in\mathring\Omega_0^{EL} \mid r_-(E,L)<r<r_+(E,L) \right\}, \quad r>0,
  \end{align*}
  and $\theta$ is defined in \eqref{eq:tauofr}.
  Moreover, the map $]0,\infty[^2\ni (r,\sigma)\mapsto K_\l(r,\sigma)\in\mathbb R$ is continuous.
\end{prop}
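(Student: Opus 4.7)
The plan is to compute $\mathcal M_\lambda G$ concretely by exploiting the fact that, thanks to Corollary~\ref{trans1dsquare}, the resolvent $R_{-\D^2}(\lambda)$ acts as an ODE resolvent in the angle variable $\theta$ at fixed $(E,L)$. Concretely, given $G\in\mathcal F$ set $g(r,w,L)=|\varphi'(E,L)|\,w\,G(r)\in \Ltwo^{odd}$ and $\tilde g\coloneqq R_{-\D^2}(\lambda)g$. Then
\begin{align*}
(\M_\lambda G)(r) \;=\; \frac{4\pi^2}{r^2}\,\J(\tilde g)(r),
\end{align*}
so the task is to express $\J(\tilde g)$ as an integral against $G$. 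In $(\theta,E,L)$ coordinates, the characteristic symmetry $R(1-\theta)=R(\theta)$, $W(1-\theta)=-W(\theta)$ implies that $g$ is odd about $\theta=\tfrac{1}{2}$, so its $\theta$-Fourier series contains only the sine modes $\sin(2\pi k\theta)$, $k\in\N$. Changing variables from $\theta$ to $r$ via $d\theta = dr/(TW)$ on the ascending arc $[0,1/2]$ and using the sine symmetry to double the integral, the Fourier coefficients come out as
\begin{align*}
b_k(E,L) \;=\; \frac{4\,|\varphi'(E,L)|}{T(E,L)}\int_{r_-(E,L)}^{r_+(E,L)} G(\sigma)\,\sin\!\big(2\pi k\,\theta(\sigma,E,L)\big)\,d\sigma.
\end{align*}

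Next I would solve the resolvent equation $(-\D^2-\lambda)\tilde g=g$ mode-by-mode. Since $-\D^2 = -T^{-2}\partial_\theta^2$ on $\mathrm D(\D^2)$, the equation decouples into the algebraic relations $\tilde b_k = b_k/(4\pi^2k^2/T^2-\lambda)$; these are well-defined because $\lambda\notin\sigma(-\D^2)$ ensures $\inf_{k,(E,L)}|4\pi^2k^2/T(E,L)^2-\lambda|\geq c>0$ by Theorem~\ref{transsquarespectrum}. Then I would compute $\J(\tilde g)(r)$ by changing variables $(w,L)\mapsto(E,L)$ at fixed $r$. For each $(E,L)\in\Omega_0^{EL}(r)$ there are two sheets $w=\pm\sqrt{2(E-\Psi_L(r))}$ corresponding to $\theta(r,E,L)$ and $1-\theta(r,E,L)$, and the Jacobian $w\,dw\,dL = \pm dE\,dL$ together with the oddness $\tilde g(1-\theta,E,L)=-\tilde g(\theta,E,L)$ makes the two sheets add constructively, yielding
\begin{align*}
\J(\tilde g)(r) \;=\; 2\int_{\Omega_0^{EL}(r)} \tilde g\big(\theta(r,E,L),E,L\big)\,d(E,L).
\end{align*}

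Inserting the sine series $\tilde g(\theta,E,L)=\sum_k \tilde b_k(E,L)\sin(2\pi k\theta)$ together with the expression for $b_k$ and applying Fubini to swap the $\sigma$-integral with the $(E,L)$-integral (noting that the inner integral over $[r_-(E,L),r_+(E,L)]$ becomes one over $\sigma\in S$ with the constraint $(E,L)\in\Omega_0^{EL}(\sigma)$) gives exactly \eqref{eq:mathurkernel} with $K_\lambda$ as in \eqref{E:KLAMBDADEF}. The interchange of $\sum_k$ with the $\sigma$-integration (and with the $(E,L)$-integration for the continuity argument) is justified by the bound
\begin{align*}
\left|\frac{|\varphi'(E,L)|}{T(E,L)}\,\frac{\sin(2\pi k\theta(r,E,L))\,\sin(2\pi k\theta(\sigma,E,L))}{4\pi^2k^2/T(E,L)^2-\lambda}\right|
\;\leq\; \frac{C}{k^2}
\end{align*}
uniformly in $(r,\sigma,E,L)$ for $k$ large, which follows from the boundedness of $|\varphi'|/T$ and $T$ on $\mathring\Omega_0^{EL}$ (Proposition~\ref{Tbounded}) and the uniform spectral gap above. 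Summability over $k$ plus the bounded measure of $\Omega_0^{EL}$ then gives an integrable majorant, and Fubini applies.

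The continuity of $(r,\sigma)\mapsto K_\lambda(r,\sigma)$ on $]0,\infty[^2$ is then read off from the same majorant: each summand is continuous in $(r,\sigma)$ (since $\theta(\,\cdot\,,E,L)$ is continuous on $[r_-(E,L),r_+(E,L)]$ by Lemma~\ref{effpot} and vanishes identically on the complementary regions where $\Omega_0^{EL}(\cdot)$ drops $(E,L)$, making the integrand jointly continuous after extension by zero) and the tail is controlled uniformly by $C\sum_{k\geq k_0}k^{-2}$. I expect the main technical obstacle to be handling the boundary of $\Omega_0^{EL}(r)\cap\Omega_0^{EL}(\sigma)$ with care when $r$ or $\sigma$ approaches a point where this set shrinks to measure zero---specifically, verifying that the integrand vanishes continuously there (which it does because $\sin(2\pi k\theta)$ vanishes at $\theta=0$ as $(E,L)$ approaches a boundary where $r_-(E,L)=r$, and because the $(E,L)$-measure tends to zero when $r_\pm(E,L)$ pinches). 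A clean implementation uses dominated convergence on the characteristic $\mathbf{1}_{\Omega_0^{EL}(r)\cap\Omega_0^{EL}(\sigma)}$ times the bounded integrand.
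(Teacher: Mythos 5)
Your proof takes essentially the same route as the paper: expand $g$ in its $\theta$-Fourier sine series, solve the resolvent equation mode-by-mode using Corollary~\ref{trans1dsquare}, compute $\J(\tilde g)(r)$ by the change of variables $(w,L)\mapsto(E,L)$ (with the factor of $2$ coming from the two sheets $w=\pm\sqrt{2(E-\Psi_L(r))}$ and the oddness of $\tilde g$), then swap sums and integrals. This is precisely the paper's derivation.

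There is, however, one imprecision that would need fixing. You claim the uniform bound
\[
\left|\frac{|\varphi'(E,L)|}{T(E,L)}\,\frac{\sin(2\pi k\theta(r,E,L))\,\sin(2\pi k\theta(\sigma,E,L))}{4\pi^2k^2/T(E,L)^2-\lambda}\right|\;\le\;\frac{C}{k^2}
\]
``uniformly in $(r,\sigma,E,L)$,'' justified by ``boundedness of $|\varphi'|/T$ on $\mathring\Omega_0^{EL}$.'' That boundedness is false in general: for a polytrope $\varphi(E,L)=(E_0-E)_+^k(L-L_0)_+^l$, one has $\varphi'(E,L)=-k(E_0-E)_+^{k-1}(L-L_0)_+^l$, which blows up as $E\to E_0$ when $k<1$, and as $L\to L_0$ when $l<0$; both ranges are permitted by the paper's hypotheses. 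What is actually available, and what the paper uses, is the bound
\[
\left|\sum_{k=1}^n\frac{|\varphi'(E,L)|}{T(E,L)}\,\frac{\sin(\cdots)\sin(\cdots)}{4\pi^2k^2/T^2-\lambda}\right|\;\le\;\frac{|\varphi'(E,L)|}{\inf_{\mathring\Omega_0^{EL}}(T)}\,C\sum_{k=1}^n\frac{1}{k^2}\;\le\;C'\,|\varphi'(E,L)|,
\]
together with the integrability of $|\varphi'|$ over $\Omega_0^{EL}$, which is a consequence of assumption~\eqref{eq:A7prime} after the change of variables~\eqref{eq:actionanglevolume}. So the majorant is $|\varphi'(E,L)|$ times a constant, not a constant; it is still summable in $k$ for each $(E,L)$ and integrable in $(E,L)$, so dominated convergence and Fubini apply, but the justification should invoke integrability of $|\varphi'|$, not its boundedness. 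Relatedly, when interchanging the sum with the $\sigma$-integral over $[r_-(E,L),r_+(E,L)]$ you should note (as the paper does) that $G$ is integrable there because $r^2\rho_0(r)$ stays bounded away from zero on that interval for $L>0$, a point your sketch glosses over.
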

\begin{proof}
	To apply $Q_\lambda$ on $g\in\Ltwo^{odd}$ defined by $g(r,w,L)=\vert\varphi'(E,L)\vert\,w\,G(r)$, we first have to apply the resolvent operator $R_{-\D^2}(\lambda)$. For this purpose we expand $g$ in its $\theta$-Fourier series (recall that $g$ is odd w.r.t.\ $v$):
	\begin{align} \label{eq:gfourier}
		g(\theta,E,L) = \sum_{k=1}^\infty b_k(E,L) \sin(2\pi k\theta),
	\end{align} 
	where
	\begin{align}
		b_k(E,L) \coloneqq 2\int_0^1 g(\theta,E,L) \sin(2\pi k\theta)\diff\theta,\qquad(E,L)\in\mathring\Omega_0^{EL}.
	\end{align}
Using \eqref{eq:gfourier}---which holds as a limit in $\Ltwo$---now allows us to apply the resolvent operator:
	\begin{align} \label{eq:gresolvent}
		\left(R_{-\D^2}(\lambda) g\right) (\theta,E,L) = \sum_{k=1}^\infty \frac1{\frac{4\pi^2}{T(E,L)^2}k^2-\lambda} b_k(E,L) \sin(2\pi k\theta)
	\end{align}
	as a limit in $\Ltwo$; it can be easily verified that the resolvent operator is of the form \eqref{eq:gresolvent} by applying Corollary~\ref{trans1dsquare} to the Fourier series.

	To apply $\B$, we change variables via \eqref{eq:tauofr} and get
	\begin{align}\label{eq:bkrwl}
		b_k(E,L) &= 4\int_0^{\frac12} g(\theta,E,L) \sin(2\pi k\theta) \diff\theta \nonumber\\
		&= 4 \frac1{T(E,L)} \int_{r_-(E,L)}^{r_+(E,L)} \frac{g(\theta(r,E,L),E,L)}{\sqrt{2E-2\Psi_L(r)}} \sin(2\pi k\theta(r,E,L)) \diff r\nonumber  \\
		&= 4 \frac{\vert\varphi'(E,L)\vert}{T(E,L)} \int_{r_-(E,L)}^{r_+(E,L)} G(r) \sin(2\pi k\theta(r,E,L)) \diff r.	
	\end{align}
	As for  $\B$, recall that for every $f\in \Ltwo^{odd}$,
	\begin{align} \label{eq:Brecall}
		\left(\B f\right)(r,w,L) &= 4\pi^2\vert\varphi'(E,L)\vert\frac w{r^2}\int_0^\infty\int_\R \tilde w f(r,\tilde w,\tilde L)\diff \tilde w\diff \tilde L  \nonumber\\
		&= 8\pi^2 \vert\varphi'(E,L)\vert\frac w{r^2}\int_0^\infty\int_0^\infty \tilde w f(r,\tilde w,\tilde L)\diff \tilde w\diff \tilde L\nonumber\\
		&= 8\pi^2 \vert\varphi'(E,L)\vert\frac w{r^2}\int_{\Omega_0^{EL}(r)} f(r,\sqrt{2\tilde E-2\Psi_{\tilde L}(r)},\tilde L)\diff(\tilde E,\tilde L).
	\end{align}
	Inserting $f=R_{-\D^2}(\lambda)g$ yields the following for the integral in \eqref{eq:Brecall} for $r>0$:
	\begin{align}\label{eq:Klambdafinal}
		&\int_{\Omega_0^{EL}(r)} \left(R_{-\D^2}(\lambda)g\right)(\theta(r,E,L),E,L)\diff(E,L)  \nonumber\\
		&= 4 \int_{\Omega_0^{EL}(r)} \sum_{k=1}^\infty \frac{\vert\varphi'(E,L)\vert}{T(E,L)} \frac{\sin(2\pi k\theta(r,E,L))}{\frac{4\pi^2}{T(E,L)^2}k^2-\lambda}\int_{r_-(E,L)}^{r_+(E,L)}G(\sigma) \sin(2\pi k\theta(\sigma,E,L)) \diff\sigma\diff(E,L)  \nonumber\\
		&= 4\int_0^\infty G(\sigma) \sum_{k=1}^\infty \int_{\Omega_0^{EL}(r)\cap\Omega_0^{EL}(\sigma)}\frac{\vert\varphi'(E,L)\vert}{T(E,L)}\frac{\sin(2\pi k\theta(r,E,L))\sin(2\pi k\theta(\sigma,E,L))}{\frac{4\pi^2}{T(E,L)^2}k^2-\lambda}\diff(E,L)\diff\sigma,
	\end{align}
	where we used \eqref{eq:gresolvent}, \eqref{eq:bkrwl}, and Fubini's theorem. Note that we switched the infinite sum with the $\sigma$-integral and then with the $(E,L)$-integral in the last equality. Switching with the $\sigma$-integral is verified by fixing $(E,L)\in\Omega_0^{EL}(r)$ and observing that the weight $r^2\rho_0(r)$ is bounded away from $0$ on $[r_-(E,L),r_+(E,L)]$ if $L>0$, i.e., $G$ is integrable over $[r_-(E,L),r_+(E,L)]$. Furthermore, recall that the distance of $\lambda$ to the essential spectrum of $-\D^2$ is positive. The second switch will be justified below.
	Together with \eqref{eq:Brecall} we get the desired representation \eqref{eq:mathurkernel} of $\mathcal M_\lambda G$.
	
	To prove the continuity of $(r,\sigma)\mapsto K_\l(r,\sigma)$ we first extend the mapping $\theta$ from \eqref{eq:tauofr} by setting
	\begin{align}
		\theta(r,E,L)\coloneqq0 \text{ for } r>0,~(E,L)\in\mathring\Omega_0^{EL}\setminus\Omega_0^{EL}(r).
	\end{align}
	Then 
	\begin{align}
		(r,\sigma)\mapsto\frac{\vert\varphi'(E,L)\vert}{T(E,L)} \frac{\sin(2\pi k\theta(r,E,L))\sin(2\pi k\theta(\sigma,E,L))}{\frac{4\pi^2}{T(E,L)^2}k^2-\lambda}
	\end{align}
	is continuous on $]0,\infty[^2$ for every $k\in\N$ and $(E,L)\in\mathring\Omega_0^{EL}$. Moreover, there exists a constant $C>0$ depending only on $\lambda$---more precisely, on the distance of $\lambda$ to the essential spectrum of $-\D^2$ (given by Theorem~\ref{transsquarespectrum}), which is positive---such that
	\begin{align}\label{eq:Klambdabound}
		&\left| \sum_{k=1}^n\frac{\vert\varphi'(E,L)\vert}{T(E,L)} \frac{\sin(2\pi k\theta(r,E,L))\sin(2\pi k\theta(\sigma,E,L))}{\frac{4\pi^2}{T(E,L)^2}k^2-\lambda} \right|\nonumber\\
		&\qquad\qquad\qquad\qquad\leq \frac{\vert\varphi'(E,L)\vert}{\inf_{\mathring\Omega_0^{EL}}(T)} C\sum_{k=1}^n\frac1{k^2}\leq\frac{\vert\varphi'(E,L)\vert}{\inf_{\mathring\Omega_0^{EL}}(T)}C\frac{\pi^2}6
	\end{align} 
	for $r,\sigma>0$, $(E,L)\in\mathring\Omega_0^{EL}$ and $n\in\N$; note $\inf_{\mathring\Omega_0^{EL}}(T)>0$ by Proposition~\ref{Tbounded}. This shows that for fixed $(E,L)\in\mathring\Omega_0^{EL}$ the limit
	\begin{align}\label{eq:Klambdaintegrand}
		\sum_{k=1}^\infty\frac{\vert\varphi'(E,L)\vert}{T(E,L)} \frac{\sin(2\pi k\theta(r,E,L))\sin(2\pi k\theta(\sigma,E,L))}{\frac{4\pi^2}{T(E,L)^2}k^2-\lambda}
	\end{align} 
	exists uniformly in $r,\sigma>0$ and \eqref{eq:Klambdaintegrand} defines a continuous function in $(r,\sigma)$. Moreover, since $\vert\varphi'\vert$ is integrable over $\Omega_0^{EL}$ by \eqref{eq:A7prime} (after changing variables via \eqref{eq:actionanglevolume}), we conclude that
	\begin{align}\label{eq:Klambdaclose}
		\int_{\Omega_0^{EL}}\sum_{k=1}^\infty\frac{\vert\varphi'(E,L)\vert}{T(E,L)} \frac{\sin(2\pi k\theta(r,E,L))\sin(2\pi k\theta(\sigma,E,L))}{\frac{4\pi^2}{T(E,L)^2}k^2-\lambda} \diff(E,L)
	\end{align}
	is also continuous in $r,\sigma>0$. The dominated convergence theorem and \eqref{eq:Klambdabound} yield that we can switch the order of the integral and the infinite sum in \eqref{eq:Klambdaclose}, i.e., it follows that the kernel $K_\lambda$ is indeed continuous on $]0,\infty[^2$.
\end{proof}


In the planar case, we can prove an analogous statement by exactly the same method.


\begin{prop}\label{Klambdadefplane}
  For any $G\in\p{\mathcal F}$,
  \begin{align}\label{eq:mathurkernelplanar}
    \p{\mathcal M}_\l(G)(x)= \int_{\pS} \pK_\lambda(x,y)\,G(y)\,\diff y
  \end{align} 
  for a.e.~$x\in \pS$, where
  \begin{align}\label{E:KLAMBDABARDEF}
    \pK_\lambda(x,y)\coloneqq
    32\pi \sum_{k=1}^\infty \int_{\pOmega_0^{\pE}(x)\cap\pOmega_0^{\pE}(y)}
    \frac{\vert\palpha'(\pE)\vert}{\pT(\pE)}
    \frac{\sin(4\pi k\theta(x,\pE))\sin(4\pi k\theta(y,\pE))}
    {\frac{4\pi^2}{\pT(\pE)^2}(2k)^2-\lambda}\diff \pE
  \end{align}
  for $x,y\in\mathbb R$,
  \begin{align*}
    \pOmega_0^{\pE}(x) \coloneqq \left\{ \pE\in]\pUmin,\pE_0[\, \mid x_-(\pE)<x<x_+(\pE) \right\},
    \quad x\in\mathbb R,
  \end{align*}
  and $\theta$ is defined in \eqref{E:THETAPLANARDEF}.
  Moreover, $\R^2\ni (x,y)\mapsto \pK_\l(x,y)\in\R$ is a bounded continuous function.
  Here $\alpha$ is the microscopic equation of state related to $f_0$ via~\eqref{eq:iso1D}.
\end{prop}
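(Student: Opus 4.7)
The proof follows the same overall strategy as Proposition~\ref{Klambdadef}, using the planar action-angle variables of Section~\ref{ssc:essplane}; the chief adaptations come from the extra oddness-in-$x$ built into $\p\H$ and from the factorization $\bar\varphi(\bar E,\bar v)=\alpha(\bar E)\beta(\bar v)$.

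\emph{Setup and Fourier expansion.} Fix $G\in\bar{\mathcal F}$ and set $g(x,v):=|\bar\varphi'(\bar E,\bar v)|\,v_1\,G(x)$. By Lemma~\ref{Wintegrability}(b) and the oddness of $G$, one has $g\in\p\H$. Passing to $(\theta,\bar E,\bar v)$-coordinates and expanding $g$ in its $\theta$-Fourier series, the oddness in $v_1$ eliminates all cosine modes, and (as in Remark~\ref{approxplane}) the oddness in $x$ additionally kills every sine mode of odd index; therefore
\[
g(\theta,\bar E,\bar v)=\sum_{k=1}^\infty c_k(\bar E,\bar v)\sin(4\pi k\theta),\qquad c_k(\bar E,\bar v):=2\int_0^1 g(\theta,\bar E,\bar v)\sin(4\pi k\theta)\,d\theta.
\]
This is the source of the $\sin(4\pi k\theta)$-factors and of the $(2k)^2$ in the denominator of~\eqref{E:KLAMBDABARDEF}.

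\emph{Applying $R_{-\bar{\mathcal D}^2}(\lambda)$ and $\bar{\mathcal B}$.} By Corollary~\ref{trans1dsquareplane}, $-\bar{\mathcal D}^2$ acts as multiplication by $4\pi^2(2k)^2/\bar T(\bar E)^2$ on the $k$-th harmonic, hence the resolvent is diagonal in this Fourier basis. A change of variables via $dx=v_1\bar T(\bar E)\,d\theta$ over the half-period $\theta\in[0,\tfrac12]$ gives $c_k(\bar E,\bar v)=4\,\bar T(\bar E)^{-1}|\bar\varphi'(\bar E,\bar v)|\int_{x_-(\bar E)}^{x_+(\bar E)}G(y)\sin(4\pi k\theta(y,\bar E))\,dy$, in exact parallel to~\eqref{eq:bkrwl}. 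To apply $\bar{\mathcal B}$, I would compute $\bar{\mathcal J}(R_{-\bar{\mathcal D}^2}(\lambda)g)(x)$ by switching from $v_1$ to $\bar E$ on each half-line, which contributes a factor $2$ from the two halves and a Jacobian $1/v_1$ cancelling the $v_1$ in $\bar{\mathcal J}$. The crucial planar simplification is $|\bar\varphi'(\bar E,\bar v)|=|\alpha'(\bar E)|\beta(\bar v)$, so that the entire $\bar v$-dependence of the integrand reduces to $\beta(\bar v)$; the normalization~\eqref{E:BETABARASSUMPTION} then integrates it out. Collecting the prefactors ($4\pi$ from $\bar{\mathcal B}$, $2$ from the $v_1$-halving, and $4$ from $c_k$) produces the overall constant $32\pi$, and Fubini yields the integral-kernel representation~\eqref{eq:mathurkernelplanar}.

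\emph{Continuity, boundedness, and Fubini justification.} Extend $\theta(\,\cdot\,,\bar E)$ by $0$ outside $[x_-(\bar E),x_+(\bar E)]$ so that each summand in~\eqref{E:KLAMBDABARDEF} is continuous in $(x,y)\in\mathbb R^2$. By Proposition~\ref{Tboundsplane}, $\bar T$ is bounded above and bounded away from zero, so that $\mathrm{dist}(\lambda,\sigma(-\bar{\mathcal D}^2))>0$ from Theorem~\ref{transsquarespectrumplane} yields
\[
\left|\frac{|\alpha'(\bar E)|}{\bar T(\bar E)}\,\frac{\sin(4\pi k\theta(x,\bar E))\sin(4\pi k\theta(y,\bar E))}{\frac{4\pi^2(2k)^2}{\bar T(\bar E)^2}-\lambda}\right|\leq C\,|\alpha'(\bar E)|\,k^{-2}
\]
with $C=C(\lambda)$ independent of $(x,y,k)$. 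Since $\alpha'$ is integrable on $[\bar U_0(0),\bar E_0]$ and $\sum k^{-2}<\infty$, dominated convergence justifies the interchange of summation and integration (both in $\bar v$ and in $\bar E$) and yields the claimed continuity and uniform boundedness of $\bar K_\lambda$. The only non-routine step is the parity analysis forcing the even-harmonic restriction and the concomitant appearance of $(2k)^2$; the rest is bookkeeping analogous to the spherically symmetric argument.
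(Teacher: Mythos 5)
Your proposal is correct and follows the same route as the paper: the published proof simply refers back to Proposition~\ref{Klambdadef} and records the modified Fourier expansion and resolvent formula, while you carry out the bookkeeping in full. The parity analysis (oddness in $v_1$ killing cosines, oddness in $x$ killing odd-index sines, hence only $\sin(4\pi k\theta)$-modes with eigenvalue $4\pi^2(2k)^2/\pT(\pE)^2$), the use of the factorization $\pvarphi'=\palpha'\,\pbeta$ together with the normalization~\eqref{E:BETABARASSUMPTION} to integrate out $\pv$, and the prefactor accounting $4\pi\cdot 2\cdot 4=32\pi$ all match the paper's intent and are accurate; the continuity/boundedness argument via the $k^{-2}$ bound, integrability of $\palpha'$, and dominated convergence is likewise sound.
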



\begin{proof}
  Except for obvious changes the proof is almost identical to the one of
  Proposition~\ref{Klambdadef}. The resolvent operator reads
  \begin{align} \label{eq:gresolventbar}
    \left(R_{-\pD^2}(\lambda) g\right)(\theta,\pE, \pv)
    = \sum_{k=1}^\infty \frac1{\frac{4\pi^2}{\pT(\pE)^2}
      (2k)^2-\lambda} b_k(\pE,\pv) \sin(4\pi k\theta),
  \end{align}
  where 
  \begin{align} \label{eq:gfourierplane}
    b_k(\pE,\pv) \coloneqq
    2\int_0^1 g(\theta,\pE,\pv) \sin(4\pi k\theta)\diff\theta\;\;
    \text{ and }\;\;   g(\theta,\pE,\pv) = \sum_{k=1}^\infty b_k(\pE,\pv) \sin(4\pi k\theta)
  \end{align}
	is the $\theta$-Fourier series expansion of $g\in\p\H$ defined by $g(x,v)=\vert\pvarphi'(\pE,\pv)\vert\,v_1\,G(x)$. Recall that $g$ is odd in $x$ and $v_1$, so the Fourier series does not contain any $\cos$-terms nor $\sin(2\pi k\cdot)$-terms with odd $k$, see also Remark~\ref{approxplane}. We then follow the proof above and note that the $\pv$-integral can be computed explicitly since $\pvarphi'(\pE,\pv)=\palpha'(\pE)\,\pbeta(\pv)$.
\end{proof}

For the spectral analysis it is beneficial to treat $\mathcal M_\lambda$ ($\p{\mathcal M}_\l$)
as an operator on $\mathcal F_1$ ($\p{\mathcal F}_1$) instead of $\mathcal F$ ($\p{\mathcal F}$),
since the Mathur operators are symmetric on the former spaces.

\begin{lemma}\label{mathurF1}
  Define $\mathcal M_\lambda\colon\mathcal F_1\to\mathcal F_1$
  and $\p{\mathcal M}_\lambda\colon\p{\mathcal F}_1\to\p{\mathcal F}_1$
  by \eqref{eq:mathurkernel} and \eqref{eq:mathurkernelplanar} respectively.
  These operators are well-defined, linear, bounded, and self-adjoint.
  Furthermore, they are Hilbert-Schmidt operators and therefore compact, see e.g.~\cite[Theorems~VI.22 and~VI.23]{ReSi1}.
\end{lemma}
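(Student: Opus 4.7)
The approach is to work directly with the integral kernel representations from Propositions~\ref{Klambdadef} and~\ref{Klambdadefplane}; linearity is then immediate. The logical order will be to first prove the Hilbert--Schmidt property (which subsumes well-definedness and boundedness) and subsequently derive self-adjointness from a weighted symmetry of the kernels.

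In the planar case, Proposition~\ref{Klambdadefplane} provides that $\pK_\lambda$ is continuous and bounded on $\R^2$, while $\pS$ is bounded; hence $\pK_\lambda\in L^2(\pS\times\pS)$, so $\p{\mathcal M}_\lambda$ is Hilbert--Schmidt on $L^2(\pS)$. A short check of parity (using $\theta(-x,\pE)=\tfrac12-\theta(x,\pE)$ from the evenness of $\pU_0$) shows $\pK_\lambda(-x,-y)=\pK_\lambda(x,y)$, so the operator restricts to $\p{\mathcal F}_1$. In the radial case, the unitary identification $\mathcal F_1\cong L^2(S)$ via $G\mapsto rG$ conjugates $\mathcal M_\lambda$ to the integral operator with kernel $\tfrac{r}{\sigma}K_\lambda(r,\sigma)$. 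Writing $K_\lambda(r,\sigma)=\tfrac{32\pi^2}{r^2}I(r,\sigma)$ with $I$ the sum-integral from~\eqref{E:KLAMBDADEF}, the Hilbert--Schmidt condition becomes
\be
\int_S\!\int_S \frac{\vert I(r,\sigma)\vert^2}{r^2\sigma^2}\,\diff r\,\diff\sigma<\infty,
\ee
and the uniform estimate~\eqref{eq:Klambdabound} yields $\vert I(r,\sigma)\vert\leq C\int_{\Omega_0^{EL}(r)\cap\Omega_0^{EL}(\sigma)}\vert\varphi'\vert\,\diff(E,L)$.

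The main obstacle, and the only delicate step, is controlling this integral as $r\to 0$, relevant only for isotropic models where $S$ reaches the origin (for polytropes with $L_0>0$, $S$ is bounded away from $0$, the continuous kernel $K_\lambda$ is bounded on $\overline{S}\times\overline{S}$, and the estimate is trivial). Here a geometric argument saves the day: $(E,L)\in\Omega_0^{EL}(r)$ requires $L/(2r^2)+U_0(r)\leq E\leq E_0$, hence $L\leq 2r^2(E_0-U_0(0))$. Together with~\eqref{eq:A7prime}, this forces $\int_{\Omega_0^{EL}(r)}\vert\varphi'\vert\,\diff(E,L)=O(r^2)$ as $r\to 0$, and symmetrically in $\sigma$ one concludes $\vert I(r,\sigma)\vert\lesssim\min(r^2,\sigma^2)$. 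This more than cancels the $1/(r^2\sigma^2)$ weight and leaves a bounded integrand on the bounded set $S\times S$, giving the desired finiteness.

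With boundedness established, self-adjointness follows by Fubini. The planar kernel $\pK_\lambda$ from~\eqref{E:KLAMBDABARDEF} is manifestly symmetric in $(x,y)$. For the radial kernel, the only asymmetric factor in~\eqref{E:KLAMBDADEF} is the prefactor $r^{-2}$, so one has the weighted identity $r^2 K_\lambda(r,\sigma)=\sigma^2 K_\lambda(\sigma,r)$; since the $\mathcal F_1$-inner product carries precisely the weight $r^2$, this translates into $\langle\mathcal M_\lambda G,H\rangle_{\mathcal F_1}=\langle G,\mathcal M_\lambda H\rangle_{\mathcal F_1}$, and similarly for $\p{\mathcal M}_\lambda$. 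Since a bounded symmetric operator defined on all of a Hilbert space is self-adjoint, this completes the proof.
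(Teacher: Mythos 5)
Your proof is correct, and for the one non-trivial step it takes a genuinely different route than the paper. Both proofs begin the same way: planar case and radial polytropes with $L_0>0$ are handled trivially because the kernel is bounded on a compact set, and the work reduces to the isotropic radial case, where the Hilbert--Schmidt condition $\int_S\int_S r^2\sigma^{-2}K_\lambda^2(r,\sigma)\,\diff\sigma\,\diff r<\infty$ must be checked near $r=0$. There the paper runs a duality argument: it tests $\tfrac{r}{\sigma}K_\lambda(r,\sigma)$ against an arbitrary $f\in L^2(S\times S)$, applies Cauchy--Schwarz in the inner $(r,\sigma)$-variables, and is left with $\int_{\Omega_0^{EL}}|\varphi'|/r_-(E,L)\,\diff(E,L)$, which it finitizes via $1/r_-(E,L)\leq\sqrt{2(E_0-U_0(0))/L}$ and the integrability of $L^{-1/2}$. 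You instead prove a pointwise bound on the kernel directly: writing $K_\lambda=\tfrac{32\pi^2}{r^2}I(r,\sigma)$ and using the uniform estimate~\eqref{eq:Klambdabound} to dominate the $k$-sum, you get $|I(r,\sigma)|\lesssim\int_{\Omega_0^{EL}(r)\cap\Omega_0^{EL}(\sigma)}|\varphi'|$, and then the geometric observation that $\Psi_L(r)\leq E\leq E_0$ forces $L\leq 2r^2(E_0-U_0(0))$, combined with~\eqref{eq:A7prime} (indeed, since $2E-2\Psi_L(r)\leq 2(E_0-U_0(0))$ on $\Omega_0^{EL}(r)$,~\eqref{eq:A7prime} does give $\int_{\Omega_0^{EL}(r)}|\varphi'|\,\diff(E,L)\leq Cr^2$), yields $|I(r,\sigma)|\lesssim\min(r^2,\sigma^2)$ and hence a bounded Hilbert--Schmidt integrand. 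This is arguably more elementary --- it avoids the Cauchy--Schwarz/duality step entirely and produces a stronger pointwise conclusion (boundedness of the integrand, not merely integrability) --- while the paper's route has the virtue of needing only the one-variable bound $1/r_-\lesssim L^{-1/2}$. The symmetry argument is identical in substance; you correctly write the inner product as $\langle\cdot,\cdot\rangle_{\mathcal F_1}$ (carrying weight $r^2$), whereas the paper's subscript $\mathcal F$ there is a typo, since the displayed expression lacks the $\rho_0$ weight that $\mathcal F$ would carry.
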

\begin{proof}
	For all the claimed properties except of the self-adjointness, it suffices to show that the integrals
	\begin{align}
		\int_S\int_S \frac {r^2}{\sigma^2} K_\lambda^2(r,\sigma)\diff\sigma\diff r, \ \ 
		\int_{\pS}\int_{\pS}\pK_\lambda^2(x,y)\diff y\diff x \label{eq:mathurkernelL2}
	\end{align}
	are finite, since for $G\in\mathcal F_1$ we have by Cauchy-Schwarz
	\begin{align*}
		\|\mathcal M_\lambda G\|_{\mathcal F_1}^2 &= \int_S r^2\,\left(\int_S K_\lambda(r,\sigma)\,G(\sigma)\diff\sigma\right)^2\diff r\leq \|G\|_{\mathcal F_1}^2 \int_S\int_S \frac {r^2}{\sigma^2} K_\lambda^2(r,\sigma)\diff\sigma\diff r;
	\end{align*}
	a similar estimate holds in the planar case.
	
	If the spherically symmetric steady state is of polytropic form \eqref{E:POLYTROPE} with $L_0>0$, then the spatial support is bounded away from $0$ due to the presence of an inner vacuum region. In this case \eqref{eq:mathurkernelL2} is obviously finite since the integrand is bounded on $S^2$. In the planar setting, the finiteness of the second integral in~\eqref{eq:mathurkernelL2} follows similarly by the boundedness of the integrand as there are no singular terms in~\eqref{E:KLAMBDABARDEF}. The finiteness of the first integral in~\eqref{eq:mathurkernelL2} for the remaining radial steady states is more challenging but will be shown below. Before that, observe that the symmetry of $\mathcal M_\lambda\colon\mathcal F_1\to\mathcal F_1$ follows from
	\begin{align*}
		\langle\mathcal M_\lambda G,F\rangle_{\mathcal F} = \int_S\int_Sr^2K_\lambda(r,\sigma)\,G(\sigma)\,F(r)\diff\sigma\diff r
	\end{align*}
	and the symmetry of $r^2K_\lambda(r,\sigma)$ in $(r,\sigma)$. The planar case is obvious.
	
	Now consider a radial steady state of King type \eqref{E:KING} or a polytrope \eqref{E:POLYTROPE} with $L_0=0$. In particular, $\varphi(E,L)=\varphi(E)$ in both cases, i.e., the steady state is isotropic. For every $f\in L^2(S\times S)$,
\begin{align*}
  &\left|\int_S\int_S\frac r\sigma K_\lambda(r,\sigma)f(r,\sigma)\diff\sigma\diff r\right|\\
		&\leq32\pi^2\sum_{k=1}^\infty\int_{\Omega_0^{EL}}\frac{\vert\varphi'(E)\vert}{T(E,L)}\frac1{\frac{4\pi^2}{T(E,L)^2}k^2-\lambda}\\&\qquad\qquad\int_{r_-(E,L)}^{r_+(E,L)}\int_{r_-(E,L)}^{r_+(E,L)}\left|\frac{\sin(2\pi k\theta(r,E,L))}r\frac{\sin(2\pi k\theta(\sigma,E,L))}\sigma f(r,\sigma)\right|\diff\sigma\diff r\diff(E,L)\\
		&\leq32\pi^2\|f\|_{L^2(S\times S)}\sum_{k=1}^\infty\int_{\Omega_0^{EL}}\frac{\vert\varphi'(E)\vert}{T(E,L)}\frac1{\frac{4\pi^2}{T(E,L)^2}k^2-\lambda}\int_{r_-(E,L)}^{r_+(E,L)}\frac{\sin^2(2\pi k\theta(r,E,L))}{r^2}\diff r\diff(E,L)
	\end{align*}
	by inserting \eqref{E:KLAMBDADEF}; the following calculations will show that we can switch the infinite sum with the integrals. Thus,
	\begin{align*}
		\int_S&\int_S\frac {r^2}{\sigma^2} K_\lambda^2(r,\sigma)\diff\sigma\diff r\\
		&\leq32\pi^2\sum_{k=1}^\infty\int_{\Omega_0^{EL}}\frac{\vert\varphi'(E)\vert}{T(E,L)}\frac1{\frac{4\pi^2}{T(E,L)^2}k^2-\lambda}\int_{r_-(E,L)}^{r_+(E,L)}\frac{\sin^2(2\pi k\theta(r,E,L))}{r^2}\diff r\diff(E,L)\\
		&\leq \frac{32\pi^2}{\inf_{\mathring\Omega_0^{EL}}(T)}\sum_{k=1}^\infty\int_{\Omega_0^{EL}}\frac{\vert\varphi'(E)\vert}{\frac{4\pi^2}{T(E,L)^2}k^2-\lambda}\int_{r_-(E,L)}^{r_+(E,L)}\frac{\diff r}{r^2} \diff(E,L)\leq C\int_{\Omega_0^{EL}} \frac{\vert\varphi'(E)\vert}{r_-(E,L)}\diff(E,L) 
	\end{align*}
	for some constant $C>0$ depending on the steady state and $\lambda$; note that $T$ is bounded away from $0$ by Proposition~\ref{Tbounded} and that the distance between $\lambda$ and the spectrum of $-\D^2$ is positive. To show that the latter integral is finite, recall that $r_-(E,L)$ is defined in Lemma~\ref{effpot} as a solution of 
	\begin{align*}
		E=\Psi_L(r_-(E,L))=U_0(r_-(E,L))+\frac L{2r_-^2(E,L)},
	\end{align*}
	which implies
	\begin{align*}
		\frac1{r_-(E,L)}=\frac{\sqrt{2E-2U_0(r_-(E,L))}}{\sqrt L}\leq\sqrt2\frac{\sqrt{E_0-U_0(0)}}{\sqrt L}.
	\end{align*}
	Thus,
	\begin{align*}
		\int_{\Omega_0^{EL}} \frac{\vert\varphi'(E)\vert}{r_-(E,L)}\diff(E,L)\leq \sqrt{2E_0-2U_0(0)}\int_0^{L_{max}}\frac{\diff L}{\sqrt L}\;\int_{U_0(0)}^{E_0} \vert\varphi'(E)\vert\diff E <\infty,
	\end{align*}
	where $L_{max}\in]0,\infty[$ is the maximal $L$-value in the steady state support and the finiteness of the latter integral follows since $k>0$ in the polytropic case. We therefore conclude that the first integral in~\eqref{eq:mathurkernelL2} is indeed finite. 
\end{proof}

In the next lemma we show that $\mathcal M_\lambda$ ($\p\M_\lambda$) map $\mathcal F$ ($\p{\mathcal F}$) into the smaller spaces $\mathcal F_1$ ($\p{\mathcal F}_1$).
\begin{lemma}\label{mathurincint}
	$\mathcal M_\lambda(\mathcal F)\subset\mathcal F_1$ and $\p{\mathcal M}_\lambda(\p{\mathcal F})\subset\p{\mathcal F}_1$.
\end{lemma}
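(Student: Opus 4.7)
The plan is to use the alternative representation~\eqref{eq:bpot} of $\mathcal{B}$ (respectively~\eqref{eq:bpot1D} of $\bar{\mathcal{B}}$) in terms of the gradient of an induced gravitational potential, and then to appeal to the $H^2$-bounds on such potentials from Appendix~\ref{sc:potential}. These bounds are sharp enough to upgrade $L^2_{\rho_0}$-integrability to $L^2$-integrability against the weights appearing in $\mathcal{F}_1$ and $\bar{\mathcal{F}}_1$.

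In detail, fix $G\in\mathcal{F}$ and let $g(r,w,L)\coloneqq |\varphi'(E,L)|\,w\,G(r)$. By Lemma~\ref{Wintegrability} we have $g\in H^{odd}$, and since $\lambda\in\rho(-\mathcal{D}^2)$, the function $h\coloneqq R_{-\mathcal{D}^2}(\lambda) g$ lies in $\mathrm{D}(\mathcal{D}^2)\subset\mathrm{D}(\mathcal{D})$. Applying~\eqref{eq:bpot} yields
\[
(Q_\lambda g)(r,w,L) = (\mathcal{B} h)(r,w,L) = |\varphi'(E,L)|\,w\,U_{\mathcal{D}h}'(r),
\]
so by the uniqueness in Definition~\ref{defmathur} we obtain
\[
(\mathcal{M}_\lambda G)(r) = U_{\mathcal{D}h}'(r) \quad \text{for a.e.\ } r\in S.
\]
Since $\mathcal{D}h\in H$, the results of Section~\ref{ssc:potradial} (in particular~\eqref{eq:poth2}) yield $U_{\mathcal{D}h}\in C\cap H^2(\mathbb{R}^3)$, and thanks to the spherical symmetry
\[
\int_S r^2 \bigl(\mathcal{M}_\lambda G\bigr)^2(r)\,\diff r \le \int_0^\infty r^2 \bigl(U_{\mathcal{D}h}'(r)\bigr)^2\,\diff r = \frac{1}{4\pi}\,\|\nabla U_{\mathcal{D}h}\|_2^2 < \infty,
\]
which shows that $\mathcal{M}_\lambda G\in\mathcal{F}_1$.

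The planar case is fully analogous. For $G\in\bar{\mathcal{F}}$, set $g(x,v)\coloneqq|\bar\varphi'(\bar E,\bar v)|\,v_1\,G(x)$, which lies in $\bar H^{odd}$ by Lemma~\ref{Wintegrability}, and is odd in $x$ because $G$ is odd. Let $h\coloneqq R_{-\bar{\mathcal{D}}^2}(\lambda)g$; by the representation~\eqref{eq:bpot1D},
\[
(\bar Q_\lambda g)(x,v) = |\bar\varphi'(\bar E,\bar v)|\,v_1\,\bar U_{\bar{\mathcal{D}}h}'(x),
\]
so that $\bar{\mathcal{M}}_\lambda G = \bar U_{\bar{\mathcal{D}}h}'$. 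The planar analogue of the potential estimate (Section~\ref{ssc:potplane}, cf.~\eqref{eq:poth2plane}) guarantees $\bar U_{\bar{\mathcal{D}}h}'\in L^2(\mathbb{R})$ with compact support in $[-\bar R_0,\bar R_0]$, which gives $\bar{\mathcal{M}}_\lambda G\in\bar{\mathcal{F}}_1$; oddness in $x$ follows from the oddness-in-$x$ of $h$ inherited from $g$.

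The only potential obstacle is ensuring that $h=R_{-\mathcal{D}^2}(\lambda)g$ belongs to $\mathrm{D}(\mathcal{D})$ so that~\eqref{eq:bpot} applies, but this is automatic since the resolvent maps into $\mathrm{D}(\mathcal{D}^2)\subset\mathrm{D}(\mathcal{D})$; the rest is merely bookkeeping between the radial $L^2$-norm with weight $r^2$ and the three-dimensional $L^2$-norm of the gradient of a spherically symmetric potential, and, in the planar case, between $L^2(\bar S)$ and $L^2(\mathbb{R})$.
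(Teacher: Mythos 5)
Your proof is correct, and it is close in spirit to the paper's argument but routed differently through the intermediate objects. The paper works directly with the integral formula~\eqref{E:BDEF} for $\B h$ and bounds $\|F\|_{\F_1}^2$ by Cauchy--Schwarz in the $(w,L)$-integral together with~\eqref{eq:wintrho} and the boundedness of $\rho_0$, arriving at $\|F\|_{\F_1}\le C\|h\|_{\Ltwo}$. You instead invoke the alternate representation $\B h=\vert\varphi'\vert\,w\,U_{\D h}'$ from~\eqref{eq:bpot}, identify $\M_\lambda G=U_{\D h}'$ on $S$, and conclude via the $H^2$-estimate~\eqref{eq:poth2} on the induced potential. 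These are equivalent in substance---\eqref{eq:poth2} is itself established by essentially the same Cauchy--Schwarz computation and~\eqref{eq:A7prime}---so your route amounts to outsourcing the estimate to the potential-theoretic toolbox of Appendix~\ref{sc:potential} rather than redoing it inline. That is a clean repackaging; the one small thing worth making explicit is that the resolvent actually lands in $\mathrm D(\D^2)\subset\mathrm D(\D)$ (you do note this), which is what licenses~\eqref{eq:bpot}. The planar case and the oddness remark are handled correctly.
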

\begin{proof}
	Let $F\in\mathcal M_\lambda(\mathcal F)$, i.e., there exists $G\in\mathcal F$ such that $\mathcal M_\lambda G=F$. Define the spherically symmetric functions $f,g\colon\Omega_0\to\R$ by
	\begin{align*}
		f(r,w,L)=\vert\varphi'(E,L)\vert\,w\,F(r),\quad g(r,w,L)=\vert\varphi'(E,L)\vert\,w\,G(r),\quad(r,w,L)\in\Omega_0^r.
	\end{align*}
	Lemma~\ref{Wintegrability} yields $f,g\in\Ltwo^{odd}$, and $Q_\lambda g=f$ by the definition of $\mathcal M_\lambda$, see Definition \ref{defmathur}. In other words,
	\begin{align*}
		f(r,w,L)=\B h(r,w,L) = 4\pi^2\vert\varphi'(E,L)\vert\,\frac w{r^2}\int_0^\infty\int_\R\tilde w\,h(r,\tilde w,\tilde L)\diff\tilde w\diff\tilde L,\quad(r,w,L)\in\Omega_0^r,
	\end{align*}
	where $h\coloneqq R_{-\D^2}(\lambda)g\in\mathrm D(\D^2)\cap\Ltwo^{odd}$. Using the representation of $F$ provided by the formula above, we arrive at
	\begin{align*}
		\|F\|_{\mathcal F_1}^2&=\int_Sr^2\,F^2(r)\diff r=16\pi^4\int_S\frac1{r^2}\,\left(\int_0^\infty\int_{\R}w\,h(r,w,L)\diff w\diff L\right)^2\diff r\\
		&\leq16\pi^4\int_S\frac1{r^2}\,\left(\int_0^\infty\int_\R w^2\,\vert\varphi'(E,L)\vert\diff w\diff L\right)\left(\int_0^\infty\int_\R \frac{h^2(r,w,L)}{\vert\varphi'(E,L)\vert} \diff w\diff L\right)\diff r\\&\leq C\|h\|_\Ltwo^2,
	\end{align*}
	where we used \eqref{eq:wintrho} and the boundedness of $\rho_0$ in the last inequality. In the planar setting the proof is analogous; apply~\eqref{eq:v1intrho} instead of~\eqref{eq:wintrho} for the last step.
\end{proof}

We are now in the position to show the equivalence of the eigenvalues of $Q_\lambda$ ($\pQ_\lambda$) and the ones of the Mathur operator ${\mathcal M}_\lambda$ ($\p{\mathcal M}_\lambda$):

\begin{lemma}\label{spectraequivalent}
	Let $\mu\in\R\setminus\{0\}$.
	\begin{enumerate}[label=(\alph*)]
		\item $\mu$ is an eigenvalue of $Q_\lambda$ if and only if $\mu$ is an eigenvalue of $\M_\lambda$.	
		Here, $\M_\lambda$ can be seen as an operator $\mathcal F\to\mathcal F$ or $\mathcal F_1\to\mathcal F_1$.
		\item $\mu$ is an eigenvalue of $\pQ_\lambda$ if and only if $\mu$ is an eigenvalue of $\p\M_\lambda$,  where $\p\M_\lambda$ can be seen as an operator $\p{\mathcal F}\to\p{\mathcal F}$ or $\p{\mathcal F}_1\to\p{\mathcal F}_1$.
	\end{enumerate}
\end{lemma}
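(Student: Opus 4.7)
The proof is essentially an unwinding of Definition~\ref{defmathur}: the Mathur operator is nothing but the restriction of $Q_\lambda$ (resp.\ $\pQ_\lambda$) to the invariant subspace of functions of the separated form $|\varphi'(E,L)|\,w\,G(r)$ (resp.\ $|\pvarphi'(\pE,\pv)|\,v_1\,G(x)$), and any nonzero eigenfunction of $Q_\lambda$ corresponding to a nonzero eigenvalue must automatically lie in this subspace because it equals $\mu^{-1}$ times an element of $\mathrm{im}(\B)$.

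\textbf{Forward direction for (a).} Assume $Q_\lambda g=\mu g$ with $g\in\Ltwo^{odd}\setminus\{0\}$ and $\mu\neq0$. Since $\mu g = Q_\lambda g = \B R_{-\D^2}(\lambda)g\in\mathrm{im}(\B)$ and $\mu\neq 0$, the function $g$ itself is of the form $g(r,w,L)=|\varphi'(E,L)|\,w\,G(r)$ for some measurable $G\colon S\to\R$; by Lemma~\ref{Wintegrability} this $G$ lies in $\mathcal F$ (and is nonzero since $g$ is). By Definition~\ref{defmathur} one has $Q_\lambda g(r,w,L)=|\varphi'(E,L)|\,w\,(\M_\lambda G)(r)$, so $\mu g=Q_\lambda g$ forces $\M_\lambda G=\mu G$ in $\mathcal F$. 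Finally, Lemma~\ref{mathurincint} gives $G=\mu^{-1}\M_\lambda G\in\mathcal F_1$, so $\mu$ is also an eigenvalue of $\M_\lambda$ on $\mathcal F_1$.

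\textbf{Backward direction for (a).} Conversely, assume $\M_\lambda G=\mu G$ with $G\in\mathcal F\setminus\{0\}$ (the case $G\in\mathcal F_1$ is subsumed since $\mathcal F_1\subset\mathcal F$). Set $g(r,w,L)\coloneqq|\varphi'(E,L)|\,w\,G(r)$; by Lemma~\ref{Wintegrability} we have $g\in\Ltwo^{odd}$, and $g\neq 0$ because $G\neq 0$ (the map $G\mapsto g$ is injective on $\mathcal F$). Then, by the very definition of $\M_\lambda$,
\begin{equation*}
Q_\lambda g(r,w,L) = |\varphi'(E,L)|\,w\,(\M_\lambda G)(r) = \mu\,|\varphi'(E,L)|\,w\,G(r) = \mu\,g(r,w,L),
\end{equation*}
so $\mu$ is an eigenvalue of $Q_\lambda$. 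Part (b) for the planar operators $\pQ_\lambda$ and $\p\M_\lambda$ is proved by the same argument, with $|\pvarphi'|\,v_1\,G(x)$ replacing $|\varphi'|\,w\,G(r)$, using Lemma~\ref{Wintegrability}(b) for $\pH$-integrability, the planar Definition~\ref{defmathur}(b), and the planar part of Lemma~\ref{mathurincint} to pass from $\p{\mathcal F}$ to $\p{\mathcal F}_1$.

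\textbf{What is routine vs.\ delicate.} No step is substantial: the only thing one needs to be careful about is that $\mu\neq 0$ is used in the forward direction to conclude $g$ itself (not merely $Q_\lambda g$) has the separated form $|\varphi'|\,w\,G$, which is what allows one to identify $g$ with an element of $\mathcal F$ and transfer the eigenvalue equation. Well-definedness of the assignment $G\mapsto g$ and its injectivity follow immediately from $|\varphi'(E,L)|>0$ on the interior of the $(E,L)$-support. No new estimates are required beyond those already recorded in Lemmas~\ref{Wintegrability} and~\ref{mathurincint}.
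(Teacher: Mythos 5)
Your proposal is correct and follows essentially the same route as the paper's own proof: use $\mu\neq 0$ to put the eigenfunction $g$ into $\mathrm{im}(\B)$ and hence into the separated form $|\varphi'|\,w\,G$, appeal to Lemma~\ref{Wintegrability} for membership in $\mathcal F$, to Definition~\ref{defmathur} to recast the eigenvalue equation as $\M_\lambda G=\mu G$, and to Lemma~\ref{mathurincint} to upgrade from $\mathcal F$ to $\mathcal F_1$; the converse is the same reconstruction in reverse. The only (harmless) additions are your explicit remarks on nontriviality and injectivity of $G\mapsto g$, which the paper leaves implicit.
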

\begin{proof}
	We only prove part (a), similar arguments apply in the planar setting. 
	
	If there exists $g\in \Ltwo^{odd}$ with $Q_\lambda g= \mu g$, then $g\in \mathrm{im}(\B)$ since $\mu\neq0$. Thus, there exists $G\colon S\to\R$ such that $g$ is of the form $g(r,w,L)=\vert\varphi'(E,L)\vert\,w\,G(r)$, and $G\in\mathcal F$ by Lemma~\ref{Wintegrability}. Using Definition~\ref{defmathur}, the eigenvalue equation becomes
	\begin{align}\label{eq:Klambdaev}
		\vert\varphi'(E,L)\vert\,w\,\M_\lambda G(r) = \mu\,\vert\varphi'(E,L)\vert\,w\,G(r),\qquad(r,w,L)\in\Omega_0^r.
	\end{align}
	Hence, $\M_\lambda G=\mu G$. In particular, $G\in\mathcal F_1$ by Lemma~\ref{mathurincint}.
	
	Conversely, let $G\in\mathcal F$ (recall $\mathcal F_1\subset\mathcal F$) be such that $\M_\lambda G=\mu G$ and define $g(r,w,L)=\vert\varphi'(E,L)\vert\,w\,G(r)$. Then $g\in \Ltwo^{odd}$ and \eqref{eq:Klambdaev} holds true. 
\end{proof}

Since $\M_\lambda\colon\mathcal F_1\to\mathcal F_1$ ($\p\M_\lambda\colon\p{\mathcal F}_1\to\p{\mathcal F}_1$) is a symmetric and compact Hilbert-Schmidt operator by Lemma~\ref{mathurF1}, the spectrum of $\M_\lambda$ is real and its largest element is given by
\begin{align}\label{eq:mathurlargestev}
	M_\lambda\coloneqq 
	\sup\left\{ \langle G,\M_\lambda G\rangle_{\mathcal F_1} \mid G\in\mathcal F_1,\,\|G\|_{\mathcal F_1}=1\right\}.
\end{align}
Similarly, the largest element in the spectrum of $\p\M_\lambda$ is
\begin{align}\label{eq:mathurlargestevplanar}
	\p M_\lambda \coloneqq \sup\left\{ \langle G,\p\M_\lambda G\rangle_{\p{\mathcal F}_1} \mid G\in\p{\mathcal F}_1,\,\|G\|_{\p{\mathcal F}_1}=1\right\}.
\end{align}
Furthermore, if $M_\lambda\neq0$ ($\p M_\lambda\neq0$), then $M_\lambda$ ($\p M_\lambda$) is actually an eigenvalue of $\M_\lambda$ ($\p\M_\lambda$).
\begin{theorem}[Criterion for the existence of oscillating modes]\label{T:CRITERION}
	\begin{enumerate}[label=(\alph*)]
		\item Let $f_0$ be a radial steady state of the form~\eqref{E:POLYTROPE} or~\eqref{E:KING}. Then the linearized operator $\A$ possesses an eigenvalue in the principal gap $\mathcal G$---defined in~\eqref{eq:defprincgap}---with associated eigenfunction odd in $v$ if and only if there exists a $\l\in \mathcal G$ such that 
		\begin{align}\label{E:CRITERIONONE}
			M_\lambda\geq1.
		\end{align}
		\item Let $\pf_0$ be a planar steady state as specified in Section~\ref{ssc:ststp}. Then the linearized operator $\pA$ possesses an eigenvalue in the principal gap $\p{\mathcal G}$---defined in \eqref{eq:defprincgapplanar}---with associated eigenfunction odd in $v_1$ and $x$ if and only if there exists a $\l\in \p{\mathcal G}$ such that 
		\begin{align}\label{E:CRITERIONTWO}
			\p M_\lambda\geq1.
		\end{align}
	\end{enumerate}
\end{theorem}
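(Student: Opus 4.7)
\textbf{Proof proposal for Theorem \ref{T:CRITERION}.}

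The plan is to chain together the three structural ingredients already developed: the Birman--Schwinger reformulation (Lemmas \ref{L:BS} and \ref{L:BSONE}), the equivalence of nonzero eigenvalues of $Q_\lambda$ and $\M_\lambda$ (Lemma \ref{spectraequivalent}), and the spectral properties of the Mathur operator on $\F_1$ (Lemma \ref{mathurF1} and Lemma \ref{mathurincint}). Since the argument in the planar case is formally identical with the radial one, I will first describe part (a) and then note the obvious adaptation for part (b).

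For the forward implication in (a), suppose that $\lambda \in \mathcal G$ is an eigenvalue of $\A\colon \mathrm D(\D^2)\cap \Ltwo^{odd}\to \Ltwo^{odd}$ with eigenfunction $g$. Since $\lambda \in \mathcal G \subset \rho(-\D^2)$ by Theorem \ref{transsquarespectrum}, the element $h \coloneqq \B g$ cannot vanish (otherwise $\lambda$ would be an eigenvalue of $-\D^2$), and from $\A g = \lambda g$ one gets $h = \B R_{-\D^2}(\lambda) h = Q_\lambda h$; hence $1$ is an eigenvalue of $Q_\lambda$. Because $h \in \mathrm{im}(\B)$, it is of the form $h(r,w,L) = |\varphi'(E,L)|\,w\,G(r)$ with $G\in \F$, and Lemma \ref{spectraequivalent} then yields $\M_\lambda G = G$. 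By Lemma \ref{mathurincint} we have $G \in \F_1$, so $1$ lies in the spectrum of $\M_\lambda\colon\F_1\to\F_1$, giving $M_\lambda \geq 1$.

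For the converse, assume there exists $\lambda_0 \in \mathcal G$ with $M_{\lambda_0}\geq 1$. Since $\M_{\lambda_0}$ is a self-adjoint compact operator on $\F_1$ (Lemma \ref{mathurF1}) and $M_{\lambda_0}>0$, the supremum $M_{\lambda_0}$ is actually attained as an eigenvalue: pick a corresponding eigenfunction $G\in\F_1\setminus\{0\}$, so $\M_{\lambda_0} G = M_{\lambda_0} G$. By Lemma \ref{spectraequivalent}, $M_{\lambda_0}$ is then an eigenvalue of $Q_{\lambda_0}$. The Birman--Schwinger identity of Lemma \ref{L:BS}, applied with $\mu \coloneqq M_{\lambda_0} \geq 1$, shows that $\lambda_0$ is an eigenvalue of $-\D^2 - \tfrac{1}{\mu}\B$ lying in $\mathcal G$. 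Finally, Lemma \ref{L:BSONE}(a) upgrades this to the existence of an eigenvalue of $\A$ inside $\mathcal G$; the corresponding eigenfunction is automatically odd in $v$ since we are working with the restriction of $\A$ to $\Ltwo^{odd}$.

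Part (b) goes through verbatim with the substitutions $\A\rightsquigarrow\pA$, $\D\rightsquigarrow\pD$, $\B\rightsquigarrow\pB$, $\Ltwo^{odd}\rightsquigarrow\p\H$, $\mathcal G\rightsquigarrow\p{\mathcal G}$, and $\M_\lambda\rightsquigarrow \p\M_\lambda$ on $\p\F_1$, using Propositions \ref{Klambdadefplane}, Lemma \ref{L:BSONE}(b), and the planar versions of the equivalence lemmas. I do not expect any genuine obstacle: all the hard work—relative compactness of $\B$, the integral kernel representation of $\M_\lambda$, and its Hilbert--Schmidt and self-adjointness properties on the right space—has already been carried out. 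The only point that requires a brief comment is to make sure that when $M_{\lambda_0} \geq 1$ the supremum is actually attained (rather than only approached), which is immediate from $\M_{\lambda_0}$ being a compact self-adjoint operator with $M_{\lambda_0}\neq 0$.
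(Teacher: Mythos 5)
Your proposal is correct and follows essentially the same route as the paper: both directions chain Lemma~\ref{L:BS}, Lemma~\ref{spectraequivalent}, and Lemma~\ref{L:BSONE}, with the converse relying on the compactness and self-adjointness of $\M_{\lambda_0}$ on $\F_1$ so that the nonzero supremum $M_{\lambda_0}$ is attained as an eigenvalue. The only cosmetic difference is that you re-derive the Birman--Schwinger identity $h=Q_\lambda h$ and invoke Lemma~\ref{mathurincint} explicitly where the paper simply cites the relevant lemmas.
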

\begin{proof}[Proof of part (a)]
	If $\lambda\in\mathcal G$ is an eigenvalue of $\A$ (restricted to $H^{odd}$, i.e., functions in the spherically symmetric, weighted $L^2$-space $\Ltwo$ which are odd in~$v$), then $1$ is an eigenvalue of $Q_\lambda$ by Lemma~\ref{L:BS}. Thus, Lemma~\ref{spectraequivalent} yields that $1$ is also an eigenvalue of $\mathcal M_\lambda\colon\F_1\to\F_1$, which implies $M_\lambda\geq1$.
	
	Conversely, if there exists $\lambda\in\mathcal G$ such that $M_\lambda\geq1$, then $\M_\lambda$ has an eigenvalue $\mu\geq1$. Lemma~\ref{spectraequivalent} implies that $\mu$ is also an eigenvalue of $Q_\lambda$, and therefore $\lambda$ is an eigenvalue of $-\D^2-\frac1\mu\B$ by Lemma~\ref{L:BS}. Using Lemma~\ref{L:BSONE}, we conclude that $\A$ indeed has an eigenvalue in the principal gap $\mathcal G$.
	
	\noindent\textit{Proof of part (b).} The proof is analogous to the radial setting and uses the planar statements of Lemmas~\ref{L:BS}, \ref{L:BSONE}, and~\ref{spectraequivalent}. However, recall that we restricted $\pA$ to $\p\H$, i.e., functions odd in $v_1$ and $x$, in the planar setting, which causes the top of the principal gap $\p{\mathcal G}$ to quadruple, see \eqref{eq:defprincgapplanar} and also Theorem~\ref{T:ESSENTIALSPECTRUMABAR}.
\end{proof}

Before verifying these criteria for selected steady states,
we state some properties of $M_\lambda$ and $\p M_\lambda$.

\begin{remark}\label{criterionexplicit}
  Using the kernel representation of the radial Mathur operator provided
  by Proposition~\ref{Klambdadef}, the associated quadratic form can be rewritten as
  \begin{align}
    &\langle G,\M_\lambda G\rangle_{\mathcal F_1}
    = \int_S\int_Sr^2\,K_\lambda(r,\sigma)\,G(r)\,G(\sigma)\diff\sigma\diff r\nonumber\\
    &=32\pi^2\sum_{k=1}^\infty
    \int_{\Omega_0^{EL}}\frac{\vert\varphi'(E,L)\vert}{T(E,L)}
    \frac1{\frac{4\pi^2k^2}{T(E,L)^2}-\lambda}
    \left(\int_{r_-(E,L)}^{r_+(E,L)}\sin(2\pi k\theta(r,E,L))\,G(r)\diff r\right)^2 \diff(E,L)
    \label{eq:mathurquadform}
  \end{align}
  for $G\in\mathcal F_1$. Since $\lambda$ is in the principal gap, the latter integral is
  obviously non-negative, i.e., the spectrum of the Mathur operator is non-negative as well.
  Hence $M_\lambda$ coincides with the operator norm of
  $\M_\lambda$, cf.~\cite[Theorem~5.14]{HiSi}.
  Furthermore, $M_\lambda$ increases in $\lambda$,
  which can for example be seen in the integral \eqref{eq:mathurquadform}. 
  Theorem~\ref{T:CRITERION} now allows us to translate the existence of
  an oscillating mode corresponding to an eigenvalue of $\A$ in the principal gap into a condition
  on the size of
  \begin{align}\label{eq:mathursupsup}
  	M\coloneqq\sup_{\lambda\in\mathcal G}M_\lambda=\sup_{\lambda\in\mathcal G}\|\M_\lambda\|_{\F_1\to\F_1}:
  \end{align}
  \begin{enumerate}[label=(\alph*)]
  \item If $M>1$, then $\A$ possesses at least one eigenvalue in the principal gap.
  \item If $M<1$, then $\A$ has no eigenvalues in the principal gap.
  \item In the case $M=1$, the existence of an eigenvalue of $\A$ in the
    principal gap depends on whether or not the supremum \eqref{eq:mathursupsup} is attained.
  \end{enumerate}
	
  Similar statements hold true in the plane symmetric setting.
  In particular, the spectrum of $\p\M_\lambda\colon\p\F_1\to\p\F_1$ is non-negative,
  and 
  \begin{align}
  	\p M\coloneqq\sup_{\lambda\in\p{\mathcal G}}\p M_\lambda=\sup_{\lambda\in\p{\mathcal G}}\|\p\M_\lambda\|_{\p\F_1\to\p\F_1}
  \end{align}
  being larger (smaller) than $1$ implies the existence one (no) eigenvalue(s) of $\pA$
  in the principal gap. 
\end{remark}

\subsection{Examples of linear oscillations}\label{ssc:examples}

We now apply Theorem~\ref{T:CRITERION} to give a class of examples of steady states which allow for pulsating modes. A particularly simple-minded approach is to identify steady states for which $M_\lambda$ ($\p M_\lambda$) tends to infinity as $\lambda$ approaches the top of the principal gap.

\subsubsection{Linear oscillations in the planar case}

In the plane symmetric setting we are able to analytically show the existence of linearly pulsating modes for a large class of steady state models by pursuing the approach discussed above. 


\begin{theorem}\label{T:EXAMPLEPLANAR}
Let $\pf_0$ be a planar steady state of polytropic form~\eqref{eq:poly1d} with $\frac12<k\leq1$ or of King type~\eqref{eq:king1d}. Then the associated linearized operator $\pA$---restricted to $\p\H$, i.e., functions in $\pH$ which are odd in $v_1$ and $x$---possesses an eigenvalue in the principal gap $\p{\mathcal G}=\left]0,4\frac{4\pi^2}{\pT^2(\pE_0)}\right[$.
\end{theorem}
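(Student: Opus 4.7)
The plan is to invoke part~(b) of Theorem~\ref{T:CRITERION} and produce a $\lambda \in \p{\mathcal G}=\,]0,16\pi^2/\pT(\pE_0)^2[$ at which the largest spectral value $\p M_\lambda$ of the planar Mathur operator exceeds $1$. Rather than locating a single such $\lambda$ directly, I will prove the stronger statement $\p M_\lambda \to +\infty$ as $\lambda \nearrow 16\pi^2/\pT(\pE_0)^2$. The driving mechanism is the $k=1$ summand in the kernel formula~\eqref{E:KLAMBDABARDEF}, whose denominator $16\pi^2/\pT(\pE)^2-\lambda$ degenerates precisely as $\pE\to\pE_0$ and $\lambda$ approaches the bottom of the remaining essential spectrum; the monotonicity of $\pT$ from Proposition~\ref{Tincreasingplane} is what makes this mechanism coherent.

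As test function I take $G(x)=\sign(x)\,\mathds{1}_{]-\pR_0,\pR_0[}(x)$, which is odd in $x$ and lies in $\p{\mathcal F}_1$ with $\|G\|_{\p{\mathcal F}_1}^2=2\pR_0$. By oddness, $\int_{x_-(\pE)}^{x_+(\pE)} G(x)\sin(4\pi k\theta(x,\pE))\,dx = 2\int_0^{x_+(\pE)}\sin(4\pi k\theta(x,\pE))\,dx$. Substituting this into Proposition~\ref{Klambdadefplane} and discarding all terms with $k\geq 2$ (every summand is non-negative since $\lambda < 16\pi^2 k^2/\pT(\pE)^2$ for each $k\geq 1$ and $\pE<\pE_0$) yields the lower bound
\begin{equation*}
\langle G,\p{\mathcal M}_\lambda G\rangle_{\p{\mathcal F}_1}\,\geq\,32\pi\int_{\pUmin}^{\pE_0}\frac{|\palpha'(\pE)|}{\pT(\pE)}\frac{I_1(\pE)^2}{16\pi^2/\pT(\pE)^2-\lambda}\,d\pE,
\end{equation*}
where $I_1(\pE)=2\int_0^{x_+(\pE)}\sin(4\pi\theta(x,\pE))\,dx$.

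Next I control the non-denominator factors near $\pE=\pE_0$. The change of variables $u=4\pi\theta(x,\pE)$ rewrites
\begin{equation*}
I_1(\pE)=\frac{\pT(\pE)}{2\pi}\int_\pi^{2\pi}\sin(u)\,\sqrt{2(\pE-\pU_0(x(u,\pE)))}\,du,
\end{equation*}
whose integrand is strictly negative a.e.\ on $(\pi,2\pi)$; dominated convergence shows $\lim_{\pE\to\pE_0^-}I_1(\pE)<0$, hence $I_1(\pE)^2/\pT(\pE)\geq c>0$ on some interval $[\pE_0-\delta,\pE_0[$. For the denominator, Lemma~\ref{Tderplane} ensures $\pT'$ is continuous and extends continuously through $\pE_0$, so there is $C_1>0$ with $16\pi^2/\pT(\pE)^2-16\pi^2/\pT(\pE_0)^2\leq C_1(\pE_0-\pE)$ on that interval. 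Setting $\epsilon=16\pi^2/\pT(\pE_0)^2-\lambda$ and $s=\pE_0-\pE$ reduces the task to proving
\begin{equation*}
\int_0^\delta \frac{|\palpha'(\pE_0-s)|}{\epsilon+C_1 s}\,ds\longrightarrow+\infty\qquad(\epsilon\to 0^+).
\end{equation*}

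This last divergence is the decisive step and is exactly where the restriction $\tfrac12<k\leq 1$ enters. For the polytropic case $|\palpha'(\pE_0-s)|=k\,s^{k-1}$: when $k=1$ the integral equals $C_1^{-1}\log(1+C_1\delta/\epsilon)$; when $\tfrac12<k<1$, the substitution $s=\epsilon t/C_1$ produces a prefactor $\epsilon^{k-1}\to+\infty$ times the convergent Beta integral $\int_0^\infty t^{k-1}/(1+t)\,dt=\pi/\sin(\pi k)$. For the King case $|\palpha'(\pE_0-s)|=e^s\geq 1$ on $[0,\delta]$, so the logarithmic divergence from the $k=1$ polytrope suffices. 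In all three cases $\langle G,\p{\mathcal M}_\lambda G\rangle_{\p{\mathcal F}_1}\to\infty$ while $\|G\|_{\p{\mathcal F}_1}^2$ stays equal to $2\pR_0$; thus $\p M_\lambda\to+\infty$ and Theorem~\ref{T:CRITERION}(b) concludes. The main technical points to watch are the non-degeneracy $\lim_{\pE\to\pE_0^-}I_1(\pE)\neq 0$ and the one-sided Lipschitz estimate on $\pT^{-2}$ up to $\pE_0$; both rest on the $C^1$ regularity of $\pT$ provided by Lemma~\ref{Tderplane}, extended by continuity to the closed energy interval.
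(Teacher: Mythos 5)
Your proof is correct and establishes the stronger statement $\p M_\lambda\to\infty$, which of course yields the criterion~\eqref{E:CRITERIONTWO}. The overall mechanism is the same as the paper's: keep only the $k=1$ Fourier mode (each summand being non-negative, this is a valid lower bound), exploit the degeneracy of the denominator $16\pi^2/\pT(\pE)^2-\lambda$ as $(\pE,\lambda)\to(\pE_0,16\pi^2/\pT(\pE_0)^2)$, control it via the Lipschitz bound coming from the $C^1$ regularity of $\pT$ (Lemma~\ref{Tderplane}), and force the divergence from the singularity of $|\palpha'|$ at $\pE_0$ for $\tfrac12<k\leq1$ and for King.

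The genuine difference is in the choice of test function and in how you verify non-degeneracy of the first Fourier coefficient. The paper localizes $G$ on a small interval $\pI$ around $x_{1/4}(\pE_0)$, engineered so that $\sin(4\pi\theta(x,\pE))\geq\tfrac12$ there uniformly for $\pE$ in a neighborhood $\pN_\eta$ of $\pE_0$, giving an elementary pointwise lower bound. You instead take the canonical global test function $G=\sign(x)\,\id_{]-\pR_0,\pR_0[}$, pull the first Fourier coefficient $I_1(\pE)$ into the angle variable $u=4\pi\theta$, observe that the integrand $\sin(u)\sqrt{2(\pE-\pU_0(x(u,\pE)))}$ has a definite sign on $(\pi,2\pi)$, and conclude by dominated convergence that $I_1(\pE)^2/\pT(\pE)$ has a strictly positive limit as $\pE\to\pE_0^-$. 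Both routes are sound. Yours avoids constructing $\pI$ and $\pN_\eta$ and works directly with $\epsilon=16\pi^2/\pT(\pE_0)^2-\lambda$ instead of a $\limsup$/monotone-convergence passage, at the modest cost of needing the joint continuity of $\pE\mapsto x(u,\pE)$ (which follows from the continuity of $\theta$, $\pT$ and $x_\pm$, but should be stated). The explicit Beta-integral identity for $\tfrac12<k<1$ is correct but more than is needed; the paper simply notes that $\int(\pE_0-\pE)^{k-2}\,\diff\pE$ diverges for $k\leq1$. Finally, your remark that all summands in the kernel sum are non-negative is in fact the cleanest justification for extracting the $k=1$ term (the paper invokes a Fubini-type exchange by analogy to the radial case), so it is worth keeping.
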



\begin{proof}
For $G\in\p{\mathcal F}_1$, the quadratic form in \eqref{eq:mathurlargestevplanar} is
\begin{align}
	&\langle G,\p{\M}_\lambda G\rangle_{\p{\mathcal F}_1} = \int_{\pS} \int_{\pS} \pK_\lambda(x,y)\,G(x)\,G(y)\diff x\diff y \nonumber \\
	&=32\pi \sum_{k=1}^\infty \int_{\pS}\int_{\pS}  \int_{\pOmega_0^{\pE}(x)\cap\pOmega_0^{\pE}(y)}\frac{\vert\palpha'(\pE)\vert}{\pT(\pE)}\frac{\sin(4\pi k\theta(x,\pE))
	\sin(4\pi k\theta(y,\pE))}{\frac{4\pi^2}{\pT(\pE)^2}(2k)^2-\lambda}\diff\pE\,G(x)\,G(y)\diff x \diff y \nonumber \\
	&=32\pi\sum_{k=1}^\infty  \int_{\pUmin}^{\pE_0}\frac{\vert\palpha'(\pE)\vert}{\pT(\pE)}\frac1{\frac{4\pi^2}{\pT(\pE)^2}(2k)^2-\lambda} \left( \int_{x_-(\pE)}^{x_+(\pE)} \sin(4\pi k\theta(x,\pE))\,G(x)\diff x \right)^2 \diff \pE,
\end{align}
see Proposition~\ref{Klambdadefplane} for the definition of $\pK_\lambda$. Note that the exchange of the infinite sum and the integration can be justified similarly to the radial case, see for example a related argument in the proof of Proposition~\ref{Klambdadef}.
In particular, for any $\lambda$ in the principal gap, i.e., $0<\lambda<4\frac{4\pi^2}{\pT^2(\pE_0)}$,
we conclude that for all non-zero $G\in\p{\mathcal F}_1$, 
\begin{align}
  \p M_\lambda&\geq\frac{\langle G,\p\M_\lambda G\rangle_{\p{\mathcal F}_1} }{\|G\|_{\p{\mathcal F}_1}^2}
  \nonumber \\
  &\geq\frac{32\pi}{\|G\|_{\p{\mathcal F}_1}^2}\int_{\pUmin}^{\pE_0}\frac{\vert\palpha'(\pE)\vert}{\pT(\pE)}\frac1{\frac{16\pi^2}{\pT(\pE)^2}-\lambda}\left( \int_{x_-(\pE)}^{x_+(\pE)} \sin(4\pi \theta(x,\pE))\,G(x)\diff x\right)^2\diff\pE.\label{eq:Glambdaestimatebar}
	\end{align}
Consider a neighborhood $\pN_\eta \coloneqq[\pE_0-\eta,\pE_0]$ of the cut-off energy $\pE_0$
for some sufficiently small parameter $0<\eta<\pE_0-\pUmin$.
By an easy continuity argument we can choose a closed interval
$\pI\subset]x_-(\pE_0),0[=]-\pR_0,0[$ such that
\begin{align}\label{E:SINLOWERBOUND}
  \sin (4\pi\theta(x,\pE)) \geq\frac12 \quad\text{ for } \pE\in \pN_\eta, \ x\in \pI,
\end{align} 
if $\eta>0$ is sufficiently small, in particular,
$\pI\subset]x_-(\pE),0[$ for $\pE\in\pN_\eta$. More precisely, $\pI$ can be constructed as follows:
Let $x_{1/4}(\pE)$ be defined by
\begin{align*}
	\theta( x_{1/4}(\pE),\pE )=\frac18,\quad\text{i.e.,}\quad x_{1/4}(\pE)=X(\frac18\pT(\pE),x_-(\pE),0),
\end{align*}
where $X$ gives the $x$-component of the solutions of the stationary planar characteristic system \eqref{eq:charsystplanar}. Obviously, $x_{1/4}$ is continuous, and $\pI$ can be chosen as a neighborhood of $x_{1/4}(\pE_0)$ after possibly reducing $\eta$. Now let
\begin{align}
	G\colon\pS\to\R,\quad G(x)\coloneqq\begin{cases}
		1, & x\in \pI, \\
		-1, & -x\in \pI,\\
		0,&\text{else}.
	\end{cases}
\end{align}
Clearly, $G$ is odd and $G\in\p\F_1$ with $\|G\|_{\p{\mathcal F}_1}^2 = 2|\pI|$. Moreover,
\begin{align*}
	\int_{x_-(\pE)}^{x_+(\pE)}\sin(4\pi\theta(x,\pE))\,G(x)\diff x\geq\vert\pI\vert
\end{align*}
for any $\pE\in\pN_\eta$; observe that the symmetry of $\pU_0$ implies $4\pi\theta([x_-(\pE),0],\pE)=[0,\pi]$ and $4\pi\theta([0,x_+(\pE)],\pE)=[\pi,2\pi]$ as well as $\theta(x,\pE)+\theta(-x,\pE)=\frac12$ for $x\in[x_-(\pE),0]$, recall the definition \eqref{E:THETAPLANARDEF} of $\theta$.
Plugging $G$ into~\eqref{eq:Glambdaestimatebar} yields
\begin{align*}
	\p M_\lambda\geq16\pi\vert\pI\vert\int_{\pN_\eta}\frac{\vert\palpha'(\pE)\vert}{\pT(\pE)}\frac1{\frac{16\pi^2}{\pT(\pE)^2}-\lambda}\diff\pE.
\end{align*}
We now let $\lambda\to\sup\p{\mathcal G}=\frac{16\pi^2}{\pT(\pE_0)^2}$---notice that $\pI$
is independent of $\lambda$---and conclude by the monotone convergence theorem that
\begin{align}
	\limsup_{\l\to \frac{16\pi^2}{\pT(\pE_0)^2}} \p M_\lambda&\geq16\pi\vert\pI\vert\int_{\pN_\eta}\frac{\vert\palpha'(\pE)\vert}{\pT(\pE)}\frac1{\frac{16\pi^2}{\pT(\pE)^2}-{\frac{16\pi^2}{\pT(\pE_0)^2}}} \diff\pE\nonumber\\
	&=\frac{\vert\pI\vert}\pi\int_{\pN_\eta}\frac{\vert\palpha'(\pE)\vert}{\pT(\pE_0)-\pT(\pE)}\,\frac{\pT(\pE)\,\pT(\pE_0)^2}{\pT(\pE)+\pT(\pE_0)}\diff\pE\nonumber\\
	&\geq C\int_{\pN_\eta}\frac{\vert\palpha'(\pE)\vert}{\pT(\pE_0)-\pT(\pE)}\diff\pE\label{eq:mathursupsupestimate}
\end{align}
for some $C>0$; recall that $\pT$ is bounded and bounded away from $0$ on
$[\pUmin,\pE_0[$ by Proposition~\ref{Tboundsplane}.
To show that the latter integral is infinite, we expand the denominator of its integrand.
By the mean value theorem, for each $\pE\in\pN_\eta$ there exists $\pE^*\in\pN_\eta$ such that
$\pT(\pE_0)-\pT(\pE)=\pT'(\pE^*)\,(\pE_0-\pE)$.
Since $\pT'$ is continuous by Lemma~\ref{Tderplane}, 
we obtain $\pT(\pE_0)-\pT(\pE)\leq C\,(\pE_0-\pE)$ for $\pE\in\pN_\eta$ and some constant $C>0$ depending on the steady state and $\eta$. Inserting this estimate in \eqref{eq:mathursupsupestimate} then yields
\begin{align*}
	\limsup_{\l\to \frac{16\pi^2}{\pT(\pE_0)^2}} \p M_\lambda&\geq C\int_{\pN_\eta}\frac{\vert\palpha'(\pE)\vert}{\pE_0-\pE}\diff\pE\\
	&=C\begin{cases}k\int_{\pE_0-\eta}^{\pE_0}\left(\pE_0-\pE\right)^{k-2}\diff\pE,&\palpha \text{ is polytropic with }\frac12<k\leq1\\
		\int_{\pE_0-\eta}^{\pE_0}\frac{e^{\pE_0-\pE}}{\pE_0-\pE}\diff\pE,&\palpha\text{ is of King type}
	\end{cases}\\
	&=\infty.
\end{align*}
The claim now follows by part (b) of Theorem~\ref{T:CRITERION}.
\end{proof}

\begin{remark}
	\begin{enumerate}[label=(\alph*)]
		\item Since $\pT'>0$ on $]\pUmin,\infty[$, we can not expect to show that the integral \eqref{eq:mathursupsupestimate} is infinite for polytropes with exponent $k>1$ by expanding $\pT$ to higher order.
		\item For the above proof---and everything in the preceding sections---the exact polytropic~\eqref{eq:poly1d} or King type~\eqref{eq:king1d} structure of $\palpha$ is not essential. Only  the behavior of $\palpha$ near the cut-off energy $\pE_0$ matters for our argument. 
	\end{enumerate}	
\end{remark}




\subsubsection{Linear oscillations in the radial case}

In order to apply the same idea as in the proof of Theorem~\ref{T:EXAMPLEPLANAR} we need
to know where the period function $\mathring\Omega_0^{EL}\ni(E,L)\mapsto T(E,L)$ attains
its maximal values. This is an involved technical question about the behavior of solutions
to semi-linear radial ODEs of
Lane-Emden type, which will be rigorously addressed in future work.
However, numerical calculations conclusively show that for a wide range
of steady states of the form~\eqref{E:POLYTROPE}--\eqref{E:KING} the period function has
the property that
\begin{align}\label{E:TMAX}
  \sup_{\mathring\Omega_0^{EL}}(T) = T(E_0,L_0),
\end{align}
where $L_0\geq0$ is the lowest occurring $L$-value in the steady state support.
Note that $T(E_0,L_0)$ may formally not be defined by Definition~\ref{defT},
but $T$ can easily be extended to $L=0$ by replacing $\Psi_L$ with $U_0$ in the definition. 

In order to Taylor-expand $T$ near its maximal value similar to the planar setting,
we further require that
\begin{align}\label{E:Tdiffbar}
  T\text{ is differentiable and }\partial_ET,\,\partial_LT\text{ are bounded near }(E_0,L_0).
\end{align}
The latter can be shown by explicitly computing the derivatives of $T$ with respect to $E$ and $L$,
but we choose to leave out the proof of this rather technical statement. 

One way to validate~\eqref{E:TMAX} is to show
\begin{align}\label{E:TPROPERTIES}
	\pa_E T(E,L)& \geq0, \ \ 
	\pa_L T(E,L) \leq0, \  \quad\text{ for } (E,L)\in\mathring\Omega_0^{EL},
\end{align}
and numerical computations indicate that \eqref{E:TPROPERTIES} is indeed true for a wide
range of steady states.
We discuss this matter in more detail at the end of Section~\ref{ssc:reg} in the appendix,
and give explicit parameters of a steady state for which~\eqref{E:TMAX} and~\eqref{E:TPROPERTIES}
have been verified numerically in Remark~\ref{radialparameterexample}.

\begin{theorem}\label{T:EXAMPLERADIAL}
Let $f_0$ be a radial steady state of polytropic form~\eqref{E:POLYTROPE} with parameters $k,l,L_0$ satisfying
\begin{align}\label{eq:critparameters}
	L_0>0,\ \ k>0,\ \ l>-1,\ \ k<l+\frac72,\ \ k+l+\frac12\geq0,\ \ k+l\leq0.
\end{align} 	
Assume further that the assumptions~\eqref{E:TMAX} and~\eqref{E:Tdiffbar} hold. Then the associated linearized operator $\A$---restricted to $\Ltwo^{odd}$, i.e., functions in $\Ltwo$ which are odd in $v$---possesses an eigenvalue in the principal gap $\mathcal G=\left]0,\frac{4\pi^2}{\sup^2(T)}\right[$. 
\end{theorem}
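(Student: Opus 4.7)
\textbf{Proof plan for Theorem~\ref{T:EXAMPLERADIAL}.} The plan is to mirror the strategy from the planar case (Theorem~\ref{T:EXAMPLEPLANAR}) and verify the criterion of Theorem~\ref{T:CRITERION}(a), by exhibiting a concrete test function $G\in\mathcal F_1$ so that $\langle G,\mathcal M_\lambda G\rangle_{\mathcal F_1}/\|G\|_{\mathcal F_1}^2\to\infty$ as $\lambda$ approaches the top of the principal gap $\sup\mathcal G=4\pi^2/T^2(E_0,L_0)$ (where we use the hypothesis~\eqref{E:TMAX}). Throughout we work with the quadratic form representation from Remark~\ref{criterionexplicit}, and we discard all Fourier modes but $k=1$, which is non-negative. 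The assumption $L_0>0$ is crucial: it guarantees that the radial support $S$ is bounded away from $0$, so the localised choices of $G$ below automatically belong to $\mathcal F_1$.

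\textbf{First step (localisation in $r$).} For $(E,L)$ near $(E_0,L_0)$, define $r_{1/4}(E,L)$ by the condition $\theta(r_{1/4}(E,L),E,L)=\tfrac{1}{4}$, which is a continuous function of $(E,L)$ by Lemma~\ref{effpot} and the formula~\eqref{eq:tauofr}. Then $\sin(2\pi\theta(r_{1/4},E,L))=1$, and by continuity there exist a closed interval $\mathcal I\subset\,]r_-(E_0,L_0),r_+(E_0,L_0)[\,$ around $r_{1/4}(E_0,L_0)$ and a neighbourhood $\mathcal N_\eta\coloneqq[E_0-\eta,E_0]\times[L_0,L_0+\eta]\cap\mathring\Omega_0^{EL}$ of $(E_0,L_0)$ on which $\mathcal I\subset\,]r_-(E,L),r_+(E,L)[\,$ and $\sin(2\pi\theta(r,E,L))\geq\tfrac{1}{2}$ for all $r\in\mathcal I$, $(E,L)\in\mathcal N_\eta$. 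Taking $G\coloneqq\mathbf 1_{\mathcal I}\in\mathcal F_1$ (which is legitimate because $\mathcal I$ lies in a bounded subset of $]0,\infty[$ and $L_0>0$), the inner $r$-integral in the $k=1$ term of~\eqref{eq:mathurquadform} is bounded below by $|\mathcal I|/2$ uniformly on $\mathcal N_\eta$.

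\textbf{Second step (singular reduction).} Keeping only the $k=1$ term and restricting to $\mathcal N_\eta$, we obtain, for every $\lambda\in\mathcal G$,
\begin{align*}
M_\lambda\,\|G\|_{\mathcal F_1}^2\;\geq\;8\pi^2|\mathcal I|^2\int_{\mathcal N_\eta}\frac{|\varphi'(E,L)|}{T(E,L)}\,\frac{1}{\frac{4\pi^2}{T^2(E,L)}-\lambda}\,\diff(E,L).
\end{align*}
Sending $\lambda\uparrow 4\pi^2/T^2(E_0,L_0)$ and invoking monotone convergence, using the boundedness of $T$ and $T^{-1}$ on the steady state support (Proposition~\ref{Tbounded}), this reduces the question to showing
\begin{align*}
\int_{\mathcal N_\eta}\frac{(E_0-E)^{k-1}(L-L_0)^l}{T^2(E_0,L_0)-T^2(E,L)}\,\diff(E,L)\;=\;+\infty.
\end{align*}
By the differentiability and boundedness of $\partial_E T,\partial_L T$ from~\eqref{E:Tdiffbar}, the mean value theorem yields $T^2(E_0,L_0)-T^2(E,L)\leq C\bigl[(E_0-E)+(L-L_0)\bigr]$ on $\mathcal N_\eta$, so it suffices to establish
\begin{align*}
\int_0^\eta\int_0^\eta\frac{s^{k-1}t^l}{s+t}\,\diff s\,\diff t=+\infty.
\end{align*}

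\textbf{Third step (two-variable divergence).} Under~\eqref{eq:critparameters} we have $l>-1$, and $k+l\leq 0$ combined with $l>-1$ forces $0<k<1$. Fixing $t=L-L_0$ and evaluating the inner integral, a standard splitting at $s=t$ gives $\int_0^\eta s^{k-1}/(s+t)\,\diff s\asymp t^{k-1}$ as $t\downarrow 0$ (since $k<1$). The outer integral then reduces to $\int_0^\eta t^{k+l-1}\,\diff t=+\infty$ precisely because $k+l\leq 0$. Thus $\limsup_{\lambda\uparrow\sup\mathcal G}M_\lambda=+\infty$, and in particular $M_\lambda>1$ for some $\lambda\in\mathcal G$, so Theorem~\ref{T:CRITERION}(a) produces the required eigenvalue in the principal gap.

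\textbf{Main obstacle.} The routine calculus is standard; the only genuinely delicate point, already flagged in the text, is the hypothesis~\eqref{E:TMAX} on the location of the maximum of $T$ together with the regularity~\eqref{E:Tdiffbar} near $(E_0,L_0)$. These are assumed, not proved, here; verifying them rigorously is an independent and technically demanding task concerning the period function of a semilinear radial Lane--Emden type ODE. Everything else flows from the planar template combined with the two-variable weight calculation above, where the parameter restriction $k+l\leq 0$ is used in a sharp way.
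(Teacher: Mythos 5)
Your proposal is correct and follows essentially the same route as the paper's proof: the same choice of test function $G=\mathbf 1_{\mathcal I}$ localised near a quarter-period radius, the same restriction to the $k=1$ Fourier mode, the same monotone-convergence limit $\lambda\uparrow 4\pi^2/T^2(E_0,L_0)$, and the same reduction via~\eqref{E:TMAX} and~\eqref{E:Tdiffbar} to the divergence of $\int_0^\eta\int_0^\eta s^{k-1}t^l/(s+t)\,\diff s\,\diff t$ under $k+l\leq 0$. The only cosmetic differences are that you apply the mean-value bound to $T^2(E_0,L_0)-T^2(E,L)$ rather than to $T(E_0,L_0)-T(E,L)$ (equivalent, since $T+T_0$ is bounded above and below), and you spell out the final two-variable divergence calculation that the paper merely asserts.
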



\begin{proof}
Just like in the proof of Theorem~\ref{T:EXAMPLEPLANAR} we can show that 
\begin{align}\label{E:MLAMBDALOWERBOUND}
	& M_\lambda 
	\geq \frac{32\pi^2}{\|G\|_{\mathcal F_1}^2} \int_{\Omega_0^{EL}}\frac{\vert\varphi'(E,L)\vert}{T(E,L)}\frac1{\frac{4\pi^2}{T(E,L)^2}-\lambda} \left( \int_{r_-(E,L)}^{r_+(E,L)} \sin(2\pi k\theta(r,E,L))\,G(r)\diff r \right)^2 \diff(E,L)
	\end{align}
for any $G\in\mathcal F_1\setminus\{0\}$ and $\lambda\in\mathcal G$. Letting $\lambda\to\frac{4\pi^2}{\sup^2(T)}=\frac{4\pi^2}{T(E_0,L_0)^2}$, using the monotone convergence theorem, and the boundedness of $T$ from above and away from zero (see Proposition~\ref{Tbounded}), we conclude that
\begin{multline}
\limsup_{\l\to \frac{4\pi^2}{\sup^2(T)}}M_\lambda\nonumber\\ \geq \frac C{\|G\|_{\mathcal F_1}^2} \int_{\Omega_0^{EL}}  \frac{\vert\varphi'(E,L)\vert}{T(E_0,L_0)-T(E,L)}\left( \int_{r_-(E,L)}^{r_+(E,L)} \sin(2\pi\theta(r,E,L))\,G(r)\diff r \right)^2 \diff(E,L)
\end{multline}
for some constant $C>0$ depending on the steady state.

Consider a closed neighborhood $N_\eta$ of the $T$-maximizer $(E_0,L_0)$ of the form
\begin{align*}
  N_\eta \coloneqq [E_0-\eta,E_0]\times[L_0,L_0+\eta]
\end{align*}
for a sufficiently small $\eta>0$ such that $\mathring N_\eta\subset\Omega_0^{EL}$.
Next, choose a non-empty, closed interval $I\subset S$ such that
\begin{align}
	\sin (2\pi\theta(r,E,L)) \geq\frac12 \quad\text{ for } (E,L)\in N_\eta,~r\in I.
\end{align} 
To convince ourselves of the existence of such an interval $I$, let $r_{1/2}(E,L)>0$ be defined by
\begin{align}
  \theta( r_{1/2}(E,L),E,L )=\frac14, \quad\text{i.e.,}\quad r_{1/2}(E,L)=R(\frac14T(E,L),r_-(E,L),0,L),
\end{align} 
where $R$ is the $r$-component of the solutions of the stationary radial characteristic
system~\eqref{eq:charsystrad};
note that $r_{1/2}(E,L)$ need not coincide with $r_L$.
Since $T$ is continuous by Lemma~\ref{partperiodcontinuous}, $r_{1/2}$ is continuous in $(E,L)$, and $I$ can be chosen as a sufficiently small interval around $r_{1/2}(E_0,L_0)>0$
(after possibly reducing $\eta$). 
Now let $ G\coloneqq\id_I \in \mathcal F_1$. 
Analogously to~\eqref{eq:mathursupsupestimate} we conclude that 
\begin{align}\label{E:LBMLAMBDA}
 \limsup_{\l\to \frac{4\pi^2}{T(E_0,L_0)^2}} M_\lambda
 &\ge  C \int_{E_0-\eta}^{E_0}\int_{L_0}^{L_0+\eta}\frac{\vert\varphi'(E,L)\vert}{T(E_0,L_0)-T(E,L)} \diff L \diff E.
\end{align}
Taylor-expanding the denominator in this integral gives
\begin{align}
T(E_0,L_0)-T(E,L) 
= &c_E\,(E_0-E) - c_L\,(L-L_0)\nonumber\\& + o_{(E,L)\to(E_0,L_0)}\left(\vert(E,L)-(E_0,L_0)\vert\right),
\end{align}
where we denote $c_E\coloneqq\partial_ET(E_0,L_0),\ c_L\coloneqq\partial_LT(E_0,L_0)<\infty$;
recall the assumption~\eqref{E:Tdiffbar}.
Thus, after choosing a possibly smaller $\eta$, we use \eqref{E:LBMLAMBDA}
and the polytropic structure \eqref{E:POLYTROPE} of the steady state model to obtain
\begin{align}
 \limsup_{\l\to \frac{4\pi^2}{T(E_0,L_0)^2}} M_\lambda
 & \ge C  \int_{E_0-\eta}^{E_0}\int_{L_0}^{L_0+\eta} \frac{(E_0-E)^{k-1}(L-L_0)^l}{E_0-E + L-L_0}
 \diff L\diff E \notag \\
& = C \int_0^\eta\int_0^\eta \frac{x^{k-1}y^l}{x+y} \diff y\diff x,
\end{align}
where we used the obvious change of variables $x=E_0-E$ and $y=L-L_0$.
The latter integral is infinite precisely when $k+l\leq0$.
Together with part (a) of Theorem~\ref{T:CRITERION} this concludes the proof.
\end{proof}

\begin{remark}\label{radialparameterexample}
  The steady state parameter conditions \eqref{eq:critparameters} are for example
  satisfied with
  \begin{align}
    k=\frac12,\quad l=-\frac12,\quad L_0>0\text{ arbitrary},
  \end{align}
  and numerical computations---e.g.\ with $L_0=\frac1{10}$ and $E_0-U_0(0)=1$---clearly
  show that in this case the monotonicity assumptions~\eqref{E:TPROPERTIES} are valid.
\end{remark}

\appendix
\section{Auxiliary results on potentials}\label{sc:potential}
In this section we discuss the properties of potentials induced by functions of the form $\D g$
for $g\in\mathrm D(\D)$ and $\pD g$ for $g\in\mathrm D(\pD)$ respectively.

\subsection{Potentials in the spherically symmetric case}\label{ssc:potradial}

For $g\in\mathrm D(\D)$, let
\begin{align*}
\rho = \rho_{\D g} \coloneqq \int_{\R^3} \D g(\cdot,v)\diff v.
\end{align*}
We extend all functions by $0$ on $\R^3\times\R^3$. Using \eqref{eq:A7prime} yields
\begin{align*}
\|\rho\|_2^2&= \int_{\R^3}\left(\int_{\R^3} \D g(x,v)\diff v\right)^2\diff x\leq C\int_{\R^3}\int_{\R^3} \frac{\vert \D g(x,v)\vert^2}{\vert\varphi'(E,L)\vert}\diff v\diff x= C\|\D g\|_{\Ltwo}^2,
\end{align*}
and $\supp(\rho)\subset B_{R_0}(0)$, i.e., $\rho\in L^1\cap L^2(\R^3)$. 
Furthermore, the integral of $\rho$ vanishes, since
\begin{align*}
	\int_{\R^3}\rho(x)\diff x=\langle \vert\varphi'(E,L)\vert,\D g\rangle_{\Ltwo}
\end{align*}
and $\vert\varphi'(E,L)\vert\in\ker(\D)\subset\Ltwo$ by \eqref{eq:A7prime} as well as
$\D g\in\mathrm{im}(\D)\perp\ker(\D)$ by the skew-adjointness of $\D$.

Now let $U\coloneqq U_\rho=U_{\D g}$ be the gravitational potential induced by $\D g$, i.e., 
\begin{align*}
U(x) = -\int_{\R^3}\frac{\rho(y)}{\vert x-y\vert} \diff y ,\quad x\in\R^3.
\end{align*}
Since $\rho\in L^1\cap L^2(\R^3)$, basic potential theory yields $U \in C(\R^3)$ with $\lim_{\vert x\vert\to\infty} U(x)=0$ and $\|U\|_\infty \leq C\|\rho\|_2$ as well as $\partial_x U\in L^2(\R^3)$ with
\begin{align*}
\| \partial_x U\|_2\leq C\|\rho\|_{\frac65}\leq C\|\rho\|_2,
\end{align*}
and obviously $\Delta U = 4\pi\rho\in L^2(\R^3)$. In particular, all derivatives exist in the
weak sense. Furthermore, for all $x\in\R^3$ with $\vert x\vert\geq2R_0$ the vanishing integral
of $\rho$ implies that
\begin{align*}
\vert U(x)\vert &= \left| \int_{\R^3}\frac{\rho(y)}{\vert x-y\vert} \diff y  \right| = \left| \int_{B_{R_0}(0)}\frac{\rho(y)}{\vert x-y\vert} \diff y - \int_{B_{R_0}(0)}\frac{\rho(y)}{\vert x\vert} \diff y \right| \\
&\leq \int_{B_{R_0}(0)} \vert\rho(y)\vert \frac{\left| \vert x\vert - \vert x-y\vert \right|}{\vert x\vert \cdot \vert x-y\vert} \diff y \leq \int_{B_{R_0}(0)} \vert\rho(y)\vert \frac{2R_0}{\vert x\vert^2} \diff y= \frac{2 R_0\|\rho\|_1}{\vert x\vert^2}.
\end{align*}
Thus,
\begin{align*}
\int_{\vert x\vert\geq 2R_0} \vert U(x)\vert^2\diff x\leq 4 R_0^2\|\rho\|_1^2 \int_{\vert x\vert\geq 2R_0} \frac{\diff x}{\vert x\vert^4} = 8 \pi R_0\|\rho\|_1^2 \leq C \|\rho\|_2^2.
\end{align*}
Overall we obtain $U\in H^2(\R^3)$ with
\begin{align} \label{eq:poth2}
\| U\|_{H^2(\R^3)}\leq C\|\rho\|_2 \leq C\|\D g\|_{\Ltwo},
\end{align}
where $C>0$ only depends on the fixed steady state $f_0$. Related arguments have also been used in the proof of Theorem~1.1 in \cite{GuLi08}. 

Lastly, $U$ inherits the symmetry of $\rho$, $\D g$ and $g$, i.e., we can write $U(x)=U(\vert x\vert)$. Then, 
\begin{align} \label{eq:uprime}
U_{\D g}'(r) = U'(r) = \frac{4\pi^2}{r^2} \J(g)(r) = \frac{4\pi^2}{r^2} \int_0^\infty\int_\R w\, g(r,w,L)\diff w\diff L 
\end{align}
for a.e.~$r>0$. For $g\in C^{\infty}_{c,r}(\Omega_0)$ this follows by integrating the radial Poisson equation, since
\begin{align*}
\rho(r) = \frac\pi{r^2} \int_0^\infty\int_\R\D g(r,w,L)\diff w\diff L = \frac\pi{r^2}\;\partial_r\left( \int_0^\infty\int_\R w\,g(r,w,L)\diff w\diff L \right).
\end{align*}
\eqref{eq:uprime} can be extended to $\mathrm D(\D)$ by using the approximation scheme from \cite{ReSt20,St19} together with \eqref{eq:wintrho}.

\subsection{Potentials in the plane symmetric case}\label{ssc:potplane}

In the plane symmetric case, we let $g\in \mathrm D(\pD)$ and
\begin{align*}
\prho(x)=\prho_{\pD g}(x)\coloneqq \int_{\R^3} \pD g(x,v)\diff v,\quad x\in\R,
\end{align*}
where we again extend all functions by $0$ to $\R\times\R^3$.
By \eqref{eq:A7prime1d}, $\|\prho\|_2\leq C\|\pD g\|_{\pH}$
with some constant $C>0$ depending only on the steady state.
Obviously, $\mathrm{supp}(\prho)\subset[-\pR_0,\pR_0]$, i.e.,
$\prho\in L^1\cap L^2(\R)$.
As in the spherically symmetric case,
$\int_{\R}\prho=\langle\vert\pvarphi'(\pE,\pv)\vert,\pD g\rangle_{\pH} =0$
by the skew-adjointness of $\pD$.

Now let $\pU=\pU_{\prho}=\pU_{\pD g}$ be the potential induced by $\pD g$,
cf.~\eqref{pl_poisson}.
Since $\prho$ vanishes outside of $[-\pR_0,\pR_0]$ and $\int_\R \rho =0$,
\eqref{pl_dxU} implies that
\[
U'(x) = 2\pi \int_{-R_0}^{R_0} \sign(x-y)\, \rho(y) \diff y
= 4\pi \int_{-R_0}^{x} \rho(y) \diff y,
\]
and $\pU'(x)=0$ for
$x\in \R\setminus[-R_0,R_0]$. Hence
\begin{align*}
  \|\pU'\|_2^2=\int_{-\pR_0}^{\pR_0} \vert\pU'(x)\vert^2\diff x
  \leq 16\pi^2\pR_0^2 \, \|\prho\|_2^2.
\end{align*}
Altogether,
\begin{align}\label{eq:poth2plane}
\|\pU'\|_2+\|\pU''\|_2\leq C\|\prho\|_2\leq C\|\pD g\|_{\pH}^2.
\end{align}
Lastly, we obtain the representation
\begin{align}\label{eq:uprimeplane}
  \pU_{\pD g}'(x) = 4\pi\int_{-\pR_0}^x \int_{\R^3}\pD g(y,v)\diff v\diff y
  = 4\pi \int_{\R^3} v_1\,g(x,v)\diff v,\quad x\in\R
\end{align}
for smooth $g$, which once again can be extended onto $\mathrm D(\pD)$
by approximation, for example as in Remark \ref{approxplane}, using the
bound \eqref{eq:v1intrho}.

\section{Properties of the radial period function}\label{sc:T}

This appendix is devoted to the properties of the period function $T$ in the case
of a spherically symmetric equilibrium, since it gives the essential spectrum of
the linearized operator $\A$ and is a crucial quantity for the existence of
oscillating modes. Recall that
\begin{align*}
T(E,L) = 2 \int_{r_-(E,L)}^{r_+(E,L)} \frac{\diff r}{\sqrt{2E-2\Psi_L(r)}}
\end{align*}
for $L>0$ and $\Psi_L(r_L) < E < 0$. In particular, $T(E,L)$ is
defined for $(E,L)\in\mathring\Omega_0^{EL}$.
In the two upcoming sections we prove the following result which is e.g.~needed
in Section~\ref{ssc:essradial}:
\begin{prop}\label{Tbounded}
  There exist $c,C>0$ such that $c\leq T(E,L)\leq C$ for $(E,L)\in\mathring\Omega_0^{EL}$.
\end{prop}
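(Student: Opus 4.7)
The plan is to prove both bounds via a single compactness argument: I will extend $T$ to a strictly positive continuous function on the closure $\overline{\Omega_0^{EL}}$, which is compact in $\mathbb R^2$ since $\Omega_0^{EL}$ is bounded ($E\in[U_0(0),E_0]$ with $E_0<0$, and $L$ is bounded by the requirement that $\Psi_L(r_L)<E_0$). Lemma \ref{partperiodcontinuous} already yields continuity on the interior $\mathring\Omega_0^{EL}$, so the work is entirely about extending $T$ continuously and with strictly positive values to the boundary of $\Omega_0^{EL}$.

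The boundary decomposes into pieces on which the argument differs. On the "outer" pieces where $E=E_0$ or (in the anisotropic case) $L=L_0>0$, the turning points $r_\pm(E,L)$ remain distinct and separated from the spatial origin, so after factoring out the endpoint singularities via Lemma \ref{effpot}(d) the continuous extension follows by dominated convergence; in fact, when $L_0>0$ the a priori estimate \eqref{eq:Tboundnaive} already delivers a uniform upper bound there. In the isotropic case $L_0=0$ I would also extend $T$ continuously to $L=0$ by setting
\[
T(E,0) \;:=\; 2\int_0^{r_+(E,0)} \frac{dr}{\sqrt{2(E-U_0(r))}}.
\]
This integral is finite because $U_0$ has a strict quadratic minimum at the origin (ensured by $\rho_0(0)>0$) and $U_0'(r_+(E,0))>0$; continuity as $L\to 0^+$ follows from dominated convergence after uniformly dominating the integrand on compacta bounded away from the singularities.

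The delicate piece is the inner boundary $E=\Psi_L(r_L)$, corresponding to degenerate circular orbits: there the oscillation interval collapses to $\{r_L\}$ and the integrand is singular throughout. To extract the limit I would change variables via $r=r_L+s\sqrt{2(E-\Psi_L(r_L))/\Psi_L''(r_L)}$; a Taylor expansion of $\Psi_L$ at its minimum (using $\Psi_L'(r_L)=0$) shows that the rescaled integrand converges uniformly on $[-1,1]$ to $(\Psi_L''(r_L))^{-1/2}(1-s^2)^{-1/2}$, yielding the harmonic-oscillator period
\[
\lim_{E\to\Psi_L(r_L)^+} T(E,L) \;=\; \frac{2\pi}{\sqrt{\Psi_L''(r_L)}}.
\]
This limit is both positive and uniformly bounded thanks to the identity
\[
\Psi_L''(r_L) \;=\; 4\pi\rho_0(r_L) + \frac{U_0'(r_L)}{r_L},
\]
obtained from the radial Poisson relation $U_0''=4\pi\rho_0-2U_0'/r$ combined with $U_0'(r_L)=L/r_L^3$: it represents $\Psi_L''(r_L)$ as a sum of two continuous, non-negative quantities on the compact closure, with strictly positive infimum (by $L\geq L_0>0$ in the anisotropic case and by $\rho_0(0)>0$ in the isotropic case).

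Once continuity and strict positivity on $\overline{\Omega_0^{EL}}$ are established, compactness delivers both bounds simultaneously. The main technical obstacle is the harmonic-limit extension at the inner boundary, especially uniformity of the asymptotics in $L$ near corner points where this boundary meets other boundary pieces; there one must control the collapse of the oscillation interval jointly with the variation of $\Psi_L''(r_L)$. The remaining steps reduce to careful bookkeeping of square-root singularities via dominated convergence.
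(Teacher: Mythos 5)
Your route---continuous extension of $T$ to the compact closure $\overline{\Omega_0^{EL}}$ followed by a compactness argument---is a genuinely different strategy from the paper's, which assembles the two bounds directly from the a priori estimate \eqref{eq:Tboundnaive}, a maximum-principle comparison (Lemma~\ref{Tupperboundmaxprinc}), the three-step estimate of Lemma~\ref{T_gap}, and the explicit lower bound of Section~\ref{ssc:Tlower}. In fact the authors explicitly flag your route as an alternative in the closing Remark of Section~\ref{ssc:Tlower}, without carrying it out. Your identity $\Psi_L''(r_L)=4\pi\rho_0(r_L)+U_0'(r_L)/r_L$ (which follows from $U_0''=4\pi\rho_0-2U_0'/r$ and $U_0'(r_L)=L/r_L^3$) is correct, and since $U_0'(r)/r\to U_0''(0)=\tfrac{4\pi}{3}\rho_0(0)$ as $r\to 0$ it neatly replaces the role of Lemma~\ref{limitLoverrL4}. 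The harmonic limit $T\to 2\pi/\sqrt{\Psi_L''(r_L)}$ at the inner boundary is also the right formula, and one can check it is consistent with the direct limit $T(E,0)\to\pi/\sqrt{U_0''(0)}$ at the corner $(U_0(0),0)$ since $\Psi_L''(r_L)\to 4U_0''(0)$.

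However, in the isotropic case your argument has two genuine gaps at the corners, which are precisely where the paper expends its effort. First, your claim that at the outer boundary $E=E_0$ ``the turning points $r_\pm(E,L)$ remain distinct and separated from the spatial origin'' is false as $(E,L)\to(E_0,0)$: there $r_-(E,L)\to 0$. Because the inner singularity of the integrand sits at a moving point $r_-(E,L)$ that drifts into the fixed singularity of $L/(2r^2)$ at $r=0$, a straight dominated-convergence argument does not obviously close; the endpoint bound from Lemma~\ref{effpot}(d) deteriorates as $L\to 0$, which is why the paper replaces it with the quantitative three-region estimate of Lemma~\ref{T_gap}. Second, near the inner corner $(U_0(0),0)$ the harmonic rescaling you propose is not uniformly justified: $\Psi_L'''(r_L)\sim -12L/r_L^5$ blows up as $L\to 0$, so the Taylor remainder dominates the quadratic term unless $\epsilon=E-\Psi_L(r_L)\ll\Psi_L(r_L)-U_0(0)\sim U_0''(0)r_L^2$, a constraint that tightens exactly as you approach the corner; continuity of the extended $T$ along arbitrary sequences into $(U_0(0),0)$ therefore needs a genuinely joint two-scale argument, not a pointwise harmonic limit in $E$ followed by a limit in $L$. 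You flag this as ``the main technical obstacle,'' but the remaining step is not reducible to ``bookkeeping of square-root singularities.'' For the anisotropic case $L_0>0$, where $r_L$ and the oscillation interval stay bounded away from the origin, your scheme closes without these issues and is arguably cleaner than the paper's.
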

Afterwards we discuss the regularity and potential monotonicity of the period function. 

\subsection{An upper bound on $T$}\label{ssc:Tupper}

First we recall the bound \eqref{eq:Tboundnaive}, i.e.,
\begin{align*}
T(E,L) \leq 2\pi \frac{\|f_0\|_1^2}{E^2\sqrt L}
\end{align*}
for $L>0$ and $\Psi_L(r_L)<E<0$. This estimate has earlier been used in
\cite{ReSt20,St19}, where the reader may also find a rather straight-forward proof.
In particular, it shows that in the case of a polytropic shell steady state, i.e.,
the ansatz function is of the form \eqref{E:POLYTROPE} with $L_0>0$, $T$ is bounded
from above on the whole set $\mathring\Omega_0^{EL}$.

Unfortunately, the boundedness from above is harder to show in the case
where no inner vacuum region exists, for example in the case of isotropic steady states.
In order to handle these models we use the maximum principle to estimate $T$ as follows:

\begin{lemma}\label{Tupperboundmaxprinc}
	Consider an isotropic steady state, i.e., $\varphi$ is of the form \eqref{E:KING} or \eqref{E:POLYTROPE} with $L_0=0=l$, in particular $\supp(\rho_0)=[0,R_0]$ and $\rho_0(0)>0$. Suppose that there exist $S,c>0$ such that $\rho_0\geq c$ on $[0,S]\subset[0,R_0]$. Then 
	\begin{align}\label{eq:effpotmaxprinc}
	E-\Psi_L(r)\geq \frac{2\pi}3 c \frac{(r_+(E,L)- r)(r-r_-(E,L))(r+r_+(E,L)+r_-(E,L))}r
	\end{align}
	for $(E,L)\in\mathring\Omega_0^{EL}$ and $r>0$ such that $0<r_-(E,L)\leq r\leq r_+(E,L)\leq S$. This leads to
	\begin{align}
	T(E,L) \leq \sqrt{\frac{3\pi}c}
	\end{align}
	for $(E,L)\in\mathring\Omega_0^{EL}$ satisfying $r_+(E,L)\leq S$.
\end{lemma}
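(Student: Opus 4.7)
The plan is to establish the lower bound on $E-\Psi_L(r)$ by a maximum principle argument applied to the auxiliary function
\[
H(r) \coloneqq r\bigl(E-\Psi_L(r)\bigr) - \tfrac{2\pi}{3}c\,(r_+-r)(r-r_-)(r+r_++r_-),
\]
where we abbreviate $r_\pm = r_\pm(E,L)$. First I would observe that $H(r_\pm)=0$: the first term vanishes by the defining property $\Psi_L(r_\pm)=E$, and the product of factors kills the second term at both endpoints. Next I would compute $H''$ using the spherically symmetric Poisson equation $U_0''+\tfrac{2}{r}U_0'=4\pi\rho_0$. Writing $r\Psi_L(r)= rU_0(r)+\tfrac{L}{2r}$, a direct computation gives
\[
\bigl(r(E-\Psi_L(r))\bigr)''=-4\pi r\rho_0(r)-\tfrac{L}{r^3},
\]
while expanding the cubic yields $(r_+-r)(r-r_-)(r+r_++r_-)=-r^3+(r_+^2+r_+r_-+r_-^2)r-r_+r_-(r_++r_-)$, whose second derivative is $-6r$. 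Thus
\[
H''(r)=-4\pi r\bigl(\rho_0(r)-c\bigr)-\tfrac{L}{r^3}\le 0
\]
throughout $[r_-,r_+]\subset[0,S]$, using the hypothesis $\rho_0\ge c$ on $[0,S]$. Hence $H$ is concave on $[r_-,r_+]$ and vanishes at the endpoints, so $H\ge 0$ on the interval, which is exactly~\eqref{eq:effpotmaxprinc} after dividing by $r>0$.

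For the period bound I would then substitute~\eqref{eq:effpotmaxprinc} into the definition of $T(E,L)$ and use the elementary inequality $\frac{r}{r+r_++r_-}\le 1$ (valid since $r_\pm\ge 0$) to reduce the integrand to the standard form with a square-root singularity at both endpoints:
\[
T(E,L)\le 2\int_{r_-}^{r_+}\!\sqrt{\frac{3\,r}{4\pi c\,(r_+-r)(r-r_-)(r+r_++r_-)}}\,\diff r\le \sqrt{\frac{3}{\pi c}}\int_{r_-}^{r_+}\!\frac{\diff r}{\sqrt{(r_+-r)(r-r_-)}}.
\]
The remaining integral equals $\pi$ (this is a textbook evaluation, immediately verified by the substitution $r=\tfrac{r_++r_-}{2}+\tfrac{r_+-r_-}{2}\sin\tau$), yielding $T(E,L)\le\sqrt{3\pi/c}$.

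The only slightly delicate point is the sign of $H''$: one must be careful that the hypothesis $r_+(E,L)\le S$ ensures the entire interval of integration $[r_-,r_+]$ lies inside the region $[0,S]$ where $\rho_0\ge c$, so that the concavity bound holds throughout. Everything else is a direct computation, and no further ingredient beyond the Poisson equation and the endpoint identities $\Psi_L(r_\pm)=E$ is needed.
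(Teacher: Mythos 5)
Your proof is correct and follows essentially the same idea as the paper. The only cosmetic difference is that you work directly with the one-dimensional second derivative of $H(r)=r\bigl(E-\Psi_L(r)\bigr)-\tfrac{2\pi}{3}c(r_+-r)(r-r_-)(r+r_++r_-)$ and conclude via concavity plus vanishing endpoint values, whereas the paper applies the radial Laplacian $\Delta=\partial_r^2+\tfrac{2}{r}\partial_r$ to the function $U_c+E-\Psi_L$ (with $U_c=H/r - (E-\Psi_L)+\dots$, i.e.\ your $H$ divided by $r$) and invokes the maximum principle on the annulus; these are equivalent via the standard identity $\Delta f=\tfrac{1}{r}(rf)''$, and your version has the minor advantage of replacing the $3$-dimensional maximum principle with elementary $1$-dimensional concavity.
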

\begin{proof}
	Fix $(E,L)\in\mathring\Omega_0^{EL}$ with $r_+(E,L)\leq S$ and let
	\begin{align*}
	U_c(r)\coloneqq&- \frac{2\pi}3 c \frac{(r_+(E,L)- r)(r-r_-(E,L))(r+r_+(E,L)+r_-(E,L))}r  \nonumber \\
	=& \frac{2\pi}3 c \left( r^2 - \left(r_+-r_-\right)^2 + r_-r_+ + \frac1r r_-r_+(r_++r_+) \right)
	\end{align*}
	for $r\in[r_-(E,L),r_+(E,L)]$, where we used the abbreviation $r_\pm= r_\pm(E,L)$. Obviously, $U_c(r_\pm(E,L)) = 0= E-\Psi_L(r_\pm(E,L))$. Applying the (radial) Laplacian $\Delta = \left(\partial_r^2+\frac2r\partial_r\right)$ yields
	\begin{align*}
	\Delta U_c(r) &= 4\pi c,\\
	\Delta \Psi_L (r) &= \Delta U_0(r) + \frac L{r^4} = 4\pi\rho_0(r) + \frac L{r^4}
	\end{align*}
	for $r\in[r_-(E,L),r_+(E,L)]$. Thus, by the choice of $c$,
	\begin{align*}
	\Delta \left( U_c + E - \Psi_L \right) < 0 \text{ on } [r_-(E,L),r_+(E,L)].
	\end{align*}
	By the maximum principle 
	we therefore conclude that
	\begin{align*}
	U_c + E - \Psi_L > 0 \text{ on } ]r_-(E,L),r_+(E,L)[,
	\end{align*}
	which shows \eqref{eq:effpotmaxprinc}. Inserting this into the definition of $T$ yields
	\begin{align*}
	T(E,L) &= \sqrt2 \int_{r_-(E,L)}^{r_+(E,L)} \frac{\diff r}{\sqrt{E-\Psi_L(r)}} \\ 
	&\leq \sqrt{\frac3{c\pi}} \int_{r_-(E,L)}^{r_+(E,L)} \frac{\diff r}{\sqrt{(r_+(E,L)-r)(r-r_-(E,L))}} = \sqrt{\frac{3\pi}c}.\qedhere
	\end{align*}
\end{proof}

The bound \eqref{eq:Tboundnaive} shows that for any choice of $L_1>0$
the period function $T$ is bounded on the set $L\geq L_1$.
For any choice of $U_0(0)<E_1<E_0$
orbits corresponding to $(E,L)$ with $E\leq E_1$ are radially restricted
to some interval $[0,R_1]$ with $R_1<R_0$. For an isotropic steady
state, $\rho_0$ is bounded away from zero on such an interval, and
the previous lemma shows that the period function $T$ is
bounded on the set $ E \leq E_1$.
The next lemma closes the remaining gap:

\begin{lemma}\label{T_gap}
	Consider a steady state of the form \eqref{E:KING} or \eqref{E:POLYTROPE} with $L_0=0=l$, in particular $\supp(\rho_0)=[0,R_0]$. Then there exist $U_0(0) < E_1 < E_0$ and $L_1>0$ and a constant $C>0$ such that
	\[
	T(E,L) \leq C \text{ for } E_1\leq E \leq E_0,\ 0< L \leq L_1.
	\]
	In particular, $L_1$ and $E_1$ can be chosen such that $\Psi_L(r_L)<E$ for $E_1\leq E \leq E_0,\ 0< L \leq L_1$, i.e., $T$ is well-defined for points as above.
\end{lemma}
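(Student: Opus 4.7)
The main obstacle is that neither of the bounds available so far is adequate in this regime: the naive estimate \eqref{eq:Tboundnaive} blows up as $L\to 0$, while the maximum-principle estimate of Lemma \ref{Tupperboundmaxprinc} requires the whole orbit to lie in a region where $\rho_0$ is bounded below by a uniform constant, which fails once $r_+(E,L)$ approaches $R_0$. My plan is to handle the regime of small $L$ and $E$ close to $E_0$ via a cutoff argument: split $T(E,L) = 2\int_{r_-}^{\delta}\!\cdots + 2\int_{\delta}^{r_+}\!\cdots$ at a small, $L$-independent radius $\delta$ and estimate the two pieces by different means. The key new input will be a non-perturbative lower bound on $E-\Psi_L(r)$ near the inner turning point $r_-\sim\sqrt{L}$, whose derivation will exploit the crucial isotropy assumption together with $\rho_0(0)>0$ (so that $U_0\in C^2$ with $U_0''(0)=\tfrac{4\pi}{3}\rho_0(0)$).

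First I would verify that $r_\pm(E,L)$ is well-defined throughout the target set: since $r_L\sim L^{1/4}$ and $U_0(r_L)-U_0(0)=O(\sqrt{L})$, one has $\Psi_L(r_L)\to U_0(0)$ as $L\to 0$, so choosing any $E_1<E_0$ with $E_1>U_0(0)$ and then $L_1$ sufficiently small forces $\Psi_L(r_L)<E_1\leq E$ throughout $[E_1,E_0]\times(0,L_1]$. This uses only $E_0>U_0(0)$, which holds for any non-trivial steady state.

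The core step will be the inner estimate. Substituting $L/r_-^2=2(E-U_0(r_-))$ into the definition of $\Psi_L$ yields the algebraic identity $E-\Psi_L(r) = (E-U_0(r_-))(r^2-r_-^2)/r^2 - (U_0(r)-U_0(r_-))$. Since $\rho_0$ is non-increasing with $\rho_0(0)<\infty$, one has the pointwise bound $m(s)\leq\tfrac{4\pi}{3}\rho_0(0)s^3$ and hence $U_0(r)-U_0(r_-)\leq\tfrac{2\pi\rho_0(0)}{3}(r^2-r_-^2)$. Choosing $\delta>0$ so that $\tfrac{2\pi\rho_0(0)}{3}\delta^2<(E_1-U_0(0))/2$ and shrinking $L_1$ further so that $E-U_0(r_-)\geq (E_1-U_0(0))/2$, these two terms combine into $E-\Psi_L(r)\geq c_0(r^2-r_-^2)/r^2$ on $[r_-,\delta]$, with $c_0>0$ depending only on $E_1$ and $\rho_0(0)$. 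The substitution $u=r^2-r_-^2$ will then give $\int_{r_-}^{\delta}dr/\sqrt{2(E-\Psi_L)}\leq\delta/\sqrt{2c_0}$, uniformly bounded as required.

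For the outer piece $[\delta,r_+]$ the centrifugal term is a uniformly small perturbation, $L/(2r^2)\leq L/(2\delta^2)$. Setting $c_2:=\inf_{[\delta,R_0]}U_0'>0$ and $r_+^0(E):=U_0^{-1}(E)$, I would use the linear lower bound $E-U_0(r)\geq c_2(r_+^0-r)$ on $[\delta,r_+^0]$ and subdivide at $r_*:=r_+^0 - L/(c_2\delta^2)$. On $[\delta,r_*]$ the perturbation is absorbed into the linear bound and the resulting contribution to $T$ is $O(\sqrt{R_0-\delta})$, while the remaining short piece $[r_*,r_+]$ of length $O(L)$ can be controlled via Lemma \ref{effpot}(d), contributing a quantity which vanishes with $L$. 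Summing the three contributions yields the desired uniform bound. The main remaining subtlety will be in verifying that the constants $c_0$, $c_2$ and $\delta$ can be chosen uniformly in $E\in[E_1,E_0]$; this should follow straightforwardly from the compactness of $[E_1,E_0]$ and the continuity of the relevant quantities.
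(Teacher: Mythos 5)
Your proposal is correct in outline and takes a genuinely different route from the paper. The paper splits the integral by \emph{energy level}: it fixes a small $\epsilon>0$ depending only on the steady state and decomposes the orbit $[r_-(E,L),r_+(E,L)]$ at the turning points $r_\pm(E-\epsilon,L)$ of the lower energy shell. The inner piece is controlled by showing that $\Psi_L$ is convex on $]0,r_L]$ (a consequence of $\Psi_L'<0$ there), the middle piece by $\Psi_L\le E-\epsilon$, and the outer piece by a uniform positive lower bound on $\Psi_L'$ obtained via the monotonicity $\Psi_L'\ge\Psi_{L_1}'$ once $r_+(E-\epsilon,L)>r_{2L_1}$. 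Your proposal splits by \emph{radius}: a fixed $\delta$ separates a near-center region, where you exploit the algebraic identity $E-\Psi_L(r)=(E-U_0(r_-))\frac{r^2-r_-^2}{r^2}-(U_0(r)-U_0(r_-))$ together with $m(s)\le\frac{4\pi}{3}\rho_0(0)s^3$ to get a clean quadratic lower bound, and an outer region where the centrifugal term is a small additive perturbation controlled by $\inf_{[\delta,R_0]}U_0'>0$ and Lemma~\ref{effpot}(d) near the outer turning point. Your version makes the role of $\rho_0(0)<\infty$ transparent (it is precisely what bounds $U_0'(s)/s$), whereas the paper packages the same information into the convexity of $\Psi_L$; your version requires sharper asymptotics for $r_-\sim\sqrt{L}$ and $r_L\sim L^{1/4}$ than the paper needs, but in exchange it gives a more explicit bound and in fact your inner-region estimate could be pushed to replace the maximum-principle argument of Lemma~\ref{Tupperboundmaxprinc} as well. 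One small defect: choosing $\delta$ so that $\frac{2\pi\rho_0(0)}{3}\delta^2<(E_1-U_0(0))/2$ and simultaneously arranging $E-U_0(r_-)\ge(E_1-U_0(0))/2$ only makes the bracketed factor positive, not uniformly so; you should leave a margin, e.g.\ impose $\frac{2\pi\rho_0(0)}{3}\delta^2\le(E_1-U_0(0))/4$, so that $c_0\ge(E_1-U_0(0))/4>0$ explicitly.
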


\begin{proof}
	For $L>0$ let $E_L\coloneqq\Psi_L(r_L) < E \leq E_0$ and $\epsilon >0$ such that $E - \epsilon >E_L$;
	$\epsilon$ will be specified more precisely below. 
	Then
	\[
	r_-(E,L) < r_-(E-\epsilon,L) < r_+(E-\epsilon,L) < r_+(E,L).
	\]
	{\em Step 1.}
	We estimate the time a particle takes to travel from
	$r_-(E,L)$ to $r_-(E-\epsilon,L)$. To do so we
	first show that $\Psi_L$ is convex on the interval $]0,r_L]$,
	which contains the radii specified above.
	This follows from
	\[
	\Psi_L''(r) = 4 \pi \rho_0(r) - \frac{2}{r} U_0'(r) + 3 \frac{L}{r^4}
	\]
	and the fact that by definition of $r_L$,
	\[
	\Psi_L' (r) = U_0'(r) - \frac{L}{r^3} < 0
	\]
	on $]0,r_L[$. For any $r\in [r_-(E,L),r_-(E-\epsilon,L)]$ we let
	\[
	\alpha := \frac{r-r_-(E,L)}{r_-(E-\epsilon,L)-r_-(E,L)}.
	\]
	Then
	\[
	\Psi_L(r) \leq (1-\alpha) \Psi_L(r_-(E,L)) + \alpha  \Psi_L(r_-(E-\epsilon,L))
	= E - \alpha \epsilon.
	\]
	Hence
	\begin{align*}
	&
	\int_{r_-(E,L)}^{r_-(E-\epsilon,L)} \frac{\diff r}{\sqrt{E-\Psi_L(r)}}\\
	& \qquad
	\leq \frac{\sqrt{r_-(E-\epsilon,L)-r_-(E,L)}}{\sqrt{\epsilon}}
	\int_{r_-(E,L)}^{r_-(E-\epsilon,L)} \frac{\diff r}{\sqrt{r-r_-(E,L)}}\\
	& \qquad
	= 2 \frac{r_-(E-\epsilon,L)-r_-(E,L)}{\sqrt{\epsilon}}.
	\end{align*}
	{\em Step 2.}
	Here we estimate the time a particle takes to travel from $r_-(E-\epsilon,L)$
	to $r_+(E-\epsilon,L)$. On this interval, $\Psi_L(r) \leq E-\epsilon$,
	and hence
	\[
	\int_{r_-(E-\epsilon,L)}^{r_+(E-\epsilon,L)} \frac{\diff r}{\sqrt{E-\Psi_L(r)}}
	=  \frac{r_+(E-\epsilon,L)-r_-(E-\epsilon,L)}{\sqrt{\epsilon}}.
	\]
	{\em Step 3.}
	Here we estimate the time a particle takes to travel from $r_+(E-\epsilon,L)$
	to $r_+(E,L)$, which is the crucial part. Let
	\[
	\mu := \min\{ \Psi_L'(r) \mid r\in [r_+(E-\epsilon,L),r_+(E,L)]\}.
	\]
	Clearly,
	$\Psi_L(r) \leq E - \mu \, (r_+(E,L) -r)$ for
	$r\in [r_+(E-\epsilon,L),r_+(E,L)]$.
	Since $\mu >0$ this implies that
	\begin{align*}
	\int_{r_+(E-\epsilon,L)}^{r_+(E,L)} \frac{\diff r}{\sqrt{E-\Psi_L(r)}}
	&\leq
	\int_{r_+(E-\epsilon,L)}^{r_+(E,L)} \frac{\diff r}{\sqrt{\mu (r_+(E,L) -r)}}\\
	&= 2 \frac{\sqrt{r_+(E,L)-r_+(E-\epsilon,L)}}{\sqrt{\mu}},
	\end{align*}
	so it remains to estimate $\mu$
	independently of $E$ and $L$.
	
	We observe that $r_+(E,L)\to R_0>0$ as $(E,L) \to (E_0,0)$, and $r_L \to 0$
	as $L\to 0$. Hence there exist $U_0(0) < \tilde E_1 < E_0$
	and $L_1 > 0$ such that
	\[
	r_+(E,L) > r_{2 L_1} > r_{L_1} \geq r_L
	\]
	as well as $E_L<E$ for all $\tilde E_1 \leq E \leq E_0$ and $0 < L \leq L_1$.
	We define
	\[
	\epsilon \coloneqq \frac{E_0 - \tilde E_1}2,\quad E_1 \coloneqq \frac{E_0 + \tilde E_1}2
	\]
	Consider any $E_1\leq E \leq E_0$ and $0< L \leq L_1$. Then
	$E-\epsilon \geq \tilde E_1$, and hence $r_+(E-\epsilon,L) > r_{2L_1}$
	and for $r\in [r_+(E-\epsilon,L),r_+(E,L)]$,
	\begin{align*}
	\Psi_L'(r)
	&= U_0'(r) - \frac{L}{r^3} \geq
	U_0'(r) - \frac{L_1}{r^3} = \Psi_{L_1}'(r) \\
	&\geq
	\min\{\Psi_{L_1}'(s) \mid s\in [r_{2 L_1},R_0]\} >0.
	\end{align*}
	The latter constant depends only on the given steady state and the
	parameter $L_1$ and provides the required lower bound on $\mu$.
	
	All three steps together yield the desired estimate for $T(E,L)$,
	where we note that also $\epsilon$, which enters in the estimates
	from the first two steps, has now been chosen to depend only
	on the parameters $L_1$ and $\tilde E_1=2 E_1 - E_0$.
\end{proof}

\subsection{A lower bound on $T$}\label{ssc:Tlower}

\begin{lemma}
	For all $(E,L)\in\mathring\Omega_0^{EL}$,
	\begin{align*}
	T(E,L) \geq \left(4\pi\|\rho_0\|_\infty+3\frac L{r_L^4}\right)^{-\frac12}.
	\end{align*}
\end{lemma}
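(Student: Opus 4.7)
My plan is a harmonic-oscillator comparison based on an upper bound for $\Psi_L''$ on the outer half-orbit $[r_L, r_+(E,L)]$. First I would compute $\Psi_L''(r) = U_0''(r) + 3L/r^4$ directly from the definition of $\Psi_L$, and combine this with the radial Poisson equation $U_0'' + (2/r)U_0' = 4\pi\rho_0$ together with $U_0'(r) = m_0(r)/r^2 \geq 0$ to obtain
\[
\Psi_L''(r) \leq 4\pi\rho_0(r) + \frac{3L}{r^4} \leq 4\pi\|\rho_0\|_\infty + \frac{3L}{r^4}.
\]
On $[r_L, r_+(E,L)]$ the second term is bounded by $3L/r_L^4$, so with $M \coloneqq 4\pi\|\rho_0\|_\infty + 3L/r_L^4$ we obtain the uniform bound $\Psi_L''(r) \leq M$ throughout this interval.

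Next, using $\Psi_L'(r_L)=0$ and integrating twice, I would deduce $\Psi_L(r) - \Psi_L(r_L) \leq (M/2)(r-r_L)^2$ on $[r_L, r_+(E,L)]$. Evaluating at $r = r_+(E,L)$ and recalling $\Psi_L(r_+(E,L)) = E$ yields
\[
r_+(E,L) - r_L \geq \frac{A}{\sqrt{M}}, \qquad A \coloneqq \sqrt{2(E-\Psi_L(r_L))}.
\]
Since $\Psi_L$ attains its minimum at $r_L$, the estimate $\sqrt{2E-2\Psi_L(r)} \leq A$ holds throughout the full orbit, and therefore
\[
T(E,L) = 2\int_{r_-(E,L)}^{r_+(E,L)} \frac{\diff r}{\sqrt{2E-2\Psi_L(r)}} \geq \frac{2(r_+(E,L) - r_-(E,L))}{A} \geq \frac{2(r_+(E,L) - r_L)}{A} \geq \frac{2}{\sqrt{M}},
\]
which is in fact stronger than the claimed bound $1/\sqrt{M}$.

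There is no serious obstacle. The one point worth flagging is that $3L/r^4$ cannot be uniformly controlled on the full orbit, since it blows up near the origin; this is why the argument must be carried out on the outer half-orbit only, invoking $r_+ - r_- \geq r_+ - r_L$ to relate the partial estimate back to the full integral for $T$. Conceptually the proof is a comparison with a harmonic oscillator of stiffness $M$: the inequality $\Psi_L'' \leq M$ says the radial restoring force $-\Psi_L'$ is bounded in modulus by that of a linear spring of stiffness $M$, whose natural period is $2\pi/\sqrt{M}$, and the above argument delivers a crude but sufficient form of this natural bound.
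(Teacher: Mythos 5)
Your proof is correct and actually delivers a bound with an extra factor of $2$, namely $T(E,L)\geq 2\bigl(4\pi\|\rho_0\|_\infty+3L/r_L^4\bigr)^{-1/2}$. It shares the same starting point and the same key ingredient as the paper's proof: both begin with the crude estimate $T\geq\sqrt{2}\,(r_+-r_L)/\sqrt{E-\Psi_L(r_L)}$ (obtained by bounding the integrand from below by its value at the minimum), and both ultimately rest on the bound $\Psi_L''(s)=-2m_0(s)/s^3+4\pi\rho_0(s)+3L/s^4\leq 4\pi\|\rho_0\|_\infty+3L/r_L^4$ on the outer arc $s\geq r_L$. Where you diverge is in how the lower bound on $r_+-r_L$ is extracted. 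The paper goes through two successive mean-value arguments: first the inverse-function-theorem identity $r_+(E,L)-r_L=(E-\Psi_L(r_L))/\Psi_L'(r_+(\eta,L))$ for some intermediate $\eta$, then the extended mean value theorem applied to $\Psi_L$ versus $(\Psi_L')^2$, landing on $T\geq 1/\sqrt{\Psi_L''(s)}$ for a point $s$ located only implicitly. You instead bound $\Psi_L''$ uniformly by $M$ on $[r_L,r_+]$ and integrate twice from $r_L$ using $\Psi_L'(r_L)=0$ to get the quadratic overshoot $\Psi_L(r)-\Psi_L(r_L)\leq\tfrac{M}{2}(r-r_L)^2$, which at $r=r_+$ forces $r_+-r_L\geq\sqrt{2(E-\Psi_L(r_L))/M}$. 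Your route avoids the abstractly located intermediate points, makes the harmonic-oscillator comparison explicit, and—because the estimate is applied sharply rather than via MVT at an unspecified point—picks up the cleaner constant $2/\sqrt{M}$ rather than $1/\sqrt{M}$.
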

\begin{proof}
  First,
  \begin{align*}
    T(E,L)
    &= 2\int_{r_-(E,L)}^{r_+(E,L)} \frac{\diff r}{\sqrt{2E-2\Psi_L(r)}}
    \geq \sqrt2 \frac{r_+(E,L)-r_-(E,L)}{\sqrt{E-\Psi_L(r_L)}}\\
    &\geq \sqrt2\frac{r_+(E,L)-r_L}{\sqrt{E-\Psi_L(r_L)}}.
  \end{align*}
  We now apply the mean value theorem to the mapping
  $]\Psi_L(r_L),0[\ni \eta\mapsto r_+(\eta,L)$, continuously extended by
  $r_+(\Psi_L(r_L),L)\coloneqq r_L$.
  Note that $r_+(\cdot,L)$ is differentiable on $]\Psi_L(r_L),0[$ with
  \begin{align*}
    \frac{\partial r_+}{\partial E} (\eta, L)
    = \frac1{\Psi_L'(r_+(\eta,L))} ,\qquad \eta\in]\Psi_L(r_L),0[
  \end{align*}
  by the inverse function theorem. Hence there exists $\eta\in]\Psi_L(r_L),E[$ such that
    \begin{align*}
      r_+(E,L)-r_L = \frac{E-\Psi_L(r_L)}{\Psi_L'(r_+(\eta,L))}.
    \end{align*}
    Thus,
    \begin{align*}
      T(E,L) &\geq\sqrt2 \frac{\sqrt{{E-\Psi_L(r_L)}}}{\Psi_L'(r_+(\eta,L))} \geq
      \sqrt2 \frac{\sqrt{{\eta-\Psi_L(r_L)}}}{\Psi_L'(r_+(\eta,L))} 
      = \sqrt2 \left( \frac{\Psi_L(r_+(\eta,L)) - \Psi_L(r_L)}{(\Psi_L')^2(r_+(\eta,L)) - (\Psi_L')^2(r_L)} \right)^{\frac12},
	\end{align*}
	where we used $\Psi_L'(r_L)=0$ and $\Psi_L'(r_+(\eta,L))>0$ in the last equality. By the (extended) mean value theorem, there exists $s\in]r_L,r_+(\eta,L)[$ such that
	\begin{align*}
	2\Psi_L'(s)\Psi_L''(s)\left( \Psi_L(r_+(\eta,L)) - \Psi_L(r_L) \right) = \Psi_L'(s) \left( (\Psi_L')^2(r_+(\eta,L)) - (\Psi_L')^2(r_L) \right).
	\end{align*}
	We therefore conclude that
	\begin{align*}
	T(E,L) &\geq \sqrt2 \left( \frac{\Psi_L'(s)}{2\Psi_L'(s)\Psi_L''(s)} \right)^{\frac12} = \frac1{\sqrt{\Psi_L''(s)}},
	\end{align*}
	note that $\Psi_L'>0$ on $]r_L,\infty[$ and the above equality yields $\Psi_L''(s)>0$.
    Moreover,
	\begin{align*}
	\Psi_L''(s) &= U_0''(s)+ 3\frac L{s^4} = -2 \frac{m_0(s)}{s^3} + 4\pi\rho_0(s)+3\frac L{s^4}
	\leq 4\pi\|\rho_0\|_\infty+ 3\frac L{s^4},
	\end{align*}
	and
	\begin{align*}
	T(E,L) &\geq \left(4\pi\|\rho_0\|_\infty+3\frac L{s^4}\right)^{-\frac12} \geq \left(4\pi\|\rho_0\|_\infty+3\frac L{r_L^4}\right)^{-\frac12}.\qedhere
	\end{align*}
\end{proof} 

In order to obtain the boundedness of $T$
away from zero when arbitrary small $L$s are in the steady state support,
we have to ensure that the continuous function $]0,\infty[\ni L\mapsto \frac L{r_L^4}$
is bounded for $L\to0$; note that $r_L\to0$ as $L\to0$.

\begin{lemma}\label{limitLoverrL4}
  It holds that      
  \[
  \lim\limits_{L\to0} \frac L{r_L^4} = \frac{4\pi}3 \rho_0(0).
  \]
\end{lemma}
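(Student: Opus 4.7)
The plan is to exploit the defining relation for $r_L$. By the definition of $r_L$ as the unique minimizer of $\Psi_L$ on $]0,\infty[$, we have $\Psi_L'(r_L) = 0$, i.e.,
\[
U_0'(r_L) = \frac{L}{r_L^3}.
\]
Rearranging gives the key identity
\[
\frac{L}{r_L^4} = \frac{U_0'(r_L)}{r_L}.
\]
Since $r_L \to 0$ as $L\to 0$ (this was already noted in the discussion preceding the lemma), the problem reduces to computing $\lim_{r\to 0} U_0'(r)/r$.

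For this second step, I would use the spherically symmetric form of the Poisson equation. Since $U_0$ is the potential induced by the spherically symmetric density $\rho_0$, we have
\[
U_0'(r) = \frac{m_0(r)}{r^2}, \quad \text{where}\quad m_0(r) = 4\pi\int_0^r s^2\rho_0(s)\,\diff s.
\]
Because $\rho_0$ is continuous at the origin (this holds for both the polytropic and King models under consideration; for isotropic models $\rho_0\in C^1(\R^3)$, and for the relevant shell polytropes the case $L_0>0$ puts $r_L$ away from $0$ uniformly so nothing is needed), a direct estimate gives
\[
\frac{U_0'(r)}{r} = \frac{4\pi}{r^3}\int_0^r s^2\rho_0(s)\,\diff s \xrightarrow[r\to 0]{} \frac{4\pi}{3}\rho_0(0),
\]
by continuity of $\rho_0$ at $0$ and the standard mean-value argument for the average of $s^2\rho_0(s)$ against $s^2$ on $[0,r]$. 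Combining with the rearrangement above yields the claim.

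The main thing to be slightly careful about is the case where $\rho_0(0)$ could vanish or the steady state has an inner vacuum region. In the polytropic shell case with $L_0>0$, the potential $U_0$ is smooth and constant on $[0,r_{L_0}]$ where $\rho_0=0$, so for very small $L$ the infimum $r_L$ may in fact stay bounded away from $0$; however the statement of the lemma is of interest precisely when $\rho_0(0)>0$, i.e., for the isotropic King and polytropic models where no inner vacuum region is present, and in that regime the continuity of $\rho_0$ at the origin is guaranteed by the existence theory cited in Section~\ref{ssc:ststradial}. Thus the only genuine step is the elementary limit above, and no other obstacle arises.
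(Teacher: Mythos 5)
Your proof is correct and arrives at the same limit, but by a slightly cleaner route than the paper. Both arguments start from the critical-point relation $\Psi_L'(r_L)=0$, equivalently $U_0'(r_L)=L/r_L^3$ or $r_L\,m_0(r_L)=L$. The paper differentiates this implicit relation in $L$ (implicit function theorem) and then applies l'Hospital's rule twice: once to $L/r_L^4$ and once more to $m_0(r)/(4r^3)$. You instead rearrange directly to $L/r_L^4 = U_0'(r_L)/r_L = m_0(r_L)/r_L^3$ and pass to the limit using continuity of $\rho_0$ at the origin, via the elementary estimate $\frac{4\pi}{r^3}\int_0^r s^2\rho_0(s)\,\diff s \to \frac{4\pi}{3}\rho_0(0)$. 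This avoids l'Hospital and the implicit function theorem entirely and is arguably the shorter argument. Your remark about the $L_0>0$ shell case is also correct — the lemma is only invoked when there is no inner vacuum region (the shell polytropes are handled by the earlier bound $T(E,L)\leq 2\pi M_0^2 E^{-2}L^{-1/2}$), so $\rho_0(0)>0$ holds in all cases where the lemma is used. One small stylistic note: since $r_L\to 0$ as $L\to 0$ is used but not proved in the paper either (it is merely observed, e.g.\ before Lemma~\ref{T_gap}), you would want, if writing this out fully, to justify it — it follows from $r_L\,m_0(r_L)=L\to 0$ together with $m_0(r)\geq c r^3$ near $0$ when $\rho_0(0)>0$.
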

\begin{proof}
  Recall that $r_L$ is the unique solution of the equation $rm_0(r)=L$.
  By the implicit function theorem, the mapping $]0,\infty[\ni L\mapsto r_L$
  is differentiable with
  \begin{align*}
    \frac{\partial r_L}{\partial L} (L) = \frac1{m_0(r_L)+4\pi r_L^3\rho_0(r_L)} ,\quad L>0.
  \end{align*}
  Therefore, by l'Hospital's rule,
  \begin{align*}
    \lim\limits_{L\to0} \frac L{r_L^4} =
    \lim\limits_{L\to0} \frac{m_0(r_L)+4\pi r_L^3\rho_0(r_L)}{4r_L^3} =
    \lim\limits_{L\to0} \frac{m_0(r_L)}{4r_L^3} + \pi \rho_0(0).
  \end{align*}
  Applying l'Hospital's rule once again we arrive at
  \begin{align*}
    \lim\limits_{L\to0} \frac{m_0(r_L)}{4r_L^3} =
    \lim\limits_{r\to0} \frac{m_0(r)}{4r^3} =
    \lim\limits_{r\to0} \frac{4\pi r^2\rho_0(r)}{12r^2} = \frac\pi3 \rho_0(0). &\qedhere
  \end{align*}
\end{proof}

Since $\rho_0(0)<\infty$ if the steady state possesses no inner vacuum region, the previous two lemmata yield the boundedness of the period function $T$ away from zero on $\mathring{\Omega}^{EL}$.

\begin{remark}
	Another approach to bound the period function $T$ from below and above is to extend it continuously onto the boundary of $\mathring \Omega_0^{EL}$.
\end{remark}

\subsection{Regularity \& monotonicity of $T$}\label{ssc:reg}

\begin{lemma}\label{partperiodcontinuous}
	$T$ is continuous on $\{ (E,L)\in]-\infty,0[\times]0,\infty[\mid\Psi_L(r_L)<E \}$.
\end{lemma}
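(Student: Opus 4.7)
The plan is to desingularize the integrand for $T$ via a trigonometric change of variable that maps the $(E,L)$-dependent interval $[r_-(E,L),r_+(E,L)]$ onto the fixed interval $[0,\pi]$, and then conclude by dominated convergence. Fix $(E_*,L_*)$ in the admissible set and let $(E_n,L_n)\to(E_*,L_*)$. By Lemma~\ref{effpot}, for $n$ large $(E_n,L_n)$ lies in the admissible set and $r_\pm(E_n,L_n)\to r_\pm(E_*,L_*)$, so in particular $b(E_n,L_n):=(r_+(E_n,L_n)-r_-(E_n,L_n))/2$ stays bounded away from zero. Substituting
\begin{align*}
r(\theta;E,L)=a(E,L)+b(E,L)\cos\theta,\qquad \theta\in[0,\pi],\quad a:=\tfrac{r_++r_-}{2},
\end{align*}
one has $(r_+-r)(r-r_-)=b^2\sin^2\theta$ and $dr=-b\sin\theta\,d\theta$. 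Factoring $E-\Psi_L(r)=(r_+-r)(r-r_-)\,h(r;E,L)$, a direct computation then yields
\begin{align*}
T(E,L)=\sqrt{2}\int_0^\pi \Phi(\theta,E,L)\,d\theta,\qquad \Phi(\theta,E,L):=\bigl[h(r(\theta;E,L);E,L)\bigr]^{-1/2},
\end{align*}
where $h$, a priori defined on $(r_-,r_+)$, extends continuously and positively to $[r_-,r_+]$ with boundary values $h(r_\pm;E,L)=|\Psi_L'(r_\pm(E,L))|/(r_+(E,L)-r_-(E,L))$, since $\Psi_L'(r_\pm)\neq 0$ (as $r_\pm\neq r_L$, the unique critical point of $\Psi_L$).

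Next I would establish that $\Phi$ is jointly continuous on $[0,\pi]\times K$ for some compact neighborhood $K$ of $(E_*,L_*)$ in the admissible set. For $\theta\in(0,\pi)$ this is immediate from Lemma~\ref{effpot} and the smoothness of $\Psi_L(r)=U_0(r)+L/(2r^2)$, since the denominator $(r_+-r)(r-r_-)$ appearing in $h$ is strictly positive and jointly continuous. At the endpoint $\theta=0$, where the naive quotient defining $h$ is of $0/0$ type, I would use
\begin{align*}
E-\Psi_L(r)=(r_+-r)\int_0^1\Psi_L'\bigl(r+t(r_+-r)\bigr)\,dt,
\end{align*}
which after cancellation with $r_+-r=b(1-\cos\theta)$ and $r-r_-=b(1+\cos\theta)$ gives
\begin{align*}
\Phi(\theta,E,L)^2=\frac{b(E,L)\,(1+\cos\theta)}{\displaystyle\int_0^1\Psi_L'\bigl(r(\theta;E,L)+t(r_+(E,L)-r(\theta;E,L))\bigr)\,dt}.
\end{align*}
The denominator is jointly continuous in $(\theta,E,L)$ and tends to $\Psi_{L_*}'(r_+(E_*,L_*))>0$ as $(\theta,E,L)\to(0,E_*,L_*)$, so $\Phi$ is continuous at $\theta=0$. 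The symmetric identity $E-\Psi_L(r)=-(r-r_-)\int_0^1\Psi_L'(r_-+t(r-r_-))\,dt$ handles the neighborhood of $\theta=\pi$.

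Finally, joint continuity and compactness of $[0,\pi]\times K$ supply a uniform bound $\Phi\le M<\infty$. For $n$ large $(E_n,L_n)\in K$, so $\Phi(\cdot,E_n,L_n)\le M$; combined with the pointwise convergence $\Phi(\theta,E_n,L_n)\to\Phi(\theta,E_*,L_*)$ just established, the dominated convergence theorem yields $T(E_n,L_n)\to T(E_*,L_*)$, completing the proof. The principal technical obstacle is exactly the joint continuity of $\Phi$ at the endpoints $\theta=0,\pi$; the integral representations above, which turn the $0/0$ indeterminacy into an honest continuous expression by exploiting that $\Psi_L'$ does not vanish at $r_\pm(E,L)$, are the key tool needed.
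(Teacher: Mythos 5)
Your proof is correct, but it takes a genuinely different route from the paper's. The paper uses the \emph{linear} substitution $r=s\,(r_+-r_-)+r_-$ to pull the integral onto the fixed interval $[0,1]$, and then invokes dominated convergence with the \emph{explicit} integrable majorant
\begin{align*}
\frac1{\sqrt{E-\Psi_L\left[s(r_+-r_-)+r_-\right]}}\leq \frac{\sqrt2\,M_0^2}{\sqrt L\, E_0^2}\,\frac1{(r_+-r_-)\sqrt{s(1-s)}},
\end{align*}
which comes directly from the effective-potential bound of Lemma~\ref{effpot}\,(d) together with Lemma~\ref{effpot}\,(c); the transformed integrand still carries the integrable square-root singularity at $s=0,1$, but that is harmless for the DCT. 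Your trigonometric substitution $r=a+b\cos\theta$ goes further: by factoring out $(r_+-r)(r-r_-)=b^2\sin^2\theta$ and exhibiting, via the two Hadamard-type integral representations of $E-\Psi_L(r)$, that the resulting function $h$ extends continuously and positively up to $r_\pm$ (using $\Psi_L'(r_\pm)\neq 0$, which indeed holds because $r_-<r_L<r_+$ and $r_L$ is the unique critical point of $\Psi_L$), you remove the singularity entirely and obtain a jointly continuous, bounded integrand $\Phi$ on $[0,\pi]\times K$. This buys you more than the paper needs for bare continuity — it is the kind of desingularization one would want when studying differentiability of $T$ — at the cost of the more elaborate endpoint analysis you correctly identify as the main technical obstacle. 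One small remark: once $\Phi$ is jointly continuous on the compact set $[0,\pi]\times K$, it is uniformly continuous there, so you can conclude $T(E_n,L_n)\to T(E_*,L_*)$ by uniform convergence of the integrands without even invoking DCT; this slightly streamlines your final step.
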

\begin{proof}
	The change of variables $r=s\left(r_+(E,L)-r_-(E,L)\right)+r_-(E,L)$ yields
	\begin{align*}
	T(E,L) = \sqrt2 &\left(r_+(E,L)-r_-(E,L)\right) \\
	&\cdot \int_0^1 \frac{\diff s}{\sqrt{E-\Psi_L\left[ s\left(r_+(E,L)-r_-(E,L)\right)+r_-(E,L) \right]}}.
	\end{align*}
	Since $r_\pm$ are continuous by Lemma~\ref{effpot}, it remains to show that the latter integral is also continuous. This can be seen by the dominated convergence theorem, since
	\begin{multline*}
	\frac1{\sqrt{E-\Psi_L\left[ s\left(r_+(E,L)-r_-(E,L)\right)+r_-(E,L) \right]}}\\
	\leq \frac{\sqrt2 M_0^2}{\sqrt L\, E_0^2}\,\frac1{\left(r_+(E,L)-r_-(E,L)\right)\sqrt{s(1-s)}}
	\end{multline*}
	for $(E,L)\in]-\infty,0[\times]0,\infty[$ with $\Psi_L(r_L)<E$ and $0\leq s\leq1$ by Lemma~\ref{effpot}. Note that $\left(r_+-r_-\right)^{-1}$ is locally bounded and $\int_0^1\frac{\diff s}{\sqrt{s(1-s)}}=\pi<\infty$.
\end{proof}

In fact, one can show that $T$ is even differentiable. The $E$-derivative can be computed similarly to the plane symmetric case (see Proposition \ref{Tderplane}), the $L$-derivative can be computed by related techniques. Unfortunately, the non-negativity of $\partial_ET$ is harder to show in the spherically symmetric case than in the planar setting.

Nonetheless, numerical simulations clearly indicate $\partial_ET\geq0$ and $\partial_LT\leq0$
for a wide range of steady states, including the ones satisfying the assumptions of
Theorem~\ref{T:EXAMPLERADIAL}, see Remark~\ref{radialparameterexample} for an explicit example.

We again emphasize that whether period functions as the one above are monotonous is an
involved question and has been widely studied \cite{Bo05,Sc85}, especially in the context of bifurcation theory for Hamiltonian ODEs~\cite{Ch85,ChWa86,HaKo1991}. In particular,
numerical computations show that neither the monotonicities~\eqref{E:TPROPERTIES} nor~\eqref{E:TMAX} are true for general radial states; for example in the case of a polytropic~\eqref{E:POLYTROPE} steady state with large $l$.
This illustrates that~\eqref{E:TMAX} can not be shown by a general argument but depends on the exact form of the steady state, i.e., on the parameters $k$, $l$, $L_0$ in the polytropic case.
The rigorous monotonicity properties of $T$ will be treated in future work. 


%
%
%

\newpage


\begin{thebibliography}{0}
	
\bibitem{AR}
{\sc Andr\'{e}asson, H., Rein, G.,}
A numerical investigation of the stability of steady states 
and critical phenomena for the spherically symmetric 
Einstein-Vlasov system.
{\em Classical Quantum Gravity}\ {\bf23}, 3659--3677 (2006).	
		
\bibitem{An1961}
{\sc Antonov, V.~A.}, 
Remarks on the problems of stability in stellar dynamics. 
{\em Soviet Astronom.\ AJ.}\ {\bf 4}, 859--867 (1960).


\bibitem{An1971}
{\sc Antonov, V.~A.}, 
The exact determination of the oscillation spectrum of stellar systems as represented by the model of a plane homogeneous layer.
{\em Tr.\ Astron.\ Obs.\ Leningrad Univ.}\ {\bf 28}, 64--85 (1971).

\bibitem{Arnold}
{\sc Arnol'd, V.~I.},
\textit{Mathematical methods of classical mechanics} (second edition), Graduate Texts in Mathematics {\bf60}, Springer-Verlag, New~York 1989.

\bibitem{BaFaHo86}
{\sc Batt, J., Faltenbacher, W., Horst, E.},
Stationary Spherically Symmetric Models in Stellar Dynamics.
\newblock \emph{Arch.\ Rational Mech.\ Anal.}\ {\bf93}, 159--183 (1986).

\bibitem{BaMoRe95}
{\sc Batt, J., Morrison, P.~J., Rein, G.},
Linear stability of stationary solutions of the Vlasov-Poisson system in three dimensions.
{\em Arch.\ Rational Mech.\ Anal.}\ {\bf 130}, 163--182 (1995).
	
\bibitem{BeMaMo2016}
{\sc Bedrossian, J., Masmoudi, N., Mouhot, C.},
Landau Damping: Paraproducts and Gevrey Regularity.
{\em Ann.\ PDE }\ \textbf{2:4}, 71pp.\ (2016).	

\bibitem{BiTr}
{\sc Binney, J., Tremaine, S.},
\textit{Galactic Dynamics} (second edition), Princeton Series in Astrophysics {\bf4}, Princeton University Press 2008.
	
\bibitem{Bo05}
{\sc Bonorino, L., Brietzke, E.~H.~M., Lukaszczyk, J.~P., Taschetto, C.~A.},
Properties of the period function for some Hamiltonian systems and homogeneous solutions of a semilinear elliptic equation.
{\em J.\ Differential Equations} \textbf{214}, 156--175 (2005).
	
\bibitem{Brezis}
{\sc Brezis, H.},
\textit{Functional Analysis, Sobolev Spaces and Partial Differential Equations}, Universitext, Springer, New~York 2011.

\bibitem{Ch85}
{\sc Chicone, C.},
The Monotonicity of the Period Function for Planar Hamiltonian Vector Fields.
{\em J.\ Differential Equations} \textbf{69}, 310--321 (1987).
	
	
\bibitem{ChWa86}
{\sc Chow, S.-N., Wang, D.},
On the monotonicity of the period function of some second order equations.
{\em \v{C}asopis P\v{e}st.\ Mat.} \textbf{111}, 14--25 (1986).

\bibitem{CuZw}
{\sc Curtain, R.~F., Zwart, H.},
\textit{An Introduction to Infinite-Dimensional Linear Systems Theory}, Texts in Applied Mathematics {\bf21}, Springer-Verlag, New~York 1995.

\bibitem{DoFeBa}
{\sc Doremus, J.-P., Feix, M.~R., Baumann, G.}, 
Stability of Encounterless Spherical Stellar Systems. 
{\em Phys.\ Rev.\ Lett.}\ {\bf 26}, 725--728 (1971).

\bibitem{EbMar}
{\sc Ebin, D.~G., Marsden, J.},
Groups of diffeomorphisms and the motion of an incompressible fluid.
{\em Ann.\ of Math.}\ {\bf92}, 102--163 (1970).

\bibitem{edd}
{\sc Eddington, A.~S.},
On the Pulsations of a Gaseous Star and the Problem of the Cepheid Variables.\ Part I.
{\em Mon.\ Not.\ R.\ Astr.\ Soc.}\ {\bf79}, 2--22 (1918).

\bibitem{EnNa}
{\sc Engel, K.-J., Nagel, R.},
\textit{One-Parameter Semigroups for Linear Evolution Equations}, Graduate Texts in Mathematics {\bf194}, Springer-Verlag, New~York 2000.
	
\bibitem{FrPo1984}
{\sc Fridman, A.~M., Polyachenko, V.~L.},
\textit{Physics of Gravitating Systems\ I.}, Springer-Verlag, New~York 1984.

\bibitem{GiNiNi79}
{\sc Gidas, B., Ni, W.~M., Nirenberg, L.},	
Symmetry and related properties via the maximum principle.
{\em Comm.\ Math.\ Phys.}\ {\bf 68}, 209--243 (1979).
	
\bibitem{GrNgRo2020}
{\sc Grenier, E., Nguyen, T.~T., Rodnianski, I.},
Landau damping for analytic and Gevrey data.
{\em Available at \href{https://arxiv.org/abs/2004.05979}{https://arxiv.org/abs/2004.05979}}, 20pp.\ (2020).

\bibitem{Gue_e_a}
{\sc G\"unther, S., K\"orner, J., Lebeda, T., P\"otzl, B., Rein, G., Straub, C., Weber, J.},
A numerical stability analysis for the Einstein-Vlasov system.
{\em Classical Quantum Gravity}\ {\bf38}, 035003--035030 (2021).

\bibitem{GRS}
{\sc G\"unther, S., Rein, G., Straub, C.},
Oscillating solutions of the Einstein-Vlasov system.
{\em In preparation.}

\bibitem{GuLi08}
{\sc Guo, Y., Lin, Z.},
Unstable and stable galaxy models.
{\em Comm.\ Math.\ Phys.}\ {\bf279}, 789--813 (2008).

\bibitem{GuLi2017}
{\sc Guo, Y., Lin, Z.},
The existence of stable BGK waves.
{\em Comm.\ Math.\ Phys.}\ {\bf352}, 1121--1152 (2017).
	
\bibitem{GuRe2001}
{\sc Guo, Y., Rein, G.},
Isotropic steady states in galactic dynamics.
{\em Comm.\ Math.\ Phys.}\ {\bf219}, 607--629 (2001).

\bibitem{GuRe2007}
{\sc Guo, Y., Rein, G.},
A non-variational approach to nonlinear stability in stellar dynamics applied to the King model.
{\em Comm.\ Math.\ Phys.}\ {\bf 271}, 489--509 (2007).
	
	

\bibitem{HaKo1991}
{\sc Hale, J.~K., Ko\c cak, H.},
\textit{Dynamics and Bifurcations}, Texts in Applied Mathematics~{\bf3}, Springer-Verlag, New~York 1991.

\bibitem{HiSi}
{\sc Hislop, P.~D., Sigal, I.~M.},
\textit{Introduction to Spectral Theory}, Applied Mathematical Sciences {\bf113}, Springer-Verlag, New~York 1996.

\bibitem{IT68}
{\sc Ipser, J.~R., Thorne, K.~S.},
Relativistic, spherically symmetric star clusters.\ I.\ Stability theory for radial perturbations.
{\em Astrophys.\ J.}\ {\bf 154}, 251--270 (1968).

\bibitem{jang}
{\sc Jang, J.},
Time periodic approximations of the Euler-Poisson system near Lane-Emden stars.
{\em Anal.\ PDE}\ {\bf9}, 1043--1078 (2016).

\bibitem{JaMa2009}
{\sc Jang, J., Masmoudi, N.},
Well-posedness for compressible Euler equations with physical vacuum singularity.
{\em Comm.\ Pure  Appl.\ Math.}\ {\bf 62} 1327--1385 (2009). 

\bibitem{Kalnajs1971}
{\sc Kalnajs, A.~J.},
Dynamics of flat galaxies.\ I.
{\em Astrophys.\ J.}\ {\bf 166}, 275--293 (1971).

\bibitem{Kalnajs1977}
{\sc Kalnajs, A.~J.},
Dynamics of flat galaxies.\ IV.\ The integral equation for normal modes in matrix form.
{\em Astrophys.\ J.}\ {\bf 212}, 637--644 (1977).

\bibitem{KS} 
{\sc Kandrup, H.~E., Sygnet, J.~F.}, 
A simple proof of dynamical stability for a class of spherical clusters. 
{\em Astrophys.\ J.}\ {\bf 298}, 27--33 (1985).

\bibitem{KuMa1970}
{\sc Kulsrud, R.~M., Mark, J.~W.-K.},
Collective Instabilities and Waves for Inhomogeneous Stellar Systems.\ I.\ The Necessary and Sufficient Energy Principle.
{\em Astrophys.\ J.}\ {\bf 160}, 471--483 (1970).


\bibitem{MK}
{\sc Kunze, M.}, \textit{A Birman-Schwinger Principle in Galactic Dynamics}, Birkh\"auser, to appear.

\bibitem{Ku78}
{\sc Kurth, R.},
A global particular solution to the initial-value problems of stellar dynamics.
{\em Quart.\ Appl.\ Math.}\ {\bf36}, 325--329 (1978).

\bibitem{Landau1946}
{\sc Landau, L.},
On the vibrations of the electronic plasma. 
{\em Akad.\ Nauk SSSR.\ Zhurnal Eksper.\ Teoret.\ Fiz.}\ {\bf16}, 574--586 (1946).

\bibitem{LaLi}
{\sc Landau, L.~D., Lifshitz, E.~M.},
\textit{Mechanics} (third edition), Course of Theoretical Physics {\bf1}, Elsevier Butterworth-Heinemann 1982.

\bibitem{LeMeRa11}
{\sc Lemou, M., M\'ehats, F., Rapha\"el, P.},
A new variational approach to the stability of gravitational systems.
{\em Comm.\ Math.\ Phys.}\ {\bf 302}, 161--224 (2011).

\bibitem{LeMeRa12}
{\sc Lemou, M., M\'ehats, F., Rapha\"el, P.},
Orbital stability of spherical galactic models.
{\em Invent.\ Math.}\ {\bf187}, 145--194 (2012).
	
\bibitem{LiLo01}
{\sc Lieb, E., Loss, M.},
\textit{Analysis} (second edition), Graduate Studies in Mathematics {\bf14}, American Mathematical Society, Providence, RI 2001.

\bibitem{Louis1992}
{\sc Louis, P.~D.},
Discrete oscillation modes and damped stationary density waves in one-dimensional collisionless systems.
{\em Mon.\ Not.\ R.\ Astr.\ Soc.}\ {\bf258}, 552--570 (1992).

\bibitem{LoGe1988}
{\sc Louis, P.~D., Gerhard, O.~E.},
Can galaxies oscillate? A self-consistent model of a non-stationary stellar system.
{\em Mon.\ Not.\ R.\ Astr.\ Soc.}\ {\bf233}, 337--365 (1988).

\bibitem{LB1962}
{\sc Lynden-Bell, D.},
The stability and vibrations of a gas of stars.
{\em Mon.\ Not.\ R.\ Astr.\ Soc.}\ {\bf124}, 279--296 (1962).

\bibitem{LB1967}
{\sc Lynden-Bell, D.},
Statistical mechanics of violent relaxation in stellar systems.
{\em Mon.\ Not.\ R.\ Astr.\ Soc.}\ {\bf136}, 101--121 (1967).

\bibitem{LB1994}
{\sc Lynden-Bell, D.},
Lectures on stellar dynamics,
in {\em Galactic dynamics and $N$-body simulations}, {\em Lecture Notes in Physics} {\bf433}, Springer-Verlag, Berlin, 3--31 (1994).

\bibitem{mak}
{\sc Makino, T.},
On spherically symmetric motions of a gaseous star governed by the Euler-Poisson equations.
{\em Osaka J.\ Math.}\ {\bf52}, 545--580 (2015).

\bibitem{Mark1971}
{\sc Mark, J.~W.-K.},
Collective Instabilities and Waves for Inhomogeneous Stellar Systems.\ II.\ The Normal-Modes Problem of the Self-Consistent Plane-Parallel Slab.
{\em Astrophys.\ J.}\ {\bf169}, 455--475 (1971).

\bibitem{Mar}
{\sc Marsden, J.~E.},
A group theoretic approach to the equations of plasma physics.
{\em Canad.\ Math.\ Bull.}\ {\bf25}, 129--142 (1982).

\bibitem{Ma}
{\sc Mathur, S.~D.},
Existence of oscillation modes in collisionless gravitating systems.
{\em Mon.\ Not.\ R.\ Astr.\ Soc.}\ {\bf243}, 529--536 (1990).     
	
\bibitem{MoVi2011}
{\sc Mouhot, C., Villani, C.},
On Landau damping.
{\em Acta Math.}\ {\bf207}, 29--201 (2011).
		
	
\bibitem{PaWi2020}
{\sc Pausader, B., Widmayer, K.},
Stability of a Point Charge for the Vlasov–Poisson System: The Radial Case.
{\em Comm.\ Math.\ Phys.}\ {\bf385}, 1741--1769 (2021).


\bibitem{RaRe13}
{\sc Ramming, T., Rein, G.},
Spherically symmetric equilibria for self-grav\-itating kinetic or fluid models in the non-relativistic and relativistic case---A simple proof for finite extension.
{\em SIAM J.\ Math.\ Anal.}\ {\bf45}, 900--914 (2013).

\bibitem{RaRe2018}
{\sc Ramming, T., Rein, G.},
Oscillating solutions of the Vlasov-Poisson system---A numerical investigation.
{\em Phys.\ D} {\bf365}, 72--79 (2018).
	
\bibitem{ReSi2}
{\sc Reed, M., Simon, B.},
\textit{Methods of Modern Mathematical Physics.\ II.\ Fourier Analysis, Self-Adjointness}, Academic Press, New~York -- London 1975.

\bibitem{ReSi1}
{\sc Reed, M., Simon, B.},
\textit{Methods of Modern Mathematical Physics.\ I.\ Functional analysis} (second edition), Academic Press, Inc., New~York 1980.
	
\bibitem{Re99}
{\sc Rein, G.},
Static shells for the Vlasov-Poisson and Vlasov-Einstein systems.
{\em Indiana Univ.\ Math.\ J.}\ {\bf48}, 335--346 (1999).
	
\bibitem{Re07}
{\sc Rein, G.},
Collisionless Kinetic Equations from Astrophysics---The Vlasov-Pois\-son System,
in {\em Handb.\ of Differ.\ Equ., Evolutionary Equations} {\bf3}, Elsevier/North-Holland, Amsterdam, 383--476 (2007).
	
\bibitem{ReSt20}
{\sc Rein, G., Straub, C.},
On the transport operators arising from linearizing the Vlasov-Poisson or Einstein-Vlasov system about isotropic steady states.
{\em Kinet. Relat. Models} {\bf 13(5)}, 933--949 (2020).

\bibitem{RiSa2020}
{\sc Rioseco, P., Sarbach, O.},
Phase space mixing in an external gravitational central potential.
{\em Classical Quantum Gravity}\ {\bf37}, 195027--195069 (2020).

\bibitem{ross}
{\sc Rosseland, S.},
The Pulsation Theory of Cepheid Variables (George Darwin Lecture).
{\em Mon.\ Not.\ R.\ Astr.\ Soc.}\ {\bf103}, 233--243 (1943).

\bibitem{Sc85}
{\sc Schaaf, R.},
A class of Hamiltonian systems with increasing periods.
{\em J.\ Reine Angew.\ Math.}\ {\bf363}, 96--109 (1985).	
	
\bibitem{St19}
{\sc Straub, C.},
\textit{Stability of the King model -- a coercivity-based approach}, Master thesis, Universit\"at Bayreuth 2019.
	
\bibitem{Va1983}
{\sc Vandervoort, P.~O.},
On the oscillations and the stability of stellar systems.
{\em Astrophys.\ J.}\ {\bf 273}, 511--529 (1983).
	
\bibitem{Vandervoort2003}
{\sc Vandervoort, P.~O.},
On stationary oscillations of galaxies.
{\em Mon.\ Not.\ R.\ Astr.\ Soc.}\ {\bf339}, 537--555 (2003).

\bibitem{GuoLinetal2013}
{\sc Wang, Z., Guo, Y., Lin, Z., Zhang, P.},
Unstable galaxy models.
{\em Kinet. Relat. Models} {\bf 6(4)}, 701--714 (2013).
	
\bibitem{Weinberg1991}
{\sc Weinberg, M.~D.},
Vertical Oscillation of the Galactic Disk.
{\em Astrophys.\ J.}\ {\bf 373}, 391--404 (1991).
	
\bibitem{Weinberg1994}
{\sc Weinberg, M.~D.},
Weakly Damped Modes in Star Clusters and Galaxies.
{\em Astrophys.\ J.}\ {\bf421}, 481--490 (1994).
	
\end{thebibliography}
\end{document}